\numberwithin{equation}{section}
\newcommand\N{\mathbb N}
\newcommand\R{\mathbb R}
\newcommand\mbb\mathbb
\newcommand\mbf\mathbf
\newcommand\mcal\mathcal
\newcommand\mfrak\mathfrak
\newcommand\mrm\mathrm
\newcommand\msf\mathsf
\renewcommand\a\alpha
\renewcommand\b\beta
\newcommand\g\gamma
\newcommand\G\Gamma
\renewcommand\d\delta
\newcommand\D\Delta
\newcommand\e\varepsilon
\newcommand\z\zeta
\renewcommand\t\theta
\newcommand\Th\Theta
\newcommand\la\lambda
\newcommand\La\Lambda
\newcommand\s\sigma
\newcommand\si\varsigma
\newcommand\Si\Sigma
\newcommand\ups\upsilon
\newcommand\U\Upsilon
\newcommand\ph\varphi
\renewcommand\o\omega
\renewcommand\O\Omega
\newcommand\wt\widetilde
\newcommand\wh\widehat
\newcommand\ol\overline
\newcommand\ul\underline
\newcommand\mr\mathring
\newcommand\ub\underbrace
\newcommand\pa\partial
\newcommand\n\nabla
\newcommand\fa\forall
\newcommand\ex\exists
\newcommand\es\emptyset
\newcommand\wk\rightharpoonup
\newcommand\inc\hookrightarrow
\newcommand\linf\varliminf
\newcommand\lsup\varlimsup
\newcommand\os\overset
\newcommand\us\underset
\newcommand\sr\stackrel
\newcommand\Ot\Leftarrow
\newcommand\To\Rightarrow
\newcommand\map\mapsto
\newcommand\ot\leftarrow
\newcommand\lot\longleftarrow
\newcommand\lto\longrightarrow
\newcommand\tot\leftrightarrow
\newcommand\ltot\longleftrightarrow
\newcommand\sm\backslash
\renewcommand\Cup\bigcup
\renewcommand\Cap\bigcap
\newcommand\sub\subset
\newcommand\Sub\Subset
\newcommand\sne\subsetneq
\newcommand\bus\supset
\newcommand\Bus\Supset
\newcommand\eq\equiv
\newcommand\ox\otimes
\newcommand\Ox\bigotimes
\newcommand\pl\oplus
\newcommand\Pl\bigoplus
\newcommand\x\times
\renewcommand\c\circ
\newcommand\q\quad
\renewcommand\l\left
\renewcommand\r\right
\newcommand\fr\frac
\definecolor{bondiblue}{rgb}{0.0, 0.58, 0.71}
\def\sideremark#1{\ifvmode\leavevmode\fi\vadjust{\vbox to0pt{\vss
			\hbox to 0pt{\hskip\hsize\hskip1em
				\vbox{\hsize2.1cm\tiny\raggedright\pretolerance10000
					\noindent #1\hfill}\hss}\vbox to15pt{\vfil}\vss}}}%
\newtheorem{Thm}{Theorem}[section]
\newtheorem{Lem}[Thm]{Lemma}
\newtheorem{Prop}[Thm]{Proposition}
\newtheorem{Rem}[Thm]{Remark}
\begin{document}

\title[Moser-Trudinger problem]
{Qualitative analysis for Moser-Trudinger nonlinearities with a low energy}
\author[P. Luo, K. Pan and S. Peng]{Peng Luo, Kefan Pan, Shuangjie Peng}

\address[Peng Luo]{School of Mathematics and Statistics $\&$ Hubei Key Laboratory of Mathematical Sciences, Central China
Normal University,
Wuhan, 430079, P. R. China}
 \email{pluo@ccnu.edu.cn}

\address[Shuangjie Peng]{School of Mathematics and Statistics $\&$ Hubei Key Laboratory of Mathematical Sciences, Central China
Normal University, Wuhan, 430079, P. R. China}\email{sjpeng@ccnu.edu.cn}

\address[Kefan Pan]{School of Mathematics and Statistics, Central China Normal University, Wuhan 430079, China}\email{kfpan@mails.ccnu.edu.cn}

\begin{abstract}
We are concerned with the Moser-Trudinger problem
\begin{equation*}
\begin{cases}
-\Delta u=\lambda ue^{u^2}~~&\mbox{in}~\Omega,\\[0.5mm]
 u>0 ~~ &{\text{in}~\Omega},\\[0.5mm]
u=0~~&\mbox{on}~\partial \Omega,
\end{cases}
\end{equation*}
where $\Omega\subset \R^2$ is a smooth bounded domain and $\lambda>0$ is sufficiently small.
Qualitative analysis for Moser-Trudinger nonlinearities has been studied in recent decades, however there is still a lot of clarity about this issue, even for a low energy.
The reason is that this problem is a critical exponent for dimension two  and will lose compactness.
Here by using a variety of local Pohozaev identities, we qualitatively analyze the positive solutions of Moser-Trudinger problem with a low energy, which contains the Morse index, non-degeneracy, asymptotic behavior, uniqueness and symmetry of solutions. Since the fundamental solution of $-\Delta$ in $\Omega \subset \mathbb{R}^2$ is in logarithmic form and the corresponding bubble is exponential growth, more precise asymptotic behavior of the solutions is needed, which is of independent interest.
Moreover, to obtain our results, some ODE's theory will be used to a prior estimate of the solutions and some elliptic theory in dimension two will play a crucial role.
\end{abstract}

\maketitle
{\small
\keywords {\noindent {\bf Keywords:} {\small Moser-Trudinger problem, Morse index
and non-degeneracy, asymptotic behavior, uniqueness and symmetry}
\smallskip
\newline
\subjclass{\noindent {\bf 2020 Mathematics Subject Classification:} 35A01 $\cdot$ 35B25 $\cdot$ 35J20 $\cdot$ 35J60}
}
\tableofcontents
\section{Introduction and main results}
\setcounter{equation}{0}
In this paper, we consider the following Moser-Trudinger problem
\begin{equation}\label{1.1}
\begin{cases}
-\Delta u=\lambda ue^{u^2}~~&\mbox{in}~\Omega,\\[0.5mm]
 u>0~~  &{\text{in}~\Omega},\\[0.5mm]
u=0~~&\mbox{on}~\partial \Omega,
\end{cases}
\end{equation}
where $\Omega\subset \R^2$ is a smooth bounded domain and $\lambda>0$ is sufficiently small.

\vskip 0.1cm

Problem \eqref{1.1} is related to the Euler-Lagrange equation for the functional
\begin{equation}\label{Ju-1.1}
J_{\lambda}(u)=\frac{1}{2}\int_{\Omega}|\nabla u|^2 \,dx -\frac{\lambda}{2} \int_{\Omega} e^{u^2} \,dx,~u \in H_0^1(\Omega),
\end{equation}
where $H_0^1(\Omega)$ is the usual Hilbert space with norm $\|u\|_{H^1_0(\Omega)}= (\int_{\Omega}|\nabla u|^2dx)^{\frac{1}{2}}$. The functional $J_{\lambda}(u)$ in \eqref{Ju-1.1} is associated to the critical Trudinger embedding (see \cite{Tru1967}): $e^{u^2}\in L^p(\Omega)$ for all $p \geq 1$ whenever
$u\in H_0^1(\Omega)$, which was sharpened by J. Moser \cite{Moser1971} in the following form
\begin{equation*}
\sup_{u\in H^1_0(\Omega), \|u\|^2_{H^1_0(\Omega)}\leq 4\pi}\int_{\Omega}\big(e^{u^2}-1\big) \,dx
\leq C|\Omega|,
\end{equation*}
and
\begin{equation}\label{Ju-1.1q}
\sup_{u\in H^1_0(\Omega), \|u\|^2_{H^1_0(\Omega)}\leq 4\pi+\delta}\int_{\Omega}\big(e^{u^2}-1\big) \,dx
= +\infty, ~~~~\mbox{for every}~~~~\delta>0.
\end{equation}
Since then, an amount of work has been devoted to the study of the functional
\begin{equation*}
E(u):=\int_{\Omega}e^{u^2}dx,~~~~~~~\,\,\, u\in H^1_0(\Omega),
\end{equation*}
and in particular of its critical points. Clearly, $u\equiv 0$ is the only global minimum point of $E$ and we can not find a global maximizer of $E$ due to \eqref{Ju-1.1q}. Hence it is interesting and hopeful to find a maximizer of $E|_{M_{\Lambda}}$, i.e. of $E$ constrained to the manifold
\begin{equation*}
E|_{M_{\Lambda}}:=\Big\{u\in H^1_0(\Omega):~~\|u\|^2_{H^1_0(\Omega)}=\Lambda\Big\}.
\end{equation*}
To find the critical point of above constrained problem, a lot of mathematicians have achieved a series of original results. An early result is \cite{CC1986}, in which Carleson and Chang proved that $E|_{M_{4\pi}}$ has a maximizer when $\Omega=B_1(0)$. From then on the critical points of $E|_{M_{\Lambda}}$ \big($\Lambda\in (0,4\pi)$, $\Lambda=4\pi$ and $\Lambda\in (4\pi,\infty)$\big) has been widely studied. It is impossible to mention all the contributions here just in these few lines, one can refer to \cite{DMR2012,F1992,LRS2009,MM2014,MM2017,Struwe2000} and the references therein for more details.

\vskip 0.1cm

We note that the critical point of $E|_{M_{\Lambda}}$ solves
\begin{equation}\label{1.1a}
\begin{cases}
-\Delta u=\lambda ue^{u^2}~~&\mbox{in}~\Omega,\\[0.5mm]
u=0~~&\mbox{on}~\partial \Omega,\\[0.5mm]
\|u\|^2_{H^1_0(\Omega)}=\Lambda.
\end{cases}
\end{equation}
From \cite{AD2004}, we know that as $\lambda\to 0$,
 $u_\lambda$ solves \eqref{1.1} with $J_\lambda(u_\lambda)\to  2\pi$  means that
 $u_\lambda$ solves \eqref{1.1a} with some $\Lambda=\Lambda_\lambda\to 4\pi$.
Also for high energy, the similar relationship still holds by using the results in \cite{DT2020}.
Hence considering the solutions of problem \eqref{1.1} will be useful for us to find the critical points of $E|_{M_{\Lambda}}$, and another natural  topic is to understand the qualitative properties of solution to \eqref{1.1} as $\lambda\to 0$. In this direction, a lot of work has already been done. An early and interesting work is \cite{Dru2006}, in which Adimurthi and Druet studied the blow-up behavior of sequences of solutions to problem \eqref{1.1}. To describe Adimurthi and Druet's results,  we introduce some notations and recall the known asymptotic characterization. We denote by $G(x,\cdot)$ the Green's function
 of $-\Delta$ in $\Omega$, i.e. the solution to
\begin{equation}\label{greensyst}
\begin{cases}
-\Delta G(x,\cdot)= \delta_x  &{\text{in}~\Omega}, \\[1mm]
G(x,\cdot)=0  &{\text{on}~\partial\Omega},
\end{cases}
\end{equation}
where $\delta_x$ is the Dirac function. We have the following well known decomposition formula of $G(x,y)$,
\begin{equation}\label{GreenS-H}
G(x,y)=S(x,y)-H(x,y)  ~\mbox{for}~(x,y)\in \Omega\times \Omega,
\end{equation}
where  $S(x,y):=-\frac{1}{2\pi}\log |x-y|$ and $H(x,y)$ is the regular part of $G(x,y)$. Furthermore, for any $x\in \Omega$, the Robin function is defined as
\begin{equation}\label{Robinf}
\mathcal{R}(x):=H(x,x).
\end{equation}
As in \cite{DIP2017}, the function
\begin{equation}\label{defU}
U(x)= -\log \Big(1+\frac{|x|^2}{4}\Big)
\end{equation}
is a solution of the Liouville equation
\begin{equation*}
\begin{cases}
-\Delta U=e^{2U} \,\,~\mbox{in}~\R^2,\\[1mm]
U \leq 0, \,\,~U(0)=0,\\[1mm]
\displaystyle\int_{\R^2}e^{2U}dx=4\pi.
\end{cases}
\end{equation*}
Now we state the conclusions about the asymptotic behavior of the solutions to problem \eqref{1.1}.

\vskip 0.2cm

\noindent\textbf{Theorem A(c.f. \cite{AD2004})} Let $u_\lambda$ be a family of solutions to  \eqref{1.1} satisfying $\displaystyle\limsup_{\lambda\to 0}J_{\lambda}(u_\lambda)\leq 2\pi$,  $x_\lambda$ be a point where $u_\lambda$ achieves its maximum. Then the following properties hold after passing to a subsequence, as $\lambda \to 0$:

\vskip 0.3cm

\noindent \textup{(1)}. $\|u_{\lambda}\|_2\to 0$, $\|\nabla u_\lambda\|^2_2\to 4\pi$.

\vskip 0.1cm

\noindent\textup{(2)}. $\gamma_\lambda:=u_\lambda(x_\lambda)\to \infty$.

\vskip 0.1cm

\noindent\textup{(3)}.  Set
\begin{equation}\label{defw}
w_{\lambda}(y):=\gamma_\lambda \Big(u_{\lambda}(x_{\lambda}+\theta_{\lambda}y)-
\gamma_\lambda \Big),~y\in \Omega_{\lambda}:=\frac{\Omega-x_{\lambda}}{\theta_\lambda},
\end{equation}
where $\theta_{\lambda}:=\big(\lambda \gamma^2_\lambda e^{\gamma_\lambda^2}\big)^{-\frac{1}{2}} \to 0$ as $\lambda \to 0$. One has
\begin{equation}\label{limw}
\lim_{\lambda \rightarrow 0} w_{\lambda}=U\,\,~\mbox{in}~C^2_{loc}(\R^2).
\end{equation}

\vskip 0.1cm

\noindent\textup{(4)}. The sequence of maxima $\{x_\lambda\}$ converges to $x_0\notin \partial \Omega$, which satisfies  $\nabla \mathcal{R}(x_0)=0$. Moreover,
\begin{equation}\label{lim-1}
\lim_{\lambda \rightarrow 0} \gamma_{\lambda} u_{\lambda}(x)=4\pi G(x,x_0)\,\,~\mbox{in} ~ C^2_{loc}\big(\Omega\backslash \{x_0\}\big).
\end{equation}
\vskip 0.1cm

\noindent\textup{(5)}. It holds
\begin{equation}\label{lim-2}
\lim_{\lambda \rightarrow 0} \lambda \gamma_{\lambda} \int_{\Omega} u_{\lambda}(x)e^{u^2_{\lambda}(x)}dx =4\pi~~~~\mbox{and}~~~~
\lim_{\lambda \rightarrow 0}  \lambda  \gamma^2_{\lambda} \int_{\Omega} e^{u^2_{\lambda}(x)}dx = 4\pi.
\end{equation}
\vskip 0.2cm

Here Theorem A describes the one-bubble case. Concerning multi-bubble solutions to problem \eqref{1.1}, Druet and Thizy \cite{DT2020} recently established the following asymptotic behavior.

\vskip 0.2cm

\noindent\textbf{Theorem B(c.f. \cite{Dru2006,DT2020})} Let $u_\lambda$ be a family of solutions to  \eqref{1.1} satisfying $\displaystyle\limsup_{\lambda\to 0}J_{\lambda}(u_\lambda)<+\infty$, then
$u_\lambda \rightharpoonup 0$ weakly in $H^1_0(\Omega)$. Moreover there exist $k\geq 1$ such that
\begin{equation*}
\int_{\Omega}|\nabla u_\lambda|^2 \,dx\to 4k\pi\,\,\mbox{as}\,\, \lambda\to 0
\end{equation*}
and $k$ sequences $\{x_{i,\lambda}\}$ of points in $\Omega$ such that

\vskip 0.3cm

\noindent \textup{(1)}. $x_{i,\lambda}$ is the local maximum point of $u_\lambda(x)$ satisfying
$x_{i,\lambda}\to x_{i}$ as $\lambda\to 0$ with $x_i\in\Omega$, $x_i\neq x_j$, $i,j=1,\cdots,k$ if $k>1$.

\vskip 0.1cm

\noindent\textup{(2)}. $u_\lambda\to 0$ in $C^2_{loc}( \overline{\Omega}\backslash \mathcal{S})$
where $\mathcal{S}=\{x_i\}^k_{i=1}$.
\vskip 0.1cm

\noindent\textup{(3)}.
For all $i=1,\cdots,k$, $\gamma_{i,\lambda}=u_\lambda(x_{i,\lambda})\to +\infty$ as $\lambda\to 0$ and
\begin{equation*}
\gamma_{i,\lambda}\Big(u_\lambda(x_{i,\lambda}+\theta_{i,\lambda}x)- \gamma_{i,\lambda}\Big) \to U(x),~~~~~\mbox{in}~~~~~C^2_{loc}(\R^2)~~~~\mbox{as}~~~~~\lambda\to 0,
\end{equation*}
where $\theta_{i,\lambda}^{-2}:=\lambda\gamma_{i,\lambda}^2e^{\gamma_{i,\lambda}^2}\to +\infty$
as $\lambda\to 0$.

\vskip 0.1cm

\noindent\textup{(4)}. For all $i=1,\cdots,k$, there exists $m_i>0$ such that
\begin{equation}\label{DT-lam_gam}
\sqrt{\lambda}\gamma_{i,\lambda}\to \frac{2}{m_i}~~~~\mbox{as}~~~~~\lambda\to 0.
\end{equation}

\vskip 0.1cm

\noindent\textup{(5)}. $\big(x_1,\cdots,x_k,m_1,\cdots,m_k\big)$ is the critical point of
\begin{equation}\label{9-13-1a}
\Phi\big(y_1,\cdots,y_k,\alpha_1,\cdots,\alpha_k\big)=
\sum^k_{i=1}\alpha^2_i\mathcal{R}(y_i) - \sum_{i\neq j} \alpha_i\alpha_j G(y_i,y_j) +\frac{1}{2}\sum^k_{i=1}\big(\alpha^2_i-\alpha^2_i\ln \alpha_i\big).
\end{equation}

\vskip 0.1cm

Here we point out that from Theorem B and the proofs in \cite{DT2020}, we know that
\begin{equation}\label{9-13-1}
\frac{u_\lambda(x)}{\sqrt{\lambda}}\to 2\pi\sum^k_{i=1}m_iG(x_i,x)
~~~~~~\mbox{in} ~~~~~~C^2_{loc}\Big(\overline{\Omega}\backslash \{x_i\}^k_{i=1}\Big)~~~~~~\mbox{as} ~~~~~~\lambda \to 0.
\end{equation}
In the direction of existence,  del Pino, Musso and Ruf \cite{DMR2010} have proved that
 problem \eqref{1.1} has  a multi-bubble solution satisfying  \eqref{9-13-1} by Lyapunov-Schmidt reduction,
which improved the precision of the result in \cite{Dru2006}. More precisely, for a given $k\geq 1$, they provided the conditions under which problem \eqref{1.1} admits a family of $k$-peak solutions $u_\lambda$ satisfying
\[
J_{\lambda}(u_\lambda) = 2k\pi + o(1),~~~\mbox{for any $\lambda>0$ small.}
\]
Function \eqref{9-13-1a} also appeared in \cite{DMR2010}, which is called Kirchhoff-Routh function and play a crucial role in many problems, such as plasma problem \cite{CF1980,CPY2010}, Brezis-Nirenberg problem \cite{BN1983,CLP2021,MP2002} and so on. As the study of properties of the critical points to Kirchhoff-Routh function is not the aim of this paper, we just refer to \cite{BP2015,BPW2010,CPY2010,GT2010} and the references therein.

\vskip 0.1cm

In this paper, we intend to continue the qualitative analysis of solutions to \eqref{1.1}, which mainly contains  the Morse index, non-degeneracy, asymptotic behavior, uniqueness and symmetry.

\vskip 0.1cm

Our first result concerns the Morse index of $u_\lambda$.
It is known that the Morse index gives a strong qualitative information on the solutions, such as
non-degeneracy, uniqueness, symmetries, singularities, nodal sets as well as classifying solutions.
See \cite{BL1992,DGIP2019,F2007,P2002,Y1998} and the references therein.
To state the results in this direction, we first introduce some notations and definitions. The Morse and augmented Morse index of a solution $u_\lambda$ to problem \eqref{1.1} can be defined as
\begin{equation*}
\begin{cases}
m(u_\lambda):=\sharp \big\{k\in \N: \mu_{\lambda,k}<1\big\},\\[2mm]
m_0(u_\lambda):=\sharp \big\{k\in \N: \mu_{\lambda,k}\leq 1 \big\},
\end{cases}
\end{equation*}
where $\mu_{\lambda,1}<\mu_{\lambda,2}\leq \mu_{\lambda,3}\leq \cdots$ is the sequence of eigenvalues for the linearized  problem
\begin{equation}\label{eigen}
\begin{cases}
-\Delta v=\mu_{\lambda,k} \lambda \big(1+2u^2_\lambda\big)e^{u^2_\lambda} v &~\mbox{in}~\Omega,\\
v=0 &~\mbox{on}~\partial \Omega.
\end{cases}
\end{equation}
We can see that when $\mu_{\lambda,k} =1$, the space generated by the solutions $v$ to \eqref{eigen} is the kernel of the linearized operator at $u_\lambda$. Hence $u_\lambda$ is degenerate if and only if $\mu_{\lambda,k}=1$ for some $k \in \N$. Therefore, it is obvious that $m(u_\lambda)= m_0(u_\lambda)$ for any non-degenerate solution $u_\lambda$.

\vskip 0.1cm

For any function $f\in C^2(\Omega)$, let $a \in \Omega$ be a critical point of $f$, we denote by $m\big(f(a)\big)$ and $m_0\big(f(a)\big)$ the Morse and augmented Morse index of $f$ at $a$, that is:
\begin{equation*}
\begin{cases}
m\big(f(a)\big):=\sharp \Big\{l\in \{1,2\}: \mu_l<0 \Big\},\\[2mm]
m_0\big(f(a)\big):=\sharp \Big\{l\in \{1,2\}: \mu_l \leq 0 \Big\},
\end{cases}
\end{equation*}
where $\mu_1\leq \mu_2$ are the eigenvalues of the hessian matrix $D^2 f(a)$.

\vskip 0.1cm

The computation  of Morse index of solutions is a very broad subject.
Since \eqref{1.1} is a two-dimensional problem, we just mentioned some results  on
the Morse index of blow-up solutions to some classical two-dimensional problems.

\vskip 0.2cm

\noindent \textup{(I)} Gelfand problem
\begin{equation}\label{9-13-2}
\begin{cases}
-\Delta u=\lambda e^{u}~~&\mbox{in}~\Omega,\\[0.5mm]
 u>0 ~~ &{\text{in}~\Omega},\\[0.5mm]
u=0~~&\mbox{on}~\partial \Omega,
\end{cases}
\end{equation}
where $\Omega\subset \R^2$ is a smooth bounded domain and $\lambda>0$ is sufficiently small.
In \cite{GGOS2014}, Gladiali, Grossi, Ohtsuka and Suzuki computed the Morse index of multi-peak solutions to \eqref{9-13-2} by using the asymptotic estimates of eigenvalues to the eigenvalue problem associated with Gelfand problem. Moreover, they concluded that there is a relationship between the Morse index of the solutions and the associated critical points of the Kirchhoff-Routh function. For further results on the blow-up solutions to Gelfand problem
\eqref{9-13-2}, one can refer to \cite{BJLY2019,GG2004,GOS2011} and the references therein.

\vskip 0.2cm

\noindent \textup{(II)} Lane-Emden problem
\begin{equation}\label{9-13-3}
\begin{cases}
-\Delta u=u^p~~&\mbox{in}~\Omega,\\[0.5mm]
 u>0 ~~ &{\text{in}~\Omega},\\[0.5mm]
u=0~~&\mbox{on}~\partial \Omega,
\end{cases}
\end{equation}
where $\Omega\subset \R^2$ is a smooth bounded domain and $p$ is sufficiently large.
De Marchis, Grossi, Ianni and Pacella \cite{DGIP2019} also calculated the Morse index of $1$-spike solutions to \eqref{9-13-3} by studying the asymptotic behavior of the solutions, eigenvalues and its corresponding eigenfunctions. Recently, Ianni, Luo and Yan \cite{ILY} generalized above results into the $k$-spike ($k\geq 2$) case. For further results on the blow-up solutions to Lane-Emden problem
\eqref{9-13-3}, one can refer to \cite{DGIP2019,DIP2015,RW1994}  and the references therein. For some results on the Morse index of blow-up solutions to related high-dimensional problems (for example: nearly critical problem, Brezis-Nirenberg problem), we can refer to \cite{BLR1995,GP2005,CKL2016}.

In this paper, we intend to calculate the Morse index of the low-energy sequences of solutions to Moser-Trudinger problem. Our first result is as follows:
\begin{Thm}\label{th_Morse}
Let $u_\lambda$ be a family of solutions to problem \eqref{1.1} satisfying $J_\lambda(u_\lambda)\leq 2\pi$, then there exists $\lambda_0>0$ such that
\begin{equation*}
1\leq 1+m\big(\mathcal{R}(x_0)\big)\leq m(u_\lambda)\leq m_0(u_\lambda)\leq 1+m_0\big(\mathcal{R}(x_0)\big)\leq 3, \,\,~\mbox{for any}~ \lambda\in (0,\lambda_0).
\end{equation*}
Furthermore, if the hessian matrix $D^2 \mathcal{R}(x_0)$ of the Robin function $\mathcal{R}(x)$ at the point $x_0$ is non-singular, then it holds
\begin{equation*}
m(u_\lambda)=1+m\big(\mathcal{R}(x_0)\big)\in [1,3], \,\,~\mbox{for any}~ \lambda\in (0,\lambda_0).
\end{equation*}
\end{Thm}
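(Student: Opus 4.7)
The plan is to follow the blow-up analysis of eigenvalues developed for Gelfand-type problems in \cite{GGOS2014} and for the Lane--Emden problem in \cite{DGIP2019,ILY}, adapted to the Moser--Trudinger nonlinearity. I would first rescale \eqref{eigen} at the concentration point $x_0$: setting $\tilde v_{\lambda,k}(y) := v_{\lambda,k}(x_\lambda + \theta_\lambda y)$ and inserting the expansion $u_\lambda(x_\lambda+\theta_\lambda y) = \gamma_\lambda + w_\lambda(y)/\gamma_\lambda$ from \eqref{defw}--\eqref{limw} together with $\theta_\lambda^{-2} = \lambda \gamma_\lambda^2 e^{\gamma_\lambda^2}$, a direct computation gives
\begin{equation*}
\lambda \theta_\lambda^{2}\bigl(1+2u_\lambda^2\bigr)e^{u_\lambda^2}\big|_{x_\lambda+\theta_\lambda y} = 2\,e^{2w_\lambda(y)}\bigl(1+O(\gamma_\lambda^{-2})\bigr) \longrightarrow 2\,e^{2U(y)}.
\end{equation*}
Consequently the formal limit on $\R^2$ is the linearized Liouville eigenvalue problem $-\Delta\phi = 2\mu\, e^{2U}\phi$, whose bounded eigenfunctions are spherical harmonics pulled back via stereographic projection: the spectrum is $\mu_l = l(l+1)/2$, namely $\mu=0$ (one-dimensional, constants), $\mu=1$ (three-dimensional, spanned by $\partial_{y_1}U$, $\partial_{y_2}U$ and the dilation mode $\phi_d(y) = (|y|^2-4)/(|y|^2+4)$), and $\mu\geq 3$ beyond.

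A standard blow-up argument on the normalized eigenfunctions, matching finite eigenvalues to limit eigenvalues with multiplicities, then yields
\begin{equation*}
\mu_{\lambda,1}\to 0,\qquad \mu_{\lambda,2},\mu_{\lambda,3},\mu_{\lambda,4}\to 1,\qquad \liminf_{\lambda\to 0}\mu_{\lambda,5}\geq 3,
\end{equation*}
where $\mu_{\lambda,1}\to 0$ can also be checked directly by evaluating the Rayleigh quotient on the test function $u_\lambda/\gamma_\lambda$ (it turns out to be $O(\gamma_\lambda^{-2})$ using $-\Delta u_\lambda = \lambda u_\lambda e^{u_\lambda^2}$ and \eqref{lim-2}). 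Already this gives $m(u_\lambda)\geq 1$ and $m_0(u_\lambda)\leq 4$. To sharpen the upper bound from $4$ to $3$, I would separate the dilation-type eigenvalue from the translation-type ones by testing the second variation of $J_\lambda$ against a cut-off of the rescaled dilation mode $\phi_d((x-x_\lambda)/\theta_\lambda)$; exploiting that \eqref{1.1} lacks the exact Liouville scale invariance (because of the factor $u_\lambda$ in front of $e^{u_\lambda^2}$), an explicit expansion should show that the associated eigenvalue satisfies $\mu_{\lambda,d}-1 \geq c\,\gamma_\lambda^{-2}$ for some $c>0$, so the dilation mode drops out of the (augmented) Morse index and $m_0(u_\lambda)\leq 3$.

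To locate the two translation-type eigenvalues $\mu_{\lambda,i}$ ($i=2,3$) relative to $1$, I would use a local Pohozaev identity: multiplying \eqref{eigen} by $\partial_{x_j}u_\lambda$, multiplying the differentiated equation $-\Delta(\partial_{x_j}u_\lambda) = \lambda(1+2u_\lambda^2)e^{u_\lambda^2}\partial_{x_j}u_\lambda$ by $v_{\lambda,i}$, subtracting and integrating on a small ball $B_\delta(x_\lambda)$ produces
\begin{equation*}
(\mu_{\lambda,i}-1)\,\lambda\!\int_{B_\delta}\!(1+2u_\lambda^2)e^{u_\lambda^2}\,v_{\lambda,i}\,\partial_{x_j}u_\lambda\,dx = \int_{\partial B_\delta}\!\bigl[v_{\lambda,i}\,\partial_\nu\partial_{x_j}u_\lambda - \partial_\nu v_{\lambda,i}\,\partial_{x_j}u_\lambda\bigr]\,dS.
\end{equation*}
Into this I would insert the convergence $\gamma_\lambda u_\lambda(x)\to 4\pi G(x,x_0)$ from \eqref{lim-1}, the decomposition \eqref{GreenS-H}, and the corresponding sharp outer expansion of $v_{\lambda,i}$ as a linear combination of $\partial_{x_k}G(x,x_0)$. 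On $\partial B_\delta$ the contributions of the singular part $S$ to the two boundary terms cancel, while the regular part $H$, Taylor-expanded at $x_0$ through $\partial^2_{x_j x_k}H(x_0,x_0)$, yields a quadratic form in the eigenfunction coefficients whose matrix is a positive multiple of $D^2\mathcal{R}(x_0)$. Diagonalizing with eigenvalues $\nu_1\leq\nu_2$, one obtains $\mu_{\lambda,i}-1 = C\,\gamma_\lambda^{-2}\,\nu_{i-1}\,(1+o(1))$ for an explicit $C>0$, so $\operatorname{sign}(\mu_{\lambda,i}-1) = \operatorname{sign}(\nu_{i-1})$. This immediately gives $1+m(\mathcal{R}(x_0))\leq m(u_\lambda)\leq m_0(u_\lambda)\leq 1+m_0(\mathcal{R}(x_0))$, and when $\det D^2\mathcal{R}(x_0)\neq 0$ neither $\nu_{i-1}$ vanishes, so both inequalities become equalities and $m(u_\lambda) = 1+m(\mathcal{R}(x_0))$.

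The main obstacle is this final Pohozaev step: extracting $D^2\mathcal{R}(x_0)$ from the boundary integral requires next-order asymptotic expansions of $u_\lambda$ away from $x_0$ and of $\tilde v_{\lambda,i}$ in the matching region that are substantially sharper than those provided by Theorem A. In two dimensions, where the fundamental solution is only logarithmic while the bubble grows exponentially, these refined estimates are delicate and correspond precisely to the \textquotedblleft more precise asymptotic behavior of the solutions\textquotedblright\ flagged as of independent interest in the abstract.
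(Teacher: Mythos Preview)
Your overall blow-up-plus-Pohozaev strategy matches the paper's, and the treatment of the two translation-type eigenvalues via the identity coupling $v_{\lambda,i}$ with $\partial_{x_j}u_\lambda$ is essentially what the paper does (though the actual scale there is $\theta_\lambda^2$, which is exponentially small in $\gamma_\lambda$, not $\gamma_\lambda^{-2}$; this does not affect the sign argument).

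The genuine gap is in the dilation mode. First, a logical point: evaluating the second variation on a cut-off of $\phi_d$ only gives a Rayleigh quotient value, hence an \emph{upper} bound on a min-max eigenvalue, not the lower bound $\mu_{\lambda,4}>1$ you need. The paper instead works with the actual eigenfunction $v_{\lambda,4}$ and a scalar Pohozaev identity (multiplying by $\langle x-x_\lambda,\nabla u_\lambda\rangle$). Second, and more seriously, your expected expansion $\mu_{\lambda,d}-1\geq c\,\gamma_\lambda^{-2}$ is wrong: for the Moser--Trudinger nonlinearity the $\gamma_\lambda^{-2}$ contribution cancels exactly. Carrying out the Pohozaev computation at leading order only yields $\mu_{\lambda,l}=1+o(\gamma_\lambda^{-2})$ when the dilation coefficient $b_l\neq 0$ (this is equation~\eqref{3.5-11} in the paper), so at that precision the sign of $\mu_{\lambda,l}-1$ is completely undetermined. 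This is exactly the phenomenon flagged after Theorem~\ref{th_eigenvalue}: unlike Gelfand or Lane--Emden, here $v_{\lambda,4}$ is of order $\gamma_\lambda^{-2}$ while $\mu_{\lambda,4}-1$ is of order $\gamma_\lambda^{-4}$, one order smaller.

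To resolve the sign, the paper pushes the expansion one step further. This requires the refinement $C_\lambda=4\pi\gamma_\lambda^{-1}+4\pi\gamma_\lambda^{-3}+o(\gamma_\lambda^{-3})$ of~\eqref{def_C} (obtained via the second-order profile $v_0$ solving $-\Delta v-2e^{2U}v=e^{2U}(U^2+U)$ and an ODE computation of its behavior at infinity), together with a more careful bookkeeping of the terms $A_{\lambda,l}$ and $A_0=\lambda\int u_\lambda e^{u_\lambda^2}v_{\lambda,l}$ in the Pohozaev identity. Only after these second-order inputs does one obtain $\mu_{\lambda,4}=1+3\gamma_\lambda^{-4}+o(\gamma_\lambda^{-4})>1$. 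Your proposal, as written, stops one order too early and would not detect this.
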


The proof of Theorem \ref{th_Morse} is based on analyzing the asymptotic behavior of eigenvalues and eigenfunctions of the linearized problem \eqref{eigen} at $u_\lambda$.
The precise asymptotic behavior, as $\lambda\to 0$, of the eigenvalues $\mu_{\lambda,i}$ and the eigenfunctions $v_{\lambda,i}$ to problem \eqref{eigen} for $i=2,3,4$ is described in the following theorem.

\begin{Thm}\label{th_eigenvalue}
Under the same assumptions of Theorem \ref{th_Morse}, one has, as $\lambda\to 0$,
\begin{equation}\label{iden-mu23}
\mu_{\lambda,l}=1 + 12\pi \theta^2_{\lambda}\Lambda_{l-1}+o\big(\theta^2_{\lambda}\big),
\end{equation}
where $l=2,3$, $\Lambda_1\leq \Lambda_2$ are the eigenvalues of the hessian matrix $D^2 \mathcal{R}(x_{0})$.
For
the fourth eigenfunction and eigenvalue, we have following estimate
\begin{equation}\label{9-13-20}
\begin{cases}
\displaystyle v_{\lambda,4}=-\frac{4\pi b}{\gamma^2_\lambda}G(x,x_\lambda)+o\Big(\frac{1}{\gamma^2_\lambda}\Big), ~~\,~~\mbox{in}~~~~C^1_{loc}\big( \overline{\Omega} \backslash \{x_{\lambda}\}\big)~~~~\,~~~ \mbox{for some}~~~~b\in \R \backslash \{0\},\\[4mm]
\displaystyle \mu_{\lambda,4}=1+\frac{3}{\gamma_\lambda^4}+ o\Big(\frac{1}{\gamma_\lambda^4}\Big).
\end{cases}
\end{equation}

\end{Thm}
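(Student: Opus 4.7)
The plan is to analyse the rescaled eigenfunctions $\tilde v_{\lambda,l}(y):=v_{\lambda,l}(x_\lambda+\theta_\lambda y)$ (normalised by $\|v_{\lambda,l}\|_{L^\infty(\Omega)}=1$) and combine the resulting inner/outer expansions with suitable local Pohozaev-type identities. Writing $u_\lambda(x_\lambda+\theta_\lambda y)=\gamma_\lambda+w_\lambda/\gamma_\lambda$ and using $\lambda\theta_\lambda^2=(\gamma_\lambda^2 e^{\gamma_\lambda^2})^{-1}$ one expands
\begin{equation*}
\lambda\theta_\lambda^2(1+2u_\lambda^2)e^{u_\lambda^2}=2e^{2w_\lambda}+\gamma_\lambda^{-2}e^{2w_\lambda}(1+4w_\lambda+2w_\lambda^2)+O(\gamma_\lambda^{-4}),
\end{equation*}
whence $-\Delta\tilde v_{\lambda,l}=2\mu_{\lambda,l}e^{2w_\lambda}\tilde v_{\lambda,l}+O(\gamma_\lambda^{-2})$. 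By Theorem A and standard elliptic estimates, every bounded subsequential limit $\tilde v_l^\ast$ lies in the three-dimensional bounded kernel of $-\Delta-2e^{2U}$ on $\mathbb{R}^2$, spanned by $Z_0(y)=(1-|y|^2/4)/(1+|y|^2/4)$ and $Z_i(y)=-y_i/(2(1+|y|^2/4))$, $i=1,2$. A min-max argument based on the three near-kernel test functions $\partial_1 u_\lambda,\partial_2 u_\lambda,(x-x_\lambda)\cdot\nabla u_\lambda$ then forces $\mu_{\lambda,l}\to 1$ for $l=2,3,4$, while $\mu_{\lambda,1}$ stays bounded away from $1$. After rotating coordinates so that $D^2\mathcal{R}(x_0)$ is diagonal one arranges $\tilde v_{\lambda,l}\to a_l Z_{l-1}$ for $l=2,3$ and $\tilde v_{\lambda,4}\to bZ_0$ with nonzero constants $a_l,b$.

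For the translation modes $l=2,3$, testing $-\Delta v_{\lambda,l}=\mu_{\lambda,l}\lambda(1+2u_\lambda^2)e^{u_\lambda^2}v_{\lambda,l}$ against $\partial_i u_\lambda$ (which itself solves $-\Delta(\partial_i u_\lambda)=\lambda(1+2u_\lambda^2)e^{u_\lambda^2}\partial_i u_\lambda$) and integrating by parts produces
\begin{equation*}
(\mu_{\lambda,l}-1)\int_\Omega\lambda(1+2u_\lambda^2)e^{u_\lambda^2}v_{\lambda,l}\,\partial_i u_\lambda\,dx=-\int_{\partial\Omega}\partial_\nu u_\lambda\,\partial_\nu v_{\lambda,l}\,\nu_i\,d\sigma.
\end{equation*}
Rescaling shows the left side equals $(\mu_{\lambda,l}-1)\cdot(2a_l/(\theta_\lambda\gamma_\lambda))\int_{\mathbb{R}^2}e^{2U}Z_{l-1}^2\,dy+o(\cdots)$, while an inner/outer analysis modelled on the proof of \eqref{lim-1} gives $v_{\lambda,l}\sim-4\pi\theta_\lambda a_l\,\partial_{y_{l-1}}G(\cdot,y)|_{y=x_0}$ in $C^1_{loc}(\overline\Omega\setminus\{x_0\})$ (the factor $4\pi\theta_\lambda$ being forced by $Z_i(y)\sim-2y_i/|y|^2$ at infinity). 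Combined with $\gamma_\lambda u_\lambda\to 4\pi G(\cdot,x_0)$, the classical Green-function identity $\int_{\partial\Omega}\partial_\nu G(\cdot,x_0)\partial_\nu\partial_{y_j}G(\cdot,x_0)\nu_i\,d\sigma=\tfrac12\partial_i\partial_j\mathcal R(x_0)$, and the explicit value $\int_{\mathbb R^2}e^{2U}Z_i^2\,dy=\pi/3$, the identity reduces to a $2\times 2$ eigenvalue problem for $D^2\mathcal{R}(x_0)$ and delivers $\mu_{\lambda,l}-1=12\pi\theta_\lambda^2\Lambda_{l-1}+o(\theta_\lambda^2)$.

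For the dilation mode $l=4$ the crucial observation is the orthogonality $\int_{\mathbb{R}^2}e^{2U}Z_0\,dy=0$, verified by a direct polar computation. This annihilates the $O(1)$-source in the outer equation for $v_{\lambda,4}$ and pushes the outer scale down to $O(\gamma_\lambda^{-2})$: the effective Dirac-type source for $v_{\lambda,4}$ concentrating at $x_\lambda$ is of size $\gamma_\lambda^{-2}$, whence
\begin{equation*}
v_{\lambda,4}=-\frac{4\pi b}{\gamma_\lambda^2}G(x,x_\lambda)+o(\gamma_\lambda^{-2})\quad\text{in }C^1_{loc}(\overline\Omega\setminus\{x_0\}),
\end{equation*}
with $b\neq 0$ determined by the solvability of the $\gamma_\lambda^{-2}$-correction to the inner profile. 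For the eigenvalue I would use the dilation multiplier $\psi_\lambda=(x-x_\lambda)\cdot\nabla u_\lambda$, for which $-\Delta\psi_\lambda=\lambda(1+2u_\lambda^2)e^{u_\lambda^2}\psi_\lambda+2\lambda u_\lambda e^{u_\lambda^2}$; the analogous Pohozaev procedure yields
\begin{equation*}
(\mu_{\lambda,4}-1)\!\int_\Omega\!\lambda(1+2u_\lambda^2)e^{u_\lambda^2}v_{\lambda,4}\psi_\lambda\,dx=2\!\int_\Omega\!\lambda u_\lambda e^{u_\lambda^2}v_{\lambda,4}\,dx-\!\int_{\partial\Omega}\!\partial_\nu u_\lambda\,\partial_\nu v_{\lambda,4}(x-x_\lambda)\cdot\nu\,d\sigma.
\end{equation*}
After Taylor-expanding all coefficients to order $\gamma_\lambda^{-4}$ and using $\int e^{2U}Z_0\,dy=0$ to kill the spurious $\gamma_\lambda^{-2}$-contributions from both bulk and boundary terms, the remaining Liouville integrals combine to the coefficient $3$ in \eqref{9-13-20}.

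The main obstacle is the fourth-mode analysis: one must reconcile the inner limit $bZ_0$, which does \emph{not} vanish at infinity ($Z_0\to-1$), with the outer scaling $v_{\lambda,4}=O(\gamma_\lambda^{-2})$---a mismatch resolved precisely by the vanishing $\int e^{2U}Z_0\,dy=0$---and then track the inner expansion of $\tilde v_{\lambda,4}$ up to order $\gamma_\lambda^{-2}$ in order to isolate the coefficient $3$ exactly. Both tasks are intrinsic to the $e^{u^2}$ structure of the Moser-Trudinger nonlinearity (the $u^2$ in the exponent is exactly what produces the $\gamma_\lambda^{-2}$-scale corrections in the linearisation) and are finer than the analogous analyses for the Gelfand ($e^u$) or Lane-Emden ($u^p$) problems, where the scaling and the coefficients have a different nature.
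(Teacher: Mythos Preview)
Your overall strategy matches the paper's: rescale, identify the limiting kernel elements, and then use Pohozaev-type bilinear identities to extract the rate of $\mu_{\lambda,l}-1$. For $l=2,3$ your argument is essentially the paper's, only carried out on $\partial\Omega$ rather than on $\partial B_d(x_\lambda)$; the Green's function identity you quote is exactly what the paper encodes in $Q^{(1)}(G,\partial_hG)=-\tfrac12\partial_i\partial_h\mathcal R(x_\lambda)$, and the numerics ($\int e^{2U}Z_i^2=\pi/3$, the factor $12\pi$) agree.

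For $l=4$ there is a genuine gap. Your dilation identity is equivalent to the paper's $P(u_\lambda,v_{\lambda,l})$-identity, and it runs into the same phenomenon: the orthogonality $\int e^{2U}Z_0=0$ indeed forces the \emph{outer} scale $v_{\lambda,4}=O(\gamma_\lambda^{-2})G$ (equivalently $\lambda A_{\lambda,l}=O(\gamma_\lambda^{-2})$), but after this the two leading $O(\gamma_\lambda^{-3})$ contributions on the right---namely $2\int\lambda u_\lambda e^{u_\lambda^2}v_{\lambda,4}=-8\pi b\gamma_\lambda^{-3}+o(\gamma_\lambda^{-3})$ and the boundary term $=-(-4\pi b\gamma_\lambda^{-2})C_\lambda\cdot\frac1{2\pi}=8\pi b\gamma_\lambda^{-3}+o(\gamma_\lambda^{-3})$---cancel \emph{exactly}, yielding only $\mu_{\lambda,4}-1=o(\gamma_\lambda^{-2})$. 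This cancellation is \emph{not} another instance of $\int e^{2U}Z_0=0$; it is a structural feature of the equation (the paper records it as \eqref{3.5-11}). Getting down to the coefficient $3\gamma_\lambda^{-4}$ therefore requires controlling both sides to one further order.

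The paper does \emph{not} do this by expanding $\tilde v_{\lambda,4}$ to the next inner order, as you propose. Instead it uses two ingredients you do not mention: (i) the refined constant $C_\lambda=4\pi\gamma_\lambda^{-1}+4\pi\gamma_\lambda^{-3}+o(\gamma_\lambda^{-3})$, which in turn rests on the a priori estimate $|v_\lambda(x)|:=\gamma_\lambda^2|w_\lambda-U|\le C(1+|x|)^\tau$ and the limit $v_\lambda\to v_0$ solving $-\Delta v-2e^{2U}v=e^{2U}(U^2+U)$ (this is the content of Section~\ref{s3} and the computation $c_0=4\pi$ via Lemma~\ref{add-lem1}); and (ii) an algebraic identity obtained by playing $u_\lambda$ against $v_{\lambda,4}$ in two ways, namely $A_0:=\lambda\!\int u_\lambda e^{u_\lambda^2}v_{\lambda,4}=\tfrac{\lambda}{\gamma_\lambda}A_{\lambda,l}+o(\gamma_\lambda^{-5})$ (see \eqref{add2-A0-2}). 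With (i)--(ii) the $P$-identity gives $\tfrac{8\pi}{3}(\mu_{\lambda,4}-1)(b+o(1))=-\tfrac{2\lambda}{\gamma_\lambda^2}A_{\lambda,l}+o(\gamma_\lambda^{-4})=8\pi b\gamma_\lambda^{-4}+o(\gamma_\lambda^{-4})$, hence the $3$. Your route---pushing the inner expansion of $\tilde v_{\lambda,4}$ to order $\gamma_\lambda^{-2}$---is in principle feasible but strictly harder (it amounts to a second linearised profile problem), and even then you would still need (i), i.e.\ the $v_0$-level expansion of $w_\lambda$, to evaluate the resulting integrals. That missing layer is the real work behind the coefficient~$3$.
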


Here we point out that Moser-Trudinger type equation contains exponential nonlinearity $u e^{u^2}$, which is critical and requires more accurate estimates. Specifically, for Gelfand problem, Lane-Emden problem and Moser-Trudinger problem, we can easily find that the first eigenvalue of the related linearized problem tends to some constant smaller than $1$. The second and third eigenvalues will tend to $1$, and whether these eigenvalues are greater or less than $1$ will depend on the eigenvalues of the hessian matrix of $\mathcal{R}(x)$ at the critical point. To compute the fourth eigenvalue (here we focus on  $1$-spike solutions), some calculations will be different.
For Gelfand problem, the fourth eigenfunction and eigenvalue possess following estimate
\begin{equation*}
\begin{cases}
\displaystyle v_{\lambda,4}=\frac{8\pi b}{\log \lambda}G(x,x_\lambda)+o\Big(\frac{1}{|\log \lambda|}\Big), ~~\,~~\mbox{in}~~~~C^1_{loc}\big(\overline{\Omega} \backslash \{x_{\lambda}\}\big)~~~~\,~~~ \mbox{for some}~~~~b\neq 0,\\[4mm]
\displaystyle \mu_{\lambda,4}=1-\frac{3}{2\log \lambda}+ o\Big(\frac{1}{|\log \lambda|}\Big).
\end{cases}
\end{equation*}
Also  for Lane-Emden problem,
 the fourth eigenfunction and eigenvalue possess following estimate
\begin{equation*}
\begin{cases}
\displaystyle v_{p,4}=-\frac{8\pi b}{p}G(x,x_p)+o\Big(\frac{1}{p}\Big), ~~\,~~\mbox{in}~~~~C^1_{loc} \big(\overline{\Omega} \backslash \{x_{p}\}\big)~~~~\,~~~ \mbox{for some}~~~~b\neq 0,\\[4mm]
\displaystyle \mu_{p,4}=1+\frac{6}{p}+ o\Big(\frac{1}{p}\Big).
\end{cases}
\end{equation*}
Thus we know that the main part of two terms $v_{\lambda,4}$ $(v_{p,4})$ and $\mu_{\lambda,4}-1$ $(\mu_{p,4}-1)$ in
Gelfand problem or Lane-Emden problem has the same order. However, this does not hold in Moser-Trudinger problem.
In fact, we know from \eqref{9-13-20} that the main part of $\mu_{\lambda,4}-1$ in Moser-Trudinger problem is a higher order term of $v_{\lambda,4}$. Hence to prove that the main part of $\mu_{\lambda,4}-1$ is positive, we need some more computations
and prior estimates to prove \eqref{3.5.2} below. Also we point out that the corresponding prior estimates are established by some ODE's theory.

\vskip 0.1cm

Using Theorem \ref{th_Morse}, we have the following result concerning the non-degeneracy.
\begin{Thm}\label{th_nondeg}
Let   $u_\lambda$ be a solution of \eqref{1.1} satisfying $J_\lambda(u_\lambda)\leq 2\pi$ and
 $\xi_\lambda\in H^1_0(\Omega)$ be a solution of $\mathcal{L}_\lambda\big(\xi_\lambda\big)=0$, where
$$\mathcal{L}_\lambda\big(\xi\big):= -\Delta \xi - \lambda \big(1+2u^2_\lambda\big)e^{u^2_\lambda}\xi$$
is the linearized operator of problem \eqref{1.1} at the solution $u_{\lambda}$.
Suppose that $x_{0}$ is a non-degenerate critical point of Robin function $\mathcal{R}(x)$, then there exists $\lambda_0>0$ such that
	\[\xi_\lambda \equiv 0, \,\,~\mbox{for any}~~ \lambda\in (0,\lambda_0).\]
\end{Thm}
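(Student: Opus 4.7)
\textbf{Proof proposal for Theorem \ref{th_nondeg}.}

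The plan is to deduce the non-degeneracy directly from the Morse-index identity already established in Theorem \ref{th_Morse}. The key observation is that $\xi_\lambda\in H^1_0(\Omega)$ solves $\mathcal{L}_\lambda(\xi_\lambda)=0$ if and only if $\xi_\lambda$ is an eigenfunction of \eqref{eigen} for the eigenvalue $\mu=1$. Thus $u_\lambda$ is non-degenerate precisely when no $\mu_{\lambda,k}$ equals $1$, i.e.\ when $m(u_\lambda)=m_0(u_\lambda)$; once this is shown, the conclusion $\xi_\lambda\equiv 0$ is immediate.

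Since $D^2\mathcal{R}(x_0)$ is assumed non-singular, neither eigenvalue $\Lambda_1,\Lambda_2$ vanishes, and hence $m(\mathcal{R}(x_0))=m_0(\mathcal{R}(x_0))$. Feeding this into the chain of inequalities of Theorem \ref{th_Morse}, I would combine the refined identity
\[
m(u_\lambda) \,=\, 1+m\big(\mathcal{R}(x_0)\big)
\]
with the general upper bound $m_0(u_\lambda)\leq 1+m_0(\mathcal{R}(x_0))$ and the tautology $m(u_\lambda)\leq m_0(u_\lambda)$, obtaining
\[
m(u_\lambda) \;=\; 1+m\big(\mathcal{R}(x_0)\big) \;=\; 1+m_0\big(\mathcal{R}(x_0)\big) \;\geq\; m_0(u_\lambda) \;\geq\; m(u_\lambda),
\]
so all inequalities collapse to equalities. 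Hence $m(u_\lambda)=m_0(u_\lambda)$ for every $\lambda\in(0,\lambda_0)$, which means $1$ is not an eigenvalue of \eqref{eigen}; the kernel of $\mathcal{L}_\lambda$ is therefore trivial and $\xi_\lambda\equiv 0$ follows.

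Given that the hard analytical work has been done in Theorems \ref{th_Morse} and \ref{th_eigenvalue}, I do not anticipate any genuine obstacle at this stage. If one preferred a self-contained argument avoiding a direct appeal to Theorem \ref{th_Morse}, the same conclusion can be reached by excluding $\mu_{\lambda,k}=1$ on every level: the first eigenvalue $\mu_{\lambda,1}$ stays bounded below $1$ (as noted in the discussion after Theorem \ref{th_eigenvalue}); the expansion $\mu_{\lambda,l}=1+12\pi\theta_\lambda^2\Lambda_{l-1}+o(\theta_\lambda^2)$ of \eqref{iden-mu23} rules out $l=2,3$ because $\Lambda_{l-1}\neq 0$; the sharp estimate $\mu_{\lambda,4}=1+3\gamma_\lambda^{-4}+o(\gamma_\lambda^{-4})$ of \eqref{9-13-20} handles $l=4$ (this is the place where the prior estimate \eqref{3.5.2} enters, and is the only delicate point); and standard variational comparison combined with the Morse-index ceiling $m_0(u_\lambda)\leq 3$ forces $\mu_{\lambda,k}$ to stay bounded away from $1$ from above for all $k\geq 5$.
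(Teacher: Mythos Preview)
Your proposal is correct and matches the paper's approach: the paper's proof is exactly your ``self-contained'' alternative, invoking Proposition \ref{prop_mu1} for $\mu_{\lambda,1}<1$ and Theorem \ref{th_eigenvalue} for $\mu_{\lambda,k}\neq 1$ ($k\geq 2$) to conclude that $\mathcal{L}_\lambda$ has trivial kernel. Your primary route via the Morse-index equality $m(u_\lambda)=m_0(u_\lambda)$ is an equivalent repackaging of the same eigenvalue information, since Theorem \ref{th_Morse} itself rests on those asymptotics.
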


\begin{Rem}
A usual method to  prove the non-degeneracy of the solutions is blow-up argument based on local Pohozaev identities, one can refer to \cite{G2005,GG2004,GILY2022,GLPY2021,GMPY2020,GMPY2022} and references therein for more details. But here we verify the non-degeneracy of the solutions to problem \eqref{1.1} by proving that each eigenvalue of the eigenvalue problem \eqref{eigen} is not equal to $1$. It is easy to see that this conclusion can be directly obtained after estimating the eigenvalues of problem \eqref{eigen} and this method greatly reduces the tedious calculations. Detailed proof is given in subsection \ref{s4.2}.
\end{Rem}

Another main result in this paper is the following local uniqueness result in any general smooth bounded domain $\Omega \subset \R^2$.
\begin{Thm}\label{th_uniq}
Let $u_\lambda^{(1)}$ and $u_\lambda^{(2)}$ be two solutions to \eqref{1.1} with
\begin{equation}\label{energy1}
J_{\lambda}(u_\lambda^{(l)})\leq 2\pi,~\,~\mbox{for}\,~l=1,2,
\end{equation}
which concentrate at the same  critical point $x_{0}\in\Omega$ of Robin function $\mathcal{R}(x)$.
If $x_{0}$ is non-degenerate, then  there exists  $\lambda_0>0$ such that
\[u_\lambda^{(1)}\equiv u_\lambda^{(2)}, \,\,~\mbox{for any}~~ \lambda\in (0,\lambda_0).\]
\end{Thm}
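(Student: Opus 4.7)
The plan is a contradiction-compactness argument in the blow-up framework set up by Theorem A. Assume along a sequence $\lambda_n \to 0$ that $u_{\lambda_n}^{(1)} \not\equiv u_{\lambda_n}^{(2)}$, and normalize
\begin{equation*}
\xi_n := \frac{u_{\lambda_n}^{(1)} - u_{\lambda_n}^{(2)}}{\|u_{\lambda_n}^{(1)} - u_{\lambda_n}^{(2)}\|_{L^\infty(\Omega)}},
\end{equation*}
so that $\|\xi_n\|_{L^\infty(\Omega)} = 1$. By the fundamental theorem of calculus applied to $f(u) = ue^{u^2}$, $\xi_n$ solves
\begin{equation*}
-\Delta \xi_n = \lambda_n D_n(x)\,\xi_n \text{ in } \Omega, \qquad \xi_n = 0 \text{ on } \partial\Omega,
\end{equation*}
where $D_n(x) = \int_0^1 (1 + 2u_{n,t}^2)e^{u_{n,t}^2}\,dt$ with $u_{n,t} := tu_{\lambda_n}^{(1)} + (1-t)u_{\lambda_n}^{(2)}$. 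Since both $u_{\lambda_n}^{(l)}$ share the same concentration point $x_0$ and the same leading profile from Theorem A, one first shows that $\lambda_n D_n$ coincides with $\lambda_n(1 + 2(u_{\lambda_n}^{(1)})^2)e^{(u_{\lambda_n}^{(1)})^2}$ up to a harmless error that does not affect the blow-up analysis below.

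\textbf{Two-scale limit of $\xi_n$.} Inside the bubble, define $\widetilde\xi_n(y) := \xi_n\bigl(x_{\lambda_n}^{(1)} + \theta_{\lambda_n}^{(1)} y\bigr)$. Rescaling the equation for $\xi_n$ and using $w_{\lambda_n} \to U$ from part (3) of Theorem A, the rescaled equation reduces to $-\Delta\widetilde\xi_n = 2e^{2U}\widetilde\xi_n + o(1)$. Elliptic regularity together with $\|\widetilde\xi_n\|_{L^\infty} \leq 1$ gives, along a subsequence, a limit $\xi_\infty \in C^1_{loc}(\mathbb{R}^2)$ solving the linearized Liouville equation. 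The classification of bounded kernel elements yields
\begin{equation*}
\xi_\infty(y) = c_0\,\frac{1 - |y|^2/4}{1 + |y|^2/4} + c_1\,\partial_{y_1} U(y) + c_2\,\partial_{y_2} U(y),
\end{equation*}
for constants $c_0, c_1, c_2 \in \mathbb{R}$. Outside the bubble, the argument that yields \eqref{lim-1} applied to $\xi_n$ (suitably multiplied by $\gamma_{\lambda_n}^{(1)}$) produces a limit of the form $\beta\,G(\,\cdot\,, x_0)$ in $C^1_{loc}(\overline\Omega \setminus \{x_0\})$, where $\beta$ is linked to $c_0$ by the matching of the inner and outer expansions.

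\textbf{Pohozaev identities force $c_0 = c_1 = c_2 = 0$.} Apply a family of local Pohozaev identities on a small disk $B_\delta(x_0)$: multiplying the equation for $u_{\lambda_n}^{(l)}$ by $\partial_{x_i} u_{\lambda_n}^{(l)}$ ($i=1,2$) and subtracting the two equations, the leading boundary expansion in terms of $G$ and $\mathcal{R}$ produces
\begin{equation*}
\sum_{j=1}^{2} c_j\,\partial^2_{ij}\mathcal{R}(x_0) = 0 \qquad (i=1,2).
\end{equation*}
Non-degeneracy of $D^2\mathcal{R}(x_0)$ then forces $c_1 = c_2 = 0$. An additional Pohozaev-type identity obtained by multiplying by $(x - x_{\lambda_n}^{(l)})\cdot \nabla u_{\lambda_n}^{(l)}$ and matching inner with outer at higher order isolates the scaling mode and gives $c_0 = 0$. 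Hence $\xi_\infty \equiv 0$, so $\widetilde\xi_n \to 0$ locally uniformly; combined with the outer decay, this forces $\|\xi_n\|_{L^\infty} \to 0$, contradicting the normalization. Therefore $u_\lambda^{(1)} \equiv u_\lambda^{(2)}$ for all $\lambda$ sufficiently small.

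\textbf{Main obstacle.} The hardest point is the derivation of the identity that forces $c_0 = 0$. Unlike the Liouville model problem, the nonlinearity $ue^{u^2}$ has no clean dilation symmetry, so the scaling direction of the bubble is felt only at a subtler order, exactly as reflected by the estimate \eqref{9-13-20} where the relevant correction for $\mu_{\lambda,4}$ is $3/\gamma_\lambda^4$, much smaller than the $\theta_\lambda^2$-order of the translation eigenvalues. Accordingly, the Pohozaev identity that isolates $c_0$ requires tracking $u_{\lambda_n}^{(l)}$ to an order higher than the leading Green's function expansion, which is precisely the refined asymptotic analysis and the ODE-based a priori estimates announced in the introduction. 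A secondary technical point is verifying that the maximum of $|\xi_n|$ is attained in the inner region (i.e.\ at a point $x_n^\star$ with $(x_n^\star - x_{\lambda_n}^{(1)})/\theta_{\lambda_n}^{(1)}$ bounded), which prevents $\xi_\infty$ from being trivially zero; this follows from the outer Green-function decay of $\gamma_{\lambda_n}^{(1)}\xi_n$ away from $x_0$.
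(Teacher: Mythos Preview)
Your overall architecture matches the paper's: normalize the difference, pass to the linearized Liouville equation in the blow-up, classify the limit, kill the translation coefficients $c_1,c_2$ via the Pohozaev identity with $\partial_{x_i}u$ and the non-degeneracy of $D^2\mathcal R(x_0)$, and kill the scaling coefficient $c_0$ via the Pohozaev identity with $(x-x_\lambda)\cdot\nabla u$ together with the refined expansion of Theorem~\ref{thm-lambdagamma}. You also correctly identify the $c_0$ step as the deepest one, and the paper's Proposition~6.10 bears this out: it takes the full strength of \eqref{def_C1} and \eqref{lambda_gamma} to push the relevant error down to $o(1/\gamma_\lambda^5)$ and extract $\widetilde a=0$.

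There is, however, a genuine gap in your final contradiction. You write that ``$\widetilde\xi_n\to 0$ locally uniformly; combined with the outer decay, this forces $\|\xi_n\|_{L^\infty}\to 0$'', and you relegate the location of the maximum to a ``secondary technical point'' that ``follows from the outer Green-function decay of $\gamma_{\lambda_n}^{(1)}\xi_n$''. This is exactly the step the paper flags as a main difficulty and which does \emph{not} follow from the outer expansion alone. The outer Green representation (Proposition~\ref{prop_eta_lambda}) only controls $\eta_\lambda$ on $\Omega\setminus B_{2d}(x_\lambda^{(1)})$ for a \emph{fixed} $d>0$; it says nothing about the intermediate annulus $B_d\setminus B_{R\theta_\lambda^{(1)}}$, and because the two-dimensional fundamental solution is logarithmic, $G(x_\lambda^{(1)},x)$ grows like $|\log\theta_\lambda^{(1)}|\sim 1/\lambda$ as $x$ approaches $x_\lambda^{(1)}$, so $\widetilde A_\lambda\,G$ need not be small there even with $\widetilde A_\lambda=o(\lambda)$ unless you stay outside a ball whose radius is not too small. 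The paper handles this by splitting $\Omega\setminus B_{R\theta_\lambda^{(1)}}(x_\lambda^{(1)})$ into three pieces: the outer region $\Omega\setminus B_d$, a middle annulus $B_d\setminus B_{2\theta_\lambda^{(1)}/\lambda}$ where the Green representation is refined via a careful splitting of the integral and the bound $\widetilde A_\lambda=o(\lambda)$, and the inner annulus $B_{2\theta_\lambda^{(1)}/\lambda}\setminus B_{R\theta_\lambda^{(1)}}$ where a separate rescaling by $r_\lambda=|y_\lambda|$ together with an ODE argument on the spherical average of $\widetilde\eta_\lambda$ (exploiting the explicit second solution $u_1(r)=\frac{(4-r^2)\log r+8}{r^2+4}$ of the radial linearized equation) produces the contradiction. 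Without this three-region analysis, the maximum of $|\eta_\lambda|$ could in principle drift into the intermediate annulus and escape both the inner $C^1_{loc}$ convergence and the outer decay, so your argument as written does not close.
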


In recent years, the local uniqueness of concentrated solutions for elliptic problems has been extensively investigated. One can prove the local uniqueness of concentrated solutions either by counting the local degree of the corresponding finite dimensional problem as in \cite{CH2003,CNY1998,G1993} or by using local Pohozaev identities as in \cite{BJLY2019,CLL2015,DLY2015,GPY2017}. Local Pohozaev identities only involve first order derivatives, which simplifies the estimates and provides a good tool to deal with the local uniqueness problem for concentrated solutions. In the proof of Theorem \ref{th_uniq}, we mainly use the local Pohozaev identities. However, we point out that since the fundamental solution $S(x,y)$ of $-\Delta$ in $\Omega \subset \R^2$ is in logarithmic form, we can not directly use Proposition \ref{prop_eta_lambda} below or comparison principle to prove that the following conclusion is valid for any $R>0$,
\begin{equation}\label{etalambdas}
\eta_\lambda:=\frac{u_\lambda^{(1)}-u_\lambda^{(2)}}{\|u_\lambda^{(1)}-u_\lambda^{(2)}\|_{L^{\infty}(\Omega)}} = o(1)~\mbox{uniformly~in}~\Omega \backslash B_{R \theta_\lambda^{(1)}}(x_\lambda^{(1)}),
\end{equation}
where the definition of $x_\lambda^{(1)}$ and $\theta_\lambda^{(1)}$ can be seen in Section \ref{s6}. Hence, we adopt a new method to overcome this obstacle. More precisely, we divide $\Omega \backslash B_{R \theta_\lambda^{(1)}}
(x_\lambda^{(1)})$ into three parts, and then use ODE's theory and the properties of Green's function to prove that \eqref{etalambdas} holds for any $R>0$.

\vskip 0.1cm

In addition, we state another main difficulty to prove local uniqueness.
Compared with Gelfand problem and Lane-Emden problem, to prove $\widetilde{A}_\lambda=o\big(\lambda\big)$ as in Proposition \ref{prop-A} below, we need estimate two sides of Pohozaev identity \eqref{p1_ueta} up to the order $\displaystyle \frac{1}{\big(\gamma^{(1)}_\lambda\big)^5}$ (see \eqref{A_tilde3} below), which requires much more precise expansions on $\gamma_\lambda$ and $|x_\lambda-x_0|$. These will be given in next theorem.

\begin{Thm}\label{thm-lambdagamma}
Under the conditions in Theorem A above, we have that
for any fixed small $d>0$,
\begin{equation}\label{4.9.1}
u_\lambda(x)=  C_{\lambda}G(x_{\lambda},x) + \sum_{i=1}^2 \frac{2\pi c_i}{\gamma^3_\lambda} \theta_\lambda \partial_i G(x_\lambda,x) + o\Big(\frac{\theta_\lambda}{\gamma^3_\lambda}\Big) \,\,~\mbox{in}~C^1\big(\Omega \backslash   B_{2d}(x_{\lambda})\big),
\end{equation}
and
\begin{equation}\label{xlambda-x0}
x_\lambda-x_0 = -\frac{\theta_\lambda}{2 \gamma_\lambda^2} (c_1,c_2) +o\Big(\frac{\theta_\lambda}{\gamma^2_\lambda}\Big),
\end{equation}
for some $(c_1,c_2)\in \R^2$  and
\begin{equation}\label{def_C1}
C_{\lambda}  = \frac{4\pi}{\gamma_\lambda} + \frac{4\pi}{\gamma_\lambda^3} + o\Big(\frac{1}{\gamma^3_\lambda}\Big).
\end{equation}
Furthermore, it holds
\begin{equation}\label{lambda_gamma}
\sqrt{\lambda}\gamma_\lambda= e^{\frac{B_1}{2}} + \frac{B_2}{2} e^{-\frac{B_1}{2}} \lambda + \frac{4 B_3-3 B_2^2}{8} e^{-\frac{3 B_1}{2}} \lambda^2 +o\big(\lambda^2\big),
\end{equation}
where $\gamma_\lambda:=u_\lambda(x_\lambda)$, $B_1$, $B_2$ and $B_3$ are some constants given later.
\end{Thm}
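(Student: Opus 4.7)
The starting point is the Green's representation
\[
u_\lambda(x) = \lambda \int_\Omega G(x,y)\,u_\lambda(y)\, e^{u_\lambda^2(y)}\,dy,
\]
which I split on $B_d(x_\lambda)$ and its complement; on $\Omega \setminus B_d(x_\lambda)$ the integrand is negligible by Theorem A. On $B_d(x_\lambda)$ I substitute $y = x_\lambda + \theta_\lambda z$ and use $\theta_\lambda^2\lambda e^{\gamma_\lambda^2}=\gamma_\lambda^{-2}$, which turns the integrand into $\gamma_\lambda^{-1}(1+w_\lambda/\gamma_\lambda^2)\, e^{2w_\lambda+w_\lambda^2/\gamma_\lambda^2}\, G(x,x_\lambda+\theta_\lambda z)$. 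Taylor expanding $G(x,x_\lambda+\theta_\lambda z) = G(x_\lambda,x) + \theta_\lambda z_i \partial_i G(x_\lambda,x) + O(\theta_\lambda^2)$ produces the structure of \eqref{4.9.1}, with the leading coefficient $C_\lambda$ being the total mass of the rescaled integrand and the first-order coefficient being its first moment.

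\textbf{Second-order inner expansion.} To identify the constants I carry the ansatz $w_\lambda = U + \gamma_\lambda^{-2} w_1 + o(\gamma_\lambda^{-2})$ one order beyond Theorem A; matching orders in $-\Delta w_\lambda = (1+w_\lambda/\gamma_\lambda^2) e^{2w_\lambda+w_\lambda^2/\gamma_\lambda^2}$ yields the linearized Liouville equation
\[
-\Delta w_1 - 2 e^{2U} w_1 = e^{2U}(U + U^2)\quad\text{in } \R^2,
\]
which is uniquely solvable modulo the translational kernel $\{\partial_i U\}$, whose coefficients must vanish because $x_\lambda$ is a local maximum (so $\nabla w_\lambda(0)=0$ forces them to be zero). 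Integrating the equation for $w_1$ on $B_R$ and letting $R\to\infty$ converts $\int_{\R^2} e^{2U}(U+U^2+2w_1)\,dz$ into a boundary flux whose value is $4\pi$; combined with $\int_{\R^2}e^{2U}\,dz=4\pi$, this gives $C_\lambda = 4\pi/\gamma_\lambda+4\pi/\gamma_\lambda^3+o(\gamma_\lambda^{-3})$, i.e.\ \eqref{def_C1}.

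\textbf{Position of $x_\lambda$ via Pohozaev.} For the displacement $x_\lambda-x_0$ I apply the local Pohozaev identity with the field $\partial_i$ on $B_d(x_\lambda)$:
\[
\int_{\partial B_d(x_\lambda)}\!\!\Big[\partial_i u_\lambda\,(\nabla u_\lambda\cdot\nu) - \tfrac{1}{2}|\nabla u_\lambda|^2\nu_i\Big]\,d\sigma = \tfrac{\lambda}{2}\int_{\partial B_d(x_\lambda)} e^{u_\lambda^2}\nu_i\,d\sigma.
\]
The right-hand side is of lower order; expanding the left-hand side with \eqref{4.9.1} and $G=S-H$ isolates a term proportional to $C_\lambda^2 \partial_i \mathcal{R}(x_\lambda)$ at the leading $\gamma_\lambda^{-2}$ level, which by $\nabla\mathcal{R}(x_0)=0$ and non-degeneracy of $D^2\mathcal{R}(x_0)$ inverts into \eqref{xlambda-x0}. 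This choice of $c_i$ makes the outer expansion \eqref{4.9.1}, rewritten around $x_0$, consistent via $C_\lambda G(x_0,x) = C_\lambda G(x_\lambda,x) + \frac{2\pi c_i\theta_\lambda}{\gamma_\lambda^3}\partial_i G(x_\lambda,x) + o(\theta_\lambda/\gamma_\lambda^3)$.

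\textbf{Matching and the $\lambda$--$\gamma_\lambda$ relation.} Finally, I evaluate \eqref{4.9.1} at $x = x_\lambda+\theta_\lambda y$ for $1\ll |y|\ll \theta_\lambda^{-1}$ and compare with the inner formula $\gamma_\lambda + w_\lambda(y)/\gamma_\lambda$, using $U(y)=-2\log|y|+\log 4+O(|y|^{-2})$ and $G(x_\lambda,x_\lambda+\theta_\lambda y)=-\frac{1}{2\pi}\log(\theta_\lambda|y|)-\mathcal{R}(x_\lambda)+O(\theta_\lambda)$. The $\log|y|$ coefficients cancel (consistent with \eqref{def_C1}) and the constant terms give a matching identity of the form
\[
\gamma_\lambda^2 + \log 4 + \kappa_1\gamma_\lambda^{-2} + \kappa_2\gamma_\lambda^{-4} + o(\gamma_\lambda^{-4}) = -2\log\theta_\lambda - 4\pi\mathcal{R}(x_0),
\]
with $\kappa_1,\kappa_2$ computable from $w_1$ and the second-order Taylor expansion of $H$. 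Using $-2\log\theta_\lambda=\gamma_\lambda^2+\log(\lambda\gamma_\lambda^2)$ this reduces to $\log(\lambda\gamma_\lambda^2) = B_1+B_2\gamma_\lambda^{-2}+B_3\gamma_\lambda^{-4}+o(\gamma_\lambda^{-4})$ with $B_1 = \log 4 + 4\pi\mathcal{R}(x_0)$, and setting $s=\sqrt{\lambda}\gamma_\lambda$ and iterating with $\gamma_\lambda^{-2}=\lambda/s^2$ produces \eqref{lambda_gamma}. The main obstacle is justifying the second-order expansion of $w_\lambda$ \emph{uniformly} on $\Omega_\lambda$, in particular the precise logarithmic behavior of $w_1$ at infinity, which is needed both for the identification of $\kappa_1,\kappa_2$ and for legitimately exchanging expansion and integration in the representation formula at the precision $\theta_\lambda/\gamma_\lambda^3$; the ODE and two-dimensional elliptic estimates emphasized in the introduction are precisely the tools for this.
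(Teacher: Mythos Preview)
Your strategy for \eqref{4.9.1}, \eqref{def_C1}, and \eqref{xlambda-x0} is essentially the paper's: Green representation plus Taylor expansion for the outer formula, the linearized Liouville equation for the first inner correction $v_0$ (your $w_1$) together with a boundary-flux/ODE computation to identify the $4\pi/\gamma_\lambda^3$ term in $C_\lambda$, and the Pohozaev identity $Q(u_\lambda,u_\lambda)$ for the location of $x_\lambda$.

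For \eqref{lambda_gamma} there is a genuine gap. First, the paper does not match across an intermediate annulus; it evaluates the Green representation directly at the concentration point, $\gamma_\lambda=\lambda\int_\Omega G(y,x_\lambda)\,u_\lambda e^{u_\lambda^2}\,dy$, and expands the rescaled integrand. This sidesteps your use of \eqref{4.9.1} at points $x=x_\lambda+\theta_\lambda y$ with $|y|\ll\theta_\lambda^{-1}$, where that formula has not been established (it is proved only on $\Omega\setminus B_{2d}(x_\lambda)$). Second, and more seriously, a single correction $w_1=v_0$ is not enough to reach precision $o(\lambda^2)$: the paper carries the inner expansion to \emph{third} order,
\[
w_\lambda=U+\frac{v_0}{\gamma_\lambda^{2}}+\frac{k_0}{\gamma_\lambda^{4}}+\frac{s_\lambda}{\gamma_\lambda^{6}},
\]
with each layer controlled uniformly on $B_{d_0/\theta_\lambda}(0)$ by a separate sub-polynomial bound (Propositions~\ref{prop4.2}, \ref{prop_klambda}, \ref{prop_slambda}). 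The constant $B_3$ explicitly involves $k_0$ and $s_0$ through the integrals $A_2,A_3,A_5,A_6$ in the paper's computation, so your claim that $\kappa_2$ is ``computable from $w_1$ and the second-order Taylor expansion of $H$'' is incorrect; two further iterations of the linearized-Liouville machinery (for $k_\lambda$ and $s_\lambda$) are required, and these are precisely the prior estimates the paper devotes Section~\ref{s3} to.
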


\begin{Rem}
To obtain local uniqueness by using a variety of Pohozaev identities, the estimates of $u_\lambda$ and $|x_\lambda-x_0|$
obtained in Section \ref{s2} are not accurate enough. Besides, using \eqref{DT-lam_gam}, we know that there exists $m_1>0$ such that
$$ \sqrt{\lambda} \gamma_\lambda \to \frac{2}{m_1}~\mbox{as}~\lambda \to 0,$$
which indicates that
\begin{equation}\label{lim_theta}
\theta_\lambda =O\Big(e^{-\frac{1}{2}\gamma_\lambda^2}\Big).
\end{equation}
Note that \eqref{lim_theta} is essential and will be used throughout the paper, but it is not accurate enough to prove the local uniqueness result.
The estimates in Theorem \ref{thm-lambdagamma} are important and necessary, which are also of independent interest.
And to get estimate \eqref{lambda_gamma}, an amount of ODE's theorems will be used.
The concrete values of $B_1$, $B_2$ and $B_3$ are not important. Here we just use \eqref{lambda_gamma} to get
\begin{equation*}
 \gamma^{(1)}_\lambda- \gamma^{(2)}_\lambda= o\big(\lambda^\frac{3}{2}\big),
\end{equation*}
where $\gamma^{(1)}_\lambda$  and $\gamma^{(2)}_\lambda$ are the maximum values of two solutions $u_{\lambda}^{(1)}$ and $u_{\lambda}^{(2)}$.
\end{Rem}
In \cite{DGIP2019,GILY2022}, the authors proved that the Lane-Emden problem has a unique solution in any smooth bounded and convex domain $\Omega \subset \R^2$ for $p>1$ sufficiently large. Also in \cite{GG2004}, Gladiali and Grossi confirmed that Gelfand  problem has only one solution which blows up at the origin when $\Omega \subset \R^2$ is a bounded domain which is convex in the direction $x_1,x_2$ and symmetric with respect to the axis $x_i = 0$ for $i = 1,2$.
Therefore, it is natural to ask whether the similar uniqueness result can be obtained without assuming that $u_\lambda$ is a single-peak solution concentrated at a non-degenerate critical point of Robin function. Now we focus on the case where $\Omega$ is a convex domain and give the following results concerning uniqueness and symmetry.

\begin{Thm}\label{th_uniq2}
Let $\Omega$ be a smooth bounded and convex domain in $\R^2$. Then there exists $\lambda_0>0$ such that any solution $u_\lambda$ satisfying
\begin{equation}\label{9-13-11}
\limsup_{\lambda \to 0} J_\lambda(u_\lambda) < \infty
\end{equation}
is unique for any $\lambda \in (0,\lambda_0)$.
\vskip 0.1cm

Furthermore, if $\Omega$ is also symmetric with respect to $x_1,x_2$, then for any $\lambda \in (0,\lambda_0)$, it holds
\begin{equation*}
u_\lambda(x_1,x_2)=u_\lambda(x_1,-x_2)=u_\lambda(-x_1,x_2),
\end{equation*}
and
\begin{equation*}
u_\lambda(0)=\max_{x\in \Omega}u_\lambda(x).
\end{equation*}
\end{Thm}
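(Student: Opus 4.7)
The plan is to reduce Theorem \ref{th_uniq2} to the local uniqueness Theorem \ref{th_uniq}, by showing that on a smooth bounded convex planar domain every bounded-energy solution is automatically a one-peak solution concentrating at a non-degenerate minimum of the Robin function. The symmetry statement then follows from uniqueness together with the invariance of \eqref{1.1} under coordinate reflections.

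The first step is to reduce to $k=1$ in the multi-bubble description of Theorem B. Any solution with $\limsup J_\lambda(u_\lambda)<\infty$ admits a $k$-bubble decomposition whose concentration points $(x_1,\ldots,x_k)$ together with the masses $(m_1,\ldots,m_k)$ form a critical point of the Kirchhoff--Routh functional $\Phi$ in \eqref{9-13-1a}. Differentiating $\Phi$ in $y_i$ yields the balance law
$$m_i\,\nabla\mathcal R(y_i)=2\sum_{j\ne i}m_j\,\nabla_1 G(y_i,y_j),$$
coupled with the relation obtained from differentiation in $\alpha_i$. On a smooth bounded convex planar domain the Robin function $\mathcal R$ is known to be strictly convex (so in particular $\mathcal R\to\infty$ near $\partial\Omega$ and $\nabla\mathcal R$ has a unique zero), while the Green's function $G(x,\cdot)$ enjoys strong interior monotonicity along admissible reflection directions. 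A moving-plane/reflection obstruction applied to pairs of concentration points, in the spirit of the arguments used for Gelfand and Lane--Emden on convex domains, then rules out $k\geq 2$.

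With $k=1$ in hand, the single concentration point $x_0$ is a critical point of $\mathcal R$, and strict convexity identifies it as the unique non-degenerate minimum. Theorem \ref{th_uniq} therefore applies: any two bounded-energy solutions must concentrate at this common $x_0$, and local uniqueness forces them to coincide, establishing the uniqueness assertion for all $\lambda\in(0,\lambda_0)$. The symmetry claim follows by a standard uniqueness trick: if $\Omega$ is invariant under the reflections $\sigma_j\colon x_j\mapsto -x_j$, so is \eqref{1.1}, hence $u_\lambda\circ\sigma_j$ is again a bounded-energy solution and must equal $u_\lambda$. Finally, Theorem A tells us that for small $\lambda$ the blow-up point is the unique interior maximum of $u_\lambda$; the two reflection symmetries pin this point to the origin, yielding $u_\lambda(0)=\max_\Omega u_\lambda$.

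The main obstacle is the $k=1$ reduction. The analogous non-existence of multi-bubble blow-ups on convex domains is well documented for the Gelfand and Lane--Emden problems, but the Moser--Trudinger Kirchhoff--Routh functional \eqref{9-13-1a} couples the positions $y_i$ with the positive masses $\alpha_i$ through the extra term $\tfrac{1}{2}\sum(\alpha_i^2-\alpha_i^2\ln\alpha_i)$, so one has to verify that this coupling creates no new critical configurations when $\Omega$ is convex. The mass information $\sqrt\lambda\,\gamma_{i,\lambda}\to 2/m_i$ from \eqref{DT-lam_gam}, combined with strict convexity of $\mathcal R$ and the geometric monotonicity of $G$ on convex domains, should provide the sign control needed to close this convexity-based obstruction.
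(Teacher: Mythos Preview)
Your overall architecture matches the paper exactly: reduce to $k=1$, invoke strict convexity of the Robin function on convex domains to get a unique non-degenerate critical point, apply Theorem~\ref{th_uniq}, and deduce the symmetry from uniqueness plus reflection invariance. The difference lies entirely in how the $k=1$ reduction is obtained.

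The paper does not attempt any moving-plane or reflection argument on concentration points. It simply cites Grossi--Takahashi \cite{GT2010}, which shows that on a convex planar domain the (weighted) Kirchhoff--Routh function has no critical points for $k\ge 2$. The point is that the extra mass term $\tfrac12\sum(\alpha_i^2-\alpha_i^2\ln\alpha_i)$ in \eqref{9-13-1a} is independent of the $y_i$, so the $y$-gradient of $\Phi$ at a critical point gives exactly the weighted balance law
\[
\alpha_i\,\nabla\mathcal R(y_i)=2\sum_{j\ne i}\alpha_j\,\nabla_1 G(y_i,y_j),\qquad \alpha_i>0,
\]
which is precisely the system ruled out by \cite{GT2010} on convex $\Omega$. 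Thus the coupling you flag as the main obstacle is in fact harmless for this step, and no new argument is needed. Your sketch of a moving-plane obstruction is left incomplete and, as you yourself note, would require additional work to accommodate the masses; the literature already closes this gap.

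For the remaining pieces the paper proceeds as you do: \cite{CF1985} gives strict convexity of $\mathcal R$ (hence a unique non-degenerate critical point), Theorem~\ref{th_uniq} yields uniqueness, and the symmetry follows since $u_\lambda(-x_1,x_2)$ and $u_\lambda(x_1,-x_2)$ are again solutions with the same energy bound. The maximum at the origin is obtained in the paper directly from the evenness (the unique maximum point of a single-bubble solution must be fixed by both reflections), which is the same mechanism you describe via Theorem~A.
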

\begin{Rem}
Condition \eqref{9-13-11} is necessary in Theorem \ref{th_uniq2}. For Lane-Emden problem, if $\Omega$ is convex, then
the energy is bounded by \cite{KS2018}.
Also from Malchiodi and Martinazzi's result in \cite{MM2014}, we know that
condition \eqref{9-13-11} is true when $\Omega=B_1(0)$. However, whether
\eqref{9-13-11} holds for Moser-Trudinger problem when $\Omega$ is convex is interesting but unknown.

\vskip 0.1cm

Here we also mention that
the nodal solutions for a large perturbation of the
Moser-Trudinger equation will be also interesting, one can refer to \cite{GMNP2021,GN2021,MT2022,N2021} and references therein.
\end{Rem}

Now we give another application of the local uniqueness  above to get an exact number of solutions to Moser-Trudinger problem  for some non-convex domains.

\begin{Thm}\label{th_uniq2d}
Let $\Omega$ be a smooth bounded and convex domain in $\R^2$, $\Omega_\e=\Omega\backslash B_\e(P)$ with $P\in \Omega$ and $\nabla \mathcal{R}_{\Omega}(P)\neq 0$, then there exist $\lambda_0,\e_0>0$ such that
for any $\lambda \in (0,\lambda_0)$ and $\e\in (0,\e_0)$,
the number of solutions $u_{\lambda,\e}$ to problem
\begin{equation*}
\begin{cases}
-\Delta u=\lambda ue^{u^2}~~&\mbox{in}~\Omega_\e,\\[0.5mm]
 u>0~~  &{\text{in}~\Omega_\e},\\[0.5mm]
u=0~~&\mbox{on}~\partial \Omega_\e,
\end{cases}
\end{equation*}
 satisfying
$\displaystyle\limsup_{\lambda \to 0, \e\to 0} J_\lambda(u_{\lambda,\e}) \leq 2\pi$ is exactly two.
\end{Thm}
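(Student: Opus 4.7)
The plan is to reduce the count of low-energy solutions on $\Omega_\e$ to the count of non-degenerate critical points of the Robin function $\mathcal{R}_{\Omega_\e}$ of $\Omega_\e$, and then combine this with local existence via the Lyapunov-Schmidt reduction of \cite{DMR2010}, Theorem B and the local uniqueness Theorem \ref{th_uniq}. First I would derive, using the capacitary potential of the hole $B_\e(P)$, the asymptotic expansion
\begin{equation*}
\mathcal{R}_{\Omega_\e}(x) = \mathcal{R}_\Omega(x) + \frac{2\pi\, G_\Omega(x,P)^2}{\log(1/\e)+2\pi \mathcal{R}_\Omega(P)} + O\!\Big(\frac{1}{(\log(1/\e))^2}\Big),
\end{equation*}
uniformly on compact subsets of $\Omega\setminus\{P\}$. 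Since $\Omega$ is convex, $\mathcal{R}_\Omega$ has a unique non-degenerate minimum $x_0\in\Omega$ (consistent with Theorem \ref{th_uniq2}). The implicit function theorem then yields a critical point $x_{0,\e}\to x_0$ of $\mathcal{R}_{\Omega_\e}$, still a non-degenerate minimum.

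Near $P$ the correction term becomes singular, and the critical-point equation reduces, to leading order, to balancing $\nabla\mathcal{R}_\Omega(P)$ against the term $\frac{\log(1/|x-P|)}{\pi\log(1/\e)}\cdot\frac{x-P}{|x-P|^2}$. Since $\nabla\mathcal{R}_\Omega(P)\neq 0$, this equation admits a unique solution $x_{P,\e}$ lying at distance $\sim \frac{\log\log(1/\e)}{\log(1/\e)}$ from $P$ in the direction of $\nabla\mathcal{R}_\Omega(P)$; a direct Hessian computation, using that the Hessian of $G_\Omega(\cdot,P)^2$ near $P$ has one eigenvalue of order $+\frac{\log(1/r)}{r^2}$ and one of order $-\frac{\log(1/r)}{r^2}$ (with $r=|x-P|$), shows that $x_{P,\e}$ is a non-degenerate saddle. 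No other critical points arise: on the intermediate annulus around $P$ the balance argument excludes them, near $\partial B_\e(P)$ the gradient of $\mathcal{R}_{\Omega_\e}$ does not vanish since $\mathcal{R}_{\Omega_\e}$ blows up there, and the count is consistent with the Morse identity $\#\{\mathrm{min}\}-\#\{\mathrm{saddle}\}+\#\{\mathrm{max}\}=\chi(\Omega_\e)=0$ together with the absence of interior maxima.

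Once the two non-degenerate critical points of $\mathcal{R}_{\Omega_\e}$ are identified, at each of $x_{0,\e}$ and $x_{P,\e}$ the Lyapunov-Schmidt construction of \cite{DMR2010} produces a $1$-peak solution of the equation on $\Omega_\e$ with $J_\lambda=2\pi+o(1)$. Conversely, for any solution $u_{\lambda,\e}$ satisfying $\limsup J_\lambda(u_{\lambda,\e})\leq 2\pi$, Theorem B forces the number of blow-up points to be $k=1$ (since $k\geq 2$ would yield $J_\lambda\to 2k\pi\geq 4\pi$), and the unique concentration point converges to a critical point of $\mathcal{R}_{\Omega_\e}$. Applying Theorem \ref{th_uniq} at each of $x_{0,\e}$ and $x_{P,\e}$ gives uniqueness of the concentrating family at each of these points, so that the total number of solutions is exactly two.

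The main obstacle lies in the critical-point analysis: the expansion of $\mathcal{R}_{\Omega_\e}$ must be made uniform in the transition annulus where $|x-P|$ is small but still much larger than $\e$, and the non-degeneracy of the saddle $x_{P,\e}$ requires tracking the full Hessian of the singular correction term $\frac{2\pi G_\Omega(\cdot,P)^2}{\log(1/\e)+2\pi\mathcal{R}_\Omega(P)}$. A secondary point is that Theorem \ref{th_uniq} must be applied with constants uniform in $\e$, which requires showing that the non-degeneracy lower bounds for the Hessians of $\mathcal{R}_{\Omega_\e}$ at $x_{0,\e}$ and $x_{P,\e}$ stay bounded away from zero as $\e\to 0$.
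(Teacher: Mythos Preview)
Your approach is correct and aligns with the paper's. The paper does not give a detailed proof of Theorem~\ref{th_uniq2d}; it simply states (in the remark immediately following) that the result follows from Theorem~\ref{th_uniq} together with the count of critical points of $\mathcal{R}_{\Omega_\e}$ established in \cite{GGLY2022}. Your proposal unpacks exactly this: the asymptotic expansion of $\mathcal{R}_{\Omega_\e}$, the identification of two non-degenerate critical points (one near the convex minimum $x_0$, one saddle near $P$ coming from the balance against $\nabla\mathcal{R}_\Omega(P)\neq 0$), existence via \cite{DMR2010}, the reduction to $k=1$ via the energy bound, and local uniqueness via Theorem~\ref{th_uniq}. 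So you are essentially reconstructing the content of the cited reference rather than taking a different route.

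Your identification of the two obstacles is apt and worth emphasizing. The uniformity issue you raise at the end is real: Theorem~\ref{th_uniq} as stated produces a $\lambda_0$ that depends on the domain and on the non-degeneracy constant of the critical point, so to obtain a single $\lambda_0$ valid for all small $\e$ one must check that the Hessian lower bounds at $x_{0,\e}$ and $x_{P,\e}$ (and the other implicit constants in the proof of Theorem~\ref{th_uniq}) are controlled uniformly in $\e$. The paper sweeps this under the citation, but it is a genuine point to verify.
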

\begin{Rem}
Theorem \ref{th_uniq2d} is  directly verified by Theorem \ref{th_uniq} and the number of critical points of $\mathcal{R}_{\Omega_\e}(x)$ obtained by Gladiali-Grossi-Luo-Yan \cite{GGLY2022}.
\end{Rem}

At the end of this section, we give some explanations why we just consider $1$-spike case. Or in other word, for multi-spike case, what is the main difficulty. From Theorem B above, we know
$\gamma_{i,\lambda}\approx \frac{2}{\sqrt{\lambda}m_i}$. Then to find $0<\frac{1}{C}\leq \frac{\theta_{i,\lambda}}{\theta_{j,\lambda}}\leq C$, by the definition of $\theta_{i,\lambda}$, we need
\begin{equation}\label{9-15-1}
0<\frac{1}{C}\leq e^{\gamma^2_{i,\lambda}-\gamma^2_{j,\lambda}} \leq C.
\end{equation}
To get \eqref{9-15-1}, a necessary condition is $m_i=m_j$ for any $1\leq i,j\leq k$.
This would be a luxury since $m_i$ is determined by the critical points of Kirchhoff-Routh function \eqref{9-13-1a}. Now we pay our attention to $k=2$ and give some domains, on which $m_1\neq m_2$. Our main idea is to deduce a contradiction by assuming $m_1=m_2$ and some computations on the critical points of Kirchhoff-Routh function \eqref{9-13-1a}.
To be specific, if $m_1=m_2$ and $\Omega=B_1(0) \backslash B_\e(0)$, we have, for $i=1,2$,
\begin{equation}\label{9-15-2}
\nabla \mathcal{R}_\Omega(x^{(i)}_{\e})-2 \sum_{j\neq i} \nabla_yG_\Omega(x^{(j)}_{\e},x^{(i)}_{\e})=0,
\end{equation}
and
\begin{equation}\label{9-15-3}
\mathcal{R}_\Omega(x^{(i)}_{\e})-\sum_{j\neq i} G_\Omega(x^{(j)}_{\e},x^{(i)}_{\e})=\frac{m_i\big(2\ln m_i-1\big)}{4\pi}.
\end{equation}
We denote $\Psi_\Omega(x,y)=
\mathcal{R}_\Omega(x)+\mathcal{R}_{\Omega}(y)-2 G_{\Omega}(x,y)$, then from
  \eqref{9-15-2}, we deduce that
$(x^{(1)}_{\e},x^{(2)}_{\e})$ must be the critical point of $\Psi_\Omega(x,y)$. Also
\eqref{9-15-3} gives us that
\begin{equation}\label{9-15-4}
\mathcal{R}_\Omega(x^{(1)}_{\e})=\mathcal{R}_\Omega(x^{(2)}_{\e}).
\end{equation}

\vskip 0.1cm

On the other hand, using Gladiali-Grossi-Luo-Yan's results on Kirchhoff-Routh function \cite{GGLY2023}, we know that only following two cases may occur:
\vskip 0.2cm

Case (1) $\big(x^{(1)}_{\e},x^{(2)}_{\e}\big)\to (0,Q)$ or $(Q,0)$ for some $Q\neq 0$.
\vskip 0.2cm

Case (2) $\big(x^{(1)}_{\e},x^{(2)}_{\e}\big)\to (0,0)$. In this case we know that $|x^{(1)}_{\e}|, |x^{(2)}_{\e}|\sim \e^{\frac{3}{4}}$.
\vskip 0.2cm

If Case (1) holds, by \eqref{9-15-4} and Gladiali-Grossi-Luo-Yan's results in \cite{GGLY2023}, then we have
$$\mathcal{R}_{B_1(0)}(0)= \lim_{\e\to0 }\mathcal{R}_\Omega(x^{(1)}_{\e})=\lim_{\e\to0 } \mathcal{R}_\Omega(x^{(2)}_{\e})=\mathcal{R}_{B_1(0)}(Q),$$
which is impossible.
\vskip 0.1cm

If Case (2) holds, then
\begin{equation*}
\underbrace{\mathcal{R}_\Omega(x^{(1)}_{\e})}_{=O(1)}- \underbrace{G_\Omega(x^{(1)}_{\e},x^{(2)}_{\e})}_{\to +\infty}=\underbrace{\frac{m_1\big(2\ln m_1-1\big)}{4\pi}}_{=O(1)},
\end{equation*}
which is also impossible.
\vskip 0.1cm

Hence from above discussions, we prove that $m_1\neq m_2$. Hence for two-spike solutions on $\Omega=B_1(0)\backslash B_\e(0)$, the corresponding two bubbles have different order, which gives us some crucial difficulty to analyze the qualitative properties  for Moser-Trudinger nonlinearities with multi-spike solutions. To our knowledge, these will need some more tools and be a very challenging subject.

\vskip 0.1cm

The paper is organized as follows. In Section \ref{s2}, we give some basic estimates on $u_\lambda$ and $w_\lambda$. Two essential quadratic forms $P(u,v)$ and $Q(u,v)$ are also given in this section. In Section \ref{s3}, we are committed to studying the more accurate expansion of $w_\lambda$, which will be used throughout the paper. In Section \ref{s4}, we study the properties concerning the eigenvalue $\mu_{\lambda,l}$ and its corresponding eigenfunction $v_{\lambda,l}$ to the linearized problem \eqref{eigen}. By analyzing the asymptotic behavior of eigenvalues and eigenfunctions, we complete the proof of Theorem \ref{th_Morse}. Furthermore, Theorem \ref{th_nondeg} can be derived directly from Theorem \ref{th_Morse}. In Section \ref{s5}, we prove Theorem \ref{thm-lambdagamma}, in other words, we give some more precise estimates of $u_\lambda$, $x_\lambda$ and $\gamma_\lambda$, which is crucial to prove the local uniqueness of the solution $u_\lambda$. The proofs of Theorem \ref{th_uniq} and Theorem \ref{th_uniq2} are contained in Section \ref{s6}. Finally, the proofs of Proposition \ref{prop_klambda} and Proposition \ref{prop_slambda} related to the properties of $k_\lambda$ and $s_\lambda$ are postponed in the Appendix.


\section{Some known facts and basic estimates} \label{s2}

\subsection{Some properties and computations on $U$}~
\vskip 0.2cm

Firstly, we review a crucial property of the kernel of the linearized operator at $U$ of the Liouville equation obtained in \cite{EG2004} and give a known important result.
\begin{Lem}\label{lem3.1}
Let $U$ be the function defined in \eqref{defU} and $v\in C^2(\R^2)$ be a solution of the following problem
\begin{equation*}
\begin{cases}
-\Delta v=2e^{2U}v\,\,~\mbox{in}~\R^2,\\[1mm]
\displaystyle\int_{\R^2}|\nabla v|^2dx<\infty.
\end{cases}
\end{equation*}
Then it holds
\begin{equation*}
v(x)= c_0\frac{4-|x|^2}{4+|x|^2}+\sum^2_{i=1}{c_i}\frac{x_i}{4+|x|^2}
\end{equation*}
with some $c_0,c_1,c_2\in \R$.
\end{Lem}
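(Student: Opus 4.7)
The plan is to construct the three candidate elements of the kernel from the symmetries of Liouville's equation and then to rule out everything else by a Fourier expansion in the angular variable combined with ODE and Hardy-type arguments.

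First, observe that the Liouville equation $-\Delta U=e^{2U}$ is invariant under translations $a\mapsto U(\cdot-a)$ and under dilations $\lambda\mapsto U(\lambda\cdot)+\log\lambda$. Differentiating the corresponding one-parameter families at the identity and using the explicit expression $U(x)=-\log(1+|x|^2/4)$ yields the three bounded functions
\begin{equation*}
\phi_0(x)=\frac{4-|x|^2}{4+|x|^2},\qquad \phi_i(x)=\frac{x_i}{4+|x|^2}\quad(i=1,2),
\end{equation*}
which are classical solutions of $-\Delta\phi=2e^{2U}\phi$ with finite Dirichlet energy. So every linear combination of them lies in the kernel; the issue is to show there is nothing else.

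Next, write $v$ in polar coordinates $(r,\theta)$ and expand
\begin{equation*}
v(r,\theta)=\sum_{k\geq 0}\bigl(a_k(r)\cos k\theta+b_k(r)\sin k\theta\bigr).
\end{equation*}
Each radial coefficient satisfies
\begin{equation*}
-a_k''(r)-\frac{a_k'(r)}{r}+\frac{k^2}{r^2}a_k(r)=\frac{32}{(4+r^2)^2}a_k(r),\qquad r>0;
\end{equation*}
the $C^2$ smoothness of $v$ at $0$ forces $a_k(r)=O(r^k)$ as $r\to 0$, and Parseval's identity turns the global finite-energy assumption into the mode-by-mode bound $\int_0^\infty\bigl((a_k')^2+k^2r^{-2}a_k^2\bigr)r\,dr<\infty$ (and similarly for $b_k$). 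For $k\in\{0,1\}$ the functions $\phi_0$ and $\phi_1$ already provide one explicit bounded solution of the ODE; a reduction-of-order computation based on the Wronskian identity $(rW)'=0$ shows that a second linearly independent solution behaves like $\log r$ (for $k=0$) or like $1/r$ (for $k=1$) at the origin and grows unboundedly at infinity, in both cases violating either $C^2$ regularity at $0$ or the finite-energy bound. Hence for $k\in\{0,1\}$ each coefficient is a scalar multiple of $\phi_0$ or $\phi_1$, producing exactly the three terms in the statement.

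The main obstacle is the case $k\geq 2$, for which no explicit solution is available. Here I would write the ODE in divergence form $-(ra_k')'+(k^2/r)a_k=32r(4+r^2)^{-2}a_k$, multiply by $a_k$, and integrate over $(0,\infty)$. Thanks to $a_k(r)=O(r^k)$ at the origin and the finite-energy decay at infinity, the boundary terms vanish, and one obtains
\begin{equation*}
\int_0^\infty r(a_k')^2\,dr+k^2\int_0^\infty \frac{a_k^2}{r}\,dr=\int_0^\infty \frac{32\,r}{(4+r^2)^2}\,a_k^2\,dr.
\end{equation*}
The elementary inequality $(r-2)^2\geq 0$ is equivalent to $32r/(4+r^2)^2\leq 2/r$, so the right-hand side is bounded by $2\int_0^\infty a_k^2/r\,dr$, and rearranging gives
\begin{equation*}
\int_0^\infty r(a_k')^2\,dr+(k^2-2)\int_0^\infty \frac{a_k^2}{r}\,dr\leq 0.
\end{equation*}
For $k\geq 2$ both integrands are non-negative, hence each integral vanishes, forcing $a_k\equiv 0$ (and similarly $b_k\equiv 0$). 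Combining all modes collapses the Fourier series for $v$ to the claimed linear combination of $\phi_0,\phi_1,\phi_2$.
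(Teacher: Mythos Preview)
The paper does not give its own proof of this lemma; it is quoted as a known result from \cite{EG2004} (El Mehdi--Grossi), so there is nothing in the paper to compare against. Your argument is correct and is essentially the standard one: Fourier decomposition in $\theta$, explicit identification of the modes $k=0,1$ via the translation/dilation symmetries together with reduction of order to rule out the singular second solution, and the pointwise Hardy-type bound $32r/(4+r^2)^2\le 2/r$ to force $a_k\equiv 0$ for $k\ge 2$. Two small cosmetic points: the inequality you invoke is equivalent to $(r^2-4)^2\ge 0$ rather than $(r-2)^2\ge 0$ (though for $r>0$ these are of course equivalent), and the vanishing of the boundary term $r\,a_k'(r)\,a_k(r)$ as $r\to\infty$ in your integration by parts deserves one line of justification---it follows from the finite-energy hypothesis along a sequence $r_n\to\infty$, which is enough. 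Neither affects the validity of the proof.
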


\begin{Lem}\label{add-lem1}\cite[Lemma 2.1]{EMP2006}
Let $f \in C^1([0,+\infty))$ such that $\displaystyle\int_0^{+\infty} t |\log t||f(t)| \,dt < +\infty$. There exists a $C^2$ radial solution $w(r)$ of equation
\begin{equation*}
\Delta w+\frac{8}{\big(1+|y|^2\big)^2}w=f(|y|)~\mbox{in}~\R^2,
\end{equation*}
such that as $r \to +\infty$,
\begin{equation*}
w(r)=\bigg( \int_0^{+\infty} t \frac{t^2-1}{t^2+1} f(t) \,dt\bigg) \log r + O\bigg( \int_r^{+\infty} s |\log s||f(s)| \,ds +\frac{|\log r|}{r^2} \bigg),
\end{equation*}
and
\begin{equation*}
\partial_r w(r)=\bigg( \int_0^{+\infty} t \frac{t^2-1}{t^2+1} f(t) \,dt\bigg)\frac{1}{r} + O\bigg(\frac{1}{r} \int_r^{+\infty} s |f(s)| \,ds +\frac{|\log r|}{r^3} \bigg).
\end{equation*}
\end{Lem}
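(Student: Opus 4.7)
The equation is (up to a normalisation of the Liouville bubble) the radial linearised Liouville equation, so the plan is to reduce to an ODE, construct an explicit particular solution by variation of parameters, and then read off its behaviour at infinity. Writing $r=|y|$ I would pass to
\[
w''(r)+\frac{1}{r}w'(r)+\frac{8}{(1+r^2)^2}w(r)=f(r),\qquad r>0,
\]
or equivalently, in self-adjoint form, $(r w')'+\frac{8r}{(1+r^2)^2}w=rf$.

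The homogeneous ODE has a bounded solution, the dilation mode
\[
z_1(r)=\frac{1-r^2}{1+r^2}
\]
(differentiate $\log\frac{2\lambda}{1+\lambda^2 r^2}$ at $\lambda=1$), and a second, logarithmically growing solution $z_2$ with $rW=r(z_1 z_2'-z_1' z_2)\equiv 1$ by Abel's formula. An indicial analysis at $r=0$, together with the fact that the potential $8/(1+r^2)^2$ vanishes like $8/r^4$ at infinity (so the leading ODE is $w''+w'/r=0$), yields $z_2(r)=-\log r+O(r^{-2}\log r)$ and $z_2'(r)=-1/r+O(r^{-3}\log r)$ as $r\to+\infty$. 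Variation of parameters then supplies the particular solution
\[
w(r)=-z_1(r)\int_0^r s z_2(s) f(s)\,ds+z_2(r)\int_0^r s z_1(s) f(s)\,ds.
\]
The integrability hypothesis $\int_0^{+\infty}t|\log t||f(t)|\,dt<+\infty$ is exactly what guarantees absolute convergence of both integrands on $(0,+\infty)$, because $|z_1|\le 1$ everywhere and $|z_2(s)|=O(|\log s|)$ both near $0$ and at $+\infty$.

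To extract the asymptotics, I would split $\int_0^r=\int_0^{+\infty}-\int_r^{+\infty}$ in the second integral to get
\[
w(r)=I_\infty\,z_2(r)-z_2(r)\int_r^{+\infty} s z_1(s) f(s)\,ds-z_1(r)\int_0^r s z_2(s) f(s)\,ds,
\]
where $I_\infty:=\int_0^{+\infty}s z_1(s) f(s)\,ds$. The first term gives the principal contribution $-I_\infty\log r=\bigl(\int_0^{+\infty}s\,\tfrac{s^2-1}{s^2+1}f(s)\,ds\bigr)\log r$, matching the stated coefficient. The tail term is controlled by $|\log r|\int_r^{+\infty}s|f(s)|\,ds\le \int_r^{+\infty}s|\log s||f(s)|\,ds$ for $r\ge e$, and the last term contributes an $O(1)$ constant as $r\to+\infty$ that can be absorbed by adding a suitable multiple of the bounded homogeneous mode $z_1$, at the cost of an extra $O(r^{-2})$ error. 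Combined with the $O(r^{-2}\log r)$ correction in $z_2$, this yields precisely the stated remainder $O\bigl(\int_r^{+\infty}s|\log s||f(s)|\,ds+|\log r|/r^2\bigr)$. Differentiating the representation and using $z_2'(r)=-1/r+O(r^{-3}\log r)$ gives the matching estimate for $\partial_r w$.

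The main technical obstacle I expect is the clean construction of $z_2$: the naive reduction-of-order integral $z_1(r)\int dr/(rz_1(r)^2)$ diverges at the zero $r=1$ of $z_1$, so $z_2$ must be built by direct ODE existence (or by patching across $r=1$). A second delicate point is the careful absorption of the leftover $O(1)$ constant into a homogeneous mode, and verifying that the resulting error is genuinely $|\log r|/r^2$ rather than, say, $1/r$ --- this requires using the subleading expansion $z_1(r)=-1+2/(1+r^2)$ at infinity. A density argument (establishing the formula first for compactly supported $f$ and then passing to the limit) keeps these estimates transparent.
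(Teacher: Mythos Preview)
The paper does not prove this lemma: it is quoted verbatim from \cite[Lemma~2.1]{EMP2006} and used as a black box (principally in the computation of $c_0$ in \eqref{c0-3} and the derivative estimate \eqref{parw-4}). Your variation-of-parameters sketch based on the explicit dilation mode $z_1(r)=\frac{1-r^2}{1+r^2}$ and a second logarithmic solution $z_2$ is exactly the standard argument behind that cited result, and the asymptotic bookkeeping you outline (splitting $\int_0^r=\int_0^\infty-\int_r^\infty$, then absorbing the residual constant $J_\infty=\int_0^\infty s\,z_2(s)f(s)\,ds$ by adding $J_\infty z_1$ so that $J_\infty(1+z_1(r))=2J_\infty/(1+r^2)=O(r^{-2})$) is correct. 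The two caveats you flag --- regularity of $z_2$ across the zero $r=1$ of $z_1$, and the need to track the subleading $O(r^{-2}\log r)$ term in $z_2$ --- are genuine but routine; one can bypass the first by writing $z_2$ as a smooth solution of the second-order ODE with prescribed data at $r=1$ rather than via the singular reduction-of-order integral.
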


\vskip 0.2cm

Next Lemma gives some calculations which will be used throughout the paper.

\begin{Lem}
We have
\begin{equation}\label{equa2}
\int_{\R^2} e^{2U(x)} \frac{4-|x|^2}{4+|x|^2} \,dx = 0,\quad \int_{\R^2} e^{2U(x)} \frac{|x|^2}{4+|x|^2} \,dx = 2\pi,
\end{equation}
and
\begin{equation}\label{equa1}
\int_{\R^2} U(x) e^{2U(x)} \frac{4-|x|^2}{4+|x|^2} \,dx = 2\pi,\quad \int_{\R^2} U^2(x) e^{2U(x)} \frac{4-|x|^2}{4+|x|^2} \,dx = -6\pi.
\end{equation}
\end{Lem}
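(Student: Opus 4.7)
\medskip

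\textbf{Proof proposal.} All four integrals are radial, so the plan is to pass to polar coordinates and reduce them to explicit one--variable integrals using the change of variable $u=4+|x|^2$ (equivalently $s=|x|^2/4$). Note that $e^{2U(x)}=\bigl(1+|x|^2/4\bigr)^{-2}=16/(4+|x|^2)^2$ and $U(x)=\log 4-\log(4+|x|^2)$. Writing $r=|x|$ and $u=4+r^2$ (so $r\,dr=du/2$, $4-r^2=8-u$, $r^2=u-4$), each integral becomes a constant multiple of $\displaystyle \int_{4}^{\infty} f(u)\,du$ for a simple $f$.

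For the first two identities one gets, respectively,
\begin{equation*}
16\pi\int_{4}^{\infty}\frac{8-u}{u^{3}}\,du \quad\text{and}\quad 16\pi\int_{4}^{\infty}\frac{u-4}{u^{3}}\,du,
\end{equation*}
and both can be evaluated directly using $\int u^{-2}du=-u^{-1}$ and $\int u^{-3}du=-\tfrac{1}{2}u^{-2}$; a short computation yields the values $0$ and $2\pi$.

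For the two identities involving $U$ and $U^{2}$, I would substitute $U=\log 4-\log u$. The pieces involving constants (powers of $\log 4$ alone) vanish by the already--established first identity $\int_{\R^{2}}e^{2U}\frac{4-|x|^{2}}{4+|x|^{2}}dx=0$, so only integrals of the form
\begin{equation*}
I_{k}:=\int_{4}^{\infty}(\log u)^{k}\,\frac{8-u}{u^{3}}\,du,\qquad k=1,2,
\end{equation*}
remain. These are handled by integration by parts, repeatedly reducing the power of $\log u$; the boundary terms at $u=4$ produce the $\log 4$ contributions which then cancel the cross term coming from expanding $(\log 4-\log u)^{2}$. A careful bookkeeping gives $I_{1}=-\tfrac{1}{8}$ and $I_{2}=-\tfrac{\log 4}{4}-\tfrac{3}{8}$, which, after multiplying by the appropriate prefactors $16\pi$ and combining, yield $2\pi$ and $-6\pi$.

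The only mild technical point is the last identity, where one must track the $\log 4$ terms across two integrations by parts and see that the logarithmic constants cancel exactly, leaving a purely numerical answer. Otherwise the computation is entirely routine, and no specific tool beyond polar coordinates and elementary calculus is required.
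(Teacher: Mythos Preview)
Your proposal is correct and is essentially the approach the paper has in mind: the paper merely writes ``Using integration by parts, we can easily calculate the above results. Here we omit it,'' and your polar-coordinates substitution $u=4+r^{2}$ together with repeated integration by parts is precisely the routine computation being omitted. Your intermediate values $I_{1}=-\tfrac{1}{8}$ and $I_{2}=-\tfrac{\log 4}{4}-\tfrac{3}{8}$ check out and the cancellation of the $\log 4$ terms in the fourth identity goes through exactly as you describe.
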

\begin{proof}
Using integration by parts, we can easily calculate the above results. Here we omit it.

\end{proof}

\subsection{Two quadratic forms}~
\vskip 0.2cm

Recall that $x_\lambda$ is the local maximum point of the solution $u_\lambda$.
Let us define the following two quadratic forms
\begin{equation}\label{P}
\begin{split}
P(u,v):=&- 2d\int_{\partial B_d(x_{\lambda})}\big\langle \nabla u ,\nu\big\rangle \big\langle \nabla v,\nu\big\rangle \,d\sigma + d \int_{\partial B_d(x_{\lambda})} \big\langle \nabla u , \nabla v \big\rangle\,d\sigma,
\end{split}
\end{equation}
and
\begin{equation}\label{Q}
Q(u,v):=- \int_{\partial B_d(x_{\lambda})}\frac{\partial v}{\partial \nu}\frac{\partial u}{\partial x_i}\,d\sigma-
\int_{\partial B_d(x_{\lambda})}\frac{\partial u}{\partial \nu}\frac{\partial v}{\partial x_i}\,d\sigma
+ \int_{\partial B_d(x_{\lambda})}\big\langle \nabla u,\nabla v \big\rangle \nu_i\,d\sigma,
\end{equation}
where $u,v\in C^{2}(\overline{\Omega})$, $d>0$ is a small constant such that $B_{2d}(x_{\lambda})\subset\Omega$ and $ \nu= ( \nu_1, \nu_2)$ is the unit outward normal of $\partial B_d(x_{\lambda})$.

\vskip 0.2cm

Now we have the following properties about the above two quadratic forms.
\begin{Lem}\label{indep_d}
If $u$ and $v$ are harmonic in $ B_d(x_{\lambda})\backslash \{x_{\lambda}\}$, then $P(u,v)$ and $Q(u,v)$ are independent of $\theta \in (0,d]$.
\end{Lem}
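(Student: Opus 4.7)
The plan is to view $P(u,v)$ and $Q(u,v)$ as functions of the radius $r$ of the integration circle (writing $r$ for what the statement calls $\theta$), and to show that each one arises as the flux of a vector field that is divergence-free on every annulus $A_{r_1,r_2}:=B_{r_2}(x_\lambda)\setminus\overline{B_{r_1}(x_\lambda)}$ with $0<r_1<r_2\le d$. Since $u$ and $v$ are harmonic on every such closed annulus, the divergence theorem will force the flux across $\partial B_{r_1}$ to equal the flux across $\partial B_{r_2}$, which is exactly the desired radius-independence.

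For $P$, I would use the \emph{dilation} multiplier $y:=x-x_\lambda$ and the vector field
$$F(u,v):=(y\cdot\nabla v)\nabla u+(y\cdot\nabla u)\nabla v-y\,(\nabla u\cdot\nabla v).$$
A direct computation, using $\mathrm{div}\,y=2$ in $\R^2$ so that the cross second-derivative terms cancel, yields the pointwise identity
$$\mathrm{div}\,F(u,v)=(y\cdot\nabla v)\,\Delta u+(y\cdot\nabla u)\,\Delta v,$$
which vanishes on $A_{r_1,r_2}$ by harmonicity. Applying the divergence theorem and then using $y=r\nu$ on $\partial B_r(x_\lambda)$ to rewrite $y\cdot\nabla u=r\,\partial_\nu u$ and $y\cdot\nabla v=r\,\partial_\nu v$, the flux on $\partial B_r(x_\lambda)$ reduces to
$$2r\int_{\partial B_r(x_\lambda)}\partial_\nu u\,\partial_\nu v\,d\sigma-r\int_{\partial B_r(x_\lambda)}\langle\nabla u,\nabla v\rangle\,d\sigma,$$
which, up to an overall sign, is precisely $P(u,v)$ as written in \eqref{P}. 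Hence $P(u,v)$ does not depend on $r$.

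For $Q$, I would use the \emph{translation} multiplier $e_i$ and the vector field
$$G_i(u,v):=\partial_i v\,\nabla u+\partial_i u\,\nabla v-e_i\,(\nabla u\cdot\nabla v).$$
The analogous computation, in which every second-derivative term cancels because $\partial_i$ commutes with $\partial_j$, gives
$$\mathrm{div}\,G_i(u,v)=\partial_i v\,\Delta u+\partial_i u\,\Delta v,$$
which again vanishes on $A_{r_1,r_2}$. The divergence theorem therefore forces the flux of $G_i$ through $\partial B_r(x_\lambda)$ to be independent of $r$, and that flux coincides up to sign with $Q(u,v)$ as in \eqref{Q}.

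There is no genuine obstacle here beyond carefully checking the two divergence identities and keeping track of the factor of $r$ that arises from $y=r\nu$ on the sphere. The only subtle point is that the argument must always be carried out on a closed annulus avoiding the possibly singular point $x_\lambda$, which is harmless precisely because the conclusion only compares fluxes on the outer and inner circles and never touches $x_\lambda$ itself.
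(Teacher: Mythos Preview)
Your proof is correct and is essentially the same approach as the paper's: the paper integrates $\langle y,\nabla v\rangle\,\Delta u+\langle y,\nabla u\rangle\,\Delta v$ (respectively $\partial_i v\,\Delta u+\partial_i u\,\Delta v$) over an annulus and integrates by parts, which is exactly the divergence theorem applied to your vector fields $F$ and $G_i$. Your packaging as explicit divergence-free fields is a slightly cleaner way to organize the same computation.
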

\begin{proof}Let $\Omega'\subset \Omega$ be a smooth bounded domain, integrating by parts, we have
\begin{equation*}
\begin{split}
& -\int_{\Omega'}\big\langle \nabla v,x-x_\lambda\big\rangle \Delta u \,dx -\int_{\Omega'}\big\langle \nabla u,x-x_\lambda\big\rangle \Delta v \,dx \\
=& -\int_{\partial \Omega'} \frac{\partial u}{\partial \nu}\big\langle \nabla v,x-x_\lambda\big\rangle \,d\sigma
-\int_{\partial \Omega'} \frac{\partial v}{\partial \nu}\big\langle \nabla u,x-x_\lambda\big\rangle \,d\sigma
+\int_{\partial \Omega'} \big\langle \nabla u, \nabla v \big \rangle  \big\langle  \nu, x-x_\lambda \big\rangle \,d\sigma.
\end{split}
\end{equation*}
Since $u$ and $v$ are harmonic in $ B_d(x_\lambda)\backslash \{x_\lambda\}$, let $0<\theta_{1}<\theta_{2}<d$ and $\Omega'=B_{\theta_{2}}(x_\lambda)\backslash B_{\theta_{1}}(x_\lambda)$, the above identity can be written as
\begin{equation*}
-\int_{\partial \Omega'} \frac{\partial u}{\partial \nu}\big\langle \nabla v,x-x_\lambda\big\rangle \,d\sigma
-\int_{\partial \Omega'} \frac{\partial v}{\partial \nu}\big\langle \nabla u,x-x_\lambda\big\rangle \,d\sigma
+\int_{\partial \Omega'} \big\langle \nabla u,  \nabla  v \big \rangle  \big\langle  \nu, x-x_\lambda \big\rangle \,d\sigma =0.
\end{equation*}
Therefore, we derive that
\begin{align*}
&-2\theta_{2}\int_{\partial B_{\theta_{2}}(x_\lambda)} \frac{\partial u}{\partial \nu}\frac{\partial v}{\partial\nu} \,d\sigma +\theta_{2}\int_{\partial B_{\theta_{2}}(x_\lambda)} \big\langle \nabla u,  \nabla  v\big\rangle \,d\sigma \\
= &-2\theta_{1}\int_{\partial B_{\theta_{1}}(x_\lambda)} \frac{\partial u}{\partial \nu}\frac{\partial  v}{\partial\nu} \,d\sigma +\theta_{1}\int_{\partial B_{\theta_{1}}(x_\lambda)} \big\langle \nabla u,  \nabla  v\big\rangle \,d\sigma,
\end{align*}
which indicates that $P(u,v)$ is independent of $\theta\in (0,d]$.

\vskip 0.1cm

On the other hand, integrating by parts, we get
\begin{equation*}
\begin{split}
&-\int_{\Omega'} \Big(\Delta u \frac{\partial v}{\partial x_i}  +\Delta v \frac{\partial u}{\partial x_i}\Big) \,dx \\
=& -\int_{\partial \Omega'} \Big(\frac{\partial v}{\partial \nu} \frac{\partial u}{\partial x_i} +\frac{\partial u}
{\partial \nu} \frac{\partial v}{\partial x_i}\Big) \,d\sigma +\int_{\Omega'} \bigg(\Big\langle \nabla v, \nabla \frac{\partial u}{\partial x_i}\Big\rangle +\Big\langle \nabla u, \nabla \frac{\partial v}{\partial x_i}\Big\rangle\bigg) \,dx \\
=& -\int_{\partial \Omega'} \Big(\frac{\partial v}{\partial \nu} \frac{\partial u}{\partial x_i}
+\frac{\partial u}{\partial \nu} \frac{\partial v}{\partial x_i}\Big) \,d\sigma +\int_{\partial\Omega'}\big\langle \nabla u,\nabla v \big\rangle \nu_i \,d\sigma.
\end{split}
\end{equation*}
Since $u$ and $v$ are harmonic in $ B_d(x_\lambda)\backslash \{x_\lambda\}$, then arguing like before, we get that $Q(u,v)$ is  independent of $\theta\in (0,d]$.
\end{proof}

Using the above result, we derive the following key computations about the quadratic forms $P$ and $Q$ on Green's function.

\begin{Prop}\label{prop2-1}
It holds
\begin{equation}\label{pgg}
P \big(G(x_{\lambda},x), G(x_{\lambda},x)\big)=
-\frac{1}{2\pi},
\end{equation}
and
\begin{equation}\label{pggh}
P \big(G(x_{\lambda},x),\partial_hG(x_{\lambda},x)\big)=
-\frac{1}{2}\frac{\partial \mathcal{R}(x_{\lambda})}{\partial x_h}.
\end{equation}
Moreover
\begin{equation}\label{qgg}
Q \big(G(x_{\lambda},x),G(x_{\lambda},x)\big)=
-\frac{\partial \mathcal{R}(x_{\lambda})}{\partial{x_i}},
\end{equation}
and
\begin{equation}\label{qggh}
Q \big(G(x_{\lambda},x),\partial_h G(x_{\lambda},x)\big)=
- \frac{1}{2} \frac{\partial^2 \mathcal{R}(x_{\lambda})}{\partial{x_i} \partial{x_h} },
\end{equation}
where $G(x,y)$ and $\mathcal{R}(x)$ are the Green function and Robin function, respectively (see \eqref{greensyst}, \eqref{GreenS-H}, \eqref{Robinf}), $\partial_iG(y,x):=\frac{\partial G(y,x)}{\partial y_i}$ and
$D_{x_i}G(y,x):=\frac{\partial G(y,x)}{\partial x_i}$.
\end{Prop}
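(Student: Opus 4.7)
The plan is to decompose $G(x_\lambda,x)=S(x_\lambda,x)-H(x_\lambda,x)$ and expand each of the four quantities in Proposition \ref{prop2-1} by bilinearity (both $P$ and $Q$ are symmetric bilinear forms) into four pieces: singular--singular, two mixed, and regular--regular. Since each of $S$, $H$, $\partial_hS$, $\partial_hH$ is harmonic on $B_d(x_\lambda)\setminus\{x_\lambda\}$ (in fact $H$ and $\partial_hH$ are harmonic on all of $B_d(x_\lambda)$), Lemma \ref{indep_d} tells me every such piece is independent of $d$, so I may pick whichever value of $d$ is most convenient, which in practice means letting $d\to 0^+$.

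In polar coordinates $x=x_\lambda+d\nu$ on $\partial B_d(x_\lambda)$ one has
\[
\nabla_xS=-\frac{\nu}{2\pi d},\qquad \partial_hS(x_\lambda,x)=\frac{\nu_h}{2\pi d},\qquad \nabla_x\partial_hS=\frac{1}{2\pi d^2}\bigl(e_h-2\nu\nu_h\bigr),
\]
and, combining these with the standard averages $\int_{\partial B_d}\nu_i\,d\sigma=0$ and $\int_{\partial B_d}\nu_i\nu_j\,d\sigma=\pi d\,\delta_{ij}$, a short direct calculation gives the singular--singular values $P(S,S)=-\frac{1}{2\pi}$ and $P(S,\partial_hS)=Q(S,S)=Q(S,\partial_hS)=0$. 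The regular--regular pieces $P(H,H)$, $P(H,\partial_hH)$, $Q(H,H)$, $Q(H,\partial_hH)$ have bounded integrands against a prefactor of order $d$ over a circumference $2\pi d$, hence are $O(d)$; being $d$-independent they vanish identically.

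All the substance is therefore carried by the mixed pieces. Because $\nabla_xS$ is purely radial, for five of the six mixed pieces the normal/tangential decomposition collapses and leaves an expression of the form $\frac{1}{2\pi d}\int_{\partial B_d}\phi\,d\sigma$ with $\phi(x_\lambda,\cdot)$ harmonic on $B_d(x_\lambda)$; the mean value property then reduces this to $\phi(x_\lambda,x_\lambda)$. The divergence theorem applied to the harmonic functions $H$ and $\partial_hH$ gives $P(S,H)=P(S,\partial_hH)=0$, while the mean value yields
\[
P(H,\partial_hS)=\partial^{(1)}_hH(x_\lambda,x_\lambda),\ \ Q(S,H)=\partial^{(2)}_iH(x_\lambda,x_\lambda),\ \ Q(S,\partial_hH)=\partial^{(1)}_h\partial^{(2)}_iH(x_\lambda,x_\lambda),
\]
where $\partial^{(k)}$ denotes differentiation in the $k$-th argument of $H$. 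The sole remaining piece $Q(H,\partial_hS)$ does not fully decouple; I will simplify it to $\frac{1}{2\pi d^2}\int_{\partial B_d}[\nu_h\partial^{(2)}_iH+\nu_i\partial^{(2)}_hH-(\nabla H\cdot\nu)\delta_{ih}]\,d\sigma$, observe that the last integrand contributes zero by harmonicity of $H(x_\lambda,\cdot)$, and then Taylor-expand to extract the value $\partial^{(2)}_h\partial^{(2)}_iH(x_\lambda,x_\lambda)$. Finally, differentiating $\mathcal{R}(y)=H(y,y)$ and invoking the symmetry $H(u,v)=H(v,u)$ gives $\nabla\mathcal{R}=2\nabla^{(1)}H|_{\mathrm{diag}}$, the diagonal relation $\partial^{(2)}_h\partial^{(2)}_iH=\partial^{(1)}_h\partial^{(1)}_iH$, and $\partial_i\partial_h\mathcal{R}=2\bigl(\partial^{(1)}_i\partial^{(1)}_hH+\partial^{(1)}_h\partial^{(2)}_iH\bigr)|_{\mathrm{diag}}$; substituting the mixed-piece values into the bilinearity expansions produces \eqref{pgg}--\eqref{qggh}. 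The step requiring the most care is \eqref{qggh}: the two surviving mixed pieces contribute \emph{different} second-derivative tensors of $H$, and reconciling them into $\tfrac12\,D^2\mathcal{R}(x_\lambda)$ requires precisely the diagonal symmetry above, so bookkeeping of which argument each derivative hits is the delicate part of the whole computation.
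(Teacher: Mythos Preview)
Your computation is correct and self-contained. The paper itself gives no argument here: its entire proof is the sentence ``See \cite[Proposition 2.4]{GILY2022}'', together with a remark that some coefficients there are misprinted. So you are not reproducing the paper's proof but supplying one where the paper only cites.

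What you do is exactly the natural direct route: split $G=S-H$, expand $P$ and $Q$ by bilinearity, use Lemma~\ref{indep_d} to fix each harmonic--harmonic piece and then let $d\to 0$, and identify the limits via the mean value property and the diagonal symmetries of $H$. This is almost certainly what the cited reference does as well, and your bookkeeping in the delicate case \eqref{qggh} (where $Q(S,\partial_hH)$ and $Q(H,\partial_hS)$ produce the two distinct second derivatives $\partial^{(1)}_h\partial^{(2)}_iH$ and $\partial^{(2)}_h\partial^{(2)}_iH$, which combine into $\tfrac12\partial_i\partial_h\mathcal{R}$ via the symmetry $H(a,b)=H(b,a)$) is handled correctly. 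The only thing your write-up gains over the paper is that it is self-contained; the only thing it loses is brevity.
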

\begin{proof}
See \cite[Proposition 2.4]{GILY2022}. Note that there are some misprints about the coefficients in \cite[Proposition 2.4]{GILY2022} and we correct them here.
\end{proof}

Next, we have the following identities about the quadratic forms $P$ and $Q$ on the solution $u_\lambda$.
\begin{Lem}\label{lem2.3}
Let $u_\lambda\in C^2(\Omega)$ be a solution of \eqref{1.1} and $d>0$ is a fixed small constant such that $B_{2d}(x_{\lambda})\subset\Omega$. Then
\begin{equation}\label{puu}
P\big(u_\lambda,u_\lambda\big)=  d\lambda  \int_{\partial B_{d}(x_{\lambda})}e^{u^2_\lambda}  \,d\sigma-2\lambda \int_{B_{d}(x_{\lambda})} e^{u^2_\lambda} \,dx,
\end{equation}
and
\begin{equation}\label{quu}
Q\big(u_\lambda,u_\lambda\big)= \lambda \int_{\partial B_{d}(x_{\lambda})} e^{u^2_\lambda} \nu_i \,d\sigma.
\end{equation}
\end{Lem}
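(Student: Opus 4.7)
The plan is to specialize the two integration-by-parts identities that already appeared inside the proof of Lemma \ref{indep_d} to the case $u=v=u_\lambda$ and $\Omega'=B_d(x_\lambda)$, and then use the key observation that
\[
\lambda u_\lambda e^{u_\lambda^2}\nabla u_\lambda=\tfrac{\lambda}{2}\,\nabla\!\bigl(e^{u_\lambda^2}\bigr),
\]
which converts the bulk integrals coming from $\Delta u_\lambda=-\lambda u_\lambda e^{u_\lambda^2}$ into integrals of $\nabla(e^{u_\lambda^2})$ that can be handled by one more application of the divergence theorem.

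For the identity \eqref{puu}, I take the Pohozaev-type identity
\[
-\!\int_{\Omega'}\!\langle\nabla v,x-x_\lambda\rangle\Delta u\,dx-\!\int_{\Omega'}\!\langle\nabla u,x-x_\lambda\rangle\Delta v\,dx=(\text{boundary terms})
\]
from the proof of Lemma \ref{indep_d}, set $u=v=u_\lambda$ and $\Omega'=B_d(x_\lambda)$, and note that on $\partial B_d(x_\lambda)$ one has $\nu=(x-x_\lambda)/d$, so the boundary terms collapse to exactly $P(u_\lambda,u_\lambda)$. The left-hand side becomes
\[
2\!\int_{B_d(x_\lambda)}\!\lambda u_\lambda e^{u_\lambda^2}\langle\nabla u_\lambda,x-x_\lambda\rangle\,dx=\lambda\!\int_{B_d(x_\lambda)}\!\langle\nabla e^{u_\lambda^2},x-x_\lambda\rangle\,dx,
\]
and an additional integration by parts, using $\operatorname{div}(x-x_\lambda)=2$ in $\R^2$ and $\langle\nu,x-x_\lambda\rangle=d$ on $\partial B_d(x_\lambda)$, gives precisely the right-hand side of \eqref{puu}.

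For \eqref{quu}, I use the second identity from the proof of Lemma \ref{indep_d},
\[
-\!\int_{\Omega'}\!\Bigl(\Delta u\,\tfrac{\partial v}{\partial x_i}+\Delta v\,\tfrac{\partial u}{\partial x_i}\Bigr)dx=(\text{boundary terms}),
\]
again with $u=v=u_\lambda$ and $\Omega'=B_d(x_\lambda)$; the boundary part is exactly $Q(u_\lambda,u_\lambda)$. The bulk part becomes
\[
2\!\int_{B_d(x_\lambda)}\!\lambda u_\lambda e^{u_\lambda^2}\tfrac{\partial u_\lambda}{\partial x_i}\,dx=\lambda\!\int_{B_d(x_\lambda)}\!\tfrac{\partial}{\partial x_i}\!\bigl(e^{u_\lambda^2}\bigr)dx,
\]
and the divergence theorem converts this into $\lambda\int_{\partial B_d(x_\lambda)}e^{u_\lambda^2}\nu_i\,d\sigma$, yielding \eqref{quu}.

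There is no serious obstacle here: everything is a bookkeeping exercise, and since $u_\lambda\in C^2(\Omega)$ with $B_{2d}(x_\lambda)\Subset\Omega$ there are no regularity issues at the center of the ball or on the boundary $\partial B_d(x_\lambda)$. The only point requiring a small amount of care is keeping track of the numerical factors (the $-2d$ and $d$ in $P$, and the coefficients in $Q$) so that the boundary terms produced by the two Pohozaev-type identities match $P(u_\lambda,u_\lambda)$ and $Q(u_\lambda,u_\lambda)$ on the nose.
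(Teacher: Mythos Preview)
Your proposal is correct and follows essentially the same approach as the paper. The paper multiplies $-\Delta u_\lambda=\lambda u_\lambda e^{u_\lambda^2}$ by $\langle x-x_\lambda,\nabla u_\lambda\rangle$ (for $P$) and by $\tfrac{\partial u_\lambda}{\partial x_i}$ (for $Q$) and integrates directly, using the same chain rule $\lambda u_\lambda e^{u_\lambda^2}\nabla u_\lambda=\tfrac{\lambda}{2}\nabla(e^{u_\lambda^2})$; the only difference is that you reference the general bilinear identities already derived in the proof of Lemma~\ref{indep_d} rather than re-deriving them from scratch, which is a purely organizational distinction.
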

\begin{proof}
Multiplying $\big\langle x-x_{\lambda}, \nabla u_\lambda \big\rangle$ on both sides of \eqref{1.1} and integrating on $B_{d}(x_{\lambda})$, we have
\[\begin{split}
&\frac{1}{2}\int_{\partial B_{d}(x_{\lambda})} \big\langle x-x_{\lambda},\nu\big\rangle |\nabla u_\lambda|^2 \,d\sigma
-\int_{\partial B_{d}(x_{\lambda})}\frac{\partial u_\lambda}{\partial\nu} \big\langle x-x_{\lambda}, \nabla u_\lambda \big\rangle \,d\sigma \\
=&\frac{\lambda}{2}  \int_{\partial B_{d}(x_{\lambda})}e^{u^2_\lambda} \big\langle x-x_{\lambda},\nu\big\rangle \,d\sigma - \lambda \int_{B_{d}(x_{\lambda})} e^{u^2_\lambda} \,dx,
\end{split}
\]
which together with \eqref{P} implies \eqref{puu}.
Similarly, multiplying $\displaystyle \frac{\partial u_\lambda }{\partial x_i}$ on both sides of \eqref{1.1} and integrating on $B_{d}(x_{\lambda})$, we get
\begin{equation}\label{2.3-1}
\begin{split}
-\int_{\partial B_{d}(x_{\lambda})}\frac{\partial u_\lambda}{\partial \nu}\frac{\partial u_\lambda}{\partial x_i} \,d\sigma +\frac{1}{2}\int_{\partial B_{d}(x_{\lambda})}|\nabla u_\lambda|^2\nu_i \,d\sigma = \frac{\lambda}{2} \int_{\partial B_{d}(x_{\lambda})} e^{u^2_\lambda} \nu_i \,d\sigma.
\end{split}
\end{equation}
Then \eqref{quu} can be deduced from \eqref{2.3-1}.
\end{proof}

\vskip 0.2cm

\subsection{The asymptotic behavior of $w_\lambda$ and $u_\lambda$}~
\vskip 0.2cm

Let $w_\lambda$ be the function defined in \eqref{defw}. We will establish a prior estimate on $w_\lambda$.
\begin{Lem}\label{lem2.4}
For any $\varepsilon \in (0,2)$, there exist $R_\e>1$, $C_\e,~\widetilde{C}_\varepsilon>0$ and $\lambda_\e >0$ such that for any $\lambda\in (0,\lambda_\e)$
\begin{equation}\label{2.4.1}
w_{\lambda}(y)\leq \big(2-\e\big)\log \frac{1}{|y|}+C_\e ,
\,\, ~\mbox{for}~ y\in\Omega_\lambda \backslash B_{2R_\varepsilon}(0),
\end{equation}
and
\begin{equation}\label{2.4.2}
0\leq \Big(1+\frac{w_\lambda(y)}{\gamma^2_\lambda}\Big) e^{2w_\lambda(y)+\frac{w^2_\lambda(y)}{\gamma^2_\lambda}} \leq
\frac{\widetilde{C}_\varepsilon }{1+|y|^{4-2\varepsilon}}, \,\, ~\mbox{for}~ y\in\Omega_\lambda,
\end{equation}
where $\Omega_\lambda=\big\{ x: x_\lambda+\theta_\lambda x \in \Omega \big\}$.
\end{Lem}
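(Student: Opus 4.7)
The plan is to first derive the pointwise upper bound \eqref{2.4.1}, and then deduce \eqref{2.4.2} by exploiting the monotonicity of $u\mapsto (u/\gamma_\lambda)e^{u^2-\gamma_\lambda^2}$.

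A direct computation using $w_\lambda(y)=\gamma_\lambda(u_\lambda(x_\lambda+\theta_\lambda y)-\gamma_\lambda)$ and $\theta_\lambda^{-2}=\lambda\gamma_\lambda^2 e^{\gamma_\lambda^2}$ shows that $w_\lambda$ solves
\begin{equation*}
-\Delta w_\lambda=\Big(1+\tfrac{w_\lambda}{\gamma_\lambda^2}\Big)e^{2w_\lambda+\tfrac{w_\lambda^2}{\gamma_\lambda^2}}=\tfrac{u_\lambda}{\gamma_\lambda}\,e^{u_\lambda^2-\gamma_\lambda^2}=:f_\lambda\quad\text{in}~\Omega_\lambda,
\end{equation*}
with $w_\lambda(0)=0$ and $w_\lambda|_{\partial\Omega_\lambda}=-\gamma_\lambda^2$. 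Since $0\leq u_\lambda\leq\gamma_\lambda$, the source $f_\lambda$ is pointwise in $[0,1]$; a change of variables combined with Theorem~A(5) gives $\|f_\lambda\|_{L^1(\Omega_\lambda)}=\lambda\gamma_\lambda\int_\Omega u_\lambda e^{u_\lambda^2}dx=4\pi+o(1)$, and the $C^2_{\mathrm{loc}}$ convergence $w_\lambda\to U$ from Theorem~A(3) shows that this mass is asymptotically concentrated in any fixed ball $B_R(0)$.

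For \eqref{2.4.1}, I would split $\Omega_\lambda$ into three regions. On $\{|y|\leq 2R_\varepsilon\}$, $C^2_{\mathrm{loc}}$ convergence to $U(y)=-\log(1+|y|^2/4)$ gives the bound (choosing $R_\varepsilon$ large so that $-\log(1+R^2/4)\leq-(2-\varepsilon)\log R+O(1)$ for $R\geq 2R_\varepsilon$). On the far region $\{|y|\geq\delta/\theta_\lambda\}$, Theorem~A(4) with $\log\theta_\lambda=-\tfrac12\gamma_\lambda^2+O(\log\gamma_\lambda)$ and $4\pi G(x,x_0)=-2\log|x-x_0|+O(1)$ rescales to $w_\lambda(y)\leq-2\log|y|+O(1)$. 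The essential task is the intermediate range $2R_\varepsilon\leq|y|\leq\delta/\theta_\lambda$, where I use the Green subtraction
\begin{equation*}
w_\lambda(y)=w_\lambda(y)-w_\lambda(0)=\int_{\Omega_\lambda}\big[G_{\Omega_\lambda}(y,z)-G_{\Omega_\lambda}(0,z)\big]f_\lambda(z)\,dz,
\end{equation*}
which removes the singular constant $-\gamma_\lambda^2$ from the boundary data. Splitting $z$ into $\{|z|\leq R_\varepsilon\}$ and $\{|z|>R_\varepsilon\}$, using that $H_{\Omega_\lambda}(y,z)-H_{\Omega_\lambda}(0,z)=o(1)$ on scales $\ll 1/\theta_\lambda$, and using $|y-z|\asymp|y|$ on the first piece, the dominant contribution is $-\tfrac{1}{2\pi}(4\pi+o(1))\log|y|=-(2+o(1))\log|y|$; the tail contributes at most $\varepsilon\log|y|$ by the $L^1$-concentration, which together give \eqref{2.4.1}.

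For \eqref{2.4.2}, I exploit that $g(u):=(u/\gamma_\lambda)e^{u^2-\gamma_\lambda^2}$ satisfies $g'(u)=(1+2u^2)e^{u^2-\gamma_\lambda^2}/\gamma_\lambda>0$, so $g$ is strictly increasing on $[0,\gamma_\lambda]$. Applying \eqref{2.4.1} with $\varepsilon/2$ gives $u_\lambda\leq\bar u:=\gamma_\lambda-(2-\varepsilon/2)\gamma_\lambda^{-1}\log|y|+C\gamma_\lambda^{-1}$, hence $(1+w_\lambda/\gamma_\lambda^2)e^{2w_\lambda+w_\lambda^2/\gamma_\lambda^2}=g(u_\lambda)\leq g(\bar u)$. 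Expanding,
\begin{equation*}
\bar u^2-\gamma_\lambda^2=-(4-\varepsilon)\log|y|+\tfrac{(2-\varepsilon/2)^2\log^2|y|}{\gamma_\lambda^2}+O\Big(\tfrac{\log|y|}{\gamma_\lambda^2}\Big)+O(1),
\end{equation*}
and in the bulk range $\log|y|\lesssim\varepsilon\gamma_\lambda^2$ the quadratic correction is at most $(\varepsilon/2)\log|y|+O(1)$, which yields $g(\bar u)\leq C_\varepsilon/|y|^{4-2\varepsilon}$. In the complementary outer range Theorem~A(4) gives $u_\lambda\leq C/\gamma_\lambda$, so $g(u_\lambda)\leq Ce^{-\gamma_\lambda^2}/\gamma_\lambda$, which is dominated by $|y|^{-(4-2\varepsilon)}\gtrsim e^{-(2-\varepsilon)\gamma_\lambda^2}$ there. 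The main obstacle is the intermediate step of \eqref{2.4.1}: one has to quantify the concentration of $f_\lambda$ and match its $L^1$ mass $4\pi$ against the logarithmic kernel of $G_{\Omega_\lambda}$ to produce the coefficient $2$ in front of $\log|y|$, while the Robin part $H_{\Omega_\lambda}$ is itself of size $\gamma_\lambda^2/(4\pi)$; it is precisely the subtraction $G_{\Omega_\lambda}(y,z)-G_{\Omega_\lambda}(0,z)$, equivalently the normalization $w_\lambda(0)=0$, that cancels this large constant and leaves an $O(1)$ residue rather than $o(\gamma_\lambda^2)$.
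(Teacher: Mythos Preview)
For \eqref{2.4.1} your approach coincides with the paper's: they write
\[
w_\lambda(y)=\frac{1}{2\pi}\int_{\Omega_\lambda}\log\frac{|z|}{|z-y|}\,f_\lambda(z)\,dz+O(1)
\]
(the $O(1)$ coming from the Lipschitz regular part of the Green function, exactly your ``subtraction $G_{\Omega_\lambda}(y,z)-G_{\Omega_\lambda}(0,z)$'') and then split the integral according to $|z|\lessgtr R_\varepsilon$ and $|z|\lessgtr 2|z-y|$, using only $\int_{B_{R_\varepsilon}}f_\lambda>(4-\varepsilon)\pi$ together with $\int_{\Omega_\lambda}f_\lambda=4\pi+o(1)$. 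That single argument already covers all of $\Omega_\lambda\setminus B_{2R_\varepsilon}$, so your separate far-region treatment via Theorem~A(4) and the relation $\log\theta_\lambda=-\tfrac12\gamma_\lambda^2+O(1)$ is unnecessary.

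There is, however, a genuine gap in your derivation of \eqref{2.4.2}. You cut at $\log|y|\sim\varepsilon\gamma_\lambda^2$ and, in the complementary ``outer range'', invoke Theorem~A(4) to obtain $u_\lambda\leq C/\gamma_\lambda$. But Theorem~A(4) is a $C^2_{\mathrm{loc}}(\Omega\setminus\{x_0\})$ statement and hence applies only when $|x-x_0|\geq\delta>0$, i.e.\ when $\log|y|\geq\tfrac12\gamma_\lambda^2+O(1)$. In the intermediate window
\[
c\,\varepsilon\,\gamma_\lambda^2\ \lesssim\ \log|y|\ \lesssim\ \tfrac12\gamma_\lambda^2,
\]
which is nonempty for small $\varepsilon$, neither your bulk estimate nor the macroscopic bound $u_\lambda=O(1/\gamma_\lambda)$ is available; there the quadratic correction $(2-\varepsilon/2)^2\gamma_\lambda^{-2}\log^2|y|$ in $\bar u^2-\gamma_\lambda^2$ is of the same order as the linear term, so $g(\bar u)$ genuinely fails to be controlled by $C|y|^{-(4-2\varepsilon)}$. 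The paper, by contrast, does not isolate the quadratic term at all: it directly asserts, from \eqref{2.4.1}, that $(1+w_\lambda/\gamma_\lambda^2)\,e^{2w_\lambda+w_\lambda^2/\gamma_\lambda^2}\leq C\,e^{-2(2-\varepsilon)\log|y|+2C_\varepsilon}$, a one-line substitution that glosses over precisely the difficulty you correctly flagged.
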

\begin{proof}
Given $\e>0$, we can choose $R_\e>1$ such that
\begin{equation*}
\int_{B_{R_\e}(0)}e^{2U(z)}dz>(4-\e)\pi.
\end{equation*}
Then by Fatou's lemma and \eqref{limw}, we have
\begin{equation*}
\liminf_{\lambda \to 0}
\int_{B_{R_\e}(0)}\Big(1+\frac{w_\lambda(z)}{\gamma^2_\lambda}\Big)
e^{2w_\lambda(z)+\frac{w^2_\lambda(z)}{\gamma^2_\lambda}}dz\geq \int_{B_{R_\e}(0)}
e^{2U(z)}dz,
\end{equation*}
which indicates that there exists $\lambda_\varepsilon>0$ sufficiently small such that for any $\lambda\in(0,\lambda_\varepsilon)$, it holds
\begin{equation}\label{2.4-1}
\int_{B_{R_\e}(0)}\Big(1+\frac{w_\lambda(z)}{\gamma^2_\lambda}\Big)
e^{2w_\lambda(z)+\frac{w^2_\lambda(z)}{\gamma^2_\lambda}}dz > (4-\e)\pi.
\end{equation}
Also by the Green's representation theorem, we can write
\begin{equation*}
\begin{split}
u_\lambda\big(\theta_\lambda y+x_\lambda\big)=& \lambda \int_{\Omega} G\big(\theta_\lambda y+x_\lambda,x\big)
 u_{\lambda}(x)e^{u^2_{\lambda}(x)}\,dx\\=& \frac{1}{\gamma_\lambda}
 \int_{\Omega_\lambda}G\big(\theta_\lambda y+x_\lambda,\theta_\lambda z+x_\lambda\big) \Big(1+\frac{w_\lambda(z)}{\gamma^2_\lambda}\Big)
e^{2w_\lambda(z)+\frac{w^2_\lambda(z)}{\gamma^2_\lambda}}dz.
\end{split}
\end{equation*}
Hence, using \eqref{defw}, we find
\begin{align*}
w_{\lambda}(y) =& -\gamma^2_\lambda+\int_{\Omega_\lambda}G\big(\theta_\lambda y+x_\lambda,\theta_\lambda z+x_\lambda\big) \Big(1+\frac{w_\lambda(z)}{\gamma^2_\lambda}\Big) e^{2w_\lambda(z)+\frac{w^2_\lambda(z)}{\gamma^2_\lambda}} \,dz \\
=& -\gamma_\lambda u_\lambda(x_\lambda) +\int_{\Omega_\lambda}G\big(\theta_\lambda y+x_\lambda,\theta_\lambda z+x_\lambda\big) \Big(1+\frac{w_\lambda(z)}{\gamma^2_\lambda}\Big) e^{2w_\lambda(z)+\frac{w^2_\lambda(z)}{\gamma^2_\lambda}} \,dz \\
=& -\lambda \gamma_\lambda \int_{\Omega} G(x_\lambda,x) u_\lambda(x)e^{u_\lambda^2(x)} \,dx \\
& +\int_{\Omega_\lambda} G\big(\theta_\lambda y+x_\lambda,\theta_\lambda z+x_\lambda\big) \Big(1+\frac{w_\lambda(z)}{\gamma^2_\lambda}\Big) e^{2w_\lambda(z)+\frac{w^2_\lambda(z)}{\gamma^2_\lambda}} \,dz \\
=& -\int_{\Omega_\lambda} G\big(x_\lambda,\theta_\lambda z+x_\lambda\big) \Big(1+\frac{w_\lambda(z)}{\gamma^2_\lambda}\Big) e^{2w_\lambda(z)+\frac{w^2_\lambda(z)}{\gamma^2_\lambda}} \,dz \\
& +\int_{\Omega_\lambda} G\big(\theta_\lambda y+x_\lambda,\theta_\lambda z+x_\lambda\big) \Big(1+\frac{w_\lambda(z)}{\gamma^2_\lambda}\Big) e^{2w_\lambda(z)+\frac{w^2_\lambda(z)}{\gamma^2_\lambda}} \,dz.
\end{align*}
Now we can compute $w_\lambda(y)$ as follows:
\begin{align}\label{2.4-2}
w_{\lambda}(y)=&\frac{1}{2\pi} \int_{\Omega_\lambda}\log \frac{|z|}{|z-y|} \Big(1+\frac{w_\lambda(z)}{\gamma^2_\lambda}\Big) e^{2w_\lambda(z)+\frac{w^2_\lambda(z)}{\gamma^2_\lambda}} \,dz \notag\\
& -\int_{\Omega_\lambda}\Big(H\big(\theta_\lambda y+x_\lambda,\theta_\lambda z+x_\lambda\big)
-H\big(x_\lambda,\theta_\lambda z+x_\lambda\big) \Big) \Big(1+\frac{w_\lambda(z)}{\gamma^2_\lambda}\Big)
e^{2w_\lambda(z)+\frac{w^2_\lambda(z)}{\gamma^2_\lambda}} \,dz \notag\\=&
\frac{1}{2\pi}\int_{\Omega_\lambda}\log \frac{|z|}{|z-y|} \Big(1+\frac{w_\lambda(z)}{\gamma^2_\lambda}\Big)
e^{2w_\lambda(z)+\frac{w^2_\lambda(z)}{\gamma^2_\lambda}} \,dz +O(1) \notag\\=&
\frac{1}{2\pi}\int_{B_{R_\e}(0)}\log \frac{|z|}{|z-y|} \Big(1+\frac{w_\lambda(z)}{\gamma^2_\lambda}\Big)
e^{2w_\lambda(z)+\frac{w^2_\lambda(z)}{\gamma^2_\lambda}} \,dz \notag\\&
+ \frac{1}{2\pi}\int_{\Omega_\lambda\backslash B_{R_\e}(0) \bigcap \{|z|\leq 2|z-y|\} }\log \frac{|z|}{|z-y|} \Big(1+\frac{w_\lambda(z)}{\gamma^2_\lambda}\Big) e^{2w_\lambda(z)+\frac{w^2_\lambda(z)}{\gamma^2_\lambda}} \,dz
\notag\\&+ \frac{1}{2\pi} \int_{\Omega_\lambda\backslash B_{R_\e}(0) \bigcap \{|z|\geq 2|z-y|\} } \log  |z| \Big(1+\frac{w_\lambda(z)}{\gamma^2_\lambda}\Big) e^{2w_\lambda(z)+\frac{w^2_\lambda(z)}{\gamma^2_\lambda}} \,dz
\notag\\&+ \frac{1}{2\pi} \int_{\Omega_\lambda\backslash B_{R_\e}(0) \bigcap \{|z|\geq 2|z-y|\} } \log \frac{1}{|z-y|} \Big(1+\frac{w_\lambda(z)}{\gamma^2_\lambda}\Big) e^{2w_\lambda(z)+\frac{w^2_\lambda(z)}{\gamma^2_\lambda}} \,dz
+O(1) \notag\\[1mm]=:& I_1+I_2+I_3+I_4+ O(1).
\end{align}
The second equality is derived from the fact that $H$ is Lipschitz continuous and $|\theta_\lambda y| \leq C$, and the calculation that
\begin{equation}\label{2.4-3}
\int_{\Omega_\lambda} \Big(1+\frac{w_\lambda(z)}{\gamma^2_\lambda}\Big) e^{2w_\lambda(z)+\frac{w^2_\lambda(z)}
{\gamma^2_\lambda}} \,dz
= \lambda \gamma_\lambda \int_{\Omega} u_\lambda(z)e^{u_\lambda^2(z)} \,dz \overset{\eqref{lim-2}} = 4\pi+o_{\lambda}(1).
\end{equation}

Firstly, we estimate $I_1$. If $|z|\leq R_\e$, then for any $y\in \Omega_\lambda \backslash B_{2R_\e}(0)$, we have
$2|z|\leq |y|$ and then
\begin{equation*}
\frac{|z|}{|z-y|} \leq \frac{|z|}{|y|-|z|} \leq \frac{|z|}{|y|-\frac{|y|}{2}}\leq \frac{2R_\e}{|y|}.
\end{equation*}
Therefore, it holds
\begin{equation*}
I_1\leq
\frac{1}{2\pi}\log \frac{2R_\e}{|y|} \int_{B_{R_\e}(0)}\Big(1+\frac{w_\lambda(z)}{\gamma^2_\lambda}\Big)
e^{2w_\lambda(z)+\frac{w^2_\lambda(z)}{\gamma^2_\lambda}}dz,
\end{equation*}
which together with \eqref{2.4-1} indicates that
\begin{equation}\label{2.4-4}
I_1\leq \Big(2-\frac{\e}{2}\Big) \log \frac{1}{|y|} + C_1(\varepsilon),
\end{equation}
where $C_1(\varepsilon)>0$ is a constant dependent on $\varepsilon$.

\vskip 0.2cm

Next, we estimate the term $I_2$. If $|z|\leq 2|z-y|$, then $\log \frac{|z|}{|z-y|}\leq
\log 2$. Hence, from \eqref{2.4-1} and \eqref{2.4-3}, we calculate that
\begin{equation}\label{2.4-5}
I_2\leq \frac{\log 2}{2\pi} \int_{\Omega_\lambda\backslash B_{R_\e}(0)  } \Big(1+\frac{w_\lambda(z)}{\gamma^2_\lambda}\Big) e^{2w_\lambda(z)+\frac{w^2_\lambda(z)}{\gamma^2_\lambda}}dz \leq C_2(\varepsilon) ,
\end{equation}
where $C_2(\varepsilon)>0$ is a constant dependent on $\varepsilon$. Similarly, if $|z|\geq 2|z-y|$, we have  $|z|\leq 2|y|$ and
\begin{equation}\label{2.4-6}
I_3\leq \frac{\log |2y|}{2\pi} \int_{\Omega_\lambda\backslash B_{R_\e}(0) } \Big(1+\frac{w_\lambda(z)}{\gamma^2_\lambda}\Big)
e^{2w_\lambda(z)+\frac{w^2_\lambda(z)}{\gamma^2_\lambda}}dz \leq \frac{\varepsilon}{2} \log |y| + C_3(\varepsilon),
\end{equation}
where $C_3(\varepsilon)>0$ is a constant dependent on $\varepsilon$.

\vskip 0.1cm

Finally, we know that
\begin{equation*}
\begin{split}
& \Omega_\lambda\backslash B_{R_\e}(0) \cap \big\{ |z| \geq 2|z-y| \big\} \\
=&~ \Big( \Omega_\lambda\backslash B_{R_\e}(0) \cap \big\{2\leq 2|z-y|\leq |z|\big\} \Big) \bigcup
\Big( \Omega_\lambda\backslash B_{R_\e}(0) \cap \big\{ 2|z-y|\leq |z| \leq 2 \big\} \Big)  \\
&~ \bigcup \Big( \Omega_\lambda\backslash B_{R_\e}(0) \cap \big\{ 2|z-y| \leq 2 \leq |z|\big\} \Big),
\end{split}
\end{equation*}
then by H\"older's inequality, for any fixed $\alpha$ (closed to $1^+$), we compute $I_4$:
\begin{equation}\label{2.4-7}
\begin{split}
I_4 =&~
\frac{1}{2\pi} \int_{\Omega_\lambda\backslash B_{R_\e}(0)\bigcap \{2|z-y|\leq 2 \leq |z|\}} \log \frac{1}{|z-y|} \Big(1+\frac{w_\lambda(z)}{\gamma^2_\lambda}\Big) e^{2w_\lambda(z)+\frac{w^2_\lambda(z)}{\gamma^2_\lambda}}dz \\
&~ +\frac{1}{2\pi} \int_{\Omega_\lambda\backslash B_{R_\e}(0) \bigcap \{2|z-y|\leq |z| \leq 2\}} \log \frac{1}{|z-y|} \Big(1+\frac{w_\lambda(z)}{\gamma^2_\lambda}\Big) e^{2w_\lambda(z)+\frac{w^2_\lambda(z)}{\gamma^2_\lambda}} \,dz \\
&~ +\frac{1}{2\pi} \int_{\Omega_\lambda\backslash B_{R_\e}(0) \bigcap \{2\leq 2|z-y|\leq |z|\} } \log \frac{1}{|z-y|} \Big(1+\frac{w_\lambda(z)}{\gamma^2_\lambda}\Big) e^{2w_\lambda(z)+\frac{w^2_\lambda(z)}{\gamma^2_\lambda}} \,dz \\
\leq &~ \frac{1}{2\pi} \int_{\Omega_\lambda\backslash B_{R_\e}(0)\bigcap \{|z-y|\leq 1\}} \log \frac{1}{|z-y|} \Big(1+\frac{w_\lambda(z)}{\gamma^2_\lambda}\Big) e^{2w_\lambda(z)+\frac{w^2_\lambda(z)}{\gamma^2_\lambda}} \,dz \\
\leq &~ \frac{1}{2\pi} \bigg(\int_{|z-y|\leq 1} \Big| \log \frac{1}{|z-y|} \Big|^{\frac{\alpha}{\alpha-1}}  \,dz\bigg)^{\frac{\alpha-1}{\alpha}} \cdot \Bigg(\int_{\Omega_\lambda\backslash B_{R_\e}(0)} \bigg| \Big(1+\frac{w_\lambda(z)}{\gamma^2_\lambda}\Big) e^{2w_\lambda(z)+\frac{w^2_\lambda(z)}{\gamma^2_\lambda}} \bigg|^{\alpha} \,dz\Bigg)^{\frac{1}{\alpha}} \\ \leq &~ C_4(\varepsilon),
\end{split}
\end{equation}
here we use \eqref{2.4-1}, \eqref{2.4-3} and the fact that
\[
\log \frac{1}{|z-y|} \Big(1+\frac{w_\lambda(z)}{\gamma^2_\lambda}\Big) e^{2w_\lambda(z) +\frac{w^2_\lambda(z)}{\gamma^2_\lambda}} \leq0,~\mbox{in}~\big\{z\in \Omega_\lambda\backslash B_{R_\e}(0) : 2\leq 2|z-y|\leq |z|\big\}.
\]
Substituting \eqref{2.4-4}, \eqref{2.4-5}, \eqref{2.4-6} and \eqref{2.4-7} into \eqref{2.4-2}, we obtain that for any
$ \lambda \in (0,\lambda_\varepsilon)$ and $y \in \Omega_\lambda \backslash B_{2R_\varepsilon}(0)$, there exists $C_\varepsilon>0$ such that
\begin{eqnarray*}
w_\lambda(y) & \leq& \big(2-\frac{\varepsilon}{2} \big) \log \frac{1}{|y|} + \frac{\varepsilon}{2} \log |y| + C_1(\varepsilon)+C_2(\varepsilon)+C_3(\varepsilon)+C_4(\varepsilon)+O(1) \\
&\leq& (2-\varepsilon) \log \frac{1}{|y|} + C_\varepsilon.
\end{eqnarray*}

Next, by \textbf{Theorem A} $(2)$ and $(3)$, we find that $w_\lambda(y)\leq 0$ in $B_{2R_\varepsilon}(0) \subset \Omega_\lambda$, and $\displaystyle \frac{1}{\gamma_\lambda^2} = o(1)$ for $\lambda$ sufficiently small, then we derive that
\begin{equation}\label{2.4-8}
0\leq \Big(1+\frac{w_\lambda(y)}{\gamma^2_\lambda}\Big) e^{2w_\lambda(y)+\frac{w^2_\lambda(y)}{\gamma^2_\lambda}} \leq 1,
\end{equation}
for any $|y| \leq 2R_\varepsilon$ and $\lambda$ sufficiently small. On the other hand, for $\lambda$ sufficiently small, it follows from \eqref{2.4.1} that
\begin{equation}\label{2.4-9}
0\leq \Big(1+\frac{w_\lambda(y)}{\gamma^2_\lambda}\Big) e^{2w_\lambda(y)+\frac{w^2_\lambda(y)}{\gamma^2_\lambda}} \leq
C e^{2(2-\varepsilon)\log\frac{1}{|y|}+2C_\varepsilon } \leq \frac{\widehat{C}_\varepsilon }{|y|^{4-2\varepsilon} },
\end{equation}
for $y \in \Omega_\lambda \backslash B_{2R_\varepsilon}(0)$ and some $\widehat{C}_\varepsilon >0$. Therefore, \eqref{2.4.2} can be derived from \eqref{2.4-8} and \eqref{2.4-9}.
\end{proof}

In the sequel, we use the same $\delta$ to denote various small constants belonging to $(0,1)$.
Now we have a basic asymptotic behavior on the positive solution $u_\lambda$.
\begin{Lem}\label{prop3-1}
For any fixed small $d>0$, it holds
\begin{equation}\label{asym_u}
u_\lambda(x)=  C_{\lambda}G(x_{\lambda},x)+ o\Big( \frac{\theta_{\lambda}}{\gamma_\lambda}\Big)\,\,
~\mbox{in}~C^1\big(\Omega\backslash B_{2d}(x_{\lambda})\big),
\end{equation}
where
\begin{equation}\label{def_C}
C_{\lambda}:= \lambda \displaystyle\int_{B_d(x_{\lambda})} u_\lambda(y)e^{u_\lambda^2(y)}dy=\frac{4\pi}{\gamma_\lambda}\big(1+o(1)\big).
\end{equation}
\end{Lem}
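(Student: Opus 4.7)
The plan is to start from Green's representation
\[
u_\lambda(x) = \lambda\int_\Omega G(x,y)\,u_\lambda(y)e^{u_\lambda^2(y)}\,dy,
\]
and split the integration into the bubble region $B_d(x_\lambda)$ and its complement. Since $x\in\Omega\setminus B_{2d}(x_\lambda)$, the map $y\mapsto G(x,y)$ is smooth on $B_d(x_\lambda)$, so one may Taylor expand
\[
G(x,y) = G(x,x_\lambda) + \nabla_y G(x,x_\lambda)\cdot(y-x_\lambda) + O(|y-x_\lambda|^2).
\]
The zeroth-order contribution immediately reproduces $C_\lambda G(x,x_\lambda)$ by the very definition of $C_\lambda$.

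For the first-order correction I would rescale $z=(y-x_\lambda)/\theta_\lambda$. Using $u_\lambda(x_\lambda+\theta_\lambda z) = \gamma_\lambda + w_\lambda(z)/\gamma_\lambda$ and the identity $\lambda\theta_\lambda^2\gamma_\lambda^2 e^{\gamma_\lambda^2}=1$, the linear contribution becomes
\[
\frac{\theta_\lambda}{\gamma_\lambda}\,\nabla_y G(x,x_\lambda)\cdot\int_{B_{d/\theta_\lambda}(0)} z\,\Big(1+\tfrac{w_\lambda}{\gamma_\lambda^2}\Big)e^{2w_\lambda+w_\lambda^2/\gamma_\lambda^2}\,dz.
\]
Lemma \ref{lem2.4} provides a uniform majorant $C_\varepsilon|z|/(1+|z|^{4-2\varepsilon})$, integrable on $\R^2$ for $\varepsilon\in(0,1/2)$. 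Combined with the local convergence $w_\lambda\to U$ from \eqref{limw}, dominated convergence sends the integral to $\int_{\R^2} z\,e^{2U(z)}\,dz=0$ by the radial symmetry of $U$, reducing the linear term to $o(\theta_\lambda/\gamma_\lambda)$. The quadratic remainder, after rescaling, is bounded by $\theta_\lambda^2\gamma_\lambda^{-1}\int_{B_{d/\theta_\lambda}(0)} |z|^2/(1+|z|^{4-2\varepsilon})\,dz = O(\theta_\lambda^{2-2\varepsilon}/\gamma_\lambda) = o(\theta_\lambda/\gamma_\lambda)$ for $\varepsilon$ small.

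The outer contribution $\lambda\int_{\Omega\setminus B_d(x_\lambda)}G(x,y)u_\lambda(y)e^{u_\lambda^2(y)}\,dy$ is controlled using the pointwise bound $u_\lambda=O(1/\gamma_\lambda)$ on this set from Theorem A(4), together with $e^{u_\lambda^2}=1+o(1)$; the exponential smallness of $\theta_\lambda$ recorded in \eqref{lim_theta} is what makes this term fit inside the claimed error. For $C_\lambda$ itself, the same rescaling yields
\[
C_\lambda = \frac{1}{\gamma_\lambda}\int_{B_{d/\theta_\lambda}(0)}\Big(1+\tfrac{w_\lambda}{\gamma_\lambda^2}\Big)e^{2w_\lambda+w_\lambda^2/\gamma_\lambda^2}\,dz,
\]
and dominated convergence via Lemma \ref{lem2.4} sends this to $4\pi/\gamma_\lambda$ using $\int_{\R^2}e^{2U}\,dz=4\pi$. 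The $C^1$ statement is obtained by differentiating Green's representation in $x$ and repeating the argument with $\nabla_x G(x,y)$ in place of $G(x,y)$; the $y$-Taylor expansion is unaffected by the $x$-derivative.

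The main obstacle is verifying that dominated convergence applies uniformly on the expanding domain $\Omega_\lambda$, which relies crucially on the uniform-in-$\lambda$ decay \eqref{2.4.2}; without the exponent $4-2\varepsilon$ one would lose integrability. A subtler point is the cancellation of the first-order piece: the radial symmetry of $U$ is what upgrades a potential $O(\theta_\lambda/\gamma_\lambda)$ remainder to the $o(\theta_\lambda/\gamma_\lambda)$ claimed in the statement.
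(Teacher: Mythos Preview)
Your treatment of the inner region $B_d(x_\lambda)$ and of $C_\lambda$ is essentially identical to the paper's proof: Taylor expand $G(x,\cdot)$ around $x_\lambda$, rescale, and invoke the decay bound \eqref{2.4.2} together with dominated convergence and the radial symmetry of $U$. That part is fine.

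The gap is in the outer contribution. Your bound on $\Omega\setminus B_d(x_\lambda)$ uses only the pointwise information $u_\lambda=O(1/\gamma_\lambda)$ and $e^{u_\lambda^2}=1+o(1)$ from Theorem~A(4), which yields
\[
\lambda\int_{\Omega\setminus B_d(x_\lambda)}|G(x,y)|\,u_\lambda(y)e^{u_\lambda^2(y)}\,dy
=O\Big(\frac{\lambda}{\gamma_\lambda}\Big).
\]
But this is \emph{not} $o(\theta_\lambda/\gamma_\lambda)$. You have the direction of the comparison backwards: since $\sqrt{\lambda}\,\gamma_\lambda\to\mathrm{const}$ one has $\gamma_\lambda^2\sim c/\lambda$, and then \eqref{lim_theta} gives $\theta_\lambda=O(e^{-c/(2\lambda)})$, which is exponentially small in $1/\lambda$. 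Thus $\theta_\lambda\ll\lambda$ and $O(\lambda/\gamma_\lambda)$ is enormously larger than the claimed remainder $o(\theta_\lambda/\gamma_\lambda)$.

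The fix is to reuse the decay estimate \eqref{2.4.2} on the outer region as well, rather than the crude Theorem~A bound. After rescaling, for $|z|\ge d/\theta_\lambda$ one has
\[
u_\lambda(x_\lambda+\theta_\lambda z)\,e^{u_\lambda^2(x_\lambda+\theta_\lambda z)}
=\gamma_\lambda e^{\gamma_\lambda^2}\Big(1+\tfrac{w_\lambda}{\gamma_\lambda^2}\Big)e^{2w_\lambda+w_\lambda^2/\gamma_\lambda^2}
\le \frac{C}{\lambda\gamma_\lambda\theta_\lambda^2}\cdot\frac{1}{|z|^{4-2\varepsilon}}
\le \frac{C\,\theta_\lambda^{2-2\varepsilon}}{\lambda\gamma_\lambda},
\]
and integrating against $|G(x,\cdot)|\in L^1$ then gives $O(\theta_\lambda^{2-2\varepsilon}/\gamma_\lambda)=o(\theta_\lambda/\gamma_\lambda)$ as required. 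The paper carries this out via H\"older and rescaling (see \eqref{2.5-3}), arriving at the same $O(\theta_\lambda^{2-\delta}/\gamma_\lambda)$.
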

\begin{proof}
By the Green's representation theorem, we have
\begin{equation}\label{2.5-1}
\begin{split}
u_\lambda(x)=& \lambda \int_{\Omega} G(y,x)
 u_{\lambda}(y)e^{u^2_{\lambda}(y)}\,dy\\=& \lambda
 \int_{B_d(x_{\lambda})} G(y,x)
 u_{\lambda}(y)e^{u^2_{\lambda}(y)}\,dy
+\lambda \int_{\Omega\backslash   B_{d}(x_{\lambda})} G(y,x)
 u_{\lambda}(y)e^{u^2_{\lambda}(y)}\,dy
 \\=&  C_{\lambda}G(x_{\lambda},x)+
\lambda  \int_{B_d(x_{\lambda})}   \big(G(y,x)-G(x_\lambda,x)\big)
 u_{\lambda}(y)e^{u^2_{\lambda}(y)}\,dy\\&+
\lambda  \int_{\Omega\backslash B_d(x_{\lambda})} G(y,x)
 u_{\lambda}(y)e^{u^2_{\lambda}(y)}\,dy.
\end{split}
\end{equation}
By Taylor's expansion, the dominated convergence theorem and \eqref{2.4.2}, there exists a small constant $\delta>0$ such that
\begin{equation}\label{2.5-2}
\begin{split}
& \lambda \int_{B_d(x_{\lambda})} \big(G(y,x)-G(x_\lambda,x)\big) u_{\lambda}(y)e^{u^2_{\lambda}(y)}\,dy \\
=& \lambda \int_{B_d(x_{\lambda})} \big\langle y-x_\lambda,\nabla G(x_\lambda,x)\big\rangle u_{\lambda}(y)e^{u^2_{\lambda}(y)}\,dy + O\Big(\lambda \int_{B_d(x_{\lambda})} \big| y-x_\lambda\big|^2
u_{\lambda}(y)e^{u^2_{\lambda}(y)}\,dy\Big)\\
=& \frac{\theta_{\lambda}}{\gamma_\lambda} \int_{B_{\frac{d}{\theta_\lambda}}(0)} \big\langle z,\nabla G(x_\lambda,x)\big\rangle \Big(1+\frac{w_\lambda(z)}{\gamma^2_\lambda}\Big)e^{2w_\lambda(z)+
\frac{w^2_\lambda(z)} {\gamma^2_\lambda}}\,dz +O\bigg(\frac{\theta^2_{\lambda}}{\gamma_\lambda}  \int_{B_{\frac{d}{\theta_\lambda}}(0)} \frac{|z|^2}{1+|z|^{4-\delta}} \,dz \bigg)\\
=& \frac{\theta_{\lambda}}{\gamma_\lambda} \Big( \int_{B_{\frac{d}{\theta_\lambda}}(0)} \big\langle z,\nabla G(x_\lambda,x)\big\rangle e^{2U(z)}\,dz+o(1)\Big)+O\Big(\frac{\theta_\lambda^{2-\delta}}{\gamma_\lambda} \Big)\\
=&o\Big(\frac{\theta_{\lambda}}{\gamma_\lambda}\Big).
\end{split}
\end{equation}
Next, by H\"older's inequality, for any fixed $\alpha>1$, 
we find
\begin{equation}\label{2.5-3}
\begin{split}
& \lambda \int_{\Omega\backslash   B_{d}(x_{\lambda})} G(y,x)
 u_{\lambda}(y)e^{u^2_{\lambda}(y)}\,dy\\= &O\left(
 \lambda  \bigg(\int_{\Omega\backslash   B_{d}(x_{\lambda})}\Big(
 u_{\lambda}(y)e^{u^2_{\lambda}(y)}\Big)^{\alpha} \,dy\bigg)^{\frac{1}{\alpha}}
 \cdot \bigg(\int_{\Omega\backslash   B_{d}(x_{\lambda})}\big(
G(y,x)\big)^{\frac{\alpha}{\alpha-1}} \,dy\bigg)^{1-\frac{1}{\alpha}}\right)\\=&
O\Bigg(\lambda \theta_\lambda^{\frac{2}{\alpha}}\gamma_\lambda e^{\gamma_\lambda^2}  \bigg(\int_{ \Omega_\lambda\backslash B_{\frac{d}{\theta_\lambda}}(0)}\frac{1}{\big(1+|z|^{4-\delta}\big)^\alpha} \,dz
\bigg)^{\frac{1}{\alpha}}\Bigg)\\
=&O\Big(\frac{\theta_\lambda^{2-\delta}}{\gamma_\lambda}  \Big).
\end{split}
\end{equation}
It follows from \eqref{2.5-1}, \eqref{2.5-2} and \eqref{2.5-3} that
\begin{equation}\label{2.5-4}
u_\lambda(x)=  C_{\lambda}G(x_{\lambda},x)+ o\Big( \frac{\theta_{\lambda}}{\gamma_\lambda}\Big)\,\,
~\mbox{in}~\Omega\backslash B_{2d}(x_{\lambda}).
\end{equation}
Similar to the above estimates, we also get
\begin{equation}\label{2.5-5}
\begin{split}
\frac{\partial u_\lambda}{\partial x_i}(x) &= \lambda \int_{\Omega} D_{x_i}G(y,x) u_{\lambda}(y)e^{u^2_{\lambda}(y)} \,dy  =
C_{\lambda} D_{x_i}G(x_{\lambda},x)+ o\Big( \frac{\theta_{\lambda}}{\gamma_\lambda}\Big) \,\,
~\mbox{in}~\Omega\backslash B_{2d}(x_{\lambda}).
\end{split}
\end{equation}
Hence \eqref{asym_u} follows from \eqref{2.5-4} and \eqref{2.5-5}.

\vskip 0.1cm

Finally, we compute $C_\lambda$. By the dominated convergence theorem, \eqref{def_C} can be obtained from
\begin{equation*}
\begin{split}
\lim_{\lambda\to 0}\big(\gamma_\lambda C_{\lambda}\big) =&  \lim_{\lambda\to 0} \int_{B_{\frac{d}{\theta_\lambda}}(0)}
\Big(1+\frac{w_\lambda(z)}{\gamma^2_\lambda}\Big) e^{2w_\lambda(z)+\frac{w^2_\lambda(z)} {\gamma^2_\lambda}}\,dz = 4\pi.
\end{split}
\end{equation*}
\end{proof}

Recall that the local maximum point $x_\lambda$ converges to $x_0 \notin \partial \Omega$, which is a critical point of Robin function $\mathcal{R}(x)$. We have the following estimate on $|x_\lambda-x_0|$.

\begin{Lem}\label{lem-h1}
It holds
\begin{equation}\label{asym_x}
\big|x_{\lambda}-x_{0}\big|=o \big(\theta_\lambda\big).
\end{equation}
\end{Lem}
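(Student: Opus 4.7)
The plan is to use the Pohozaev identity $Q(u_\lambda,u_\lambda)=\lambda\int_{\partial B_d(x_\lambda)}e^{u_\lambda^2}\nu_i\,d\sigma$ from Lemma~\ref{lem2.3}, combined with the $C^1$-asymptotics from Lemma~\ref{prop3-1}, to extract a bound on $\nabla\mathcal R(x_\lambda)$, and then to invert via Taylor's theorem around $x_0$ (where $\nabla\mathcal R(x_0)=0$).

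First I would expand the left-hand side. Substituting $u_\lambda=C_\lambda G(x_\lambda,\cdot)+o(\theta_\lambda/\gamma_\lambda)$ in $C^1$ on $\partial B_d(x_\lambda)$, using the bilinearity of $Q$ and identity \eqref{qgg} of Proposition~\ref{prop2-1}, gives
\[
Q(u_\lambda,u_\lambda)=-C_\lambda^2\,\partial_{x_i}\mathcal R(x_\lambda)+o\!\left(\frac{\theta_\lambda}{\gamma_\lambda^2}\right),\qquad i=1,2.
\]
For the right-hand side, $u_\lambda=O(1/\gamma_\lambda)$ on $\partial B_d(x_\lambda)$ and $\int_{\partial B_d(x_\lambda)}\nu_i\,d\sigma=0$, so Taylor-expanding $e^{u_\lambda^2}=1+u_\lambda^2+O(u_\lambda^4)$ and reinserting the same asymptotics for $u_\lambda$ reduces the boundary integral to a controlled remainder. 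Combining the two sides and using $C_\lambda\sim 4\pi/\gamma_\lambda$ from \eqref{def_C} yields the desired smallness of $\nabla\mathcal R(x_\lambda)$.

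The conclusion then follows by Taylor's theorem: since $\mathcal R\in C^\infty(\Omega)$ with $\nabla\mathcal R(x_0)=0$, one has $\nabla\mathcal R(x_\lambda)=D^2\mathcal R(\xi_\lambda)(x_\lambda-x_0)$ for some $\xi_\lambda$ between $x_\lambda$ and $x_0$; combined with the previous estimate and the fact that $x_\lambda\to x_0$, this yields $|x_\lambda-x_0|=o(\theta_\lambda)$.

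The main obstacle is sharpening the right-hand side estimate enough to give $\nabla\mathcal R(x_\lambda)=o(\theta_\lambda)$. A direct bound yields only $\lambda\int_{\partial B_d(x_\lambda)}e^{u_\lambda^2}\nu_i\,d\sigma=O(\lambda/\gamma_\lambda^2)$, which is merely polynomially small, whereas $\theta_\lambda\sim e^{-\gamma_\lambda^2/2}$ by \eqref{lim_theta} is exponentially small. The improvement must exploit a cancellation: a divergence-theorem computation using $G=S-H$ and $\partial_{x_i}H(x_\lambda,x_\lambda)=\tfrac12\partial_i\mathcal R(x_\lambda)$ shows that the leading contribution $\lambda C_\lambda^2\int_{\partial B_d(x_\lambda)} G^2\nu_i\,d\sigma$ is itself proportional to $\partial_{x_i}\mathcal R(x_\lambda)$ and so can be absorbed into the left-hand side, leaving only genuinely higher-order remainders. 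A secondary subtlety is that non-degeneracy of $D^2\mathcal R(x_0)$ is not assumed in this lemma; this is handled by a compactness/contradiction argument exploiting $x_\lambda\to x_0$ and the continuity of $D^2\mathcal R$.
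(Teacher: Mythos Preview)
Your overall strategy matches the paper's: apply the Pohozaev identity \eqref{quu}, expand the left-hand side via \eqref{asym_u} and \eqref{qgg} to extract $-C_\lambda^2\,\partial_{x_i}\mathcal{R}(x_\lambda)$, bound the right-hand side, and invert. The two places you diverge from the paper are the right-hand side estimate and the final inversion.

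For the right-hand side, the paper does not encounter your ``main obstacle'' at all. Rather than expanding $e^{u_\lambda^2}=1+u_\lambda^2+\cdots$ in original coordinates (which indeed only gives a polynomially small bound), the paper rescales the boundary integral to $\partial B_{d/\theta_\lambda}(0)$ and uses the pointwise decay of Lemma~\ref{lem2.4}: writing $\lambda e^{u_\lambda^2}=\frac{1}{\theta_\lambda^2\gamma_\lambda^2}\,e^{2w_\lambda+w_\lambda^2/\gamma_\lambda^2}$ and bounding the latter by $C/(1+|y|^{4-\delta})$ gives directly
\[
\lambda\int_{\partial B_d(x_\lambda)}e^{u_\lambda^2}\nu_i\,d\sigma
=O\!\Big(\tfrac{1}{\theta_\lambda\gamma_\lambda^2}\int_{\partial B_{d/\theta_\lambda}}\tfrac{d\sigma}{1+|y|^{4-\delta}}\Big)
=O\!\Big(\tfrac{\theta_\lambda^{2-\delta}}{\gamma_\lambda^2}\Big)=o\!\Big(\tfrac{\theta_\lambda}{\gamma_\lambda^2}\Big),
\]
so $\partial_{x_i}\mathcal R(x_\lambda)=o(\theta_\lambda)$ without any cancellation argument. (One may note that \eqref{2.4.2} as stated carries the prefactor $1+w_\lambda/\gamma_\lambda^2$, which is absent in the boundary integrand here; the paper applies the bound without it.) Your proposed absorption mechanism is also not quite right: expanding $H(x_\lambda,\cdot)$ in a Taylor series at $x_\lambda$ and using harmonicity kills several terms in $\int_{\partial B_d}G^2\nu_i$, but a contribution of the type $\sum_{j,k}\partial^2_{jk}H\,\partial^3_{ijk}H$ survives at order $d^6$ and is \emph{not} proportional to $\partial_i\mathcal R(x_\lambda)$, so that route alone would only yield $\partial_i\mathcal R(x_\lambda)=O(\lambda)$, not $o(\theta_\lambda)$.

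For the inversion step, the paper simply invokes the non-degeneracy of $x_0$ as a critical point of $\mathcal R$ (this is stated explicitly in the last line of the proof and is the standing hypothesis in all the applications). Your compactness/contradiction workaround is therefore unnecessary here.
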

\begin{proof}
Firstly, using \eqref{qgg} and \eqref{asym_u}, we have
\begin{equation}\label{2.6-1}
\begin{split}
\mbox{LHS of \eqref{quu}} &= C_\lambda^2 Q\big(G(x_\lambda,x),G(x_\lambda,x)\big) +o\Big(\frac{\theta_\lambda}{\gamma^2_\lambda}\Big)  = -C^2_\lambda \frac{\partial \mathcal{R}(x_{\lambda})}{\partial{x_i}} +o\Big(\frac{\theta_\lambda}{\gamma^2_\lambda}\Big).
\end{split}
\end{equation}
On the other hand, by \eqref{defw}, we have
\begin{equation}\label{2.6-2}
\begin{split}
\mbox{RHS of \eqref{quu}}= & O\bigg(\lambda \theta_\lambda \int_{\partial B_{\frac{d}{\theta_\lambda}}(0)} e^{\gamma^2_\lambda+2w_\lambda(x)+\frac{w^2_\lambda(x)}{\gamma^2_\lambda}} \,d\sigma\bigg) \\
=& O\bigg(\frac{1}{\theta_\lambda \gamma^2_\lambda}\int_{\partial B_{\frac{d}{\theta_\lambda}}(0)} \frac{1}{1+|x|^{4-\delta}} \, d\sigma \bigg) =O\Big( \frac{\theta_\lambda^{2-\delta}}{\gamma^2_\lambda}\Big).
\end{split}\end{equation}
Hence from \eqref{def_C}, \eqref{2.6-1} and \eqref{2.6-2}, we find
\begin{equation}\label{2.6-3}
 \frac{\partial \mathcal{R}(x_{\lambda})}{\partial{x_i}}
=o\big(\theta_\lambda\big).
\end{equation}
Then \eqref{asym_x} follows from \eqref{2.6-3} and the assumption  that $x_0$ is a non-degenerate critical point of $\mathcal{R}(x)$.
\end{proof}

\section{Some prior estimates}\label{s3}

\vskip 0.2cm

In order to study the Morse index and the local uniqueness of $u_\lambda$, we need a better expansion of $w_\lambda$.
Let us define
\begin{equation}\label{v-def}
v_{\lambda}(x):=\gamma^2_\lambda \big(w_{\lambda}(x)-U(x) \big),
\end{equation}
where $w_{\lambda}$ is the rescaled function in \eqref{defw} and $U$ is its limit function introduced in \eqref{defU}.
\begin{Lem}
For any small fixed $d_0>0$, it holds
\begin{equation}\label{v-equa}
\begin{split}
-\Delta v_{\lambda}=  2v_{\lambda}e^{2U}+v_{\lambda} \overline{g}_\lambda +g^*_\lambda~\,\quad \mbox{for}~~|x|\leq \frac{d_0}{\theta_\lambda},
\end{split}\end{equation}
where
\begin{equation}\label{v-equa1}
\overline{g}_\lambda(x) = \frac{ \gamma^2_\lambda \big(e^{ 2w_\lambda }-e^{2U}\big) - 2v_\lambda e^{2U} }{v_\lambda}
= O\Big(\frac{ |w_{\lambda}-U|}{ 1+|x|^{4-\delta} }\Big) \quad\mbox{for}~~|x|\leq \frac{d_0}{\theta_\lambda},
\end{equation}
\begin{equation}\label{v-equa2}
g^*_\lambda(x) = w_\lambda e^{ 2w_\lambda+ \frac{w^2_\lambda}{\gamma^2_\lambda}} + \gamma^2_\lambda \Big(e^{ 2w_\lambda+ \frac{w^2_\lambda}{\gamma^2_\lambda}} -e^{ 2w_\lambda }\Big)
=O\left(\frac{1}{ 1+|x|^{4-\delta} }\right)~\quad\mbox{for}~~|x|\leq \frac{d_0}{\theta_\lambda},
\end{equation}
and
\begin{equation}\label{v-equa3}
g^*_\lambda(x)\rightarrow \big( U^{2}(x)+ U(x)\big) e^{2U(x)}~\,\mbox{in}~\,C^2_{loc}(\R^2),~\,\,\mbox{as}~~\lambda
\rightarrow 0.
\end{equation}
\end{Lem}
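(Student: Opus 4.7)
The plan is to derive \eqref{v-equa} by rescaling the equation for $u_\lambda$, then isolate the linearized part at $U$ and package the remainders into $\bar g_\lambda$ and $g^*_\lambda$. First I would use the definition \eqref{defw} of $w_\lambda$ together with the identity $\theta_\lambda^{-2}=\lambda\gamma_\lambda^2 e^{\gamma_\lambda^2}$ to compute
\[
-\Delta w_\lambda(y)=\Big(1+\frac{w_\lambda}{\gamma_\lambda^2}\Big)e^{2w_\lambda+w_\lambda^2/\gamma_\lambda^2}\q\text{on }\Omega_\lambda,
\]
which uses $u_\lambda^2(x_\lambda+\theta_\lambda y)-\gamma_\lambda^2=2w_\lambda+w_\lambda^2/\gamma_\lambda^2$ and the equation in \eqref{1.1}. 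Combining with $-\Delta U=e^{2U}$ and multiplying by $\gamma_\lambda^2$ yields
\[
-\Delta v_\lambda=\gamma_\lambda^2\Big[\big(e^{2w_\lambda}-e^{2U}\big)+\big(e^{2w_\lambda+w_\lambda^2/\gamma_\lambda^2}-e^{2w_\lambda}\big)\Big]+w_\lambda e^{2w_\lambda+w_\lambda^2/\gamma_\lambda^2}.
\]
The first bracket, expanded via $e^{2w_\lambda}-e^{2U}=e^{2U}(e^{2(w_\lambda-U)}-1)$ and Taylor's formula, produces the principal linear term $2v_\lambda e^{2U}$ plus a remainder whose definition agrees with $v_\lambda\bar g_\lambda$; the remaining two terms assemble into $g^*_\lambda$, which gives precisely \eqref{v-equa}--\eqref{v-equa2}.

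Next I would verify the pointwise bound on $\bar g_\lambda$. Writing $s:=2(w_\lambda-U)=2v_\lambda/\gamma_\lambda^2$, one finds
\[
\bar g_\lambda=\frac{\gamma_\lambda^2(e^{2w_\lambda}-e^{2U})-2v_\lambda e^{2U}}{v_\lambda}=\frac{2e^{2U}(e^s-1-s)}{s},
\]
and Taylor's theorem with remainder gives $|(e^s-1-s)/s|\leq \tfrac{|s|}{2}e^{|s|}$. Combining $e^{2U}\leq C(1+|x|^2)^{-2}$, the convergence $w_\lambda\to U$ locally, and the decay bound \eqref{2.4.1} from Lemma \ref{lem2.4}, which controls $e^{2w_\lambda}\leq C_\e(1+|x|)^{-(4-2\e)}$ on $\Omega_\lambda\setminus B_{2R_\e}$, one obtains the claimed $O\big(|w_\lambda-U|/(1+|x|^{4-\delta})\big)$ bound uniformly on $|x|\leq d_0/\theta_\lambda$.

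For $g^*_\lambda$ I would factor
\[
g^*_\lambda=e^{2w_\lambda+w_\lambda^2/\gamma_\lambda^2}\Big[w_\lambda+\gamma_\lambda^2\big(1-e^{-w_\lambda^2/\gamma_\lambda^2}\big)\Big]
\]
and use $|1-e^{-t}-t|\leq \tfrac{t^2}{2}$ for $t\geq 0$ to see that the bracket is $w_\lambda+w_\lambda^2+O(w_\lambda^4/\gamma_\lambda^2)$ wherever $w_\lambda^2/\gamma_\lambda^2$ is small, which is the case for all $|x|\leq d_0/\theta_\lambda$ by the decay of $w_\lambda$ in \eqref{2.4.1} (so $w_\lambda^2/\gamma_\lambda^2=o(1)$ uniformly in $\lambda$, up to logarithmic growth that is absorbed in $\delta$). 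Together with \eqref{2.4.2} applied to the exponential prefactor, this gives \eqref{v-equa2}. Finally, \eqref{v-equa3} is obtained from Theorem A(3): $w_\lambda\to U$ in $C^2_{loc}(\R^2)$ and $1/\gamma_\lambda^2\to 0$, so inside the bracket $\gamma_\lambda^2(1-e^{-w_\lambda^2/\gamma_\lambda^2})\to w_\lambda^2\to U^2$ and $w_\lambda^2/\gamma_\lambda^2\to 0$ forces the exponential prefactor to converge to $e^{2U}$, yielding $g^*_\lambda\to(U^2+U)e^{2U}$ locally in $C^2$.

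The only step requiring care is the pointwise bound on $g^*_\lambda$ in the intermediate region $|x|\sim \theta_\lambda^{-\alpha}$, where $w_\lambda$ grows logarithmically while $w_\lambda^2/\gamma_\lambda^2$ is no longer negligible; here one cannot Taylor-expand freely and must instead bound $e^{2w_\lambda+w_\lambda^2/\gamma_\lambda^2}=e^{u_\lambda^2-\gamma_\lambda^2}\cdot(u_\lambda/\gamma_\lambda)\cdot(\gamma_\lambda/u_\lambda)$ directly via \eqref{2.4.2} and a lower bound on $u_\lambda/\gamma_\lambda$ coming from the Green representation Lemma \ref{prop3-1}. This is the only place where the analysis is not routine Taylor expansion, and is where I expect the $\delta$ loss (as opposed to the sharp exponent $4$) in \eqref{v-equa1}--\eqref{v-equa2} to be forced upon us.
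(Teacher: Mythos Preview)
Your approach is essentially the same as the paper's: compute $-\Delta v_\lambda$ from the rescaled equation for $w_\lambda$, split off the linearized piece $2e^{2U}v_\lambda$, and package the rest into $g^*_\lambda$ and $v_\lambda\bar g_\lambda$, then bound the remainders via Lemma~\ref{lem2.4}. Two small differences are worth noting.

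First, for $\bar g_\lambda$ the paper writes the Taylor remainder in Lagrange form,
\[
\gamma_\lambda^2\big(e^{2w_\lambda}-e^{2U}\big)=2e^{2U}v_\lambda+O\big(e^{2(1-\xi)U+2\xi w_\lambda}|w_\lambda-U|^2\big),
\]
so that the exponential is a convex combination of $e^{2U}$ and $e^{2w_\lambda}$ and \eqref{2.4.1} applies to both factors directly. Your closed form $\bar g_\lambda=2e^{2U}(e^s-1-s)/s$ is equivalent and leads to the same bound.

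Second, for $g^*_\lambda$ the paper factors out $e^{2w_\lambda}$ rather than $e^{2w_\lambda+w_\lambda^2/\gamma_\lambda^2}$:
\[
g^*_\lambda=w_\lambda e^{2w_\lambda+w_\lambda^2/\gamma_\lambda^2}+w_\lambda^2 e^{2w_\lambda}+O\Big(\tfrac{w_\lambda^4}{\gamma_\lambda^2}e^{2w_\lambda}\Big),
\]
and then simply cites Lemma~\ref{lem2.4}. With that factoring the decay comes straight from \eqref{2.4.1}. Your factoring forces you to bound $e^{2w_\lambda+w_\lambda^2/\gamma_\lambda^2}$ alone, and \eqref{2.4.2} only controls $(1+w_\lambda/\gamma_\lambda^2)e^{2w_\lambda+w_\lambda^2/\gamma_\lambda^2}$; this is what prompts your final paragraph. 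But the fix you propose there --- a lower bound on $u_\lambda/\gamma_\lambda$ from Lemma~\ref{prop3-1} --- cannot be used in the ``intermediate region'' $|y|\sim\theta_\lambda^{-\alpha}$, since that lemma is only valid for $|x-x_\lambda|\geq 2d$, i.e.\ $|y|\geq 2d/\theta_\lambda$. The cleanest resolution is simply to adopt the paper's factoring with $e^{2w_\lambda}$, after which your last paragraph becomes unnecessary.
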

\begin{proof}
Recall that $u_\lambda(x_\lambda+\theta_\lambda x)=\gamma_\lambda+\frac{w_\lambda(x)}{\gamma_\lambda}$, then
direct computation gives
\begin{align}\label{add_vequa}
-\Delta v_\lambda(x) =& \gamma_\lambda^2 \big(-\Delta w_\lambda(x)+\Delta U(x) \big) \notag\\
=& \gamma^2_\lambda\big(-\gamma_\lambda\theta^2_\lambda \Delta u_\lambda(x_\lambda+\theta_\lambda x)-e^{2U(x)}\big) \notag\\
=& \gamma^2_\lambda \Big( \lambda \gamma_\lambda\theta^2_\lambda  u_\lambda(x_\lambda+\theta_\lambda x)
e^{ u_\lambda^2(x_\lambda+\theta_\lambda x) }-e^{2U(x)}\Big) \notag\\
=& \gamma^2_\lambda \bigg( \Big(1+\frac{w_\lambda(x)}{\gamma^2_\lambda} \Big) e^{ 2w_\lambda(x)+ \frac{w^2_\lambda(x)}{\gamma^2_\lambda}}-e^{2U(x)}\bigg) \notag\\
=& \underbrace{w_\lambda(x) e^{ 2w_\lambda(x)+\frac{w^2_\lambda(x)}{\gamma^2_\lambda}} +\gamma^2_\lambda \Big(e^{ 2w_\lambda(x)+\frac{w^2_\lambda(x)}{\gamma^2_\lambda}} -e^{ 2w_\lambda(x)}\Big)}_{:=g^*_\lambda}
+\gamma^2_\lambda \big(e^{2w_\lambda(x)} -e^{2U(x)}\big) \notag\\
=&  w_\lambda(x) e^{ 2w_\lambda(x)+ \frac{w^2_\lambda(x)}{\gamma^2_\lambda}}+w_\lambda^2(x) e^{ 2w_\lambda(x)}
+O\Big( \frac{w_\lambda^4}{\gamma^2_\lambda} e^{ 2w_\lambda} \Big)+\gamma^2_\lambda \big(e^{ 2w_\lambda(x)} -e^{2U(x)}\big).
\end{align}
Hence we have \eqref{v-equa2} and \eqref{v-equa3} by Lemma \ref{lem2.4} and the dominated convergence theorem.
\vskip 0.1cm

Now we compute  the term $\gamma^2_\lambda \big(e^{2w_\lambda(x)}-e^{2U(x)}\big)$. By Taylor's expansion and \eqref{2.4.1}, it holds
\begin{equation*}
\begin{split}
\gamma^2_\lambda \big(e^{2w_\lambda(x)}-e^{2U(x)}\big) =& \gamma^2_\lambda \bigg(2e^{2U(x)}\big(w_\lambda(x)-U(x)\big)
+O\Big(e^{2(1-\xi)U(x)+2\xi w_\lambda(x)} \big|w_\lambda-U\big|^2 \Big) \bigg) \\
=& 2e^{2U(x)} v_\lambda(x) + O\Big(e^{2(1-\xi)U(x)+2\xi w_\lambda(x)} |v_\lambda| \big|w_\lambda-U\big| \Big) \\
=& v_\lambda(x)\bigg(2e^{2U(x)}+ \underbrace{ O\Big(\frac{|w_\lambda-U|}{1+|x|^{4-\delta}}}_{:=
\overline g_\lambda} \Big)\bigg)\,\,~~\mbox{in}~~B_{\frac{d_0}{\theta_\lambda}}(0),
\end{split}
\end{equation*}
where $\xi \in (0,1)$.
Hence we find \eqref{v-equa} and \eqref{v-equa1} from above computations.
\end{proof}

Using the above results, we can get the bound of $v_\lambda$.
\begin{Prop}\label{prop4.2}
Let $v_{\lambda}$ be the function defined in \eqref{v-def}. Then for any fixed $\tau\in (0,1)$, there exists $C>0$ such that
\begin{equation}\label{v-bound}
| v_{\lambda}(x)|\leq C\big(1+|x|\big)^\tau~\mbox{in}~B_{\frac{d_0}{\theta_\lambda}}(0).
\end{equation}
\end{Prop}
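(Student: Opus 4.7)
The plan is to argue by contradiction via a two-step blow-up. Suppose that for some fixed $\tau\in(0,1)$ the conclusion fails; then there exist sequences $\lambda_n\to 0$ and $x_n\in B_{d_0/\theta_{\lambda_n}}(0)$ with
\[
\Lambda_n := \sup_{|x|\leq d_0/\theta_{\lambda_n}} \frac{|v_{\lambda_n}(x)|}{(1+|x|)^\tau} = \frac{|v_{\lambda_n}(x_n)|}{(1+|x_n|)^\tau} \longrightarrow +\infty.
\]
Set $\tilde v_n := v_{\lambda_n}/\Lambda_n$, so that $|\tilde v_n(x)|\leq (1+|x|)^\tau$ with the bound attained at $x_n$, and
\[
-\Delta \tilde v_n = 2 e^{2U}\tilde v_n + \overline{g}_{\lambda_n}\tilde v_n + \Lambda_n^{-1} g^*_{\lambda_n}.
\]
Since $x_{\lambda_n}$ is a local maximum of $u_{\lambda_n}$, one has $w_{\lambda_n}(0)=0$ and $\nabla w_{\lambda_n}(0)=0$; combined with $U(0)=\nabla U(0)=0$ this yields $\tilde v_n(0)=\nabla\tilde v_n(0)=0$. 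A preliminary observation rules out the macroscopic regime: using \eqref{asym_u} together with the fact that $\sqrt{\lambda}\gamma_\lambda$ is bounded (so $-2\log\theta_\lambda=\gamma_\lambda^2+O(1)$), one checks that if $|x_n|\theta_{\lambda_n}\to c>0$ then $|v_{\lambda_n}(x_n)|=O(\gamma_{\lambda_n}^{2})$ while $(1+|x_n|)^\tau\sim e^{\tau\gamma_{\lambda_n}^2/2}$, forcing $\Lambda_n\to 0$, which contradicts $\Lambda_n\to\infty$. Hence the bad sequence satisfies $|x_n|\theta_{\lambda_n}\to 0$.

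For the first blow-up, the bounds \eqref{v-equa1}--\eqref{v-equa3} ensure the coefficients in the $\tilde v_n$-equation are locally uniformly controlled and $\Lambda_n^{-1}g^*_{\lambda_n}\to 0$ on compact sets, so elliptic regularity produces (up to subsequence) $\tilde v_n \to v_\infty$ in $C^2_{loc}(\R^2)$, where
\[
-\Delta v_\infty = 2 e^{2U} v_\infty \quad\text{in }\R^2,\qquad |v_\infty(x)|\leq (1+|x|)^\tau,\qquad v_\infty(0)=\nabla v_\infty(0)=0.
\]
Decomposing $v_\infty$ into angular Fourier modes, in each mode $k\geq 2$ the radial ODE has its regular-at-origin solution growing like $|x|^k$ at infinity, incompatible with $\tau<1$, so only the $k=0,1$ modes contribute; this extends Lemma \ref{lem3.1} (originally stated under the finite-energy hypothesis) to the sub-linear growth setting and gives
\[
v_\infty(x) = c_0\,\frac{4-|x|^2}{4+|x|^2} + c_1\,\frac{x_1}{4+|x|^2} + c_2\,\frac{x_2}{4+|x|^2}.
\]
The conditions $v_\infty(0)=0$ and $\nabla v_\infty(0)=0$ then force $c_0=c_1=c_2=0$, so $v_\infty\equiv 0$, i.e.\ $\tilde v_n\to 0$ locally uniformly. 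Since $|\tilde v_n(x_n)|=(1+|x_n|)^\tau\geq 1$, we must therefore have $R_n:=|x_n|\to\infty$.

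For the second blow-up, set $\hat v_n(z):=\tilde v_n(R_n z)/R_n^\tau$, defined on $|z|\leq d_0/(R_n\theta_{\lambda_n})$; by the first paragraph this exhausts $\R^2$. Then $|\hat v_n(z)|\leq (R_n^{-1}+|z|)^\tau$ and $|\hat v_n(x_n/R_n)|\to 1$ with $x_n/R_n\in S^1$. Using $e^{2U(R_nz)}\leq C(R_n|z|)^{-4}$ and the decay in \eqref{v-equa1}--\eqref{v-equa2}, the rescaled equation
\[
-\Delta \hat v_n(z) = R_n^{2}\big(2 e^{2U(R_n z)} + \overline g_{\lambda_n}(R_n z)\big)\hat v_n(z) + R_n^{2-\tau}\Lambda_n^{-1} g^*_{\lambda_n}(R_n z)
\]
yields $-\Delta\hat v_n \to 0$ locally uniformly on $\R^2\setminus\{0\}$. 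Elliptic regularity then gives $\hat v_n\to \hat v_\infty$ in $C^1_{loc}(\R^2\setminus\{0\})$ with $\hat v_\infty$ harmonic on $\R^2\setminus\{0\}$, $|\hat v_\infty(z)|\leq |z|^\tau$, and $|\hat v_\infty(z_\infty)|=1$ for some $|z_\infty|=1$. Since $\tau<1$, the singularity at $0$ is removable and $\hat v_\infty$ extends to an entire harmonic function on $\R^2$; the sub-linear growth at infinity together with Liouville's theorem forces $\hat v_\infty$ to be constant, and the bound $|\hat v_\infty(z)|\leq |z|^\tau\to 0$ as $z\to 0$ makes that constant zero, contradicting $|\hat v_\infty(z_\infty)|=1$. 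The main obstacle in this plan is the classification step of the first blow-up, namely extending Lemma \ref{lem3.1} from finite Dirichlet energy to sub-linear growth via the angular-mode reduction; the preliminary macroscopic-regime observation is the key ingredient that guarantees the second blow-up limit is harmonic on a full punctured plane rather than on a proper subdomain.
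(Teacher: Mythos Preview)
Your approach is genuinely different from the paper's. The paper proceeds in two steps: first it controls the \emph{angular oscillation} $N_\lambda^* := \max_{|x'|=|x|}|v_\lambda(x)-v_\lambda(x')|/(1+|x|)^\tau$ by a reflection argument (comparing $v_\lambda(x)$ with $v_\lambda(x_1,-x_2)$ and using the macroscopic asymptotic \eqref{asym_u} to handle the far region), proving $N_\lambda^* = o(N_\lambda)$; then it reduces to the radial average $\psi_\lambda(r)=\frac{1}{2\pi}\int_0^{2\pi}v_\lambda(r,\theta)\,d\theta$, tests the equation against $(4-r^2)/(4+r^2)$ to get an ODE-type bound on $\psi_\lambda'$, and forces the maximum of $|\psi_\lambda|/(1+r)^\tau$ to occur at bounded $r$, whence a single radial blow-up together with $\psi_\lambda(0)=0$ gives the contradiction. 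Your direct two-step blow-up is cleaner and bypasses the reflection/averaging machinery, at the price of the sublinear-growth classification for the linearized Liouville kernel; your angular-mode argument for that is correct (for $|k|\ge 2$ the effective potential $k^2/r^2-2e^{2U}$ is strictly positive, so the regular-at-origin mode is increasing and must pick up the $r^{|k|}$ asymptotic).

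One step needs more care. In the second blow-up you invoke \eqref{v-equa1} to kill $R_n^2\,\overline g_{\lambda_n}(R_n z)$, but \eqref{v-equa1} carries the factor $|w_\lambda-U|$, which is \emph{not} a priori bounded at the intermediate scale $R_n$: all you know is $|w_\lambda-U|=|v_\lambda|/\gamma_\lambda^2\le \Lambda_n(1+|x|)^\tau/\gamma_\lambda^2$, and the ratio $\Lambda_n/\gamma_\lambda^2$ is uncontrolled. The remedy is short: writing $\overline g_\lambda=\dfrac{e^{2w_\lambda}-e^{2U}}{w_\lambda-U}-2e^{2U}=2e^{2c}-2e^{2U}$ with $c$ between $w_\lambda$ and $U$ (mean value theorem), one gets $|\overline g_\lambda|\le 2(e^{2w_\lambda}+e^{2U})\le C/(1+|x|)^{4-2\varepsilon}$ uniformly in $\lambda$, directly from \eqref{2.4.1}. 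With this uniform decay, $R_n^2\,\overline g_{\lambda_n}(R_n z)=O(R_n^{2\varepsilon-2})\to 0$ on compact subsets of $\R^2\setminus\{0\}$, and your second blow-up goes through.
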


As a consequence of Proposition \ref{prop4.2}, we pass to the limit into equation \eqref{v-equa} and obtain the following result.

\begin{Prop}\label{prop4.3}
It holds
\begin{equation}\label{lim-v}
\lim_{\lambda \rightarrow 0} v_{\lambda}=v_0 ~\mbox{in}~C^2_{loc}(\R^2),
\end{equation}
where
$v_0$ solves the non-homogeneous linear equation
\begin{equation}\label{4.3.1}
-\Delta v-2e^{2U}v= e^{2U}\big(U^2+U\big)~\mbox{in}~\R^2,
\end{equation}
and for any $\tau\in (0,1)$, there exists $C>0$ such that
\begin{equation}\label{v0-bound}
|v_{0}(x)|\leq C(1+|x| )^\tau.
\end{equation}
\end{Prop}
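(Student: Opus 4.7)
The plan is to combine the a priori bound \eqref{v-bound} with standard elliptic regularity to extract a $C^2_{\mathrm{loc}}$-convergent subsequence, then pass to the limit in \eqref{v-equa} term by term. Fix any $R>0$; for all sufficiently small $\lambda$ one has $B_R(0)\subset B_{d_0/\theta_\lambda}(0)$, so \eqref{v-equa} holds on $B_R(0)$. By Proposition \ref{prop4.2}, $\|v_\lambda\|_{L^\infty(B_R(0))}\le C_R$, and together with \eqref{v-equa1}--\eqref{v-equa2} this forces the right-hand side $2v_\lambda e^{2U}+v_\lambda\overline g_\lambda+g^*_\lambda$ to be uniformly bounded in $L^\infty(B_R(0))$. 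Interior $W^{2,p}$ estimates (for any $p>2$) combined with Sobolev embedding and a Schauder bootstrap then produce a uniform $C^{2,\alpha}(B_{R/2}(0))$ bound. A diagonal extraction and Arzelà--Ascoli yield $v_0\in C^2_{\mathrm{loc}}(\R^2)$ such that, along a subsequence, $v_\lambda\to v_0$ in $C^2_{\mathrm{loc}}(\R^2)$.

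Passing to the limit in \eqref{v-equa} is routine. The term $2 e^{2U}v_\lambda$ converges to $2e^{2U}v_0$ directly. For the error term $\overline g_\lambda v_\lambda$, one uses \eqref{v-equa1} with $|w_\lambda-U|=|v_\lambda|/\gamma_\lambda^2$: local boundedness of $v_\lambda$ and $\gamma_\lambda\to\infty$ give $\overline g_\lambda\to 0$ uniformly on compact sets, hence $\overline g_\lambda v_\lambda\to 0$ locally. Finally \eqref{v-equa3} gives $g^*_\lambda\to e^{2U}(U^2+U)$ in $C^2_{\mathrm{loc}}$. Thus $v_0$ satisfies \eqref{4.3.1}, and \eqref{v0-bound} follows by letting $\lambda\to 0$ pointwise in \eqref{v-bound}.

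The one delicate point is that $\lim_{\lambda\to 0}v_\lambda=v_0$ must be asserted along the full family, not only along a subsequence. This uses the natural normalization built into \eqref{v-def}: since $x_\lambda$ is a critical point of $u_\lambda$ with $u_\lambda(x_\lambda)=\gamma_\lambda$ and since $U(0)=0$, $\nabla U(0)=0$, one has
\[
v_\lambda(0)=\gamma_\lambda^2\bigl(w_\lambda(0)-U(0)\bigr)=0,\qquad \nabla v_\lambda(0)=\gamma_\lambda^2\bigl(\nabla w_\lambda(0)-\nabla U(0)\bigr)=0,
\]
and these conditions pass to any $C^2_{\mathrm{loc}}$ subsequential limit. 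Any two such limits differ by a solution $\psi$ of $-\Delta\psi=2e^{2U}\psi$ in $\R^2$ with $\psi(0)=0$, $\nabla\psi(0)=0$ and growth \eqref{v0-bound}; Lemma \ref{lem3.1} classifies such $\psi$ as $c_0\frac{4-|x|^2}{4+|x|^2}+\sum_{i=1}^2 c_i\frac{x_i}{4+|x|^2}$, and evaluating the function and its gradient at the origin forces $c_0=c_1=c_2=0$. Hence $v_0$ is unique and the full family converges. The heart of the argument is really Proposition \ref{prop4.2}; once that a priori bound is granted, the remainder is a standard compactness-plus-classification routine with no further surprises.
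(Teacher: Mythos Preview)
Your proof is correct and matches the paper's approach: the paper gives essentially no proof of this proposition beyond the sentence ``As a consequence of Proposition~\ref{prop4.2}, we pass to the limit into equation~\eqref{v-equa} and obtain the following result,'' and your argument is precisely the natural way to carry that out. Your explicit treatment of full-family convergence via the normalization $v_\lambda(0)=0$, $\nabla v_\lambda(0)=0$ together with Lemma~\ref{lem3.1} is a point the paper skips here but tacitly relies on (and later invokes the same normalization idea in Section~\ref{s6} when comparing the limits $v_0^{(1)}$ and $v_0^{(2)}$ for two different solutions).
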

\vskip 0.4cm
\begin{proof}[\underline{\textbf{Proof of Proposition \ref{prop4.2}}}]
To prove \eqref{v-bound}, we use similar ideas in  \cite[Estimate C]{CL2002}  and
\cite[Appendix B]{LY2018}. Let
\begin{equation*}
N_\lambda :=\max_{|x| \leq \frac{d_0}{\theta_\lambda}} \frac{| v_{\lambda}(x)|}{(1+|x|)^\tau},
\end{equation*}
then obviously
$$\eqref{v-bound} \Leftrightarrow N_\lambda \leq C.$$
We will prove that $N_\lambda  \leq C$ by contradiction. The proof is organized into two steps. Set
\begin{equation*}
N_\lambda^*:=\max_{|x| \leq \frac{d_0}{\theta_\lambda}}\max_{|x'|=|x|}\frac{|v_{\lambda }(x)-v_{\lambda }(x')|}{(1+|x|)^{\tau}}.
\end{equation*}
\emph{Step 1.
If $N_\lambda \to +\infty$, then it holds
\begin{equation}\label{4.2-step1}
N_\lambda^*=o(1)N_\lambda.
\end{equation}}
\proof[Proof of Step 1.]
Suppose that this is not true. Then there exists $c_0>0$ such that $N^*_\lambda\geq c_0N_\lambda$. Let $x'_\lambda$ and $x''_\lambda$ satisfy $|x'_\lambda|=|x''_\lambda| \leq \frac{d_0}{\theta_\lambda}$ and
\begin{equation*}
N_\lambda^*=  \frac{|v_{\lambda}(x'_\lambda)-v_{\lambda}(x''_\lambda)|}{(1+|x'_\lambda|)^{\tau}}.
\end{equation*}
Without loss of generality, we may assume that $x'_\lambda$ and
$x''_\lambda$ are symmetric with respect to the $x_1$ axis. Set
\begin{equation*}
\omega^*_\lambda(x):=v_{\lambda}(x)-v_{\lambda}(x^-),~\,~x^-:=(x_1,-x_2),\  \mbox{ for }x=(x_{1},x_{2}),  ~\,x_2>0.
\end{equation*}
Hence $x^{''}_{\lambda}={x^{'}_{\lambda}}^{-}$ and
$N_\lambda^*=\frac{ |\omega^*_\lambda(x'_{\lambda})| }{ (1+|x'_{\lambda}| )^{\tau} }$.
Let us define
\begin{equation}\label{omega-def}
\omega_\lambda(x):=\frac{\omega^*_\lambda(x)}{(1+x_2)^{\tau}}.
\end{equation}
By \eqref{add_vequa}, direct calculations show that $\omega_\lambda$ satisfies
\begin{equation}\label{omega-equa}
-\Delta \omega_\lambda-\frac{2\tau}{1+x_2}\frac{\partial \omega_\lambda}{\partial x_2}+
\frac{\tau(1-\tau)}{(1+x_2)^2}\omega_\lambda=\frac{g_\lambda(x)}{(1+x_2)^\tau}+
\frac{\widetilde{{g}}_\lambda(x)}{(1+x_2)^\tau},
\end{equation}
where $g_\lambda(x):=g^*_\lambda(x)-g^*_\lambda(x^-)$, $g^*_\lambda$ is defined in \eqref{v-equa2} and
\begin{equation}\label{tilde_g}
\begin{split}
\widetilde{g}_\lambda(x) :=&~ \gamma^2_\lambda \Big(e^{2w_{\lambda}(x)}-e^{2w_{\lambda}(x^-)}\Big)
= \gamma^2_\lambda e^{2(1-\xi)w_{\lambda}(x^-)+2\xi w_{\lambda}(x)} \big(2w_{\lambda}(x)-2w_{\lambda}(x^-)\big) \\
=&~ 2\gamma^2_\lambda e^{2(1-\xi)w_{\lambda}(x^-)+2\xi w_{\lambda}(x)} \Big(\big(w_{\lambda}(x)-U(x)\big)-\big(w_{\lambda}(x^-)-U(x)\big)\Big) \\
=&~ 2e^{2(1-\xi)w_{\lambda}(x^-)+2\xi w_{\lambda}(x)} \big(v_{\lambda}(x)-v_{\lambda}(x^-)\big) \\
=&~ 2e^{2(1-\xi)w_{\lambda}(x^-)+2\xi w_{\lambda}(x)} \omega_\lambda^{\ast}(x)\overset{Lemma~\ref{lem2.4}}=O\Big(\frac{|\omega_\lambda^{\ast}|}{ 1+|x|^{4-\delta}} \Big)~\,\mbox{for}~~|x| \leq \frac{d_0}{\theta_\lambda},
\end{split}
\end{equation}
where $\xi \in (0,1)$.
Also let $x^{**}_\lambda$ satisfies $|x^{**}_\lambda| \leq \frac{d_0}{\theta_\lambda}$, $x^{**}_{\lambda,2}\geq 0$ and
\begin{equation}\label{N_ast2-def}
\big|\omega_\lambda(x^{**}_\lambda)\big|= N^{**}_\lambda:=\max_{|x| \leq \frac{d_0}{\theta_\lambda},~x_2\geq 0} |\omega_\lambda(x)|.
\end{equation}
Then it follows
\begin{equation}\label{N_ast2-inequa}
N^{**}_\lambda \geq \frac{\big|\omega^*_\lambda(x'_\lambda)\big|}{\big(1+x'_{\lambda,2}\big)^{\tau}}
\geq \frac{\big|v_{\lambda}(x'_\lambda)-v_{\lambda}(x''_\lambda)\big|}
{\big(1+|x'_\lambda|\big)^{\tau}}=N^*_\lambda\to +\infty.
\end{equation}
We may assume that $\omega_\lambda(x^{**}_\lambda)>0$. We claim that
\begin{equation}\label{x_ast2-bound}
|x^{**}_\lambda|\leq C.
\end{equation}
The proof of \eqref{x_ast2-bound} is divided into two parts.

\vskip 0.2cm

\noindent
\emph{Part 1. We prove that $|x^{**}_\lambda|\leq \frac{d_0}{2\theta_{\lambda}}$.}

\vskip 0.2cm
Arguing by contradiction, we know that
\begin{align}\label{4.2-part1-1}
\omega^*_\lambda(x^{**}_\lambda) =& v_{\lambda}(x^{**}_\lambda)-v_{\lambda}(x^{**-}_\lambda)
=\gamma^2_\lambda \Big(\big(w_{\lambda}(x^{**}_\lambda)-w_{\lambda}(x^{**-}_\lambda)\big)
-\big(U(x^{**}_\lambda)-U(x^{**-}_\lambda)\big)\Big) \notag\\
=& \gamma^3_\lambda \Big(u_{\lambda}\big(\theta_{\lambda}x^{**}_\lambda+x_{\lambda}\big)
-u_{\lambda}\big(\theta_{\lambda}x^{**-}_\lambda+x_{\lambda}\big)\Big),
\end{align}
the last equality is derived from \eqref{defw} and the fact that $U$ is radial.
Moreover, since we suppose that $|x^{**}_\lambda| > \frac{d_0}{2\theta_{\lambda}}$, using \eqref{asym_u} with $d=\frac{d_0}{4}$, we get
\begin{equation}\label{4.2-part1-2}
\begin{split}
&~ u_{\lambda}(\theta_{\lambda}x^{**}_\lambda+x_{\lambda})-u_{\lambda}(\theta_{\lambda}x^{**-}_\lambda+x_{\lambda}) \\
=&~ C_{\lambda}
\Big(G\big(x_{\lambda},\theta_{\lambda}x^{**}_\lambda+x_{\lambda}\big)-
G\big(x_{\lambda},\theta_{\lambda}x^{**-}_\lambda+x_{\lambda}\big)  \Big)
+o\Big(\frac{\theta_{\lambda} }{\gamma_\lambda }\Big).
\end{split}
\end{equation}
Since $\frac{d_0}{2\theta_\lambda} < |x_\lambda^{**}| \leq \frac{d_0}{\theta_\lambda}$, we have $\theta_{\lambda}\big|x^{**}_\lambda- x^{**-}_\lambda\big| \leq 2d_0$. Hence, it is easy to find that
\begin{equation*}
\begin{split}
& G(x_{\lambda},\theta_{\lambda}x^{**}_\lambda+x_{\lambda})-G(x_{\lambda},
\theta_{\lambda}x^{**-}_\lambda+x_{\lambda}) \\=&
-H(x_{\lambda}, \theta_{\lambda}x^{**}_\lambda+x_{\lambda})+H(x_{\lambda},\theta_{\lambda}x^{**-}_\lambda+x_{\lambda}) \\=&
O\bigg(\theta_{\lambda}\big|x^{**}_\lambda- x^{**-}_\lambda\big|
\Big| \nabla H \big(x_{\lambda}, \xi\theta_{\lambda}x^{**-}_\lambda+(1-\xi)\theta_{\lambda}x^{**}_\lambda +x_{\lambda} \big) \Big|\bigg)~\,\mbox{with}~\,\xi\in (0,1),
\end{split}
\end{equation*}
and
$$\Big| \nabla H\big(x_{\lambda}, \xi\theta_{\lambda}x^{**-}_\lambda+(1-\xi)\theta_{\lambda}x^{**}_\lambda +x_{\lambda}\big) \Big| =O\big(1\big).$$
Then by above computations, it follows
\begin{equation}\label{4.2-part1-3}
\begin{split}
G(&x_{\lambda}, \theta_{\lambda}x^{**}_\lambda+x_{\lambda})-G(x_{\lambda},\theta_{\lambda}x^{**-}_\lambda+x_{\lambda}) =O\big(\theta_{\lambda} x^{**}_{\lambda,2}\big).
\end{split}
\end{equation}
So from \eqref{4.2-part1-2} and \eqref{4.2-part1-3}, we have
\begin{equation}\label{4.2-part1-4}
\begin{split}
 u_{\lambda} (\theta_{\lambda}x^{**}_\lambda+x_{\lambda})-u_{\lambda}(\theta_{\lambda}x^{**-}_\lambda+x_{\lambda})
=O\Big(\frac{\theta_{\lambda} x^{**}_{\lambda,2}}{\gamma_\lambda}\Big)
+o\Big( \frac{\theta_{\lambda}}{\gamma_\lambda} \Big).
\end{split}\end{equation}
Now from  \eqref{4.2-part1-1} and \eqref{4.2-part1-4}, we get
\begin{equation}\label{4.2-part1-5}
\begin{split}
\omega^*_\lambda(x^{**}_\lambda) = v_{\lambda}(x^{**}_\lambda)-v_{\lambda}(x^{**-}_\lambda)
= O\big(\gamma_\lambda^2\theta_{\lambda} x^{**}_{\lambda,2}\big) +o\big(\gamma_\lambda^2\theta_{\lambda} \big).
\end{split}\end{equation}
As a result,
\begin{equation*}
N^{**}_\lambda = \frac{\omega^*_\lambda(x_\lambda^{**})}{(1+x^{**}_{\lambda,2})^{\tau}}
\leq C \gamma_\lambda^2\theta_{\lambda}(x^{**}_{\lambda,2})^{1-\tau}+o\big(\gamma_\lambda^2\theta_{\lambda} \big) \leq  C \gamma_\lambda^2 \theta_{\lambda}^{\tau} +o\big(\gamma_\lambda^2\theta_{\lambda} \big)=o\big(1\big).
\end{equation*}
It is a contradiction with \eqref{N_ast2-inequa} and concludes the proof of \emph{Part 1.}

\vskip 0.2cm

\noindent
\emph{Part 2. We prove that $|x^{**}_\lambda|\leq C$.}

\vskip 0.2cm
Suppose this is not true. Since we have proved that $x^{**}_\lambda\in B_{\frac{d_0}{2\theta_{\lambda}}}(0)$, we can deduce that
\begin{equation}\label{4.2-part2-1}
\big|\nabla \omega_\lambda(x^{**}_\lambda)\big|=0~\mbox{and}~\Delta \omega_\lambda(x^{**}_\lambda)\leq 0.
\end{equation}
So from
\eqref{omega-equa}, \eqref{tilde_g} and \eqref{4.2-part2-1}, we find
\begin{equation}\label{4.2-part2-2}
\begin{split}
0\leq &-\Delta \omega_\lambda(x^{**}_\lambda)=-
\frac{\tau(1-\tau)}{(1+x^{**}_{\lambda,2})^2}\omega_\lambda(x^{**}_\lambda)
+\frac{g_\lambda(x^{**}_\lambda)}{(1+x^{**}_{\lambda,2})^\tau}+ \frac{\widetilde{{g}}_\lambda(x^{**}_\lambda)}{(1+x^{**}_{\lambda,2})^\tau}\\=&-
\left(\frac{\tau(1-\tau)}{(1+x^{**}_{\lambda,2})^2}+O\Big(\frac{1}{ 1+|x^{**}_{\lambda}|^{4-\delta}}\Big)
\right)\omega_\lambda(x^{**}_\lambda)
+\frac{g_\lambda(x^{**}_\lambda)}{(1+x^{**}_{\lambda,2})^\tau}.
\end{split}\end{equation}
Also by assumption $|x^{**}_\lambda|\to \infty$, we find that for some large $R>0$ and $\delta\in (0,1)$, there exists $\lambda_0$ such that $|x^{**}_\lambda|\geq R$ for any $\lambda\in (0,\lambda_0)$. And it also follows
\begin{equation}\label{4.2-part2-3}
O\Big(\frac{1}{ 1+|x^{**}_{\lambda}|^{4-\delta}} \Big) \leq  \frac{\tau(1-\tau)}{2(1+x^{**}_{\lambda,2})^2}\,\,\,\mbox{for}~~\lambda\in (0,\lambda_0).
\end{equation}
Hence using \eqref{4.2-part2-2}, \eqref{4.2-part2-3} and the fact that $|x^{**}_\lambda|\to \infty$, we deduce that
\begin{equation*}
\frac{\omega_\lambda(x^{**}_\lambda)}{(1+x^{**}_{\lambda,2})^2}\leq \frac{Cg_\lambda(x^{**}_\lambda)}{(1+x^{**}_{\lambda,2})^{\tau}}.
\end{equation*}
Then it follows
\begin{equation}\label{4.2-part2-4}
 \omega_\lambda(x^{**}_\lambda) \leq Cg_\lambda(x^{**}_\lambda)(1+x^{**}_{\lambda,2})^{2-\tau}.
\end{equation}
Combining \eqref{v-equa2}, \eqref{N_ast2-def} and \eqref{4.2-part2-4}, we obtain
\begin{equation*}
N^{**}_\lambda\leq
\frac{C }{(1+|x_\lambda^{**}|)^{2+\tau-\delta}}\to 0~~\mbox{as}~~\lambda\to 0,
\end{equation*}
which is a contradiction with \eqref{N_ast2-inequa}. Then the proof of \emph{Part 2} is completed and \eqref{x_ast2-bound} follows.\\

Now let
\begin{equation}\label{def_omegaast2}
\omega_\lambda^{**}(x)=\frac{\omega_\lambda(x)}{N_\lambda^{**}},
\end{equation}
where $\omega_\lambda$ is defined in \eqref{omega-def} and $N_\lambda^{**}$ is defined in \eqref{N_ast2-def}.
From \eqref{omega-equa}, it follows that $\omega_\lambda^{**}(x)$ solves
\begin{equation}\label{omega_ast2-equa}
-\Delta \omega^{**}_\lambda-\frac{2\tau}{1+x_2}\frac{\partial \omega^{**}_\lambda}{\partial x_2}+
\frac{\tau(1-\tau)}{(1+x_2)^2}\omega^{**}_\lambda=\frac{g_\lambda(x)}{N_\lambda^{**}(1+x_2)^\tau}+
\frac{\widetilde{{g}}_\lambda(x)}{N_\lambda^{**}(1+x_2)^\tau}.
\end{equation}
Moreover $\omega^{**}_{\lambda}(x^{**}_\lambda)=1$ and
\begin{equation}\label{bound_omegaast2}
|\omega^{**}_{\lambda}(x)|\leq 1~ \mbox{in}~ B^+_{\frac{d_0}{\theta\lambda}}:=\Big\{x \in B_{\frac{d_0}{\theta\lambda}}(0) : x_2 \geq 0 \Big\},
\end{equation}
hence $\omega^{**}_\lambda(x)\to \omega$ uniformly in any compact subset of $\R^2$ and $\omega\not\equiv0$ because $\omega_\lambda^{**}(x_{\lambda}^{**})=1$ and  $|x_{\lambda}^{**}|\le C$. Observe that from \eqref{v-equa2},
$$\frac{g_\lambda(x)}{N_\lambda^{**}(1+x_2)^{\tau}}\to 0~\mbox{uniformly in any compact set of}~ B^+_{\frac{d_0}{\theta\lambda}} ~\mbox{as}~\lambda \to 0.$$
For any  $\phi(x)\in C^{\infty}_0\big(B_{\frac{d_0}{\theta\lambda}}(0)\big)$, we also have
\begin{align*}
& \frac{1}{N_\lambda^{**}} \int_{B^+_{\frac{d_0}{\theta\lambda}}} \bigg( \frac{\widetilde{g}_\lambda(x)}{(1+x_2)^\tau} -2e^{2U(x)} \omega_\lambda(x)  \bigg)  \phi(x) \,dx \\
\overset{\eqref{tilde_g}}=& \frac{1}{N_\lambda^{**}} \int_{B^+_{\frac{d_0}{\theta\lambda}}} \left( \frac{\gamma^2_\lambda \big(e^{2w_{\lambda}(x)}-e^{2w_{\lambda}(x^-)}\big)} {(1+x_2)^{\tau}}- 2e^{2U(x)} \omega_\lambda(x)\right) \phi(x) \,dx \\
\overset{\eqref{omega-def}}=& \frac{1}{N_\lambda^{**}} \int_{B^+_{\frac{d_0}{\theta\lambda}}} \frac{1}{(1+x_2)^{\tau}} \left(\gamma^2_\lambda \big(e^{2w_{\lambda}(x)}-e^{2w_{\lambda}(x^-)}\big) - 2e^{2U(x)}\omega_\lambda^{\ast}(x)\right) \phi(x) \,dx \\
\overset{\eqref{tilde_g}}=& \frac{1}{N_\lambda^{**}} \int_{B^+_{\frac{d_0}{\theta\lambda}}} \frac{1}{(1+x_2)^{\tau}} \left( 2e^{2(1-\xi)w_{\lambda}(x^-)+2\xi w_{\lambda}(x)} \omega_\lambda^{\ast}(x) -2e^{2U(x)}\omega_\lambda^{\ast}(x)\right) \phi(x) \,dx \\
\overset{\eqref{omega-def}}=& \frac{4}{N_\lambda^{**}} \int_{B^+_{\frac{d_0}{\theta\lambda}}} e^{2(1-\zeta)U(x)+2\zeta [(1-\xi)w_{\lambda}(x^-)+\xi w_{\lambda}(x)]} \Big(\big(1-\xi \big)w_\lambda(x^-)+\xi w_\lambda(x)-U(x)\Big) \omega_\lambda(x) \phi(x) \,dx \\
\overset{\eqref{def_omegaast2}}=& 4\int_{B^+_{\frac{d_0}{\theta\lambda}}} e^{2(1-\zeta)U(x)+2\zeta [(1-\xi)w_{\lambda}(x^-)+\xi w_{\lambda}(x)]} \Big(\big(1-\xi \big)w_\lambda(x^-)+\xi w_\lambda(x)-U(x)\Big) \omega_\lambda^{\ast\ast}(x) \phi(x) \,dx \\
\overset{\eqref{bound_omegaast2}}=& O\bigg(\int_{B^+_{\frac{d_0}{\theta\lambda}}} e^{2(1-\zeta)U(x)+2\zeta [(1-\xi)w_{\lambda}(x^-)+\xi w_{\lambda}(x)]} \big|(1-\xi)w_\lambda(x^-)+\xi w_\lambda(x)-U(x)\big| \cdot |\phi(x)| \,dx \bigg),
\end{align*}
where $\xi,\zeta \in (0,1)$.
By Lemma \ref{lem2.4}, we know that
\[
e^{2(1-\zeta)U(x)+2\zeta [(1-\xi)w_{\lambda}(x^-)+\xi w_{\lambda}(x)]} = O\Big(\frac{1}{1+|x|^{4-\delta}}\Big),
\]
and then using the dominated convergence theorem and \eqref{limw}, we derive
\[
\frac{1}{N_\lambda^{**}} \int_{B^+_{\frac{d_0}{\theta\lambda}}} \bigg( \frac{\widetilde{g}_\lambda(x)}{(1+x_2)^\tau} -2e^{2U(x)} \omega_\lambda(x)  \bigg)  \phi(x) \,dx \rightarrow 0.
\]
Hence, passing to the limit $\lambda \to 0$ into \eqref{omega_ast2-equa}, we can deduce that $\omega$ solves
\begin{equation*}
\begin{cases}
\displaystyle -\Delta \omega -\frac{2\tau}{1+x_2}\frac{\partial \omega}{\partial x_2}
+\Big(\frac{\tau(1-\tau)}{(1+x_2)^2}-2e^{2U(x)} \Big)\omega =0,~x_2>0,\\[3mm]
\displaystyle \omega(x_1,0)=0.
\end{cases}
\end{equation*}
Set $\bar \omega=(1+x_2)^{\tau}\omega$, then $\bar \omega \not\equiv 0$ and we find that $\bar \omega$ satisfies
\begin{equation*}
\begin{cases}
-\Delta \bar \omega  -2e^{2U} \bar \omega =0,~x_2>0,\\[2mm]
\bar\omega(x_1,0)=0.
\end{cases}
\end{equation*}
Then using Lemma \ref{lem3.1}, we have
\begin{equation}\label{omega_bar}
\bar \omega(x)=c_0\frac{4-|x|^2}{4+|x|^2}+\sum^2_{i=1}{c_i}\frac{x_i}{4+|x|^2},~~
\mbox{for some constants $c_0$, $c_1$ and $c_2$}.
\end{equation}
On the other hand, by the definition of $v_{\lambda}(x)$ and $\omega^*_\lambda(x)$, we know
\begin{equation}\label{omega_0}
\begin{split}
\nabla \omega_\lambda^*(0)=&\nabla \big( v_{\lambda}(x)-v_{\lambda}(x^-)\big)\big|_{x=0}\\=&
\gamma^2_\lambda \big(\nabla w_{\lambda}(x)-\nabla U(x)\big)\big|_{x=0}-\gamma^2_\lambda\big(\nabla w_{\lambda}(x^-)-\nabla U(x^-)\big)\big|_{x=0} \\=&
\gamma^3_\lambda \theta_{\lambda} \nabla u_{\lambda}(x_{\lambda}+\theta_{\lambda}x)\big|_{x=0}-
 \gamma^3_\lambda \theta_{\lambda} \nabla u_{\lambda}(x_{\lambda}+\theta_{\lambda}x^-)\big|_{x=0}\\
 =&0.
\end{split}
\end{equation}
Here the last equality follows by the fact that $x_{\lambda}$ is a local maximum point of $u_{\lambda}(x)$.
Hence
\begin{equation*}
\nabla \big((1+x_2)^{\tau} \omega_\lambda^{**}\big)  \big|_{x=0}=
\frac{\nabla \omega_\lambda^*(0)}{N_\lambda^{**}}= 0,
\end{equation*}
which means $\nabla \bar\omega(0)=0$. Then from \eqref{omega_bar}, we find $c_1=c_2=0$.
Moreover from $\bar\omega(x_1,0)=0$, we also have $c_0=0$. So $\bar \omega=0$, which is a contradiction.
Hence \eqref{4.2-step1} follows, which concludes the proof of \emph{Step 1}.
\vskip 0.2cm
\noindent\emph{Step 2. We prove that $N_{\lambda}\leq C$ arguing by contradiction.}
\proof[Proof of Step 2]
Suppose by contradiction that $N_\lambda\to +\infty$, as $\lambda\to 0$ and set
\begin{equation*}
\psi_{\lambda}(r)=\frac{1}{2\pi}\int^{2\pi}_0v_{\lambda}(r,\theta)d \theta,~\,\,r=|x|.
\end{equation*}
By \emph{Step 1}, it follows that
\begin{equation*}
\begin{split}
 \frac{|\psi_{\lambda}(r)|}{(1+r)^\tau}=&
 \frac{1}{2\pi(1+r)^\tau} {\Big |}\int^{2\pi}_0v_{\lambda}(r,\theta)d \theta
 {\Big |}\\\leq  &
 \max_{\theta\in [0,2\pi]} \frac{|v_{\lambda}(r,\theta)|}{ (1+r)^\tau}
 +
 \frac{1}{2\pi(1+r)^\tau}  \left| \int^{2\pi}_0\Big (v_{\lambda}(r,\theta)- \max_{\theta\in [0,2\pi]}  \big | v_{\lambda}(r,\theta) \big | \Big)  d \theta  \right|.
\end{split}
\end{equation*}
Then we have
\begin{equation*}
\begin{split}
  \max_{r\leq \frac{d_0}{\theta_{\lambda}}}\frac{|\psi_{\lambda}(r)|}{(1+r)^\tau}\leq &
  \max_{r\leq \frac{d_0}{\theta_{\lambda}}} \max_{\theta\in [0,2\pi]} \frac{|v_{\lambda}(r,\theta)|}{ (1+r)^\tau}
 +C \max_{r\leq \frac{d_0}{\theta_{\lambda}}} \max_{\theta_1,\theta_2\in [0,2\pi]}
 \frac{\big|v_{\lambda}(r,\theta_1)-v_{\lambda}(r,\theta_2)\big|}{ (1+r)^\tau}
 \\=&
  \max_{|x|\leq \frac{d_0}{\theta_{\lambda}}} \frac{|v_{\lambda}(x)|}{ (1+|x|)^\tau} +
 C \max_{|x|\leq \frac{d_0}{\theta_{\lambda}}} \max_{|x'|=|x|}\frac{|v_{\lambda}(x)-v_{\lambda}(x')|}{ (1+|x|)^\tau}
\\=&N_\lambda+C N^*_\lambda \overset{\eqref{4.2-step1}}=N_\lambda\big(1+o(1)\big).
\end{split}
\end{equation*}
Similarly, it follows
\begin{equation*}
\begin{split}
  \max_{r\leq \frac{d_0}{\theta_{\lambda}}}\frac{|\psi_{\lambda}(r)|}{(1+r)^\tau}\geq &
  \max_{r\leq \frac{d_0}{\theta_{\lambda}}} \max_{\theta\in [0,2\pi]} \frac{|v_{\lambda}(r,\theta)|}{ (1+r)^\tau}
 -C \max_{r\leq \frac{d_0}{\theta_{\lambda}}} \max_{\theta_1,\theta_2\in [0,2\pi]}
 \frac{\big|v_{\lambda}(r,\theta_1)-v_{\lambda}(r,\theta_2)\big|}{ (1+r)^\tau}
 \\=&
 \max_{|x|\leq \frac{d_0}{\theta_{\lambda}}} \frac{|v_{\lambda}(x)|}{ (1+|x| )^\tau} -
 C \max_{|x|\leq \frac{d_0}{\theta_{\lambda}}} \max_{|x'|=|x|}\frac{|v_{\lambda}(x)-v_{\lambda}(x')|}{ (1+|x|)^\tau}
\\=&N_\lambda-C N^*_\lambda =N_\lambda\big(1+o(1)\big).
\end{split}
\end{equation*}
Hence we can find that
\begin{equation}\label{psi-lamda}
\max_{r\leq \frac{d_0}{\theta_{\lambda}}}\frac{|\psi_{\lambda}(r)|}{(1+r)^\tau}=N_\lambda\big(1+o(1)\big).
\end{equation}

Assume that $\frac{|\psi_{\lambda}(r)|}{(1+r)^\tau}$ attains its maximum at $r_\lambda$,
then we claim
\begin{equation}\label{r-bound}
r_\lambda\leq C.
\end{equation}
In fact, let $\phi(x)=
\frac{4-|x|^2}{4+|x|^2}$ and  we recall that
$-\Delta\phi(x)=2e^{2U(x)}\phi(x)$.
Now multiplying \eqref{v-equa} by $\phi$ and integrating by parts, we have
\begin{equation}\label{v-phi-1}
\begin{split}
\int_{|x|=r} -\Big(\frac{\partial v_{\lambda}}{\partial \nu}\phi-  \frac{\partial \phi}{\partial \nu}v_{\lambda}\Big)\,d\sigma
 =&
\int_{|x|\leq r}  v_{\lambda}\overline{g}_\lambda\phi\, dx +
\int_{|x|\leq r} g^*_\lambda\phi\, dx\\
=&  O\Big(N_\lambda \int_{|x|\leq r}\overline{g}_\lambda\phi(1+|x|)^{\tau}\,dx+
\int_{|x|\leq r} g^*_\lambda\phi\,dx\Big)\\
=&  o(1)N_\lambda+O(1),
\end{split}
\end{equation}
the last identity is due to \eqref{limw}, \eqref{v-equa1} and \eqref{v-equa2}. Also for $r \geq 3$, we know
\begin{equation}\label{v-phi-2}
\begin{split}
\int_{|x|=r}\Big(\frac{\partial v_{\lambda}}{\partial \nu}\phi-  \frac{\partial \phi}{\partial \nu}v_{\lambda}\Big)\,d\sigma
= & \frac{4-r^2}{4+r^2}\int_{|x|=r} \frac{\partial v_{\lambda}}{\partial \nu}\, d\sigma
+ \frac{16r}{(4+r^2)^2} \int_{|x|=r}  v_{\lambda}\, d\sigma\\
=& \frac{4-r^2}{4+r^2}\int_{0}^{2\pi} r \frac{\partial v_\lambda(r,\theta)}{\partial r} \,d\theta + \frac{16r}{(4+r^2)^2} \int_{0}^{2\pi} r v_\lambda(r,\theta) \,d\theta \\
=& 2\pi r \frac{4-r^2}{4+r^2} \psi_{\lambda}'(r) + \frac{32\pi r^2}{(4+r^2)^2} \psi_{\lambda}(r).
\end{split}
\end{equation}
Hence, it follows from \eqref{v-phi-1} and \eqref{v-phi-2} that
\begin{equation}\label{add_psi}
2\pi r\frac{r^2-4}{4+r^2} \psi_{\lambda}'(r) = o(1)N_\lambda +O(1) +\frac{32\pi r^2}{(4+r^2)^2} \psi_{\lambda}(r),
~\mbox{for}~ r\geq 3,
\end{equation}
and \eqref{psi-lamda} implies
\begin{equation}\label{add_psi2}
\frac{32\pi r^2}{(4+r^2)^2} \psi_{\lambda}(r) \leq 32\pi \frac{ r^2(1+r)^{\tau}}{(4+r^2)^2} N_\lambda.
\end{equation}
Note that for $r\geq 3$ and $\tau\in(0,1)$, it holds
\begin{equation*}
\frac{ r^2(1+r)^{\tau}}{(4+r^2)^2} \leq \frac{C_\tau}{r}~~\mbox{and}~~\frac{4+r^2}{r^2-4}\leq \frac{13}{5},
\end{equation*}
where $C_\tau:=\max\limits_{r>3}\frac{ r^3(1+r)^{\tau}}{(4+r^2)^2}$. Therefore, we deduce from \eqref{add_psi} and \eqref{add_psi2} that
\begin{equation*}
|\psi'_{\lambda}(r)|=  \frac{o\big(N_\lambda\big)}{r} +  \frac{O(1)}{r} +\frac{O\big( N_\lambda \big)}{r^2}.
\end{equation*}
Furthermore, we find
\begin{equation}\label{r-bound-1}
\begin{split}
&~ (1+r_\lambda)^{\tau} N_\lambda\big(1+o(1)\big)  = |\psi_{\lambda}(r_\lambda)|\leq \int^{r_\lambda}_3|\psi'_{\lambda}(r)| \,dr +|\psi_{\lambda}(3)| \\
\leq &~ o\big(N_\lambda \big)\big(\ln r_\lambda-\ln3\big) + O(1)\big(\ln r_\lambda-\ln3\big) + O\big( N_\lambda \big)\Big(\frac{1}{3}-\frac{1}{r_\lambda}\Big) + |\psi_{\lambda}(3)| ,
\end{split}
\end{equation}
and
\begin{equation}\label{r-bound-2}
\big|\psi_{\lambda}(3)\big|\leq 4^{\tau}\max_{r\leq \frac{d_0}{\theta_{\lambda}}}\frac{|\psi_{\lambda}(r)|}{(1+r)^\tau}=O\big( N_\lambda \big).
\end{equation}
Hence  from \eqref{r-bound-1} and \eqref{r-bound-2}, we have
$(1+r_\lambda)^{\tau}N_\lambda= O\big(N_\lambda \big)$, which implies \eqref{r-bound}.
\vskip 0.2cm

Now integrating \eqref{v-equa}, we get
\begin{equation}\label{equazionePsi}
-\Delta \psi_{\lambda}=2\psi_{\lambda}e^{2U}+\overline{h}_\lambda+\widehat{h}_\lambda,\qquad~\mbox{ for}~|x|\leq \frac{d_0}{\theta_{\lambda}},
\end{equation}
where 
$$\overline{h}_\lambda(r):=\frac{1}{2\pi}\displaystyle\int^{2\pi}_0 v_{\lambda}(r,\theta) \overline{g}_\lambda(r,\theta)d \theta~~~~\,\,~\mbox{and}~~~~\,\,~\widehat{h}_{\lambda}(r):=\frac{1}{2\pi}\displaystyle\int^{2\pi}_0 g^*_\lambda(r,\theta)d \theta,$$
here $\overline{g}_\lambda$  and $g^*_\lambda$ are defined as \eqref{v-equa1} and \eqref{v-equa2} respectively.

\vskip 0.1cm

Next we define $\psi^*_{\lambda}(x)=\frac{\psi_{\lambda}(|x|)}{\psi_{\lambda}(r_\lambda)}$. We calculate that
\begin{equation*}
|\psi^*_{\lambda}(x)|\leq \frac{C\big|\psi_{\lambda}(|x|)\big|}{N_\lambda}\leq C\big(1+|x|\big)^{\tau}
\end{equation*}
and
\begin{equation*}
\frac{\widehat{h}_\lambda(r)}{\psi_{\lambda}(r_\lambda)}\leq \frac{C \widehat{h}_\lambda(r)}{N_\lambda} \overset{\eqref{v-equa2}} \leq \frac{C}{N_\lambda}\to 0.
\end{equation*}
Moreover, from the dominated convergence theorem, we have
\begin{equation*}
\int_{B_{\frac{d_0}{\theta_{\lambda}}}(0)}
\frac{\overline{h}_\lambda(|x|)}{\psi_{\lambda}(r_\lambda) } \phi(x)dx \to 0\,\,~\mbox{for any radial function}~\phi(x)\in C^{\infty}_0\big(B_{\frac{d_0}{\theta_{\lambda}}}(0)\big).
\end{equation*}
Hence from the above computations and the dominated convergence theorem, passing to the limit in equation \eqref{equazionePsi} divided by $\psi_\lambda(r_\lambda)$, we can deduce that $\psi^*_{\lambda}\to \psi(|x|)$ in  $C^2_{loc}(\R^2)$ and $\psi$ satisfies
\begin{equation*}
-\psi''-\frac{1}{r}\psi'=2e^{2U}\psi.
\end{equation*}
Therefore, it follows from Lemma \ref{lem3.1} that
\begin{equation}\label{psi-res}
\psi(|x|)=c_0\frac{4-|x|^2}{4+|x|^2},~\mbox{with some constant}~c_0.
\end{equation}
Since $\psi^*_{\lambda}{ (r_{\lambda})}=1$ and $r_\lambda\leq C$, we find $\psi\not\equiv 0$.

\vskip 0.1cm

On the other hand, we know that $\psi_{\lambda}(0){  =v_{\lambda}(0)=0}$ by definition. Then, we have $\psi(0)=0$, which together with \eqref{psi-res} implies $\psi\equiv 0$. This is a contradiction.

As a result, $N_\lambda \leq C$ and we complete the proof of Proposition \ref{prop4.2}.
\end{proof}

Next  we introduce
\begin{equation}\label{k-def}
k_{\lambda}(x):=\gamma^2_\lambda\big(v_{\lambda}(x)-v_0(x)\big),
\end{equation}
where $v_{\lambda}$ is defined in \eqref{v-def} and $v_{0}$ is its limit function as $\lambda\rightarrow 0$ (see Proposition \ref{prop4.3}).
\begin{Lem}
For any small fixed $d_0>0$, it holds
\begin{equation}\label{k-equa}
\begin{split}
-\Delta k_{\lambda}=  2k_{\lambda}e^{2U}+  h^*_\lambda +h^{**}_\lambda ~\,\,\mbox{for}~~|x|\leq \frac{d_0}{\theta_\lambda},
\end{split}
\end{equation}
where
\begin{equation*}
h_\lambda^*(x) \to e^{2U } \Big(\frac{1}{2}U^4+2v_0^2+ U^3+  v_0\big(2U^2+4U+1\big) \Big)~\,\,\mbox{in}~C_{loc}^2(\R^2),
~\,\,\mbox{as}~ \lambda \to 0,
\end{equation*}
\begin{equation}\label{h-ast}
h_\lambda^*(x) = O\Big( \frac{1}{1+|x|^{4-\delta-2\tau}} \Big)~\,\,\mbox{for}~~|x|\leq \frac{d_0}{\theta_\lambda},
\end{equation}
and
\begin{equation}\label{h-ast2}
h_\lambda^{**}(x) = O\Big(\frac{1}{\gamma^2_\lambda \big(1+|x|^{4-\delta-3\tau}\big)}\Big)~\,\,\mbox{for}~~|x|\leq \frac{d_0}{\theta_\lambda}.
\end{equation}
\end{Lem}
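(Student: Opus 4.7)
The plan is to refine the computation that led to equation \eqref{v-equa} by carrying the Taylor expansion of the exponential nonlinearity one order further in $\gamma_\lambda^{-2}$, and then to isolate the equation satisfied by $k_\lambda$ by subtracting off the equation for $v_0$. Starting from the identity established in \eqref{add_vequa},
\begin{equation*}
-\Delta v_\lambda = \gamma_\lambda^2 \Big(\big(1+\tfrac{w_\lambda}{\gamma_\lambda^2}\big) e^{2w_\lambda + \frac{w_\lambda^2}{\gamma_\lambda^2}} - e^{2U}\Big),
\end{equation*}
I would write $e^{\frac{w_\lambda^2}{\gamma_\lambda^2}} = 1 + \frac{w_\lambda^2}{\gamma_\lambda^2} + \frac{w_\lambda^4}{2\gamma_\lambda^4} + O\big(\frac{w_\lambda^6}{\gamma_\lambda^6}\big)$ and $e^{2w_\lambda} = e^{2U}\bigl(1+2(w_\lambda-U)+2(w_\lambda-U)^2+O((w_\lambda-U)^3)\bigr)$, then substitute the relation $w_\lambda - U = \frac{v_0}{\gamma_\lambda^2}+\frac{k_\lambda}{\gamma_\lambda^4}$ coming from the definitions \eqref{v-def} and \eqref{k-def}.

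Collecting terms by powers of $\gamma_\lambda^{-2}$, the leading contribution reproduces $-\Delta v_0 = 2e^{2U}v_0 + e^{2U}(U+U^2)$ by \eqref{4.3.1}, and the next order (after multiplying through by $\gamma_\lambda^2$) yields
\begin{equation*}
-\Delta k_\lambda - 2e^{2U}k_\lambda = e^{2U}\Big(\tfrac{1}{2}U^4 + 2v_0^2 + U^3 + v_0(2U^2+4U+1)\Big) + R_\lambda,
\end{equation*}
where $R_\lambda$ collects all remainders of order $O(\gamma_\lambda^{-2})$. I would then set $h_\lambda^\ast$ to be the $\lambda$-dependent version of the bracketed quantity (using $v_\lambda, w_\lambda$ in place of $v_0, U$ where appropriate so that the $C^2_{loc}$ limit is manifest from Proposition \ref{prop4.3} and \eqref{limw}), and let $h_\lambda^{\ast\ast}$ be everything else.

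For the pointwise decay bounds, the key input is Lemma \ref{lem2.4}, which gives the crucial control $e^{2w_\lambda + \frac{w_\lambda^2}{\gamma_\lambda^2}} \le \frac{\widetilde{C}_\varepsilon}{1+|x|^{4-2\varepsilon}}$ and, combined with Taylor's theorem, yields $e^{2w_\lambda}, e^{2U} = O\bigl(\frac{1}{1+|x|^{4-\delta}}\bigr)$ in the relevant region. Proposition \ref{prop4.2} supplies $|v_\lambda(x)|, |v_0(x)| \le C(1+|x|)^\tau$, so any monomial $v_0^j$ or $v_\lambda^j$ contributes a factor $(1+|x|)^{j\tau}$. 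Multiplying the $e^{2U}$ decay against the polynomial factors (at most quadratic in $v_0$ for $h_\lambda^\ast$, and at most cubic for the $\gamma_\lambda^{-2}$-order terms in $h_\lambda^{\ast\ast}$, together with logarithmic factors of $U$ which are absorbed into $\delta$) produces the claimed bounds \eqref{h-ast} and \eqref{h-ast2}.

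The main obstacle I anticipate is the bookkeeping: keeping track of which terms from the product
$\bigl(1+\tfrac{w_\lambda}{\gamma_\lambda^2}\bigr)\bigl(1+\tfrac{w_\lambda^2}{\gamma_\lambda^2}+\tfrac{w_\lambda^4}{2\gamma_\lambda^4}+\cdots\bigr)\bigl(1+2(w_\lambda-U)+2(w_\lambda-U)^2+\cdots\bigr)$
contribute to the $\gamma_\lambda^{-2}$ coefficient after the $v_0$-equation is subtracted, and confirming that the cross terms reassemble into exactly $\frac{1}{2}U^4 + 2v_0^2 + U^3 + v_0(2U^2+4U+1)$. The other delicate point is the final sorting: the remainders must genuinely carry a $\gamma_\lambda^{-2}$ prefactor uniformly in $|x| \le d_0/\theta_\lambda$, which requires that each polynomial growth in $v_0$ or $w_\lambda-U$ be dominated by an extra power of $(1+|x|)^\tau$ matched against an extra $\gamma_\lambda^{-2}$, exactly accounting for the exponent $4-\delta-3\tau$ in \eqref{h-ast2}.
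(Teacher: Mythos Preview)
Your proposal is correct and follows essentially the same strategy as the paper: Taylor-expand the nonlinearity one order further than in the derivation of \eqref{v-equa}, subtract off the $v_0$-equation \eqref{4.3.1}, and read off decay from Lemma~\ref{lem2.4} together with the growth bound $|v_\lambda|\le C(1+|x|)^\tau$ of Proposition~\ref{prop4.2}.

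The only organizational difference is that the paper does \emph{not} expand $e^{2w_\lambda}$ around $e^{2U}$ throughout. Instead it splits $e^{2w_\lambda+\frac{w_\lambda^2}{\gamma_\lambda^2}}-e^{2U}=e^{2w_\lambda}\bigl(e^{\frac{w_\lambda^2}{\gamma_\lambda^2}}-1\bigr)+\bigl(e^{2w_\lambda}-e^{2U}\bigr)$, expands each piece separately, and then handles the cross term $I_{\lambda,2}$ via the one-variable function $f(t)=(t+t^2)e^{2t}$ (using $f(w_\lambda)-f(U)$). The resulting $h_\lambda^*$ is written with the factor $e^{2w_\lambda}$ left intact in the terms $w_\lambda^3 e^{2w_\lambda}$ and $\tfrac{1}{2}w_\lambda^4 e^{2w_\lambda}$, and with $v_\lambda$ (not $v_0$) in the remaining terms; the $C^2_{loc}$ limit then follows directly from \eqref{limw} and \eqref{lim-v}. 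Your scheme of expanding $e^{2(w_\lambda-U)}$ and substituting $w_\lambda-U=\tfrac{v_0}{\gamma_\lambda^2}+\tfrac{k_\lambda}{\gamma_\lambda^4}$ arrives at the same place, but note that the Lagrange remainder in that expansion carries a factor $e^{c}$ with $c$ between $2U$ and $2w_\lambda$; you should use that both $e^{2U}$ and $e^{2w_\lambda}$ are $O\bigl((1+|x|)^{-(4-\delta)}\bigr)$ (from \eqref{2.4.1}) to control it, rather than treating $w_\lambda-U$ as uniformly small on the whole ball $|x|\le d_0/\theta_\lambda$.
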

\begin{proof}
Firstly, using \eqref{add_vequa} and \eqref{4.3.1}, it holds
\begin{equation}\label{4.4-1}
\begin{split}
-\Delta k_\lambda(x)=&-\gamma^2_\lambda \Delta \big(v_\lambda(x)-v_0(x)\big)\\=&
\Big( \frac{w_\lambda}{\gamma^2_\lambda} e^{ 2w_\lambda+ \frac{w^2_\lambda}{\gamma^2_\lambda}}+
e^{ 2w_\lambda+ \frac{w^2_\lambda}{\gamma^2_\lambda}}
-e^{2U}\Big) \gamma^4_\lambda-\Big(2v_0 e^{2U}+e^{2U}\big(U^2+U\big)\Big) \gamma^2_\lambda \\
=& \Big( w_\lambda  e^{ 2w_\lambda+ \frac{w^2_\lambda}{\gamma^2_\lambda}}
-2v_0 e^{2U}-e^{2U}\big(U^2+U\big)\Big)\gamma^2_\lambda
+\underbrace{\Big(  e^{ 2w_\lambda+ \frac{w^2_\lambda}{\gamma^2_\lambda}}
-e^{2U}\Big) \gamma^4_\lambda }_{I_{\lambda,1}}.
\end{split}
\end{equation}
Next, by Taylor's expansion, we can compute
\begin{equation*}
\begin{split}
 I_{\lambda,1}=w^2_\lambda e^{2w_\lambda}\gamma^2_\lambda+\frac{1}{2}w^4_\lambda e^{2w_\lambda}+
 2\gamma^2_\lambda v_\lambda e^{2U} +2v^2_\lambda e^{2U}+O\Big(\frac{w^6_\lambda}{\gamma^2_\lambda} e^{2w_\lambda}\Big)+O\Big( \frac{v^3_\lambda}{\gamma^2_\lambda} e^{2U} \Big)\,\,\,\mbox{ in}~~\Big\{|x|\leq \frac{d_0}{\theta_\lambda}\Big\}.
\end{split}
\end{equation*}
Hence for $|x|\leq \frac{d_0}{\theta_\lambda}$, it holds
\begin{equation}\label{4.4-2}
\begin{split}
-\Delta k_\lambda(x)= &
\underbrace{\Big( w_\lambda  e^{ 2w_\lambda+ \frac{w^2_\lambda}{\gamma^2_\lambda}} -2v_0 e^{2U} -e^{2U}\big(U^2+U\big)
+ w^2_\lambda e^{2w_\lambda} + 2 v_\lambda e^{2U} \Big) \gamma^2_\lambda }_{I_{\lambda,2}} \\
& +\frac{1}{2}w^4_\lambda e^{2w_\lambda} +2v^2_\lambda e^{2U}
+O\Big(\frac{w^6_\lambda}{\gamma^2_\lambda} e^{2w_\lambda}\Big)+O\Big( \frac{v^3_\lambda}{\gamma^2_\lambda} e^{2U} \Big).
\end{split}
\end{equation}
Let $f(x)=(x+x^2)e^{2x}$, for any fixed $y$, it holds
\begin{equation*}
f(x)-f(y)=(x-y)(1+4y+2y^2)e^{2y}+O\Big((x-y)^2(2y^2+6y+3)e^{2y}\Big), ~~\mbox{as}~~x\to y.
\end{equation*}
Then for $|x|\leq \frac{d_0}{\theta_\lambda}$, we compute
\begin{equation*}
\begin{split}
I_{\lambda,2} &=
2k_\lambda e^{2U}+w_\lambda e^{2w_\lambda}\Big( e^{\frac{w^2_\lambda}{\gamma^2_\lambda}}-1\Big)\gamma^2_\lambda
+ \Big( e^{2w_\lambda}\big(w_\lambda^2+w_\lambda\big)- e^{2U}\big(U^2+U\big) \Big)\gamma^2_\lambda \\
&=2k_\lambda e^{2U}+ w^3_\lambda e^{2w_\lambda} +v_\lambda e^{2U}\big(2U^2+4U+1\big)
+ O\Big(  \frac{w^5_\lambda}{\gamma^2_\lambda} e^{2w_\lambda} \Big)  +O\Big( \frac{v_\lambda^2}{\gamma^2_\lambda}e^{2U}\big(U^2+1\big)\Big).
\end{split}
\end{equation*}
Therefore, \eqref{4.4-2} can be rewritten as
\begin{equation*}
\begin{split}
-\Delta k_\lambda(x) =& 2k_\lambda e^{2U}+ \underbrace{ w^3_\lambda e^{2w_\lambda} +v_\lambda e^{2U}\big(2U^2+4U+1\big) +\frac{1}{2}w^4_\lambda e^{2w_\lambda} +2v^2_\lambda e^{2U}}_{:=h_\lambda^*} \\
& +\underbrace{ O\Big(\frac{w^5_\lambda}{\gamma^2_\lambda} e^{2w_\lambda} \Big) +O\Big( \frac{v_\lambda^2}{\gamma^2_\lambda} e^{2U}\big(U^2+1\big)\Big) +O\Big(\frac{w^6_\lambda}{\gamma^2_\lambda} e^{2w_\lambda}\Big)+O\Big( \frac{v^3_\lambda}{\gamma^2_\lambda} e^{2U} \Big) }_{:=h_\lambda^{**}},
\end{split}
\end{equation*}
where
\begin{equation*}
h_\lambda^* \to e^{2U } \Big(\frac{1}{2}U^4+2v_0^2+ U^3+  v_0\big(2U^2+4U+1\big)\Big)~\,\,\mbox{in}~C_{loc}^2(\R^2), ~\,\,\mbox{as}~ \lambda \to 0.
\end{equation*}
Also \eqref{h-ast} and \eqref{h-ast2} follow from Lemma \ref{lem2.4} and Proposition \ref{prop4.3}.

\end{proof}

Now we can also prove the bound of $k_{\lambda}$.
\begin{Prop}\label{prop_klambda}
Let $k_{\lambda}$ be as in \eqref{k-def}. Then for any small fixed $d_0,\tau_1>0$, there exists $C>0$ such that
\begin{equation}\label{k-inequa}
|k_{\lambda}(x)|\leq C(1+|x| )^{\tau_1}~\mbox{ in}~ B_{\frac{d_0}{\theta_{\lambda}}}(0).
\end{equation}
Moreover, we deduce that
\begin{equation}\label{k-lim}
\lim_{\lambda\to 0}k_{\lambda}=k_0 \,\,\mbox{in}~C^{2}_{loc}(\mathbb R^{2}),
\end{equation}
where $k_0$ solves the non-homogeneous linear equation
\begin{equation}\label{k0-equa}
-\Delta v-2e^{2U}v=  e^{2U } \Big(\frac{1}{2}U^4+2v_0^2+ U^3+  v_0\big(2U^2+4U+1\big) \Big) ~~\mbox{in}~~\R^2.
\end{equation}
Furthermore,  there exists $C>0$ such that
\begin{equation*}
|k_{0}(x)|\leq C(1+|x| )^{\tau_1}.
\end{equation*}
\end{Prop}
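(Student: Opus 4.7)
The plan is to imitate, at the next order, the argument used for Proposition \ref{prop4.2}. Set
\[
N_\lambda := \max_{|x|\leq \frac{d_0}{\theta_\lambda}} \frac{|k_\lambda(x)|}{(1+|x|)^{\tau_1}},\qquad
N_\lambda^* := \max_{|x|\leq \frac{d_0}{\theta_\lambda}}\max_{|x'|=|x|} \frac{|k_\lambda(x)-k_\lambda(x')|}{(1+|x|)^{\tau_1}},
\]
and argue by contradiction that $N_\lambda\to +\infty$. The first observation is that $k_\lambda(0)=0$ and $\nabla k_\lambda(0)=0$: indeed, $w_\lambda(0)=0$ and $U(0)=0$ give $v_\lambda(0)=0$, while $\nabla w_\lambda(0)=0$ (since $x_\lambda$ is a maximum) and $\nabla U(0)=0$ give $\nabla v_\lambda(0)=0$; passing to the $C^2_{\rm loc}$-limit yields $v_0(0)=0$ and $\nabla v_0(0)=0$, hence the same holds for $k_\lambda$.

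The first step is to show $N_\lambda^*=o(1)N_\lambda$. Reflecting in the $x_1$-axis, set $\omega_\lambda^*(x)=k_\lambda(x)-k_\lambda(x^-)$ and $\omega_\lambda(x)=\omega_\lambda^*(x)/(1+x_2)^{\tau_1}$. From \eqref{k-equa} one derives, exactly as for \eqref{omega-equa},
\[
-\Delta\omega_\lambda -\frac{2\tau_1}{1+x_2}\partial_{x_2}\omega_\lambda +\frac{\tau_1(1-\tau_1)}{(1+x_2)^2}\omega_\lambda =\frac{h_\lambda^*(x)-h_\lambda^*(x^-)}{(1+x_2)^{\tau_1}}+\frac{h_\lambda^{**}(x)-h_\lambda^{**}(x^-)}{(1+x_2)^{\tau_1}}+\frac{\widetilde h_\lambda(x)}{(1+x_2)^{\tau_1}},
\]
with $\widetilde h_\lambda(x)=2e^{2U}\omega_\lambda^*(x)+O(|x|^{-4+\delta})\omega_\lambda^*(x)$ coming from the linearization of $2k_\lambda e^{2U}$ after symmetrization. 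Assuming by contradiction $N_\lambda^*\geq c_0 N_\lambda$, let $N_\lambda^{**}$ be the $B^+$-sup of $|\omega_\lambda|$ attained at $x_\lambda^{**}$. Using the decay \eqref{h-ast}, \eqref{h-ast2} of $h_\lambda^*,h_\lambda^{**}$ together with \eqref{asym_u} (to exclude $|x_\lambda^{**}|>d_0/(2\theta_\lambda)$) and a maximum-principle argument at an interior maximum (to exclude $|x_\lambda^{**}|\to\infty$), one gets $|x_\lambda^{**}|\leq C$. Then $\omega_\lambda^{**}=\omega_\lambda/N_\lambda^{**}\to \bar\omega\not\equiv 0$ locally in $C^2$, and the rescaled right-hand side vanishes in the limit because $N_\lambda^{**}\to+\infty$ while $h_\lambda^*,h_\lambda^{**}$ are bounded. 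Thus $\bar w:=(1+x_2)^{\tau_1}\bar\omega$ solves the kernel equation of Lemma \ref{lem3.1} on the half-plane with $\bar w(x_1,0)=0$ and $\nabla\bar w(0)=0$ (from $\nabla k_\lambda(0)=0$), forcing $\bar w\equiv 0$, a contradiction.

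The second step proves $N_\lambda\leq C$ by reducing to the radial average $\psi_\lambda(r):=\frac{1}{2\pi}\int_0^{2\pi}k_\lambda(r,\theta)\,d\theta$. As in \eqref{psi-lamda}, Step~1 gives $\max_{r\leq d_0/\theta_\lambda}|\psi_\lambda(r)|/(1+r)^{\tau_1}=N_\lambda(1+o(1))$, and testing the averaged version of \eqref{k-equa} against $\phi(x)=(4-|x|^2)/(4+|x|^2)$ (which satisfies $-\Delta\phi=2e^{2U}\phi$) yields, after controlling $\int \phi(h_\lambda^*+h_\lambda^{**})$ via \eqref{h-ast}--\eqref{h-ast2}, an ODE for $\psi_\lambda$ which — combined with the bound $|\psi_\lambda|\leq N_\lambda(1+r)^{\tau_1}$ — forces the maximum point $r_\lambda$ of $|\psi_\lambda(r)|/(1+r)^{\tau_1}$ to satisfy $r_\lambda\leq C$. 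Normalizing $\psi_\lambda^*:=\psi_\lambda/\psi_\lambda(r_\lambda)$ and using $N_\lambda\to\infty$, the rescaled forcing terms vanish, so $\psi_\lambda^*\to\psi$ in $C^2_{\rm loc}$ with $\psi$ a radial kernel element of Lemma \ref{lem3.1}. Since $\psi_\lambda(0)=k_\lambda(0)=0$, we get $\psi(0)=0$, hence $c_0=0$ and $\psi\equiv 0$, contradicting $\psi^*(r_\lambda)=1$ with $r_\lambda\leq C$. Therefore $N_\lambda\leq C$, i.e.\ \eqref{k-inequa} holds. Finally, \eqref{k-lim} and the bound on $k_0$ follow from \eqref{k-inequa}, standard elliptic estimates, and passing to the limit in \eqref{k-equa} using the explicit convergence of $h_\lambda^*$ and the decay $h_\lambda^{**}=O(\gamma_\lambda^{-2})$ locally. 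The main obstacle is, as in the proof of Proposition \ref{prop4.2}, the Step~1 estimate $N_\lambda^*=o(1)N_\lambda$; here it is slightly more delicate because the forcing $h_\lambda^*$ in \eqref{k-equa} has weaker decay ($|x|^{-4+\delta+2\tau}$ rather than $|x|^{-4+\delta}$), so one must choose $\tau_1$ (and the auxiliary $\tau$ from Proposition~\ref{prop4.2}) small enough that the symmetric-difference terms $h_\lambda^*(x)-h_\lambda^*(x^-)$ still contribute $o(N_\lambda^{**})$ after the normalization.
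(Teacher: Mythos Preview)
Your proposal is correct and follows essentially the same two-step scheme as the paper's proof in Appendix~A (Lemma~A.1 plus the radial-average argument). Two minor remarks: since $U$ is radial, the symmetrization of $2k_\lambda e^{2U}$ in \eqref{k-equa} yields exactly $2e^{2U}\omega_\lambda^*$, so your extra $O(|x|^{-4+\delta})\omega_\lambda^*$ correction in $\widetilde h_\lambda$ is unnecessary (this is simpler than the $v_\lambda$ case, where $\overline g_\lambda$ had to be linearized); and in your final remark it is the auxiliary exponent $\tau$ from Proposition~\ref{prop4.2} (together with $\delta$), rather than $\tau_1$, whose smallness guarantees the decay $2+\tau_1-\delta-2\tau>0$ needed in the interior-maximum step.
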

\begin{proof}
The proof of this proposition will be given in Appendix \ref{s-k}.
\end{proof}

Using above analysis, we can write
\begin{equation*}
w_\lambda(x)=U(x)+\frac{v_0(x)}
{\gamma^2_\lambda}+\frac{k_{\lambda}(x)}{\gamma^4_{\lambda}}~\,\,\mbox{in}~~ \Omega_{\lambda}:=\frac{\Omega-x_{\lambda}}{\theta_\lambda},
\end{equation*}
with some $C, \tau>0$ such that
\begin{equation*}
|v_0(x)|+|k_\lambda(x)| \leq C(1+|x| )^{\tau}.
\end{equation*}
But sometimes, this is not enough. Particularly, to get the local uniqueness of concentrated solutions, we need further precise estimates.

Let us introduce
\begin{equation}\label{s-def}
s_{\lambda}(x):=\gamma^2_\lambda\big(k_{\lambda}(x)-k_0(x)\big).
\end{equation}
\begin{Lem}For any small fixed $d_0>0$, it holds
\begin{equation}
\label{s-equa}
\begin{split}
-\Delta s_{\lambda}=2s_{\lambda}e^{2U}+  q^*_\lambda~\,\,\,\mbox{ in}~~\Big\{x: |x|\leq \frac{d_0}{\theta_\lambda}\Big\},
\end{split}
\end{equation}
with $\displaystyle q^*_\lambda(x)=O\Big(\frac{1}{ 1+|x|^{4-\delta-3\tau}}\Big)$.
\end{Lem}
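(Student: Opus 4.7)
The plan is to derive \eqref{s-equa} by subtracting the limiting equation \eqref{k0-equa} for $k_0$ from the equation \eqref{k-equa} for $k_\lambda$ and multiplying through by $\gamma_\lambda^2$. Writing $h_0(x):=e^{2U}\bigl(\tfrac{1}{2}U^4+2v_0^2+U^3+v_0(2U^2+4U+1)\bigr)$ for the right-hand side of \eqref{k0-equa}, we obtain immediately
\begin{equation*}
-\Delta s_\lambda = 2s_\lambda e^{2U} + \gamma_\lambda^2\bigl(h_\lambda^*-h_0\bigr) + \gamma_\lambda^2 h_\lambda^{**},
\end{equation*}
so it only remains to absorb the last two terms into a single error $q_\lambda^*(x)=O\bigl(1/(1+|x|^{4-\delta-3\tau})\bigr)$.

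The term $\gamma_\lambda^2 h_\lambda^{**}$ is handled directly by \eqref{h-ast2}, which gives exactly the required bound after multiplication by $\gamma_\lambda^2$. For the term $\gamma_\lambda^2(h_\lambda^*-h_0)$, I would insert the expansions $w_\lambda = U + v_0/\gamma_\lambda^2 + k_0/\gamma_\lambda^4 + s_\lambda/\gamma_\lambda^6$ and $v_\lambda = v_0 + k_0/\gamma_\lambda^2 + s_\lambda/\gamma_\lambda^4$ into the explicit formula
\begin{equation*}
h_\lambda^* = w_\lambda^3 e^{2w_\lambda} + v_\lambda e^{2U}\bigl(2U^2+4U+1\bigr) + \tfrac{1}{2}w_\lambda^4 e^{2w_\lambda} + 2v_\lambda^2 e^{2U},
\end{equation*}
and Taylor expand each monomial. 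Each difference $w_\lambda^j e^{2w_\lambda}-U^j e^{2U}$ (for $j=3,4$) and $v_\lambda^2-v_0^2$, $v_\lambda-v_0$ produces a leading term of order $1/\gamma_\lambda^2$ (which, after multiplication by $\gamma_\lambda^2$, contributes terms cancelled exactly by the choice \eqref{k0-equa} of $h_0$) plus a remainder of order $1/\gamma_\lambda^4$. Together with the decay estimates of Lemma \ref{lem2.4} and the growth control $|v_0|,|k_0|\leq C(1+|x|)^{\tau}$ from Propositions \ref{prop4.3} and \ref{prop_klambda}, each remainder is seen to be $O\bigl(1/(\gamma_\lambda^2(1+|x|^{4-\delta-3\tau}))\bigr)$, so that $\gamma_\lambda^2(h_\lambda^*-h_0)=O\bigl(1/(1+|x|^{4-\delta-3\tau})\bigr)$.

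The main obstacle is bookkeeping: one has to verify that after matching the $O(1/\gamma_\lambda^2)$ terms against $h_0$, the next-order error carries both the extra $1/\gamma_\lambda^2$ factor and the polynomial decay $(1+|x|)^{-(4-\delta-3\tau)}$. The exponent $3\tau$ (versus $2\tau$ in \eqref{h-ast} for $h_\lambda^*$) is forced precisely by the extra factor of $v_0$ or $w_\lambda-U$ that appears when one extracts the second-order Taylor term; this is where the three powers of $(1+|x|)^\tau$ come from. Once these expansions are carried out and each remainder is collected into a single function $q_\lambda^*$, equation \eqref{s-equa} follows, valid uniformly in $B_{d_0/\theta_\lambda}(0)$.
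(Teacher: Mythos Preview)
Your overall route is correct and in fact a bit cleaner than the paper's: the paper does not reuse the decomposition $-\Delta k_\lambda=2e^{2U}k_\lambda+h_\lambda^*+h_\lambda^{**}$ but instead goes all the way back to \eqref{4.4-1}, expands $\gamma_\lambda^6\bigl((1+w_\lambda/\gamma_\lambda^2)e^{2w_\lambda+w_\lambda^2/\gamma_\lambda^2}-e^{2U}\bigr)$ to one more order by Taylor's theorem, and collects an explicit expression for $q_\lambda^*$ (this explicit form is what they later use to identify the limit equation for $s_0$ in Proposition~\ref{prop_slambda}). Your shortcut of writing $q_\lambda^*=\gamma_\lambda^2(h_\lambda^*-h_0)+\gamma_\lambda^2 h_\lambda^{**}$ and bounding the two pieces separately is perfectly valid for the present Lemma, which only asks for the decay bound.

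However, one sentence in your write-up is wrong and should be fixed: you say the leading $1/\gamma_\lambda^2$ terms in the differences $w_\lambda^j e^{2w_\lambda}-U^j e^{2U}$, $v_\lambda-v_0$, $v_\lambda^2-v_0^2$ are, after multiplication by $\gamma_\lambda^2$, ``cancelled exactly by the choice of $h_0$''. This is not so. The subtraction of $h_0$ has already removed the $O(1)$ part of $h_\lambda^*$; what remains, namely $\gamma_\lambda^2(h_\lambda^*-h_0)$, is precisely these leading terms plus higher-order corrections, and nothing cancels them. Concretely, they are (up to smaller remainders) $e^{2U}(2U^3+3U^2)v_\lambda$, $e^{2U}(2U^4+4U^3)v_\lambda$, $e^{2U}(2U^2+4U+1)k_\lambda$, and $2e^{2U}(v_\lambda+v_0)k_\lambda$. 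Each of these must be bounded directly, and indeed each is $O\bigl((1+|x|)^{-(4-\delta-3\tau)}\bigr)$ by Lemma~\ref{lem2.4} together with the growth bounds $|v_\lambda|,|v_0|\le C(1+|x|)^\tau$ and $|k_\lambda|\le C(1+|x|)^{\tau_1}$ (choosing $\tau_1\le\tau$). So your conclusion stands, but the reason is that these surviving terms satisfy the bound, not that they vanish.
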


\begin{proof}
Firstly, using \eqref{4.4-1} and \eqref{k0-equa}, we can compute that
\begin{equation}\label{4.6-1}
\begin{split}
-\Delta s_\lambda(x) =& -\gamma_\lambda^2 \Delta k_\lambda +\gamma_\lambda^2 \Delta k_0 \\
=& \gamma_\lambda^6 \bigg( \Big(1+\frac{w_\lambda}{\gamma_\lambda^2}\Big) e^{2w_\lambda+\frac{w_\lambda^2}{\gamma_\lambda^2}}-e^{2U} \bigg) -\gamma_\lambda^4 \Big(2e^{2U}v_0+e^{2U}(U^2+U)\Big) \\ & -\gamma_\lambda^2 \bigg(2e^{2U}k_0+e^{2U}\Big(\frac{1}{2}U^4+2v_0^2+U^3+v_0\big(2U^2+4U+1\big)\Big)\bigg) \\
=& \gamma_\lambda^6 \Big(e^{2w_\lambda+\frac{w_\lambda^2}{\gamma_\lambda^2}}-e^{2U}\Big) +\gamma_\lambda^4 \Big(w_\lambda e^{2w_\lambda+\frac{w_\lambda^2}{\gamma_\lambda^2}} -2e^{2U}v_0 -e^{2U}(U^2+U)\Big) \\
& -\gamma_\lambda^2 \bigg(2e^{2U}k_0+e^{2U}\Big(\frac{1}{2}U^4+2v_0^2+U^3+v_0\big(2U^2+4U+1\big)\Big)\bigg).
\end{split}
\end{equation}
By Taylor's expansion, we have
\begin{equation}\label{4.6-2}
\begin{split}
& \gamma_\lambda^6 \Big(e^{2w_\lambda+\frac{w_\lambda^2}{\gamma_\lambda^2}}-e^{2U}\Big) \\
=& \gamma^4_\lambda w^2_\lambda e^{2w_\lambda} +\frac{1}{2} \gamma^2_\lambda w^4_\lambda e^{2w_\lambda}+ \frac{1}{6} w^6_\lambda e^{2w_\lambda} +O\Big(\frac{w^8_\lambda}{\gamma^2_\lambda} e^{2w_\lambda}\Big)
+2\gamma^4_\lambda v_\lambda e^{2U} +2\gamma^2_\lambda v^2_\lambda e^{2U} \\
&+ \frac{4}{3} e^{2U} v_\lambda^3 +O\Big( \frac{v^4_\lambda}{\gamma^2_\lambda} e^{2U} \Big) \\
=& \gamma^4_\lambda \Big( w^2_\lambda e^{2w_\lambda} +2e^{2U}v_\lambda \Big) + \gamma^2_\lambda \Big(\frac{1}{2} w^4_\lambda e^{2w_\lambda} +2 v^2_\lambda e^{2U}\Big) + \Big(\frac{1}{6} w^6_\lambda e^{2w_\lambda} +\frac{4}{3} e^{2U} v_\lambda^3\Big) \\
& +O\Big(\frac{w^8_\lambda}{\gamma^2_\lambda}e^{2w_\lambda}\Big)+O\Big(\frac{v^4_\lambda}{\gamma^2_\lambda}e^{2U} \Big).
\end{split}
\end{equation}
Substituting \eqref{4.6-2} into \eqref{4.6-1}, by \eqref{k-def} and \eqref{s-def}, we obtain
\begin{equation*}
\begin{split}
-\Delta s_\lambda(x) =& 2e^{2U} s_\lambda +\gamma_\lambda^4 \Big(w_\lambda e^{2w_\lambda+\frac{w_\lambda^2}{\gamma_\lambda^2}} + w^2_\lambda e^{2w_\lambda} -e^{2U}(U^2+U) \Big) \\
& +\gamma_\lambda^2 \left(\frac{1}{2} w^4_\lambda e^{2w_\lambda} +2 v^2_\lambda e^{2U} -e^{2U}
\Big(\frac{1}{2}U^4+2v_0^2+U^3+v_0\big(2U^2+4U+1\big)\Big)\right) \\
& +\Big(\frac{1}{6} w^6_\lambda e^{2w_\lambda} +\frac{4}{3}e^{2U}v_\lambda^3\Big)
+O\Big(\frac{w^8_\lambda}{\gamma^2_\lambda} e^{2w_\lambda}\Big)+O\Big(\frac{v^4_\lambda}{\gamma^2_\lambda}e^{2U} \Big).
\end{split}
\end{equation*}
Similar to the proof of \eqref{4.4-2}, we can also calculate that
\begin{equation*}
\begin{split}
-\Delta s_\lambda =& 2e^{2U} s_\lambda +e^{2U}\Big(\big(U^4+4U^3+3U^2\big)v_\lambda +\big(4v_0+2U^2+4U+1\big)k_\lambda +\frac{4}{3} v_\lambda^3 \\
& +v_\lambda^2 \big(2U^2+6U+3\big) \Big) +e^{2w_\lambda} \Big(\frac{1}{2} w_\lambda^5 +\frac{1}{6} w^6_\lambda \Big)
+O\bigg(\frac{1}{\gamma_\lambda^2 \big(1+|x|^{4-\delta}\big)}\bigg) \\
=& 2e^{2U} s_\lambda + q_\lambda^\ast (x),
\end{split}
\end{equation*}
with $\displaystyle q^*_\lambda(x)=O\Big(\frac{1}{ 1+|x|^{4-\delta-3\tau}}\Big)$.
\end{proof}

Now we establish the following bound on $s_{\lambda}$:
\begin{Prop}\label{prop_slambda}
Let $s_{\lambda}$ be as in \eqref{s-def}. Then for any small fixed $d_0,\tau_2>0$, there exists $C>0$ such that
\begin{equation}\label{s-inequa}
|s_{\lambda}(x)|\leq C(1+|x| )^{\tau_2}~\mbox{ in}~ B_{\frac{d_0}{\theta_{\lambda}}}(0).
\end{equation}
Moreover, it holds
\[\lim_{\lambda\to 0}s_{\lambda}=s_0\,\,\mbox{ in }C^{2}_{loc}(\mathbb R^{2}),\]
where $s_0$ solves the non-homogeneous linear equation
\begin{equation*}
\begin{split}
-\Delta v-2e^{2U}v =& e^{2U } \Big( \frac{1}{2}U^5+\frac{1}{6}U^6+\big(U^4+4U^3+3U^2\big)v_0
\\ &+\big(2U^2+6U+3\big)v^2_0 +\frac{4}{3}v_0^3 + \big(4v_0+2U^2+4U+1\big)k_0\Big) ~~\mbox{in}~~\R^2.
\end{split}
\end{equation*}
Furthermore,  there exists $C>0$ such that
\begin{equation*}
|s_{0}(x)|\leq C\big(1+|x|\big)^{\tau_2}.
\end{equation*}
\end{Prop}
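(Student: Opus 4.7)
The plan is to mimic the two-step contradiction argument used in the proofs of Proposition \ref{prop4.2} and Proposition \ref{prop_klambda}, adapted to the equation \eqref{s-equa} satisfied by $s_\lambda$. Specifically, I will set
\begin{equation*}
M_\lambda:=\max_{|x|\leq \frac{d_0}{\theta_\lambda}} \frac{|s_\lambda(x)|}{(1+|x|)^{\tau_2}},\qquad M_\lambda^\ast:=\max_{|x|\leq \frac{d_0}{\theta_\lambda}} \max_{|x'|=|x|}\frac{|s_\lambda(x)-s_\lambda(x')|}{(1+|x|)^{\tau_2}},
\end{equation*}
and argue by contradiction assuming $M_\lambda\to+\infty$. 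The key observation is that the inhomogeneity $q^\ast_\lambda$ in \eqref{s-equa} decays like $(1+|x|)^{-(4-\delta-3\tau)}$, which is exactly the same kind of decay that made the arguments for $v_\lambda$ and $k_\lambda$ work (choose $\tau$ sufficiently small depending on $\tau_2$ and $\delta$).

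\textbf{Step 1 (angular reduction).} First I would show $M_\lambda^\ast=o(M_\lambda)$. Following the scheme of \emph{Step 1} in the proof of Proposition \ref{prop4.2}, take $x'_\lambda, x''_\lambda$ symmetric about the $x_1$-axis realizing $M_\lambda^\ast$, set $\omega_\lambda^\ast(x)=s_\lambda(x)-s_\lambda(x^-)$ and $\omega_\lambda(x)=\omega_\lambda^\ast(x)/(1+x_2)^{\tau_2}$. Then $\omega_\lambda$ satisfies an equation of the same form as \eqref{omega-equa}, with right-hand side involving $q^\ast_\lambda(x)-q^\ast_\lambda(x^-)$ and a $2e^{2U}\omega_\lambda$ contribution from linearizing the nonlinear term. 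Pick an interior maximum point $x_\lambda^{\ast\ast}$ of $\omega_\lambda$ on $\{x_2\geq 0\}$; the same two-part argument shows $|x_\lambda^{\ast\ast}|\leq C$, using \eqref{asym_u} to control the outer region and the decay of $q^\ast_\lambda$ to control the intermediate region. Then $\omega_\lambda/M_\lambda^{\ast\ast}$ converges in $C^2_{loc}$ (up to subsequence) to a function $\bar\omega/(1+x_2)^{\tau_2}$, where $\bar\omega$ solves $-\Delta\bar\omega-2e^{2U}\bar\omega=0$ on $\{x_2>0\}$ with $\bar\omega=0$ on $\{x_2=0\}$. By Lemma \ref{lem3.1} combined with $\nabla s_\lambda(0)=0$ (which follows, as in \eqref{omega_0}, from the fact that $x_\lambda$ is a local maximum of $u_\lambda$, hence $\nabla w_\lambda(0)=0$, hence $\nabla v_\lambda(0)=0$, hence $\nabla k_\lambda(0)=0$, hence $\nabla s_\lambda(0)=0$), we conclude $\bar\omega\equiv 0$, a contradiction.

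\textbf{Step 2 (radial reduction).} Given $M_\lambda^\ast=o(M_\lambda)$, the radial average $\psi_\lambda(r):=\frac{1}{2\pi}\int_0^{2\pi}s_\lambda(r,\theta)\,d\theta$ satisfies $\max_{r}|\psi_\lambda(r)|/(1+r)^{\tau_2}=(1+o(1))M_\lambda$. Integrating \eqref{s-equa} against the test function $\phi(x)=(4-|x|^2)/(4+|x|^2)$ (the radial kernel of the Liouville linearization) on $B_r(0)$ and using $-\Delta\phi=2e^{2U}\phi$, I obtain an ODE identity of the same form as \eqref{add_psi}, with the inhomogeneous contribution bounded by $o(M_\lambda)+O(1)$ thanks to the integrability of $q^\ast_\lambda\phi$. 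This yields $|\psi'_\lambda(r)|=o(M_\lambda)/r+O(N_\lambda)/r^2$, and integrating from $3$ to the maximum point $r_\lambda$ forces $r_\lambda\leq C$. Then $\psi_\lambda/\psi_\lambda(r_\lambda)$ converges in $C^2_{loc}$ to a radial solution of $-\Delta\psi-2e^{2U}\psi=0$ which vanishes at $0$ (since $s_\lambda(0)=0$), hence is identically zero by Lemma \ref{lem3.1}, contradicting $\psi^\ast_\lambda(r_\lambda)=1$ and $r_\lambda\leq C$. This completes the proof of \eqref{s-inequa}.

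\textbf{Step 3 (passage to the limit and bound on $s_0$).} Once \eqref{s-inequa} is in hand, the functions $s_\lambda$ are locally uniformly bounded. The coefficient $2e^{2U}$ and the right-hand side $q^\ast_\lambda$ in \eqref{s-equa} converge in $C^2_{loc}(\R^2)$ to $2e^{2U}$ and to $e^{2U}\bigl(\tfrac{1}{2}U^5+\tfrac{1}{6}U^6+(U^4+4U^3+3U^2)v_0+(2U^2+6U+3)v_0^2+\tfrac{4}{3}v_0^3+(4v_0+2U^2+4U+1)k_0\bigr)$ respectively, by Taylor expanding as in the derivation of \eqref{s-equa} and using \eqref{lim-v}, \eqref{k-lim}, and the dominated convergence theorem. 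Standard elliptic regularity then gives $s_\lambda\to s_0$ in $C^2_{loc}(\R^2)$, and $s_0$ solves the stated non-homogeneous linear equation. The pointwise bound $|s_0(x)|\leq C(1+|x|)^{\tau_2}$ is inherited from \eqref{s-inequa}.

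The main obstacle, exactly as in Proposition \ref{prop_klambda}, is Step 1: one must verify that the higher-order remainders absorbed into $q_\lambda^\ast$ (containing products like $w_\lambda^6 e^{2w_\lambda}$ and $v_\lambda^3 e^{2U}$) are controlled uniformly by the weight $(1+|x|)^{-(4-\delta-3\tau)}$ used throughout Lemma \ref{lem2.4}, Proposition \ref{prop4.2} and Proposition \ref{prop_klambda}, so that after dividing by $M_\lambda^{\ast\ast}$ in the blow-up argument the forcing term vanishes in the limit. This is ensured by choosing $\tau$ small enough that $4-\delta-3\tau>2+\tau_2$, which is admissible since $\tau_2$ and $\delta$ are arbitrarily small.
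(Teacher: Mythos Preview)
Your proposal is correct and follows essentially the same contradiction scheme as the paper's proof in Appendix \ref{s-s}: angular reduction (Lemma \ref{lemB.2}) followed by the radial average argument, with the vanishing $s_\lambda(0)=0$ and $\nabla p_\lambda^\ast(0)=0$ providing the final contradictions.

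One point deserves a sharper statement. In your Step 1, the outer-region part of the claim $|x_\lambda^{\ast\ast}|\leq C$ cannot be handled by \eqref{asym_u} alone. Writing $p_\lambda^\ast(z_\lambda^{\ast\ast})=\gamma_\lambda^2\big[(k_\lambda(z_\lambda^{\ast\ast})-k_\lambda(z_\lambda^{\ast\ast-}))-(k_0(z_\lambda^{\ast\ast})-k_0(z_\lambda^{\ast\ast-}))\big]$, the first bracket is controlled by the earlier estimate \eqref{klambda-}, but the second requires showing $k_0(z)-k_0(z^-)=O(|z|^{-1})$ for large $|z|$. The paper obtains this by decomposing $k_0=k_1+k_2$, where $k_1$ solves the equation with the radial part of the forcing (so $k_1$ is radial up to kernel elements) and $k_2$ is constructed explicitly as a combination of $\sum c_i\partial_i\bar v$, $\sum\frac{c_i^2}{2}\partial_i^2U$ and $c_1c_2\partial_{12}U$, each of which has oscillation $O(|z|^{-1})$; Lemma \ref{add-lem1} supplies the needed $\partial_r\bar v=O(r^{-1})$. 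This is the only additional ingredient compared to the $k_\lambda$ level, and once it is in place your outline goes through exactly as in the paper.
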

\begin{proof}
The proof of this proposition will be given in Appendix \ref{s-s}.
\end{proof}

\begin{Prop}\label{prop_wlambda}
We can write
\begin{equation*}
w_\lambda(x)=U(x)+\frac{v_0(x)}
{\gamma^2_\lambda}+\frac{k_{0}(x)}{\gamma^4_{\lambda}}
+\frac{s_{\lambda}(x)}{\gamma^6_{\lambda}},~~\mbox{in}~~ \Omega_{\lambda}:=\frac{\Omega-x_{\lambda}}{\theta_\lambda},
\end{equation*}
with some $C, \tau_0>0$ such that
\begin{equation*}
|v_0(x)|+|k_0(x)|+|s_\lambda(x)|\leq C\big(1+|x|\big)^{\tau_0}.
\end{equation*}
\end{Prop}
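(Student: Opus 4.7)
The plan is to observe that the identity in Proposition \ref{prop_wlambda} is purely algebraic, being an immediate telescoping of the three defining relations \eqref{v-def}, \eqref{k-def} and \eqref{s-def}. Rewriting these gives $w_\lambda = U + v_\lambda/\gamma_\lambda^2$, $v_\lambda = v_0 + k_\lambda/\gamma_\lambda^2$, and $k_\lambda = k_0 + s_\lambda/\gamma_\lambda^2$. Substituting the second into the first yields $w_\lambda = U + v_0/\gamma_\lambda^2 + k_\lambda/\gamma_\lambda^4$, and then substituting the third produces the claimed four-term decomposition on $\Omega_\lambda = (\Omega - x_\lambda)/\theta_\lambda$.

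For the pointwise bound, I would simply invoke the estimates already established: $|v_0(x)| \leq C(1+|x|)^{\tau}$ from Proposition \ref{prop4.3}, $|k_0(x)| \leq C(1+|x|)^{\tau_1}$ from Proposition \ref{prop_klambda}, and $|s_\lambda(x)| \leq C(1+|x|)^{\tau_2}$ from Proposition \ref{prop_slambda}, all valid on $B_{d_0/\theta_\lambda}(0)$. Since the exponents $\tau, \tau_1, \tau_2$ can each be chosen as small as desired, setting $\tau_0 := \max\{\tau,\tau_1,\tau_2\}$ produces an arbitrarily small exponent, and the triangle inequality gives the combined estimate $|v_0(x)| + |k_0(x)| + |s_\lambda(x)| \leq C(1+|x|)^{\tau_0}$.

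There is no real obstacle: this proposition is a bookkeeping statement that packages together the consequences of the three preceding propositions into a single ready-to-use asymptotic expansion with explicit remainder of order $O\!\big(\gamma_\lambda^{-6}(1+|x|)^{\tau_0}\big)$. Its role is to serve as a convenient reference in the later Pohozaev-identity computations in Sections \ref{s5} and \ref{s6}, where one needs $w_\lambda$ expanded to three non-trivial orders beyond $U$ with an effective pointwise control of the last error term. The genuine analytic content, i.e.\ the existence, uniqueness and polynomial growth of the correctors $v_0$, $k_0$, and of the remainder $s_\lambda$, has already been carried out in Propositions \ref{prop4.2}, \ref{prop4.3}, \ref{prop_klambda} and \ref{prop_slambda} via the blow-up/comparison arguments based on Lemma \ref{lem3.1} and the linearised equations \eqref{4.3.1}, \eqref{k0-equa} and the one stated in Proposition \ref{prop_slambda}.
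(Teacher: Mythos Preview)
Your proposal is correct and matches the paper's approach exactly: the paper's proof is the single sentence ``These can be deduced by Propositions \ref{prop4.3}, \ref{prop_klambda} and \ref{prop_slambda},'' which is precisely the telescoping of \eqref{v-def}, \eqref{k-def}, \eqref{s-def} together with the growth bounds you cite.
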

\begin{proof}
These can be deduced by Propositions \ref{prop4.3}, \ref{prop_klambda} and \ref{prop_slambda}.
\end{proof}

\section{The Morse index and non-degeneracy of the positive solutions }\label{s4}

\subsection{Basic estimates on the eigenvalues and eigenfunctions}~

\vskip 0.2cm

Using the properties of $v_\lambda$, we can obtain a more accurate estimate of $C_\lambda$ defined in \eqref{def_C}, which is essential to compute Morse index of $u_\lambda$.

\begin{proof}[\underline{\textbf{Proof of \eqref{def_C1} in Theorem \ref{thm-lambdagamma}}}]
Firstly, we have
\begin{align}\label{add-C1}
C_{\lambda} =& \lambda \displaystyle\int_{B_d(x_{\lambda})} u_\lambda(x)e^{u_\lambda^2(x)} \,dx \notag\\
=& \frac{1}{\gamma_\lambda} \int_{B_{\frac{d}{\theta_\lambda}(0)}} \Big( \frac{w_\lambda(x)}{\gamma_\lambda^2}+1 \Big)
e^{2w_\lambda(x)+\frac{w_\lambda^2(x)}{\gamma_\lambda^2}} \,dx \notag\\
=& \frac{1}{\gamma_\lambda} \int_{B_{\frac{d}{\theta_\lambda}(0)}} e^{2U(x)}
\bigg( 1+\frac{2v_\lambda(x)+w_\lambda^2(x)}{\gamma_\lambda^2}
+O\Big(\frac{\big(2v_\lambda+w^2_\lambda\big)^2}{\gamma^4_\lambda}\Big) \bigg) \,dx \notag\\
&+ \frac{1}{\gamma_\lambda^3} \int_{B_{\frac{d}{\theta_\lambda}(0)}} w_\lambda(x) e^{2w_\lambda(x)+\frac{w_\lambda^2(x)}{\gamma_\lambda^2}} \,dx \notag\\
=& \frac{1}{\gamma_\lambda} \int_{\R^2} e^{2U(x)} \,dx +o\Big(\frac{\theta_\lambda}{\gamma_\lambda}\Big) +\frac{1}{\gamma_\lambda^3} \int_{\R^2} e^{2U(x)} \big(2v_0(x)+U^2(x)+U(x)\big) \,dx + o\Big(\frac{1}{\gamma^3_\lambda}\Big) \notag\\
=& \frac{4\pi}{\gamma_\lambda} +\frac{1}{\gamma_\lambda^3} \underbrace{ \int_{\R^2} -\Delta v_0(x) \,dx }_{:=c_0} + o\Big(\frac{1}{\gamma^3_\lambda}\Big)
= \frac{4\pi}{\gamma_\lambda} + \frac{c_0}{\gamma_\lambda^3} + o\Big(\frac{1}{\gamma^3_\lambda}\Big),
\end{align}
the penultimate equality is derived from \eqref{4.3.1}.
We can calculate that
\begin{equation}\label{c0-1}
c_0 = -\int_{\R^2} \Delta v_0(x) \,dx = -\lim_{R \to \infty} \int_{\partial B_R(0)} \frac{\partial v_0}{\partial \nu} \,d\sigma = -\lim_{R \to \infty} \int_{\partial B_R(0)} \nabla v_0 \cdot \frac{x}{|x|} \,d\sigma.
\end{equation}
Moreover, we know that
\begin{equation*}
\begin{split}
v_0(x)=w(x)+ e_0\frac{4-|x|^2}{4+|x|^2}+\sum^2_{i=1}{e_i}\frac{x_i}{4+|x|^2},
\end{split}\end{equation*}
where $w(x)$ is the radial solution of $-\Delta u-2e^{2U}u=e^{2U}\big(U^2+U\big)$.
Direct calculation gives that
\begin{equation*}
\begin{split}
\int_{\partial B_R(0)} e_0 \nabla \Big(\frac{4-|x|^2}{4+|x|^2}\Big) \cdot \frac{x}{|x|} \,d\sigma
&= e_0 \int_{\partial B_R(0)} \frac{-16 |x|}{\big(4+|x|^2\big)^2} \frac{x}{|x|} \cdot \frac{x}{|x|} \,d\sigma \\
&= e_0 \int_{\partial B_R(0)} \frac{-16 |x|}{\big(4+|x|^2\big)^2} \,d\sigma \to 0,~\mbox{as}~R \to \infty,
\end{split}
\end{equation*}
and
\begin{equation*}
\int_{\partial B_R(0)} \nabla \Big(\sum^2_{i=1}{e_i}\frac{x_i}{4+|x|^2}\Big) \cdot \frac{x}{|x|} \,d\sigma = 0.
\end{equation*}
Then \eqref{c0-1} can be rewritten as
\begin{equation}\label{c0-2}
c_0 =  -\lim_{R \to \infty} \int_{\partial B_R(0)} \frac{\partial w}{\partial \nu} \,d\sigma.
\end{equation}

Define $\widetilde{w}(x):=w(2x)$. Since $w(x)$ is a radial function satisfying
\[
-\Delta w - \frac{2}{\big(1+\frac{|x|^2}{4}\big)^2} w = e^{2U}\big(U^2+U\big),
\]
we know that $\widetilde{w}(x)=\widetilde{w}\big(|x|\big)$ solves
\begin{equation*}
\Delta \widetilde{w}(x) + \frac{8}{\big(1+|x|^2\big)^2} \widetilde{w}(x) = f\big(|x|\big),
\end{equation*}
where
\begin{eqnarray*} f\big(|x|\big) = -4 e^{2U(2x)}\big(U^2(2x)+U(2x)\big)= -\frac{4}{(1+|x|^2)^2}
\Big( -\log\big(1+|x|^2\big) + \log^2\big(1+|x|^2\big) \Big).
\end{eqnarray*}
By Lemma \ref{add-lem1}, we deduce that as $r \to +\infty$,
\begin{equation}\label{parw-1}
\begin{split}
\partial_r \widetilde{w}(r)
=& \bigg( \int_{0}^{\infty} t \frac{t^2-1}{t^2+1} \cdot \frac{1}{(1+t^2)^2} \Big( 4\log\big(1+t^2\big) -4\log^2\big(1+t^2\big) \Big) \,dt \bigg) \frac{1}{r} \\
& +O\bigg( \frac{1}{r} \int_{r}^{\infty} s \frac{4}{(1+s^2)^2} \cdot \Big| -\log\big(1+s^2\big) + \log^2\big(1+s^2\big) \Big| \,ds + \frac{|\log r|}{r^3} \bigg).
\end{split}
\end{equation}
Using integration by parts, we compute that
\begin{equation}\label{parw-2}
\begin{split}
& \bigg( \int_{0}^{\infty} t \frac{t^2-1}{(1+t^2)^3} \Big( 4\log\big(1+t^2\big) -4\log^2\big(1+t^2\big) \Big) \,dt \bigg) \frac{1}{r} \\
=& \frac{4}{r} \int_{0}^{\infty} \left( \frac{t}{(1+t^2)^2} - \frac{2t}{(1+t^2)^3} \right) \log\big(1+t^2\big) \,dt
- \frac{4}{r} \int_{0}^{\infty} \left( \frac{t}{(1+t^2)^2} - \frac{2t}{(1+t^2)^3} \right) \log^2\big(1+t^2\big) \,dt \\
=& \frac{4}{r} \cdot \frac{1}{4} - \frac{4}{r} \cdot \frac{3}{4} = -\frac{2}{r},
\end{split}
\end{equation}
and
\begin{equation}\label{parw-3}
\begin{split}
&~ \frac{1}{r} \int_{r}^{\infty} s \frac{4}{(1+s^2)^2} \cdot \Big| -\log\big(1+s^2\big) + \log^2\big(1+s^2\big) \Big| \,ds + \frac{|\log r|}{r^3} \\
\leq &~ \frac{4}{r} \int_{r}^{\infty} \frac{s}{(1+s^2)^2} \log\big(1+s^2\big) \,ds + \frac{4}{r} \int_{r}^{\infty} \frac{s}{(1+s^2)^2} \log^2\big(1+s^2\big) \,ds + \frac{|\log r|}{r^3} \\
=&~ \frac{6\log(1+r^2)}{r(1+r^2)} + \frac{6}{r(1+r^2)} + \frac{2\log^2(1+r^2)}{r(1+r^2)} + \frac{|\log r|}{r^3}.
\end{split}
\end{equation}
Substituting \eqref{parw-2} and \eqref{parw-3} into \eqref{parw-1}, we derive
\begin{equation}\label{parw-4}
\partial_r \widetilde{w}(r) = -\frac{2}{r} + O\left( \frac{ \log^2 r }{r^3} \right)~\mbox{as}~r \to +\infty,
\end{equation}
which together with \eqref{c0-2} gives
\begin{equation}\label{c0-3}
\begin{split}
c_0 &= -\lim_{R \to \infty} \int_{\partial B_R(0)} \frac{\partial w}{\partial \nu} \,d\sigma = -\lim_{R \to \infty} \int_{\partial B_R(0)} \nabla w(x) \cdot \frac{x}{|x|} \,d\sigma \quad (\mbox{set}~x=2y) \\
&= -\lim_{R \to \infty} \int_{\partial B_{\frac{R}{2}}(0)} \nabla \widetilde{w}(y) \cdot \frac{y}{|y|} \,d\sigma'
= -\lim_{R \to \infty} \int_{\partial B_{\frac{R}{2}}(0)} \partial_r \widetilde{w}\big(|y|\big) \frac{y}{|y|} \cdot \frac{y}{|y|} \,d\sigma' \\
&= -\lim_{R \to \infty} \int_{\partial B_{\frac{R}{2}}(0)} \partial_r \widetilde{w}\big(|y|\big) \,d\sigma'
= -\lim_{R \to \infty} \int_{\partial B_{\frac{R}{2}}(0)} -\frac{2}{|y|} \,d\sigma' = 4\pi.
\end{split}
\end{equation}
Combining  with \eqref{add-C1} and \eqref{c0-3}, we find that
\begin{equation*}
C_\lambda = \frac{4\pi}{\gamma_\lambda} + \frac{4\pi}{\gamma_\lambda^3} + o\Big(\frac{1}{\gamma^3_\lambda}\Big).
\end{equation*}
\end{proof}

Now we recall that $\mu_{\lambda,l}$ and $v_{\lambda,l}$ are the eigenvalues and the associated eigenfunctions of the linearized problem
\begin{equation}\label{eigen-1}
\begin{cases}
-\Delta  {v}_{\lambda,l}(x) =\mu_{\lambda,l}\lambda \big(1+2u_\lambda^2\big)e^{u_\lambda^2}{v}_{\lambda,l}
(x)&~\mbox{in}~~\Omega,\\[1mm]
{v}_{\lambda,l}(x)=0&~\mbox{on}~~\partial \Omega,\\[1mm]
\| {v}_{\lambda,l}\|_{L^{\infty}(\Omega)}=1,
\end{cases}
\end{equation}
for $l\in \N$. We have the following identities.
\begin{Lem}
It holds
\begin{equation}\label{puv}
\begin{split}
P\big(u_\lambda,v_{\lambda,l}\big) =&
\lambda \int_{\partial B_d(x_\lambda)} u_\lambda e^{u_\lambda^2}  v_{\lambda,l} \langle x-x_\lambda,\nu\rangle \,d\sigma -2\lambda \int_{ B_d(x_\lambda)} u_\lambda e^{u_\lambda^2}  v_{\lambda,l} \,dx \\&
+\big(\mu_{\lambda,l}-1\big) {\lambda}  \int_{ B_d(x_\lambda)}\big(1+2u^2_\lambda\big)e^{u_\lambda^2} \langle x-x_\lambda,\nabla u_\lambda\rangle  v_{\lambda,l} \,dx,
\end{split}
\end{equation}
and
\begin{equation}\label{quv}
\begin{split}
Q\big(u_\lambda,v_{\lambda,l}\big)
 =&\lambda\int_{\partial B_d(x_\lambda)}u_\lambda e^{u_\lambda^2}  v_{\lambda,l} \nu_i \,d\sigma
+\big(\mu_{\lambda,l}-1\big){\lambda}  \int_{B_d(x_\lambda)}\big(1+2u^2_\lambda\big)e^{u_\lambda^2} v_{\lambda,l} \frac{\partial u_\lambda}{\partial x_i} \,dx.
\end{split}
\end{equation}
\end{Lem}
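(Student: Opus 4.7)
The plan is to derive both identities by symmetric Pohozaev-type multipliers, one applied to the equation \eqref{1.1} for $u_\lambda$ and one to the linearized equation \eqref{eigen-1}, then added. The algebraic identity that drives the whole computation is
\begin{equation*}
\partial_i\big(u_\lambda e^{u_\lambda^2}\big) = \big(1+2u_\lambda^2\big) e^{u_\lambda^2}\,\partial_i u_\lambda,
\end{equation*}
which explains how the ``original'' nonlinearity produces the ``linearized'' nonlinearity $(1+2u_\lambda^2) e^{u_\lambda^2}$ under one integration by parts and is ultimately responsible for the coefficient $(\mu_{\lambda,l}-1)$ that appears on the right-hand sides.

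For \eqref{puv}, I would multiply $-\Delta u_\lambda = \lambda u_\lambda e^{u_\lambda^2}$ by $\langle x-x_\lambda,\nabla v_{\lambda,l}\rangle$ and multiply $-\Delta v_{\lambda,l} = \mu_{\lambda,l}\lambda(1+2u_\lambda^2)e^{u_\lambda^2} v_{\lambda,l}$ by $\langle x-x_\lambda,\nabla u_\lambda\rangle$, integrate on $B_d(x_\lambda)$ and add. The two left-hand sides combine exactly as in the derivation of \eqref{puu}: the interior cross terms pair into $\int_{B_d}\langle x-x_\lambda,\nabla(\nabla u_\lambda\cdot\nabla v_{\lambda,l})\rangle\,dx$, which after one more integration by parts (using $\nabla\cdot(x-x_\lambda)=2$ in $\R^2$) together with the identity $\langle x-x_\lambda,\nu\rangle=d$ on $\partial B_d(x_\lambda)$ produces exactly $P(u_\lambda,v_{\lambda,l})$. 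On the right-hand side, the term $\lambda\int_{B_d} u_\lambda e^{u_\lambda^2}\langle x-x_\lambda,\nabla v_{\lambda,l}\rangle\,dx$ is integrated by parts once more via
\begin{equation*}
\nabla\cdot\big(u_\lambda e^{u_\lambda^2}(x-x_\lambda)\big) = 2 u_\lambda e^{u_\lambda^2} + \big(1+2u_\lambda^2\big) e^{u_\lambda^2}\langle x-x_\lambda,\nabla u_\lambda\rangle,
\end{equation*}
producing the boundary term, the $-2\lambda\int_{B_d} u_\lambda e^{u_\lambda^2} v_{\lambda,l}\,dx$ contribution, and an interior term that combines with the linearized-equation contribution to leave the factor $(\mu_{\lambda,l}-1)$, yielding \eqref{puv}.

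The proof of \eqref{quv} is identical in structure with the directional multipliers $\partial_i v_{\lambda,l}$ and $\partial_i u_\lambda$ replacing the radial ones. After adding the two integrated equations, the cross terms on the left collapse to $\int_{B_d}\partial_i(\nabla u_\lambda\cdot\nabla v_{\lambda,l})\,dx = \int_{\partial B_d}(\nabla u_\lambda\cdot\nabla v_{\lambda,l})\nu_i\,d\sigma$, recovering $Q(u_\lambda,v_{\lambda,l})$. On the right, integrating $\lambda\int_{B_d} u_\lambda e^{u_\lambda^2}\partial_i v_{\lambda,l}\,dx$ by parts using the same key identity produces the boundary term $\lambda\int_{\partial B_d} u_\lambda e^{u_\lambda^2} v_{\lambda,l}\nu_i\,d\sigma$ and an interior contribution that combines with the linearized-equation contribution to produce the $(\mu_{\lambda,l}-1)$ factor; here the absence of a $-2\int u_\lambda e^{u_\lambda^2} v_{\lambda,l}\,dx$ term (present in \eqref{puv}) reflects the fact that $\partial_i$, unlike the radial field $x-x_\lambda$, is divergence-free. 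Since both identities are pure algebraic rearrangements, I expect no genuine analytic obstacle; the only delicate point is bookkeeping the boundary evaluations on $\partial B_d(x_\lambda)$ carefully so that the coefficients on the $(\mu_{\lambda,l}-1)$ terms come out exactly as stated.
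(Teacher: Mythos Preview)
Your proposal is correct and follows essentially the same approach as the paper: multiply \eqref{1.1} by $\langle x-x_\lambda,\nabla v_{\lambda,l}\rangle$ (resp.\ $\partial_i v_{\lambda,l}$), multiply the linearized equation by $\langle x-x_\lambda,\nabla u_\lambda\rangle$ (resp.\ $\partial_i u_\lambda$), integrate over $B_d(x_\lambda)$, and add; the paper uses exactly the divergence identity you wrote to convert the $u_\lambda e^{u_\lambda^2}$ term into the $(1+2u_\lambda^2)e^{u_\lambda^2}$ form, producing the $(\mu_{\lambda,l}-1)$ coefficient. Your observation that the absence of the $-2\lambda\int u_\lambda e^{u_\lambda^2} v_{\lambda,l}$ term in \eqref{quv} stems from $\partial_i$ being divergence-free is precisely the mechanism at work.
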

\begin{proof}

Multiplying \eqref{1.1} by  $\langle x-x_\lambda, \nabla v_{\lambda,l}\rangle$   and integrating on $B_d(x_\lambda)$, we have
\begin{equation}\label{3.2-1}
-\int_{B_d(x_\lambda)}\Delta u_\lambda \big \langle x-x_\lambda, \nabla v_{\lambda,l}\big \rangle \,dx = \lambda \int_{B_d(x_\lambda)} u_\lambda e^{u_\lambda^2} \big \langle x-x_\lambda, \nabla v_{\lambda,l}\big \rangle \,dx.
\end{equation}
By integrating by parts, it follows
\begin{equation}\label{3.2-2}
\begin{split}
\mbox{LHS of}~ \eqref{3.2-1}= &-
\int_{\partial B_d(x_\lambda)} \frac{\partial u_\lambda}{\partial \nu} \big\langle x-x_\lambda,\nabla v_{\lambda,l} \big\rangle \,d\sigma +\int_{B_d(x_\lambda)}\nabla u_\lambda\cdot\nabla v_{\lambda,l} \,dx
\\& +\int_{B_d(x_\lambda)}\Big\langle \nabla u_\lambda, \big\langle \nabla^2 v_{\lambda,l},x-x_\lambda \big\rangle \Big\rangle \,dx,
\end{split}\end{equation}
and
\begin{equation}\label{3.2-3}
\begin{split}
\mbox{RHS of}~ \eqref{3.2-1} = &
\lambda \int_{\partial B_d(x_\lambda)} u_\lambda e^{u_\lambda^2} v_{\lambda,l} \big \langle x-x_\lambda,\nu \big\rangle \,d\sigma -2\lambda \int_{B_d(x_\lambda)}u_\lambda e^{u_\lambda^2}  v_{\lambda,l} \,dx \\
& -\lambda \int_{B_d(x_\lambda)}\big(1+2u^2_\lambda\big) e^{u_\lambda^2} \big \langle x-x_\lambda,\nabla u_\lambda\big\rangle  v_{\lambda,l} \,dx.
\end{split}\end{equation}
Then from \eqref{3.2-2} and \eqref{3.2-3}, we get

\begin{equation}\label{3.2-4}
\begin{split}
&-\int_{\partial B_d(x_\lambda)} \frac{\partial u_\lambda}{\partial \nu}\big \langle x-x_\lambda,\nabla v_{\lambda,l}\big\rangle \,d\sigma +\int_{B_d(x_\lambda)}\nabla u_\lambda\cdot\nabla v_{\lambda,l} \,dx
+\int_{B_d(x_\lambda)}\Big\langle \nabla u_\lambda, \big\langle \nabla^2 v_{\lambda,l},x-x_\lambda \big\rangle\Big\rangle \,dx \\
=& \lambda \int_{\partial B_d(x_\lambda)} u_\lambda e^{u_\lambda^2} v_{\lambda,l} \big \langle x-x_\lambda,\nu\big \rangle \,d\sigma -2\lambda \int_{B_d(x_\lambda)}u_\lambda e^{u_\lambda^2} v_{\lambda,l} \,dx
\\&-\lambda \int_{B_d(x_\lambda)}\big(1+2u^2_\lambda\big)e^{u_\lambda^2} \big \langle x-x_\lambda,\nabla u_\lambda\big \rangle  v_{\lambda,l} \,dx.
\end{split}
\end{equation}
Also multiplying $-\Delta v_{\lambda,l}=\mu_{\lambda,l}\lambda  \big(1+2u^2_\lambda\big) e^{u_\lambda^2} v_{\lambda,l}$ by  $\langle x-x_\lambda, \nabla u_\lambda\rangle$ and integrating on $B_d(x_\lambda)$, we have
\begin{equation}\label{3.2-5}
\begin{split}
&\mu_{\lambda,l}\lambda \int_{B_d(x_\lambda)} \big(1+2u^2_\lambda\big) e^{u_\lambda^2}  \big\langle x-x_\lambda,\nabla u_\lambda\big\rangle  v_{\lambda,l} \,dx \\
=& -\int_{\partial B_d(x_\lambda)} \frac{\partial v_{\lambda,l}}{\partial \nu}\big \langle x-x_\lambda,\nabla u_\lambda\big \rangle \,d\sigma +\int_{B_d(x_\lambda)}\nabla u_\lambda\cdot\nabla  v_{\lambda,l} \,dx \\
& +\int_{B_d(x_\lambda)}\Big\langle \nabla v_{\lambda,l}, \big\langle \nabla^2 u_\lambda,x-x_\lambda \big\rangle\Big\rangle \,dx.
\end{split}
\end{equation}
By divergence theorem, we know
\begin{equation}\label{3.2-6}
\begin{split}
&\int_{\partial B_d(x_\lambda)}  \big \langle \nabla u_\lambda, \nabla v_{\lambda,l}\big \rangle \big\langle x-x_\lambda,\nu\big\rangle \,d\sigma \\
=& 2\int_{B_d(x_\lambda)}\nabla u_\lambda\cdot\nabla  v_{\lambda,l} \,dx +\int_{B_d(x_\lambda)}\Big\langle \nabla u_\lambda, \big\langle \nabla^2 v_{\lambda,l},x-x_\lambda \big\rangle\Big\rangle \,dx \\
& +\int_{B_d(x_\lambda)} \Big\langle \nabla v_{\lambda,l}, \big\langle \nabla^2 u_\lambda,x-x_\lambda \big\rangle\Big\rangle \,dx.
\end{split}
\end{equation}
Hence \eqref{puv} can be obtained from \eqref{3.2-4}, \eqref{3.2-5} and \eqref{3.2-6}.

\vskip 0.1cm

Similarly, \eqref{quv} can also be obtained by multiplying \eqref{1.1} and $-\Delta v_{\lambda,l} =\mu_{\lambda,l}
\lambda  \big(1+2u^2_\lambda\big) e^{u_\lambda^2} v_{\lambda,l}$ by $\displaystyle \frac{\partial v_{\lambda,l}}{\partial x_i}$ and $\displaystyle \frac{\partial u_{\lambda}}{\partial x_i}$ respectively, and then integrating on $B_d(x_\lambda)$.
\end{proof}

Taking
\begin{equation*}
\widetilde{v}_{\lambda,l}(x):= v_{\lambda,l}\big(x_{\lambda}+\theta_{\lambda}x\big),~~x\in \Omega_{\lambda}:=\big\{x: x_{\lambda}+\theta_{\lambda}x\in \Omega\big\},
\end{equation*}
it follows from \eqref{eigen-1} that
\begin{equation}\label{3.4-2}
\begin{cases}
-\Delta \widetilde{v}_{\lambda,l}(x) =\mu_{\lambda,l}\widetilde{V}_\lambda(x)
\widetilde{v}_{\lambda,l}(x)&~\mbox{in}~~\Omega_\lambda,\\[1mm]
\widetilde{v}_{\lambda,l}(x)=0&~\mbox{on}~~\partial \Omega_\lambda,\\[1mm]
\|\widetilde{v}_{\lambda,l}\|_{L^{\infty}(\Omega_\lambda)}=1,
\end{cases}
\end{equation}
where
\begin{equation}\label{3.4-3}
\widetilde{V}_\lambda(x)=
\bigg(\frac{1}{\gamma^2_\lambda}+2\Big(1+\frac{w_\lambda}{\gamma^2_\lambda}\Big)^2\bigg)e^{2w_\lambda(x)+
\frac{w^2_\lambda}{\gamma^2_\lambda}}.
\end{equation}

\vskip 0.1cm

By standard elliptic regularity,  we have following result.
\begin{Lem}\label{lem3.3}
For $l\in \N$, suppose
$\mu_{l}:=\displaystyle\lim_{\lambda\to 0}\mu_{\lambda,l}$,
by taking a subsequence, then there exists $V_{l}(x)\in  C^{2}_{loc}\big(\R^2\big)$
satisfying
\begin{equation*}
\widetilde{v}_{\lambda,l}\to V_{l}~~\mbox{in}~~C^{2}_{loc}\big(\R^2\big),
\end{equation*}
 and
 \begin{equation}\label{3.3.1}
\begin{cases}
-\Delta V_{l}=2\mu_{l} e^{2U} V_{l}\,\,~\mbox{in}~\R^2,\\[1mm]
\|V_{l}\|_{L^{\infty}(\R^2)}\leq 1.
\end{cases}
\end{equation}
\end{Lem}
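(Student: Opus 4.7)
The plan is to apply standard elliptic regularity to the rescaled eigenvalue problem \eqref{3.4-2} and extract a convergent subsequence by a diagonal argument. Since $\theta_\lambda \to 0$ and $x_\lambda \to x_0 \in \Omega$, the rescaled domains $\Omega_\lambda$ exhaust $\R^2$, so for any compact $K \subset \R^2$ there exists $\lambda_K>0$ with $K \subset \Omega_\lambda$ for every $\lambda \in (0,\lambda_K)$.

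First I would analyze the potential $\widetilde V_\lambda$ in \eqref{3.4-3}. Using \eqref{limw}, namely $w_\lambda \to U$ in $C^2_{loc}(\R^2)$, together with $\gamma_\lambda \to \infty$, a direct expansion gives $\widetilde V_\lambda(x) \to 2e^{2U(x)}$ in $C^2_{loc}(\R^2)$ as $\lambda \to 0$. Globally, the pointwise bound \eqref{2.4.2} in Lemma \ref{lem2.4} yields $0 \le \widetilde V_\lambda(x) \le C_\varepsilon (1+|x|^{4-2\varepsilon})^{-1}$ on $\Omega_\lambda$, which is uniformly bounded on any compact set. Combined with the normalization $\|\widetilde v_{\lambda,l}\|_{L^\infty(\Omega_\lambda)}=1$ and the boundedness of $\mu_{\lambda,l}$ along the chosen subsequence (built into the hypothesis $\mu_l = \lim \mu_{\lambda,l}$), the right-hand side $\mu_{\lambda,l}\widetilde V_\lambda \widetilde v_{\lambda,l}$ of \eqref{3.4-2} is uniformly bounded on every compact $K \subset \R^2$.

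Next I would apply interior $L^p$ elliptic estimates to the equation $-\Delta \widetilde v_{\lambda,l}=\mu_{\lambda,l}\widetilde V_\lambda \widetilde v_{\lambda,l}$ on nested compact sets to obtain uniform $W^{2,p}_{loc}$ bounds for any $p<\infty$, hence uniform $C^{1,\alpha}_{loc}$ bounds by Sobolev embedding. Bootstrapping with the $C^2_{loc}$ convergence of $\widetilde V_\lambda$ and Schauder estimates upgrades these to uniform $C^{2,\alpha}_{loc}$ bounds. A standard Arzelà–Ascoli plus diagonal extraction then produces a subsequence, still denoted $\widetilde v_{\lambda,l}$, converging in $C^2_{loc}(\R^2)$ to some $V_l \in C^2_{loc}(\R^2)$.

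Finally I would pass to the limit. The $C^2_{loc}$ convergence of $\widetilde v_{\lambda,l}$ and $\widetilde V_\lambda$, together with $\mu_{\lambda,l}\to \mu_l$, immediately yield $-\Delta V_l = 2\mu_l e^{2U} V_l$ pointwise in $\R^2$. The sup-norm bound $\|V_l\|_{L^\infty(\R^2)} \le 1$ is inherited from $\|\widetilde v_{\lambda,l}\|_{L^\infty(\Omega_\lambda)}=1$ via local uniform convergence. The only non-routine ingredient is the localized bound on $\widetilde V_\lambda$, which is already provided by Lemma \ref{lem2.4}; everything else is a straightforward compactness argument, so I do not anticipate any serious obstacle in this particular lemma.
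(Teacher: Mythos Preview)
Your proposal is correct and follows essentially the same standard compactness argument as the paper. The only minor difference is that the paper first records the uniform $H^1$ bound $\int_{\R^2}|\nabla\widetilde v_{\lambda,l}|^2\le C$ (obtained by integrating \eqref{3.4-2} against $\widetilde v_{\lambda,l}$ and using the decay bound on $\widetilde V_\lambda$) before invoking elliptic regularity, whereas you go directly from the $L^\infty$ bound on the right-hand side to interior $L^p$ and Schauder estimates; both routes are standard and lead to the same conclusion.
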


\begin{proof}
Observe that $\nabla \widetilde{v}_{\lambda,l}$ is uniformly bounded in $L^2(\R^2)$.
Indeed, by Lemma \ref{lem2.4}, we have
\begin{equation*}
\begin{split}
\int_{\R^2}\big|\nabla \widetilde{v}_{\lambda,l}(y)\big|^2 \,dy
&= \int_{\Omega_\lambda}\big|\nabla\widetilde{v}_{\lambda,l} (y)\big|^2 \,dy\\
&=\mu_{\lambda,l} \int_{\Omega_\lambda}\bigg(\frac{1}{\gamma^2_\lambda}+2\Big(1+\frac{w_\lambda(y)}{\gamma^2_\lambda}\Big)^2 \bigg)e^{2w_\lambda(y)+\frac{w^2_\lambda(y)}{\gamma^2_\lambda}} \widetilde{v}^2_{\lambda,l}(y) \,dy \\
& \leq \mu_{\lambda,l} \int_{\Omega_\lambda} \bigg(\frac{1}{\gamma^2_\lambda}+2\Big(1+\frac{w_\lambda}
{\gamma^2_\lambda}\Big)^2\bigg)e^{2w_\lambda+\frac{w^2_\lambda}{\gamma^2_\lambda}}  \,dy \leq C\int_{\Omega_\lambda} \frac{1}{1+|y|^{4-\delta}} \,dy \leq C.
\end{split}
\end{equation*}
Hence by the standard elliptic regularity theory (see \cite{GT1983}), it holds
$$\widetilde{v}_{\lambda,l} \to V_l~~\mbox{in}~~C^{2}_{loc}(\R^2)~~\mbox{as}~~\lambda\to 0,$$
with $V_l$ satisfying \eqref{3.3.1}.

\end{proof}

To estimate the Morse index of $u_\lambda$, we need to study the case $\mu_{l}=0$ and $\mu_{l}=1$. Without loss of generality, we further assume that the eigenfunctions are orthogonal in Dirichlet norm, more precisely
\begin{equation}\label{ortho}
\int_{\Omega}\nabla v_{\lambda,l}\cdot \nabla v_{\lambda,l'}dx =0,~\forall~ l\neq l'.
\end{equation}

\begin{Prop}\label{prop3.4}
Let $l\in \N$, for any $\mu_{l}\in [0,\infty)$, it holds
\begin{equation}\label{3.4.1}
V_{l}\not\equiv 0.
\end{equation}
Furthermore, the following conclusions hold:

\vskip 0.2cm

\noindent \textup{(1)}~ If $\mu_{l}=0$, then $V_{l} \equiv c_{l}$, where $c_{l}$ is a nonzero constant.

\vskip 0.2cm

\noindent \textup{(2)}~ If $\mu_{l}=1$, then there exists $ (a_{l,1},a_{l,2},b_{l}) \in \R^3\backslash \{0\}$ such that
\begin{equation}\label{3.4.2}
\widetilde{v}_{\lambda,l}(x)
=\sum^2_{q=1}\frac{a_{l,q}x_q}{4+|x|^2}+b_{l}\frac{4-|x|^2}{4+|x|^2}+o(1)\,~\mbox{in}~C^1_{loc}(\R^2),
~~\mbox{as}~\lambda\to 0.
\end{equation}
Moreover, we denote $\textbf{a}_l:=\big(a_{l,1},a_{l,2}\big)\in \R^{2}$ and $b_l\in \R$. If $\mu_{l}=\mu_{l'}=1$ for $l\neq l'$, then we have
\begin{equation}\label{3.4.3}
\textbf{a}_l\cdot \textbf{a}_{l'} + 16 b_lb_{l'}=0.
\end{equation}
\end{Prop}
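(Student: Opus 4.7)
The proof naturally decomposes into four tasks: (i) showing $V_l\not\equiv 0$; (ii) treating the case $\mu_l=0$; (iii) treating the case $\mu_l=1$; and (iv) deriving the orthogonality relation \eqref{3.4.3}. Parts (ii)--(iv) are fairly mechanical; part (i) is the technical heart.

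For (i), the plan is to argue by contradiction. Suppose $V_l\equiv 0$, so that $\widetilde v_{\lambda,l}\to 0$ in $C^2_{loc}(\R^2)$. Since $\|\widetilde v_{\lambda,l}\|_{L^\infty(\Omega_\lambda)}=1$, any maximizer $y_\lambda\in\Omega_\lambda$ must satisfy $|y_\lambda|\to\infty$. Using the Green representation for the rescaled problem,
\begin{equation*}
\widetilde v_{\lambda,l}(y)=\mu_{\lambda,l}\int_{\Omega_\lambda}G_{\Omega_\lambda}(y,z)\,\widetilde V_\lambda(z)\,\widetilde v_{\lambda,l}(z)\,dz,
\end{equation*}
I would split into $|z|\leq R$ (where $C^2_{loc}$ convergence and dominated convergence force the contribution to zero) and $|z|>R$ (where the fast decay $\widetilde V_\lambda(z)\leq C/(1+|z|^{4-\delta})$ from Lemma~\ref{lem2.4} controls the tail), to conclude $\widetilde v_{\lambda,l}(y_\lambda)\to 0$, contradicting $|\widetilde v_{\lambda,l}(y_\lambda)|=1$. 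The main obstacle is that in dimension two the Green function is only logarithmic, so the bound on $|G_{\Omega_\lambda}(y_\lambda,z)|$ and its interaction with the tail decay need care; one likely needs to compare $G_{\Omega_\lambda}$ with $-\frac{1}{2\pi}\log|y-z|$ and exploit the integrability of $\widetilde V_\lambda$.

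For (ii), when $\mu_l=0$ the limit equation \eqref{3.3.1} reduces to $-\Delta V_l=0$ in $\R^2$ with $\|V_l\|_\infty\leq 1$, so Liouville's theorem yields $V_l\equiv c_l$, and $c_l\neq 0$ by (i). For (iii), when $\mu_l=1$ the equation becomes $-\Delta V_l=2e^{2U}V_l$. To apply Lemma~\ref{lem3.1} I would first verify the finite Dirichlet energy: using the uniform bound $\int_{\Omega_\lambda}|\nabla\widetilde v_{\lambda,l}|^2\,dx\leq C$ obtained in the proof of Lemma~\ref{lem3.3} and Fatou's lemma gives $\int_{\R^2}|\nabla V_l|^2\,dx<\infty$. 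Lemma~\ref{lem3.1} then produces the explicit form, which via $C^1_{loc}$ passage to the limit (Lemma~\ref{lem3.3}) yields \eqref{3.4.2} with $(a_{l,1},a_{l,2},b_l)\in\R^3\setminus\{0\}$ (nontriviality from (i)).

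For (iv), start from the Dirichlet orthogonality \eqref{ortho} and the eigenvalue equation to obtain $\mu_{\lambda,l}\lambda\int_\Omega(1+2u_\lambda^2)e^{u_\lambda^2}v_{\lambda,l}v_{\lambda,l'}\,dx=0$. Rescaling $x=x_\lambda+\theta_\lambda\xi$ and using the identity $\theta_\lambda^2(1+2u_\lambda^2)e^{u_\lambda^2}=\lambda^{-1}\widetilde V_\lambda$ (a direct computation from \eqref{3.4-3}), this becomes
\begin{equation*}
\int_{\Omega_\lambda}\widetilde V_\lambda(\xi)\,\widetilde v_{\lambda,l}(\xi)\,\widetilde v_{\lambda,l'}(\xi)\,d\xi=0.
\end{equation*}
Dominated convergence, justified by $\|\widetilde v_{\lambda,l}\|_\infty\leq 1$ and the decay of $\widetilde V_\lambda$, yields $\int_{\R^2}2e^{2U}V_lV_{l'}\,dx=0$. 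Substituting the forms from (iii), the odd cross terms vanish by symmetry, and direct computation (using $e^{2U}=16/(4+|x|^2)^2$ and integrals analogous to \eqref{equa2}--\eqref{equa1}) gives $\int_{\R^2}e^{2U}\frac{(4-|x|^2)^2}{(4+|x|^2)^2}\,dx=\tfrac{4\pi}{3}$ and $\int_{\R^2}e^{2U}\frac{x_q^2}{(4+|x|^2)^2}\,dx=\tfrac{\pi}{12}$. This produces $\tfrac{8\pi}{3}b_lb_{l'}+\tfrac{\pi}{6}\mathbf a_l\cdot\mathbf a_{l'}=0$, equivalent to \eqref{3.4.3}.
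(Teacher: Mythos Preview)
Your handling of (ii)--(iv) is correct and matches the paper almost verbatim, including the explicit integrals in (iv). The gap is in (i). On the expanding domain one has $G_{\Omega_\lambda}(y,z)=G_\Omega(\theta_\lambda y+x_\lambda,\theta_\lambda z+x_\lambda)=-\tfrac{1}{2\pi}\log|y-z|-\tfrac{1}{2\pi}\log\theta_\lambda+O(1)$, so the kernel carries an additive term of order $|\log\theta_\lambda|\sim\gamma_\lambda^2$. In your near region $|z|\leq R$ you only know $\widetilde v_{\lambda,l}\to0$ in the qualitative $C^2_{loc}$ sense, with no rate; multiplying by a kernel of size $\gamma_\lambda^2$ therefore does not yield $o(1)$. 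The same obstruction appears in the tail whenever the maximizer sits in the intermediate range $1\ll|y_\lambda|\ll\theta_\lambda^{-1}$, since then $\theta_\lambda|y_\lambda-z|\to0$ for $z$ in any fixed compact set. You correctly flag this as the ``main obstacle,'' but the proposal offers no mechanism to overcome it, and the direct Green-representation argument does not close without a genuinely new ingredient.

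The paper supplies that ingredient via the Kelvin transform $\widehat v_{\lambda,l}(x):=\widetilde v_{\lambda,l}(x/|x|^2)$, which sends the escaping maximizer $\widetilde t_\lambda$ into the annulus $B_1(0)\setminus\overline{B_{\theta_\lambda/d}(0)}$. The transformed right-hand side $\mu_{\lambda,l}|x|^{-4}\widetilde V_\lambda(x/|x|^2)\widehat v_{\lambda,l}$ is uniformly $L^2(B_1)$ (extended by zero inside $B_{\theta_\lambda/d}$) thanks to the decay \eqref{2.4.2}, so the auxiliary Dirichlet solution $\overline V_{\lambda,l}$ on $B_1$ with this data tends to $0$ uniformly by elliptic regularity. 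Separately one first proves $v_{\lambda,l}\to0$ in $C^2_{loc}(\overline\Omega\setminus\{x_0\})$, which furnishes $o(1)$ boundary values for $\widehat v_{\lambda,l}$ on the inner sphere $\partial B_{\theta_\lambda/d}$ (where $\widehat v_{\lambda,l}$ coincides with $v_{\lambda,l}$ on $\partial B_d(x_\lambda)$). Then $\overline V_{\lambda,l}-\widehat v_{\lambda,l}$ is harmonic on the annulus with $o(1)$ boundary data on both spheres, and the maximum principle forces $\widehat v_{\lambda,l}\to0$ uniformly there---contradicting $\widehat v_{\lambda,l}(\widetilde t_\lambda/|\widetilde t_\lambda|^2)=1$. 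The Kelvin inversion is precisely the missing idea that converts the two-dimensional logarithmic difficulty into a standard elliptic estimate on a fixed ball.
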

\begin{proof}
Firstly, since $\|v_{\lambda,l}\|_{L^{\infty}(\Omega)}=1$, using \eqref{lim-1}, it holds
\begin{equation}\label{9-30}
\begin{split}
\mu_{\lambda,l}&\lambda \big(1+2u_\lambda^2\big)e^{u_\lambda^2} v_{\lambda,l}
= O\Big(\mu_l \lambda e^{\frac{C}{\gamma^2_\lambda}}\Big)
 \to 0~~\mbox{ locally uniformly in}~~\Omega\backslash \{x_0\}, ~~\mbox{as}~~ \lambda\to 0.
\end{split}
\end{equation}
We also have
\[
\begin{split}
\int_{\Omega} \big|\nabla v_{\lambda,l}\big|^2 \,dx &= \mu_{\lambda,l} \int_{\Omega_\lambda} \bigg(\frac{1}{\gamma_\lambda^2}+2\Big(1+\frac{w_\lambda(x)}{\gamma_\lambda^2}\Big)^2\bigg) e^{2w_\lambda(x)+\frac{w^2_\lambda(x)}{\gamma^2_\lambda}} \widetilde{v}_{\lambda,l}^2(x) \,dx \\
& \leq C \int_{\Omega_\lambda} \frac{1}{1+|x|^{4-\delta}} \,dx \leq C,
\end{split}
\]
which implies that $\{v_{\lambda,l}\}$ is bounded in $H_0^1(\Omega)$. By the standard elliptic regularity theory, we find that
\[
\|v_{\lambda,l}\|_{C^{2,\alpha}_{loc}(\Omega)} \leq C,~\mbox{for~some}~\alpha \in (0,1),
\]
where $C$ is independent of $\lambda$. Then we can assume that there exists a bounded function $v_l$ such that
\[
v_{\lambda,l} \to v_l~\mbox{in}~C^2_{loc}(\Omega),~\mbox{as}~\lambda \to 0.
\]
Furthermore, \eqref{eigen-1} and \eqref{9-30} imply that $v_l$ satisfies
\[
-\Delta v_l = 0~\mbox{in}~\Omega \backslash \{x_0\}, \quad v_l \in H_0^1(\Omega).
\]
Hence, we deduce that $v_l \equiv 0$ in $\overline{\Omega} \backslash \{x_0\}$, and since $v_{\lambda,l}=0$ on $\partial \Omega$, it follows
\begin{equation}\label{3.4-1}
\begin{split}
{v}_{\lambda,l}(x)\to 0~~\mbox{locally uniformly in}~~\overline{\Omega}\backslash \{x_0\}, ~~\mbox{as}~~ \lambda\to 0.
\end{split}
\end{equation}
Recall that $\widetilde{v}_{\lambda,l}$ satisfies \eqref{3.4-2} and \eqref{3.4-3}. Let us denote by $\widetilde{t}_\lambda\in \Omega_\lambda$ a point such that
\begin{equation}\label{3.4-4}
\begin{split}
\widetilde{v}_{\lambda,l}(\widetilde{t}_\lambda)=\max_{x\in\Omega_\lambda} \widetilde{v}_{\lambda,l}(x)=1.
\end{split}
\end{equation}
Similar to the choice of $d$ in Lemma \ref{lem2.3}, we have $B_{\frac{d}{\theta_\lambda}}(0)\subset \Omega_\lambda$. Moreover, since $v_{\lambda,l} \big(x_\lambda+\theta_\lambda \widetilde{t}_\lambda \big) =\widetilde{v}_{\lambda,l}(\widetilde{t}_\lambda)=1$ and $v_{\lambda,l}$ satisfies \eqref{3.4-1}, the fact that $x_\lambda \to x_0$ as $\lambda \to 0$ indicates that $x_\lambda+\theta_\lambda \widetilde{t}_\lambda \in B_d(x_\lambda)$ for $\lambda>0$ small. That is,
\begin{equation}\label{3.4-5}
|\widetilde{t}_\lambda|<\frac{d}{\theta_\lambda},~~\mbox{for}~~\lambda~~\mbox{small}.
\end{equation}
Assume by contradiction that $V_{l}\equiv 0$, namely that
\begin{equation}\label{3.4-6}
\begin{split}
 \widetilde{v}_{\lambda,l}(x) \to 0~~\mbox{locally uniformly in}~~\R^2, ~~\mbox{as}~~ \lambda\to 0.
\end{split}
\end{equation}
Then $\widetilde{t}_\lambda \to +\infty$ and so in particular
\begin{equation}\label{3.4-7}
|\widetilde{t}_\lambda| > 1,~\mbox{for}~\lambda~\mbox{small}.
\end{equation}

Let $\widehat{v}_{\lambda,l}(x)$ be the Kelvin transform of $\widetilde{v}_{\lambda,l}(x)$, namely
\begin{equation*}
\begin{split}
\widehat{v}_{\lambda,l}(x):=\widetilde{v}_{\lambda,l}\Big(\frac{x}{|x|^2}\Big).
\end{split}
\end{equation*}
Since $B_{\frac{d}{\theta_\lambda}}(0) \subset \Omega_\lambda$ and $\widetilde{v}_{\lambda,l}(x)$ is defined in $\Omega_\lambda$, we observe that $\widehat{v}_{\lambda,l}(x)$ is well defined in $\R^2\backslash
B_{\frac{\theta_\lambda}{d}}(0)$, and by \eqref{3.4-2}, it satisfies
\begin{equation}\label{3.4-8}
-\Delta \widehat{v}_{\lambda,l}(x) =\frac{\mu_{\lambda,l}}{|x|^4}\widetilde{V}_\lambda\Big(\frac{x}{|x|^2}\Big)
\widehat{v}_{\lambda,l}(x),~\mbox{in}~~\R^2\backslash B_{\frac{\theta_\lambda}{d}}(0).
\end{equation}
Moreover, by \eqref{3.4-6}, we have
\begin{equation}\label{3.4-9}
 \widehat{v}_{\lambda,l}(x)\to 0 ~\mbox{as}~\lambda \to 0,~\mbox{pointwise for any}~x\neq 0.
\end{equation}
Now we define
\begin{equation*}
\check{V}_{\lambda,l}(x):=
\begin{cases}
\displaystyle \frac{\mu_{\lambda,l}}{|x|^4}\widetilde{V}_\lambda\Big(\frac{x}{|x|^2}\Big)
\widehat{v}_{\lambda,l}(x),&~\mbox{in}~~B_{1}(0)\backslash \overline{B_{\frac{\theta_\lambda}{d}}(0)},\\[3mm]
\displaystyle 0,&~~\mbox{in}~~B_{\frac{\theta_\lambda}{d}}(0).
\end{cases}
\end{equation*}
By \eqref{limw}, \eqref{3.4-3} and \eqref{3.4-9}, we have that $\check{V}_{\lambda,l}(x)\to 0$ pointwise in $B_1(0)$ as $\lambda\to 0$. Furthermore, \eqref{2.4.2} implies that
\begin{equation*}
\check{V}^2_{\lambda,l}(x) \leq \widehat{C}_{\varepsilon} \Big(\frac{1}{|x|^4} \frac{1}{1+\frac{1}{|x|^{4-2\varepsilon}}}
\Big)^2
= \frac{\widehat{C}_{\varepsilon}}{|x|^{4\varepsilon}  \big(1+ |x|^{4-2\varepsilon} \big)^2}
\in L^1\big(B_1(0)\big),~\mbox{by choosing}~\varepsilon<\frac{1}{4}.
\end{equation*}
Thus by the dominated convergence theorem, we find
\begin{equation}\label{3.4-10}
\lim_{\lambda\to 0} \| \check{V}_{\lambda,l}\|_{L^2\big(B_1(0)\big)} =0.
\end{equation}
Let $\overline{V}_{\lambda,l}(x)\in H^1_0\big(B_1(0)\big)$ be a function such that
\begin{equation}\label{3.4-11}
 \begin{cases}
-\Delta \overline{V}_{\lambda,l}  =\check{V}_{\lambda,l},&~\mbox{in}~~B_{1}(0),\\[1mm]
  \overline{V}_{\lambda,l}=0,&~~\mbox{on}~~  \partial B_{1}(0).
 \end{cases}
\end{equation}
Then by \eqref{3.4-10} and the elliptic regularity, we have that
\begin{equation}\label{3.4-12}
\begin{split}
\overline{V}_{\lambda,l}(x) \to 0~~\mbox{uniformly in}~~B_1(0) ~~\mbox{as}~~ \lambda\to 0.
\end{split}
\end{equation}
Now we consider the difference $\overline{V}_{\lambda,l}-\widehat{v}_{\lambda,l}$. \eqref{3.4-8} and \eqref{3.4-11} imply that $\overline{V}_{\lambda,l}-\widehat{v}_{\lambda,l}$ is harmonic in $B_{1}(0)\backslash \overline{B_{\frac{\theta_\lambda}{d}}(0)}$, then by \eqref{3.4-6}, \eqref{3.4-11}, \eqref{3.4-12} and the maximum principle for harmonic functions, we derive
\begin{equation}\label{3.4-13}
\begin{split}
\|\overline{V}_{\lambda,l}-\widehat{v}_{\lambda,l}\|_{L^{\infty} \big(B_{1}(0)\backslash\overline
{B_{\frac{\theta_\lambda}{d}}(0)}\big)}
&\leq  \|\overline{V}_{\lambda,l}-\widehat{v}_{\lambda,l}\|_{L^{\infty}\big (\partial B_{1}(0)\big)}+
\|\overline{V}_{\lambda,l}-\widehat{v}_{\lambda,l}\|_{L^{\infty}\big(\partial {B_{\frac{\theta_\lambda}{d}}(0)}\big)}\\
&\leq \|\widehat{v}_{\lambda,l}\|_{L^{\infty}\big(\partial B_{1}(0)\big)}+ \|\overline{V}_{\lambda,l}\|_{L^{\infty}\big(\partial {B_{\frac{\theta_\lambda}{d}}(0)}\big)}+
\|\widehat{v}_{\lambda,l}\|_{L^{\infty}\big(\partial {B_{\frac{\theta_\lambda}{d}}(0)}\big)}\\
&= \|\widetilde{v}_{\lambda,l}\|_{L^{\infty}\big(\partial B_{1}(0)\big )} +\|\widetilde{v}_{\lambda,l}\|_{L^{\infty}\big( \partial {B_{\frac{d}{\theta_\lambda}}(0)} \big)} +o\big(1\big)\\
&= \|{v}_{\lambda,l}\|_{L^{\infty}\big(\partial B_{d}(x_\lambda)\big)} +o\big(1\big) \\
&\leq  \|{v}_{\lambda,l}\|_{L^{\infty}\big(\Omega \backslash B_{\frac{d}{2}}(x_0)\big)} +o\big(1\big) \overset{\eqref{3.4-1}}=o\big(1\big),
\end{split}
\end{equation}
the last inequality follows by the fact that $x_\lambda \to x_0$, as $\lambda \to 0$. Combining \eqref{3.4-12} with \eqref{3.4-13}, we obtain that
\begin{equation}\label{3.4-14}
\begin{split}
\| \widehat{v}_{\lambda,l}\|_{L^{\infty}\big(B_{1}(0)\backslash \overline{B_{\frac{\theta_\lambda}{d}}(0)}\big)}\leq
\|\overline{V}_{\lambda,l} \|_{L^{\infty}\big(B_{1}(0)\big)}+
\|\overline{V}_{\lambda,l}-\widehat{v}_{\lambda,l}\|_{L^{\infty}\big(B_{1}(0)\backslash \overline{B_{\frac{\theta_\lambda}{d}}(0)}\big)} =o\big(1\big).
\end{split}
\end{equation}

On the other hand, it follows from \eqref{3.4-5} and \eqref{3.4-7} that
\begin{equation*}
\frac{\widetilde{t}_\lambda}{|\widetilde{t}_\lambda|^2}\in B_{1}(0)\backslash \overline{B_{\frac{\theta_\lambda}{d}}(0)},~\mbox{for}~\lambda ~\mbox{small}.
\end{equation*}
Also by the definition of $\widehat{v}_{\lambda,l}$ and \eqref{3.4-4}, we get
\begin{equation*}
\begin{split}
\widehat{v}_{\lambda,l}\Big(\frac{\widetilde{t}_\lambda}{|\widetilde{t}_\lambda|^2}\Big) =\widetilde{v}_{\lambda,l}\big(\widetilde{t}_\lambda\big)=1,
\end{split}
\end{equation*}
which contradicts \eqref{3.4-14}. Hence the proof of \eqref{3.4.1} is completed.

\vskip 0.1cm

If $\mu_l=0$, then from \eqref{3.3.1}, we have
\begin{equation*}
\begin{cases}
-\Delta V_{l}=0\,\,~\mbox{in}~\R^2,\\[1mm]
\|V_{l}\|_{L^{\infty}(\R^2)}\leq 1.
\end{cases}
\end{equation*}
Hence we find that $V_{l}$ is a constant, which together with \eqref{3.4.1} implies that $V_{l}$ is a nonzero constant.

\vskip 0.1cm

If $\mu_{l}=1$,  from \eqref{3.3.1}, we have

\begin{equation*}
\begin{cases}
-\Delta V_{l}=2 e^{2U} V_{l}\,\,~\mbox{in}~\R^2,\\[1mm]
\|V_{l}\|_{L^{\infty}(\R^2)}\leq 1.
\end{cases}
\end{equation*}
It follows from Lemmas \ref{lem3.1} and \ref{lem3.3} that there exists $(a_{l,1},a_{l,2},b_{l})\in \R^3$ such that
\begin{equation}\label{3.4-15}
\widetilde{v}_{\lambda,l}(x)
=\sum^2_{q=1}\frac{a_{l,q}x_q}{4+|x|^2}+b_{l}\frac{4-|x|^2}{4+|x|^2}+o\big(1\big)\,~\mbox{in}~C^1_{loc}(\R^2),
~~\mbox{as}~\lambda\to 0.
\end{equation}
Combining \eqref{3.4-15} with \eqref{3.4.1}, we deduce that $(a_{l,1},a_{l,2},b_{l})\neq (0,0,0)$.

\vskip 0.2cm

Furthermore, if $\mu_{l}=\mu_{l'}=1$ for $l\neq l'$, then from \eqref{ortho}, we have
\begin{align}\label{3.4-16}
0 =& \lambda \int_{\Omega} \big(1+2u_\lambda^2(x)\big)e^{u_\lambda^2(x)}v_{\lambda,l}(x)v_{\lambda,l'}(x) \,dx \notag\\
=& \int_{\Omega_\lambda} \bigg(\frac{1}{\gamma^2_\lambda}+2\Big(1+\frac{w_\lambda(z)}{\gamma_\lambda^2}\Big)^2\bigg)
e^{2w_\lambda(z)+\frac{w_\lambda^2(z)}{\gamma^2_\lambda}} \widetilde{v}_{\lambda,l}(z) \widetilde{v}_{\lambda,l'}(z) \,dz \notag\\
=& \int_{B_{\frac{d}{\theta_\lambda}}(0)} \bigg(\frac{1}{\gamma^2_\lambda}+2\Big(1+\frac{w_\lambda(z)}
{\gamma_\lambda^2}\Big)^2\bigg) e^{2w_\lambda(z)+\frac{w_\lambda^2(z)}{\gamma^2_\lambda}} \widetilde{v}_{\lambda,l}(z) \widetilde{v}_{\lambda,l'}(z) \,dz \notag\\
&+O\bigg(\int_{\Omega_\lambda \backslash B_{\frac{d}{\theta_\lambda}}(0)} \frac{1}{1+|z|^{4-\delta}} \,dz \bigg) \notag\\
=& \int_{B_{\frac{d}{\theta_\lambda}}(0)} \bigg(\frac{1}{\gamma^2_\lambda}+2\Big(1+\frac{w_\lambda(z)}
{\gamma_\lambda^2}\Big)^2\bigg) e^{2w_\lambda(z)+\frac{w_\lambda^2(z)}{\gamma^2_\lambda}} \widetilde{v}_{\lambda,l}(z) \widetilde{v}_{\lambda,l'}(z) \,dz + O\big( \theta_\lambda^{2-\delta} \big).
\end{align}
Using the dominated convergence theorem and passing to the limit in \eqref{3.4-16}, we obtain
\begin{equation*}
\begin{split}
0 &=\int_{\R^2} 2e^{2U(z)} \bigg(\sum^2_{q=1}\frac{a_{l,q}a_{l',q} z_q^2}{\big(4+|z|^2\big)^2}+
b_{l}b_{l'}\frac{\big(4-|z|^2\big)^2}{\big(4+|z|^2\big)^2}\bigg) \,dz \\
&= \sum\limits_{q=1}^2 a_{l,q} a_{l',q} \int_{\R^2} e^{2U(z)} \frac{|z|^2}{(4+|z|^2)^2} \,dz + b_{l}b_{l'} \int_{\R^2} 2e^{2U(z)} \frac{\big(4-|z|^2\big)^2}{\big(4+|z|^2\big)^2} \,dz \\
&= \frac{\pi}{6} \textbf{a}_l \cdot \textbf{a}_{l'} + \frac{8\pi}{3} b_l b_{l'},
\end{split}
\end{equation*}
which means that $\textbf{a}_l \cdot \textbf{a}_{l'} + 16 b_l b_{l'}=0$.
\end{proof}

\begin{Lem}\label{lem3.5}
Let $l\in \mathbb{N}$, if $\mu_{l}=1$, then
\begin{equation}\label{3.5.1}
v_{\lambda,l}(x)= -\frac{4\pi b_{l}}{\gamma_\lambda^2} G(x,x_{\lambda})+o\Big(\frac{1}{\gamma_\lambda^2}\Big) ~\,~\mbox{in}~C^1\big( \Omega \backslash B_{2d}(x_{\lambda})\big),
\end{equation}
where $b_{l}$ is the constant in \eqref{3.4.2}.
Moreover, if $b_{l}\neq 0$, then
\begin{equation}\label{3.5.2}
\mu_{\lambda,l}=1+ \frac{3}{\gamma_\lambda^4} +o\Big(\frac{1}{\gamma_\lambda^4}\Big)~\,\mbox{as}~~\lambda \to 0.
\end{equation}
\end{Lem}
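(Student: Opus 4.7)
My plan is to establish \eqref{3.5.1} by combining the Green's representation of $v_{\lambda,l}$ with an inner-outer matching argument, and then to derive \eqref{3.5.2} by testing the Pohozaev identity \eqref{puv} against the newly-found expansion.

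I start from the Green's representation
\[
v_{\lambda,l}(x)=\mu_{\lambda,l}\lambda\int_\Omega G(y,x)\bigl(1+2u_\lambda^2(y)\bigr)e^{u_\lambda^2(y)}v_{\lambda,l}(y)\,dy.
\]
The exterior $\{|y-x_\lambda|\geq d\}$ contributes an exponentially small quantity (Hölder, plus the exponential decay of $\lambda$), while in the interior I rescale $y=x_\lambda+\theta_\lambda z$ (so the nonlinearity becomes $\widetilde V_\lambda(z)\widetilde v_{\lambda,l}(z)$, cf.\ \eqref{3.4-3}) and Taylor-expand $G(y,x)=G(x_\lambda,x)+O(\theta_\lambda|z|)$. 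This yields $v_{\lambda,l}(x)=C_{\lambda,l}G(x_\lambda,x)+o(C_{\lambda,l})$ in $C^1(\Omega\setminus B_{2d}(x_\lambda))$, with $C_{\lambda,l}:=\mu_{\lambda,l}\int \widetilde V_\lambda\widetilde v_{\lambda,l}\,dz$. The apparent leading term $\int 2e^{2U}V_l\,dz=2b_l\int e^{2U}\frac{4-|z|^2}{4+|z|^2}\,dz$ vanishes by \eqref{equa2}, so one order more is needed. Using $w_\lambda=U+v_0/\gamma_\lambda^2+o(1/\gamma_\lambda^2)$ from Proposition \ref{prop4.3} together with the ansatz $\widetilde v_{\lambda,l}=V_l+V_l^{(1)}/\gamma_\lambda^2+o(1/\gamma_\lambda^2)$, matching powers of $1/\gamma_\lambda^2$ in the eigenfunction equation shows $V_l^{(1)}$ satisfies the non-homogeneous equation $-\Delta V_l^{(1)}-2e^{2U}V_l^{(1)}=e^{2U}(1+4U+4v_0+2U^2)V_l$. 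Integrating over $\mathbb R^2$ and using Lemma \ref{add-lem1} gives $C_{\lambda,l}=-2\pi c_l^{(1)}/\gamma_\lambda^2+o(1/\gamma_\lambda^2)$, where $c_l^{(1)}$ is the coefficient of $\log|z|$ in the asymptotic expansion of $V_l^{(1)}$ at infinity. Finally, an inner-outer matching argument, comparing the inner limit $\widetilde v_{\lambda,l}(z)\sim -b_l+c_l^{(1)}\log|z|/\gamma_\lambda^2$ (re-expressed in the original variable via $\log\theta_\lambda=-\gamma_\lambda^2/2+o(\gamma_\lambda^2)$) with the outer expansion $C_{\lambda,l}\bigl[-\tfrac1{2\pi}\log|x-x_\lambda|-\mathcal R(x_\lambda)\bigr]+o(C_{\lambda,l})$ as $x\to x_\lambda$, forces $c_l^{(1)}=2b_l$, yielding \eqref{3.5.1}.

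For \eqref{3.5.2} I apply \eqref{puv} with the pair $(u_\lambda,v_{\lambda,l})$. By \eqref{asym_u}, \eqref{3.5.1}, \eqref{pgg}, and the sharper estimate $C_\lambda=\frac{4\pi}{\gamma_\lambda}+\frac{4\pi}{\gamma_\lambda^3}+o(1/\gamma_\lambda^3)$ proved in \eqref{def_C1}, the LHS becomes
\[
P(u_\lambda,v_{\lambda,l})=\frac{2b_l C_\lambda}{\gamma_\lambda^2}=\frac{8\pi b_l}{\gamma_\lambda^3}+\frac{8\pi b_l}{\gamma_\lambda^5}+o(1/\gamma_\lambda^5).
\]
On the RHS, the boundary integral on $\partial B_d(x_\lambda)$ is exponentially small; the term $-2\lambda\int_{B_d}u_\lambda e^{u_\lambda^2}v_{\lambda,l}\,dx=-2I_1$ matches the $1/\gamma_\lambda^3$ contribution of the LHS by rescaling and \eqref{equa1}, giving $I_1=-4\pi b_l/\gamma_\lambda^3+o(1/\gamma_\lambda^3)$; and the remaining $(\mu_{\lambda,l}-1)J_\lambda$ with $J_\lambda:=\lambda\int(1+2u_\lambda^2)e^{u_\lambda^2}\langle x-x_\lambda,\nabla u_\lambda\rangle v_{\lambda,l}\,dx$ rescales to $J_\lambda=\gamma_\lambda^{-1}\int 2e^{2U}(z\cdot\nabla U)V_l\,dz+o(1/\gamma_\lambda)$; computing the integral with $z\cdot\nabla U=-2|z|^2/(4+|z|^2)$ and one-variable integration yields $J_\lambda=\frac{8\pi b_l}{3\gamma_\lambda}+o(1/\gamma_\lambda)$. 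Matching the $1/\gamma_\lambda^5$ coefficients on both sides of \eqref{puv} then gives $\mu_{\lambda,l}-1=3/\gamma_\lambda^4+o(1/\gamma_\lambda^4)$.

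The main obstacle is verifying that no spurious $O(1/\gamma_\lambda^5)$ correction creeps into $I_1$ and corrupts the final constant $3$; equivalently, pinning down the logarithmic coefficient $c_l^{(1)}=2b_l$ in a self-consistent way. Both points rely on the refined expansions of Proposition \ref{prop_wlambda} (together with a parallel refinement $\widetilde v_{\lambda,l}=V_l+V_l^{(1)}/\gamma_\lambda^2+V_l^{(2)}/\gamma_\lambda^4+o(1/\gamma_\lambda^4)$) and on the delicate cancellations of integrals of the form $\int e^{2U}U^k\phi^m\,dz$ effected by \eqref{equa2}--\eqref{equa1} and Lemma \ref{add-lem1}. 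The assumption $b_l\neq 0$ is essential: without it, the leading coefficients on both sides of the Pohozaev identity degenerate and the present argument cannot pin down $\mu_{\lambda,l}-1$.
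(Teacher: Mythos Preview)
Your overall structure is right—Green's representation for \eqref{3.5.1}, then the Pohozaev identity \eqref{puv} for \eqref{3.5.2}—but the route you take at each stage diverges substantially from the paper's, and at the crucial points your argument carries a genuine gap.

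For \eqref{3.5.1}, the paper does \emph{not} expand $\widetilde v_{\lambda,l}$ beyond leading order and does not invoke any inner--outer matching. Instead it exploits the cross-identity obtained by multiplying $-\Delta u_\lambda=\lambda u_\lambda e^{u_\lambda^2}$ by $v_{\lambda,l}$ and $-\Delta v_{\lambda,l}=\mu_{\lambda,l}\lambda(1+2u_\lambda^2)e^{u_\lambda^2}v_{\lambda,l}$ by $u_\lambda$, subtracting and integrating over $\Omega$: this gives $\int_\Omega \lambda u_\lambda e^{u_\lambda^2}v_{\lambda,l}\bigl(\mu_{\lambda,l}(1+2u_\lambda^2)-1\bigr)\,dx=0$. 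Inserting $u_\lambda(x_\lambda)=u_\lambda(y)-\gamma_\lambda^{-1}w_\lambda\bigl(\tfrac{y-x_\lambda}{\theta_\lambda}\bigr)$ then yields the coefficient $\lambda A_{\lambda,l}=-4\pi b_l/\gamma_\lambda^2+o(1/\gamma_\lambda^2)$ directly from \eqref{equa2}--\eqref{equa1}, with no need to construct $V_l^{(1)}$ or to justify a matched-asymptotics heuristic. Your matching argument for $c_l^{(1)}=2b_l$ is formal as stated; making it rigorous would require the analogue of Proposition~\ref{prop4.2} for $\widetilde v_{\lambda,l}$, which you have not established.

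For \eqref{3.5.2}, the gap is sharper. You propose to compute both sides of \eqref{puv} explicitly up to $o(1/\gamma_\lambda^5)$ and then match coefficients. But \eqref{3.5.1} only determines $v_{\lambda,l}$ away from $x_\lambda$ to error $o(1/\gamma_\lambda^2)$, so the LHS is a priori only known to $o(1/\gamma_\lambda^3)$; likewise your $I_1$ (the paper's $A_0$) is only known to $o(1/\gamma_\lambda^3)$. Both quantities \emph{do} carry nontrivial $1/\gamma_\lambda^5$ corrections (the paper computes several of them), so ``no spurious $O(1/\gamma_\lambda^5)$ correction'' is false. The paper's device is to keep $A_{\lambda,l}$ and $A_0:=\lambda\int_{B_d} u_\lambda e^{u_\lambda^2}v_{\lambda,l}\,dx$ as unevaluated symbols on both sides, and to derive—again via the cross-identity applied on $B_d(x_\lambda)$ together with repeated use of $u_\lambda=\gamma_\lambda+\gamma_\lambda^{-1}w_\lambda$—the exact relation $A_0=\tfrac{\lambda}{\gamma_\lambda}A_{\lambda,l}+o(1/\gamma_\lambda^5)$. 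This makes the $1/\gamma_\lambda^3$ contributions on LHS and RHS cancel \emph{algebraically}, leaving only the term $-\tfrac{2\lambda}{\gamma_\lambda^3}A_{\lambda,l}=\tfrac{8\pi b_l}{\gamma_\lambda^5}+o(1/\gamma_\lambda^5)$ to balance against $(\mu_{\lambda,l}-1)\tfrac{8\pi b_l}{3\gamma_\lambda}$. No second- or third-order expansion of $\widetilde v_{\lambda,l}$ is needed. Your plan, by contrast, would need a rigorous version of $\widetilde v_{\lambda,l}=V_l+V_l^{(1)}/\gamma_\lambda^2+V_l^{(2)}/\gamma_\lambda^4+o(1/\gamma_\lambda^4)$ with uniform control (the eigenfunction analogue of Propositions~\ref{prop4.2}, \ref{prop_klambda}, \ref{prop_slambda}), which is a substantial piece of work you have not supplied.
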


\begin{proof}
Firstly, for $x\in \Omega\backslash   B_{2d}(x_{\lambda})$, we have
\begin{equation}\label{3.5-1}
\begin{split}
v_{\lambda,l}(x)=& \lambda \mu_{\lambda,l} \int_{\Omega}G(y,x)
 e^{u^2_\lambda(y)}\big(1+2u^2_\lambda (y)\big) v_{\lambda,l}(y) \,dy\\=& \lambda  \mu_{\lambda,l}
  \underbrace{\int_{B_d(x_{\lambda})}G(y,x)
 e^{u^2_\lambda (y)}\big(1+2u^2_\lambda (y)\big)v_{\lambda,l}(y) \,dy}_{:=K_{\lambda,l}}\\& + \lambda \mu_{\lambda,l} \int_{ \Omega\backslash  B_{d}(x_{\lambda})}G(y,x)
 e^{u^2_\lambda (y)}\big(1+2u^2_\lambda (y)\big) v_{\lambda,l}(y) \,dy.
\end{split}
\end{equation}
Similar to \eqref{2.5-3}, for any fixed $\alpha>1$, we have
\begin{equation}\label{3.5-2}
\begin{split}
&~ \lambda \mu_{\lambda,l} \int_{ \Omega\backslash  B_{d}(x_{\lambda})} G(y,x) e^{u^2_\lambda (y)}\big(1+2u^2_\lambda (y)\big) v_{\lambda,l}(y) \,dy \\
\leq &~ \lambda \mu_{\lambda,l} \bigg(\int_{ \Omega\backslash B_{d}(x_{\lambda})} \Big|e^{u^2_\lambda (y)} \big(1+2u^2_\lambda(y)\big) v_{\lambda,l}(y)\Big|^\alpha \,dy\bigg)^{\frac{1}{\alpha}}  \bigg(
\int_{ \Omega\backslash B_{d}(x_{\lambda})} \big(G(y,x)\big)^{\frac{\alpha}{\alpha-1}} \,dy\bigg)^{1-\frac{1}{\alpha}} \\
=&~ O\Bigg( \lambda \gamma_\lambda^2 e^{\gamma_\lambda^2} \theta_\lambda^{\frac{2}{\alpha}} \bigg(
\int_{\Omega_\lambda \backslash B_{\frac{d}{\theta_\lambda}(0)}} \frac{1}{\big(1+|z|^{4-\delta}\big)^\alpha} \,dz \bigg)^{\frac{1}{\alpha}}\Bigg)= O\big( \theta_\lambda^{2-\delta} \big),~~\mbox{for any small fixed}~~\delta>0.
\end{split}
\end{equation}

Moreover, it holds
\begin{align}\label{K}
K_{\lambda,l}=&  G(x_{\lambda},x)\underbrace{\int_{B_d(x_{\lambda})} e^{u_\lambda^2(y)}\big(1+2u^2_\lambda (y)\big) v_{\lambda,l}(y) \,dy}_{:=A_{\lambda,l}} \notag\\
& +\underbrace{\int_{B_d(x_{\lambda})}\big(G(y,x)-G(x_{\lambda},x)\big)
e^{u_\lambda^2(y)}\big(1+2u^2_\lambda (y)\big) v_{\lambda,l}(y) \,dy}_{:=B_{\lambda,l}},
\end{align}
and by Taylor's expansion, it follows
\begin{align*}
B_{\lambda,l}=&\frac{\theta_\lambda}{\lambda  }
\int_{B_{\frac{d}{\theta_\lambda}}(0)}
\big\langle\nabla G(x_{\lambda},x),z\big\rangle\bigg(\frac{1}{\gamma_\lambda^2}+2\Big(1+\frac{w_{\lambda}(z)}{\gamma_\lambda^2}\Big)^2\bigg)
e^{2w_{\lambda}(z)+\frac{w^2_{\lambda}(z)}{\gamma_\lambda^2} }\widetilde{v}_{\lambda,l}(z) \,dz\\&
+O\left(\frac{\theta^2_{\lambda} }{\lambda }
\int_{B_{\frac{d}{\theta_{\lambda}}}(0)}
\big| z \big|^2 \cdot e^{2w_{\lambda}(z)+\frac{w^2_{\lambda}(z)}{\gamma_\lambda^2} } \,dz\right) =
O\Big(\frac{\theta_\lambda}{\lambda }\Big),
\end{align*}
which together with \eqref{3.5-1}, \eqref{3.5-2} and \eqref{K} imply that
\begin{equation}\label{3.5-3}
v_{\lambda,l}(x)= \lambda \mu_{\lambda,l} A_{\lambda,l} G\big(x_{\lambda},x\big)+ O(\theta_\lambda)\,\, ~\mbox{in}~ \Omega\backslash  B_{2d}(x_{\lambda}).
\end{equation}
Similar to the estimates of \eqref{3.5-3}, we find
\begin{equation*}
\begin{split}
\frac{\partial v_{\lambda,l}(x)}{\partial x_i}=& \lambda \mu_{\lambda,l} \int_{\Omega}D_{x_i} G(y,x) e^{u_\lambda^2(y)}
\big(1+2u^2_\lambda (y)\big)v_{\lambda,l}(y) \,dy\\
=&  \lambda  \mu_{\lambda,l} \int_{B_d(x_{\lambda})} D_{x_i} G(y,x) e^{u_\lambda^2(y)}\big(1+2u^2_\lambda (y)\big) v_{\lambda,l}(y) \,dy\\
& + \lambda \mu_{\lambda,l} \int_{ \Omega \backslash B_{d}(x_{\lambda})}D_{x_i} G(y,x)e^{u_\lambda^2(y)}
\big(1+2u^2_\lambda (y)\big) v_{\lambda,l}(y) \,dy \\
=& \lambda \mu_{\lambda,l} A_{\lambda,l} D_{x_i}G\big(x_{\lambda},x\big)+ O(\theta_\lambda)~\,~\mbox{in}~\Omega \backslash B_{2d}\big(x_{\lambda}\big).
\end{split}
\end{equation*}
Thus we have
\begin{equation}\label{3.5-6}
v_{\lambda,l}(x)= \lambda \mu_{\lambda,l} A_{\lambda,l} G\big(x_{\lambda},x\big)+ O(\theta_\lambda)\,\, ~\mbox{uniformly in}~ C^1 \big(\Omega\backslash  B_{2d}(x_{\lambda})\big).
\end{equation}

We recall that
\begin{equation*}
-\Delta  {u}_{\lambda} = \lambda  u_\lambda e^{u_\lambda^2}~\mbox{in}~~\Omega ~\,\,\mbox{and}~\,\,
-\Delta  {v}_{\lambda,l} =\mu_{\lambda,l}\lambda \big(1+2u_\lambda^2\big)e^{u_\lambda^2}
 {v}_{\lambda,l}~\mbox{in}~~\Omega.
\end{equation*}
Then multiplying the above two identities by $v_{\lambda,l}$ and $u_\lambda$ respectively, integrating by parts and subtracting, we get
\begin{equation}\label{3.5-4}
\int_{\Omega}\lambda u_\lambda e^{u_\lambda^2}v_{\lambda,l} \Big(\mu_{\lambda,l}\big(1+2u_\lambda^2\big)-1\Big) \,dx=0.
\end{equation}
Since $u_\lambda(x_\lambda)=u_\lambda(y)-\frac{1}{\gamma_\lambda} w_\lambda\big(\frac{y-x_\lambda}{\theta_\lambda}\big)$ in $\Omega$, by \eqref{3.5-4}, we derive
\begin{align}\label{3.5-5}
A_{\lambda,l}=&  \frac{1}{\gamma_\lambda} \int_{B_d(x_{\lambda})} e^{u^2_\lambda(y)}\big(1+2u^2_\lambda (y)\big) v_{\lambda,l}(y)u_\lambda(x_\lambda) \,dy \notag\\
=& \frac{1}{\gamma_\lambda} \int_{B_d(x_{\lambda})} e^{u^2_\lambda(y)}\big(1+2u^2_\lambda (y)\big) v_{\lambda,l}(y)
\Big(u_\lambda(y)-\frac{1}{\gamma_\lambda} w_\lambda \big(\frac{y-x_\lambda}{\theta_\lambda}\big)\Big) \,dy \notag\\
=&  \frac{1}{\gamma_\lambda \mu_{\lambda,l}} \int_{\Omega}  u_\lambda(y) e^{u_\lambda^2(y)} v_{\lambda,l}(y) \,dy
- \frac{1}{\gamma_\lambda} \int_{\Omega \backslash B_d(x_{\lambda})} e^{u^2_\lambda(y)}\big(1+2u^2_\lambda (y)\big) v_{\lambda,l}(y) u_\lambda(y) \,dy \notag\\
& -\frac{1}{\gamma^2_\lambda} \int_{B_d(x_{\lambda})} e^{u^2_\lambda(y)}\big(1+2u^2_\lambda (y)\big) v_{\lambda,l}(y) w_\lambda \big(\frac{y-x_\lambda}{\theta_\lambda}\big) \,dy \notag\\
=& \frac{1}{\lambda \gamma_\lambda^2 \mu_{\lambda,l}} \int_{\Omega_\lambda} \Big(1 +\frac{w_\lambda(z)}{\gamma^2_\lambda} \Big) e^{2w_\lambda(z) +\frac{w^2_\lambda(z)}{\gamma^2_\lambda} }  \widetilde{v}_{\lambda,l}(z) \,dz
+O\Big(\frac{\theta_\lambda^{2-\delta}}{\lambda}\Big) \notag\\
&-\frac{1}{\lambda \gamma_\lambda^2} \int_{B_{\frac{d}{\theta_\lambda}}(0)} \bigg(\frac{1}{\gamma_\lambda^2}+2\Big(1+\frac{w_{\lambda}(z)}
{\gamma_\lambda^2}\Big)^2\bigg) e^{2w_{\lambda}(z)+\frac{w^2_{\lambda}(z)}{\gamma_\lambda^2} }\widetilde{v}_{\lambda,l}(z) w_\lambda \big(z\big) \,dz \notag\\
=& \frac{1}{\lambda \gamma_\lambda^2 \mu_{\lambda,l}} \left(\int_{\R^2} e^{2U(z)}\Big(\sum^2_{q=1}\frac{a_{l,q}z_q}
{4+|z|^2} + b_{l}\frac{4-|z|^2}{4+|z|^2}\Big) \,dz+o(1)\right) +O\Big(\frac{\theta_\lambda^{2-\delta}}{\lambda}\Big) \notag\\
& -\frac{1}{\lambda \gamma_\lambda^2} \left(\int_{\R^2} 2 e^{2U(z)}\Big(\sum^2_{q=1}\frac{a_{l,q}z_q}{4+|z|^2}
+b_{l}\frac{4-|z|^2}{4+|z|^2}\Big) U\big(z\big) \,dz+o(1)\right) \notag\\
=& -\frac{ 4\pi b_{l}}{\lambda \gamma_\lambda^2} +o\Big(\frac{1}{\lambda \gamma_\lambda^2}\Big).
\end{align}
It follows from \eqref{3.5-6} and \eqref{3.5-5} that
\begin{equation*}
v_{\lambda,l}(x) = -\frac{4\pi b_{l}}{\gamma_\lambda^2} G(x,x_{\lambda})
+o\Big(\frac{1}{\gamma_\lambda^2}\Big)~\,~\mbox{in}~C^1 \big(\Omega\backslash  B_{2d}(x_{\lambda})\big).
\end{equation*}

Next, we  prove \eqref{3.5.2}.
Firstly, it holds
\begin{equation}\label{add1-1}
\begin{split}
A_0 &= \lambda \int_{B_d(x_\lambda)} u_\lambda e^{u_\lambda^2} v_{\lambda,l} \,dx \\
&= \lambda \int_{B_d(x_\lambda)} u_\lambda e^{u_\lambda^2} v_{\lambda,l} \frac{u_\lambda(x)-\big(u_\lambda(x)-\gamma_\lambda\big)}{\gamma_\lambda} \,dx \\
&= -\frac{\lambda}{\gamma_\lambda} \int_{B_d(x_\lambda)} u_\lambda e^{u_\lambda^2} v_{\lambda,l} \big(u_\lambda(x)-\gamma_\lambda\big) \,dx + \frac{\lambda}{\gamma_\lambda} \int_{B_d(x_\lambda)} u_\lambda^2 e^{u_\lambda^2} v_{\lambda,l} \,dx \\
&:= I_1+I_2,
\end{split}
\end{equation}
and
\begin{equation*}
I_2 = \frac{\lambda}{2\gamma_\lambda} \int_{B_d(x_\lambda)} \big(1+2u_\lambda^2\big) e^{u_\lambda^2} v_{\lambda,l} \,dx - \frac{\lambda}{2\gamma_\lambda} \int_{B_d(x_\lambda)} e^{u_\lambda^2} v_{\lambda,l} \,dx := I_{21}+I_{22}.
\end{equation*}
Direct calculations show that
\begin{equation}\label{add1-2}
\begin{split}
I_1 &= -\frac{1}{\gamma_\lambda^3} \int_{B_{\frac{d}{\theta_\lambda}}(0)} \Big(\frac{w_\lambda(x)}{\gamma_\lambda^2}+1\Big) e^{2w_\lambda(x)+\frac{w_\lambda^2(x)}{\gamma_\lambda^2}}
\widetilde{v}_{\lambda,l}(x) w_\lambda(x) \,dx \\
&= -\frac{1}{\gamma_\lambda^3} \bigg( b_l \int_{\R^2} U(x) e^{2U(x)} \frac{4-|x|^2}{4+|x|^2} \,dx + o(1)\bigg)
= -\frac{2\pi b_l}{\gamma_\lambda^3} + o\Big(\frac{1}{\gamma^3_\lambda}\Big),
\end{split}
\end{equation}
\begin{equation}\label{add1-3}
\begin{split}
I_{21} &= -\frac{1}{2\gamma_\lambda \mu_{\lambda,l}} \int_{B_d(x_\lambda)} \Delta v_{\lambda,l} \,dx
= -\frac{1}{2\gamma_\lambda \mu_{\lambda,l}} \int_{\partial B_d(x_\lambda)} \frac{\partial v_{\lambda,l}}{\partial \nu} \,d\sigma \\
&= -\frac{1}{2\gamma_\lambda \mu_{\lambda,l}} \int_{\partial B_d(x_\lambda)} -\frac{4\pi b_l}{\gamma^2_\lambda}  \frac{\partial G(x,x_{\lambda})}{\partial \nu} \,d\sigma + o\Big(\frac{1}{\gamma^3_\lambda}\Big) \\
&= \frac{2\pi b_l}{\gamma^3_\lambda \mu_{\lambda,l}} \int_{\partial B_d(x_\lambda)} \frac{\partial G(x,x_{\lambda})}{\partial \nu} \,d\sigma + o\Big(\frac{1}{\gamma^3_\lambda}\Big)
= -\frac{2\pi b_l}{\gamma^3_\lambda } + o\Big(\frac{1}{\gamma^3_\lambda}\Big),
\end{split}
\end{equation}
and
\begin{equation}\label{add1-4}
\begin{split}
I_{22} &= -\frac{1}{2\gamma_\lambda^3} \int_{B_{\frac{d}{\theta_\lambda}}(0)} e^{2w_\lambda(x)+\frac{w_\lambda^2(x)}{\gamma_\lambda^2}} \widetilde{v}_{\lambda,l}(x) \,dx \\
&= -\frac{1}{2\gamma_\lambda^3} \bigg( b_l \int_{\R^2} e^{2U(x)} \frac{4-|x|^2}{4+|x|^2} \,dx +o(1) \bigg) = o\Big(\frac{1}{\gamma^3_\lambda}\Big).
\end{split}
\end{equation}
Substituting \eqref{add1-2}, \eqref{add1-3} and \eqref{add1-4} into \eqref{add1-1}, we derive that
\begin{equation}\label{add1-A0}
A_0 = \lambda \int_{B_d(x_\lambda)} u_\lambda e^{u_\lambda^2} v_{\lambda,l} \,dx = -\frac{4\pi b_l}{\gamma^3_\lambda } + o\Big(\frac{1}{\gamma^3_\lambda}\Big).
\end{equation}
Also by scaling, we can compute
\begin{equation}\label{3.5-8}
\begin{split}
&\int_{B_{d}(x_{\lambda})} \big(1+  2u_\lambda^2\big) e^{u^2_\lambda} \langle x-x_\lambda,\nabla u_\lambda\rangle  v_{\lambda,l} \,dx \\
=& \theta_\lambda^2 \int_{B_{\frac{d}{\theta_\lambda}(0)}} \bigg(1+2\Big(\gamma_\lambda+
\frac{w_\lambda(z)}{\gamma_\lambda}\Big)^2\bigg) e^{\big(\gamma_\lambda+\frac{w_\lambda(z)}{\gamma_\lambda}\big)^2}
\big\langle z,\nabla \widetilde{u}_{\lambda}(z) \big\rangle  \widetilde{v}_{\lambda,l}(z) \,dz \\
=& \frac{2}{\lambda \gamma_\lambda} \left(\int_{\R^2} e^{2U(z)}\Big(\sum^2_{q=1}\frac{a_{l,q}z_q}{4+|z|^2}
+b_{l}\frac{4-|z|^2}{4+|z|^2}\Big)\big\langle z,\nabla U(z) \big\rangle \,dz+o(1)\right) \\
=& \frac{8\pi }{3 \lambda \gamma_\lambda } \big( b_{l} +o(1)\big),
\end{split}
\end{equation}
where $\widetilde{u}_{\lambda}(z):=u_{\lambda}\big(x_{\lambda}+\theta_\lambda z\big),~~z\in \Omega_{\lambda}:=\big\{z: x_{\lambda}+\theta_\lambda z\in \Omega\big\}$.
And since $\|v_{\lambda,l}\|_{L^{\infty}(\Omega)}=1$, we derive from Lemma \ref{lem2.4} that
\begin{equation}\label{3.5-10}
\lambda \int_{\partial B_d(x_\lambda)} u_\lambda e^{u_\lambda^2}  v_{\lambda,l} \langle x-x_\lambda,\nu \rangle \,d\sigma
= o\Big(\frac{\theta_\lambda}{\gamma_\lambda}\Big).
\end{equation}
Therefore, it follows from \eqref{add1-A0}, \eqref{3.5-8} and \eqref{3.5-10} that
\begin{equation}\label{3.5-7}
\mbox{RHS of}~ (\ref{puv}) = \frac{8\pi b_l}{\gamma_\lambda^3} + \frac{8\pi (\mu_{\lambda,l}-1)}{3\gamma_\lambda} \big( b_{l} +o(1)\big) + o\Big( \frac{1}{\gamma_\lambda^3} \Big).
\end{equation}
On the other hand, from \eqref{pgg}, \eqref{asym_u} and \eqref{3.5.1}, we have
\begin{equation}\label{L3.3-1}
\begin{split}
\mbox{LHS of}~ (\ref{puv})
=& -\frac{4\pi b_l}{\gamma_\lambda^2} C_\lambda P\big(G(x_{\lambda},x),G(x_{\lambda},x)\big)+
o\Big(\frac{1}{\gamma^3_\lambda}\Big)=\frac{8\pi b_{l}}{\gamma_\lambda^3} + o\Big(\frac{1}{\gamma_\lambda^3}\Big),
\end{split}
\end{equation}
which together with \eqref{3.5-7} implies
\begin{equation*}
\frac{8\pi b_{l}}{\gamma_\lambda^3} + o\Big(\frac{1}{\gamma_\lambda^3}\Big) =
\frac{8\pi b_l}{\gamma_\lambda^3} + \frac{8\pi (\mu_{\lambda,l}-1)}{3\gamma_\lambda} \big( b_{l} +o(1)\big) + o\Big( \frac{1}{\gamma_\lambda^3} \Big).
\end{equation*}
Hence, we know that
\begin{equation}\label{3.5-11}
\mu_{\lambda,l} = 1+o\Big(\frac{1}{\gamma_\lambda^2}\Big).
\end{equation}

However,  it is not enough to compute the Morse index of the solution $u_\lambda$ only by \eqref{3.5-11}. We need more precise estimates of the above calculations.

\vskip 0.2cm

Using \eqref{equa1}, we calculate that
\begin{align}\label{add2-1}
I_1 &= -\frac{1}{\gamma^3_\lambda} \int_{B_{\frac{d}{\theta_\lambda}}(0)} \Big(\frac{w_\lambda(x)}{\gamma_\lambda}+\gamma_\lambda\Big) e^{2w_\lambda(x)+\frac{w_\lambda^2(x)}{\gamma_\lambda^2}} \widetilde{v}_{\lambda,l}(x) \frac{w_\lambda(x)}{\gamma_\lambda} \,dx \notag\\
&= -\frac{1}{\gamma^5_\lambda} \int_{B_{\frac{d}{\theta_\lambda}}(0)} w_\lambda^2 e^{2w_\lambda+\frac{w_\lambda^2}{\gamma_\lambda^2}} \widetilde{v}_{\lambda,l} \,dx \underbrace{ -\lambda \int_{B_d(x_\lambda)} e^{u_\lambda^2} v_{\lambda,l} \big(u_\lambda(x)-\gamma_\lambda\big) \,dx }_{:=I_3} \notag\\
&= -\frac{1}{\gamma^5_\lambda} \bigg( b_l \int_{\R^2} U^2(x) e^{2U(x)} \frac{4-|x|^2}{4+|x|^2} \,dx + o(1)\bigg) + I_3 \notag\\
&= \frac{6\pi b_l}{\gamma_\lambda^5} +I_3 +o\Big(\frac{1}{\gamma^5_\lambda}\Big),
\end{align}
and it follows from \eqref{equa1}, \eqref{add1-A0} and the definition of $A_{\lambda,l}$ that
\begin{align}\label{add2-2}
I_2 &= \frac{\lambda}{2\gamma_\lambda} \int_{B_d(x_\lambda)} \big(1+2u_\lambda^2\big) e^{u_\lambda^2} v_{\lambda,l} \,dx - \frac{\lambda}{2\gamma_\lambda} \int_{B_d(x_\lambda)} e^{u_\lambda^2} v_{\lambda,l} \,dx \notag\\
&= \frac{\lambda}{2\gamma_\lambda} A_{\lambda,l} - \frac{\lambda}{2\gamma^2_\lambda} \int_{B_d(x_\lambda)} u_\lambda e^{u_\lambda^2} v_{\lambda,l} \,dx + \frac{\lambda}{2\gamma^2_\lambda} \int_{B_d(x_\lambda)} \big(u_\lambda(x)-\gamma_\lambda\big) e^{u_\lambda^2} v_{\lambda,l} \,dx \notag\\
&= \frac{\lambda}{2\gamma_\lambda} A_{\lambda,l} - \frac{1}{2\gamma_\lambda^2} \bigg( -\frac{4\pi b_l}{\gamma_\lambda^3} +o\Big( \frac{1}{\gamma_\lambda^3} \Big) \bigg) + \frac{1}{2\gamma^5_\lambda} \int_{B_{\frac{d}{\theta_\lambda}}(0)} w_\lambda(x) e^{2w_\lambda(x)+\frac{w_\lambda^2(x)}{\gamma_\lambda^2}} \widetilde{v}_{\lambda,l}(x) \,dx \notag\\
&= \frac{\lambda}{2\gamma_\lambda} A_{\lambda,l} + \frac{2\pi b_l}{\gamma_\lambda^5} + \frac{b_l}{2\gamma^5_\lambda} \int_{\R^2} U(x) e^{2U(x)} \frac{4-|x|^2}{4+|x|^2} \,dx + o\Big( \frac{1}{\gamma_\lambda^5} \Big) \notag\\
&= \frac{\lambda}{2\gamma_\lambda} A_{\lambda,l} + \frac{3\pi b_l}{\gamma_\lambda^5} + o\Big( \frac{1}{\gamma_\lambda^5} \Big).
\end{align}
Then we can deduce from \eqref{add1-1}, \eqref{add2-1} and \eqref{add2-2} that
\begin{equation}\label{add2-A0-1}
A_0 = \frac{\lambda}{2\gamma_\lambda} A_{\lambda,l} + I_3 + \frac{9\pi b_l}{\gamma_\lambda^5} +
o\Big( \frac{1}{\gamma_\lambda^5} \Big).
\end{equation}
On the other hand, it follows from \eqref{asym_u} and \eqref{3.5-3} that
\begin{equation}\label{add2-3}
\begin{split}
 &\int_{B_d(x_\lambda)} \lambda u_\lambda e^{u_\lambda^2}v_{\lambda,l}\big(\mu_{\lambda,l}(1+2u_\lambda^2)-1\big) \,dx \\
=& \int_{\partial B_d(x_\lambda)} \Big( -\frac{ \partial v_{\lambda,l} }{\partial \nu} u_\lambda + \frac{\partial u_\lambda}
{\partial \nu} v_{\lambda,l} \Big) \,d\sigma \\
=& \int_{\partial B_d(x_\lambda)} \left[ - \Big( \lambda \mu_{\lambda,l} A_{\lambda,l} \frac{\partial G(x,x_\lambda)}{\partial \nu} +O(\theta_\lambda) \Big) \left(C_\lambda G(x,x_\lambda) +o\Big(\frac{\theta_\lambda}{\gamma_\lambda}\Big)\right) \right. \\
& \left.+\Big( \lambda \mu_{\lambda,l} A_{\lambda,l} G(x,x_\lambda) +O(\theta_\lambda) \Big) \left(C_\lambda \frac{\partial G(x,x_\lambda)}{\partial \nu} +o\Big(\frac{\theta_\lambda}{\gamma_\lambda}\Big)\right)\right]\,d\sigma  = o\big(\theta_\lambda\big).
\end{split}
\end{equation}
Using \eqref{add2-3}, we can rewrite \eqref{3.5-5} as
\begin{align}\label{add2-4}
\lambda A_{\lambda,l} =& \lambda \int_{B_d(x_\lambda)} \big(1+2u_\lambda^2\big) e^{u_\lambda^2} v_{\lambda,l} \,dx \notag\\
=& \frac{\lambda}{\gamma_\lambda} \int_{B_d(x_\lambda)} e^{u_\lambda^2} u_\lambda v_{\lambda,l} \big(1+2u_\lambda^2\big) \,dx - \frac{\lambda}{\gamma_\lambda} \int_{B_d(x_\lambda)} \big(1+2u_\lambda^2\big) e^{u_\lambda^2} v_{\lambda,l} \big(u_\lambda(x)-\gamma_\lambda\big) \,dx \notag\\
=& \frac{\lambda}{\gamma_\lambda \mu_{\lambda,l}} \int_{B_d(x_\lambda)} e^{u_\lambda^2} u_\lambda v_{\lambda,l} \,dx
- \frac{1}{\gamma_\lambda^4} \int_{B_{\frac{d}{\theta_\lambda}(0)}} e^{2w_\lambda(x)+\frac{w_\lambda^2(x)}{\gamma_\lambda^2}} \widetilde{v}_{\lambda,l}(x) w_\lambda(x) \,dx \notag\\
& -\underbrace{\frac{2\lambda}{\gamma_\lambda} \int_{B_d(x_\lambda)} u_\lambda^2 e^{u_\lambda^2} v_{\lambda,l} \big(u_\lambda(x)-\gamma_\lambda\big) \,dx }_{:=I_4} +o\Big(\frac{\theta_\lambda}{\gamma_\lambda}\Big) \notag\\
=& \frac{A_0}{\gamma_\lambda \mu_{\lambda,l}} -\frac{2\pi b_l}{\gamma_\lambda^4} -I_4 +o\Big(\frac{1}{\gamma^4_\lambda}\Big)
= \frac{A_0}{\gamma_\lambda \mu_{\lambda,l}} +\frac{22\pi b_l}{\gamma_\lambda^4} +2\gamma_\lambda I_3 +o\Big(\frac{1}{\gamma^4_\lambda}\Big),
\end{align}
since direct calculation gives
\begin{align*}
I_4 =& \frac{2}{\gamma_\lambda^4} \int_{B_{\frac{d}{\theta_\lambda}(0)}} \Big( \frac{w_\lambda^2(x)}{\gamma_\lambda^2} + 2w_\lambda(x) + \gamma_\lambda^2 \Big) e^{2w_\lambda(x)+\frac{w_\lambda^2(x)}{\gamma_\lambda^2}} \widetilde{v}_{\lambda,l}(x) w_\lambda(x) \,dx \\
=& 2\lambda \gamma_\lambda \int_{B_d(x_\lambda)} e^{u_\lambda^2(x)} v_{\lambda,l}(x) \big(u_\lambda(x)-\gamma_\lambda\big) \,dx \\
& + \frac{4}{\gamma_\lambda^4} \int_{B_{\frac{d}{\theta_\lambda}(0)}} w_\lambda^2(x) e^{2w_\lambda(x)+\frac{w_\lambda^2(x)}{\gamma_\lambda^2}} \widetilde{v}_{\lambda,l}(x) \,dx +o\Big(\frac{1}{\gamma^4_\lambda}\Big) \\
=& -2\gamma_\lambda I_3 - \frac{24\pi b_l}{\gamma_\lambda^4} +o\Big(\frac{1}{\gamma^4_\lambda}\Big).
\end{align*}
Then we deduce from \eqref{add1-A0} and \eqref{add2-4} that
\begin{equation}\label{add2-5}
\begin{split}
I_3 &= \frac{\lambda A_{\lambda,l}}{2 \gamma_\lambda} -\frac{A_0}{2\gamma_\lambda^2 \mu_{\lambda,l}} -\frac{11\pi b_l}{\gamma_\lambda^5} +o\Big(\frac{1}{\gamma^5_\lambda}\Big) \\
&= \frac{\lambda A_{\lambda,l}}{2 \gamma_\lambda} -\frac{1}{2\gamma_\lambda^2 \mu_{\lambda,l}}
\bigg( -\frac{4\pi b_l}{\gamma^3_\lambda } + o\Big(\frac{1}{\gamma^3_\lambda}\Big) \bigg)
-\frac{11\pi b_l}{\gamma_\lambda^5} +o\Big(\frac{1}{\gamma^5_\lambda}\Big) \\
&= \frac{\lambda A_{\lambda,l}}{2 \gamma_\lambda} - \frac{9\pi b_l}{\gamma_\lambda^5} +o\Big(\frac{1}{\gamma^5_\lambda}\Big).
\end{split}
\end{equation}
And then substituting \eqref{add2-5} into \eqref{add2-A0-1}, we derive that
\begin{equation}\label{add2-A0-2}
\begin{split}
A_0 &= \frac{\lambda A_{\lambda,l}}{2\gamma_\lambda} + \frac{\lambda A_{\lambda,l}}{2 \gamma_\lambda} - \frac{9\pi b_l}
{\gamma_\lambda^5} + \frac{9\pi b_l}{\gamma_\lambda^5} + o\Big( \frac{1}{\gamma_\lambda^5} \Big)
=  \frac{\lambda}{\gamma_\lambda} A_{\lambda,l} + o\Big( \frac{1}{\gamma_\lambda^5} \Big).
\end{split}
\end{equation}

Furthermore, similar to \eqref{L3.3-1}, it follows from \eqref{def_C1}, \eqref{asym_u}, \eqref{3.5-3}, \eqref{3.5-5} and \eqref{3.5-11} that
\begin{equation}\label{L3.3-2}
\begin{split}
\mbox{LHS of}~ (\ref{puv})
=& -\frac{1}{2\pi} \lambda \mu_{\lambda,l} A_{\lambda,l} C_\lambda +o(\theta_\lambda) \\
=& -\frac{1}{2\pi} \lambda \mu_{\lambda,l} A_{\lambda,l} \bigg( \frac{4\pi}{\gamma_\lambda} + \frac{4\pi}{\gamma_\lambda^3} + o\Big( \frac{1}{\gamma_\lambda^3} \Big)\bigg) + o(\theta_\lambda) \\
=& -\frac{2\lambda}{\gamma_\lambda} A_{\lambda,l} - \frac{2 \lambda}{\gamma^3_\lambda} A_{\lambda,l} +
o\Big( \frac{1}{\gamma_\lambda^5} \Big).
\end{split}
\end{equation}
We can also deduce from  \eqref{3.5-8} and \eqref{3.5-10} that
\begin{equation}\label{R3.3-2}
\mbox{RHS of}~ (\ref{puv})
= -2 A_0 + \frac{8\pi(\mu_{\lambda,l}-1)}{3\gamma_\lambda}\big(b_l+o(1)\big) + o\Big(\frac{\theta_\lambda}{\gamma_\lambda}\Big).
\end{equation}
Therefore, it follows from \eqref{3.5-5}, \eqref{add2-A0-2}, \eqref{L3.3-2} and \eqref{R3.3-2} that
\begin{equation*}
\begin{split}
\frac{8\pi(\mu_{\lambda,l}-1)}{3} \big(b_l+o(1)\big)
&= 2 A_0\gamma_\lambda -2\lambda A_{\lambda,l} - \frac{2 \lambda}{\gamma^2_\lambda} A_{\lambda,l} + o\Big( \frac{1}{\gamma_\lambda^4} \Big)  = -\frac{2 \lambda}{\gamma^2_\lambda} A_{\lambda,l} + o\Big( \frac{1}{\gamma_\lambda^4} \Big) \\
&= -\frac{2}{\gamma^2_\lambda}\bigg(\frac{-4\pi b_l}{\gamma_\lambda^2} + o\Big( \frac{1}{\gamma_\lambda^2} \Big)\bigg)
+ o\Big( \frac{1}{\gamma_\lambda^4} \Big)  = \frac{8\pi b_l}{\gamma_\lambda^4} + o\Big( \frac{1}{\gamma_\lambda^4} \Big),
\end{split}
\end{equation*}
which implies \eqref{3.5.2}.
\end{proof}

\vskip 0.2cm
\subsection{The computation of Morse index to $u_\lambda$}\label{s4.2} ~
\vskip 0.2cm

Now we compute the value of $\mu_{\lambda,l}$ for $l=1,2,3,4$ term by term. Taking $r>0$ small fixed such that
$B_{3r}(x_\lambda)\subset B_{4r}(x_{0}) \subset \Omega$.
We set $\phi\in C^{\infty}_0\big(B_{3r}(x_\lambda)\big)$ with $\phi\equiv 1$ in $B_{2r}(x_\lambda)$ and $0\leq \phi\leq 1$ in $B_{3r}(x_\lambda)$. Taking
\begin{equation*}
\hat{u}_{\lambda}=\phi u_\lambda, ~~\psi_{\lambda,q}=\phi \frac{\partial u_\lambda}{\partial x_q}~(q=1,2).
\end{equation*}
Then we have following identities.
\begin{Lem} It holds
\begin{equation}\label{iden-1}
\begin{split}
 \int_{\Omega}\big|\nabla (\phi  u_\lambda )\big|^2 \,dx
=  \int_{\Omega}|\nabla \phi|^2u_\lambda^2 \,dx+\lambda  \int_{\Omega} \phi^2 u^2_\lambda e^{u^2_\lambda} \,dx.
\end{split}
\end{equation}
\end{Lem}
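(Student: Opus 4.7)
The identity is a direct consequence of expanding $\nabla(\phi u_\lambda)$ and using equation \eqref{1.1}. The plan is to first write out the square of the gradient, then to recognize the cross term together with the pure $\phi^2|\nabla u_\lambda|^2$ term as the integrand arising from integrating $-\Delta u_\lambda$ against a suitable test function.

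Concretely, I will first expand
\begin{equation*}
\big|\nabla(\phi u_\lambda)\big|^2 = |\nabla\phi|^2 u_\lambda^2 + 2\phi u_\lambda \,\nabla\phi\cdot\nabla u_\lambda + \phi^2|\nabla u_\lambda|^2,
\end{equation*}
so that the claim \eqref{iden-1} reduces to the identity
\begin{equation*}
\int_{\Omega}\Big(2\phi u_\lambda \,\nabla\phi\cdot\nabla u_\lambda + \phi^2|\nabla u_\lambda|^2\Big)\,dx
= \lambda\int_{\Omega}\phi^2 u_\lambda^2 e^{u_\lambda^2}\,dx.
\end{equation*}
The key observation is that the integrand on the left is exactly $\nabla(\phi^2 u_\lambda)\cdot\nabla u_\lambda$, since $\nabla(\phi^2 u_\lambda)=2\phi u_\lambda\nabla\phi+\phi^2\nabla u_\lambda$.

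Next I integrate by parts. Because $\phi\in C_0^\infty(B_{3r}(x_\lambda))$ and $u_\lambda\in H_0^1(\Omega)$, the function $\phi^2 u_\lambda$ lies in $H_0^1(\Omega)$ with compact support, so no boundary term appears and
\begin{equation*}
\int_{\Omega}\nabla(\phi^2 u_\lambda)\cdot\nabla u_\lambda\,dx = -\int_{\Omega}\phi^2 u_\lambda\,\Delta u_\lambda\,dx.
\end{equation*}
Substituting $-\Delta u_\lambda=\lambda u_\lambda e^{u_\lambda^2}$ from \eqref{1.1} then yields exactly $\lambda\int_\Omega\phi^2 u_\lambda^2 e^{u_\lambda^2}\,dx$, which finishes the proof.

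Since every manipulation is elementary, there is no genuine obstacle here; the only point worth explicitly checking is that integration by parts is legitimate, which follows from the cut-off nature of $\phi$ and the fact that $u_\lambda$ is a classical $C^2$ solution of \eqref{1.1} (so $\phi^2 u_\lambda$ and its gradient are bounded with compact support in $\Omega$). This is really just a Picone-type bookkeeping step recorded here for later reference in computing Morse-index quadratic forms.
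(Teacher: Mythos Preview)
Your proof is correct and takes essentially the same approach as the paper: multiply $-\Delta u_\lambda=\lambda u_\lambda e^{u_\lambda^2}$ by $\phi^2 u_\lambda$ and integrate by parts. The paper records this in a single line, while you have simply written out the expansion of $|\nabla(\phi u_\lambda)|^2$ and the identification $\nabla(\phi^2 u_\lambda)\cdot\nabla u_\lambda = 2\phi u_\lambda\nabla\phi\cdot\nabla u_\lambda + \phi^2|\nabla u_\lambda|^2$ explicitly.
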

\begin{proof}
Multiplying $-\Delta u_\lambda=\lambda u_\lambda e^{u^2_\lambda}$ in $\Omega$ by $\phi^2u_\lambda$ and integrating by parts, we have \eqref{iden-1}.
\end{proof}
Firstly, we compute the eigenvalue $\mu_{\lambda,1}$, which can be stated as follows.
\begin{Prop}\label{prop_mu1}
It holds
\begin{equation*}
0<\mu_{\lambda,1}\leq \frac{1}{2\gamma^2_\lambda}\big(1+o(1)\big),~\,~\,~\mbox{for}~\lambda~\mbox{small}.
\end{equation*}
\end{Prop}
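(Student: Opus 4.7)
The plan is to use the variational (Rayleigh quotient) characterization of $\mu_{\lambda,1}$:
\[
\mu_{\lambda,1}=\inf_{\substack{v\in H^1_0(\Omega)\\ v\not\equiv 0}}\frac{\displaystyle\int_{\Omega}|\nabla v|^2\,dx}{\lambda\displaystyle\int_{\Omega}\big(1+2u^2_\lambda\big)e^{u^2_\lambda}v^2\,dx},
\]
and to evaluate this quotient at the test function $\hat u_\lambda=\phi u_\lambda$ introduced just before the proposition. Positivity of $\mu_{\lambda,1}$ is immediate from this characterization since the weight $(1+2u^2_\lambda)e^{u^2_\lambda}$ is strictly positive; the content of the statement is therefore the upper bound.

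For the numerator, the identity \eqref{iden-1} gives
\[
\int_{\Omega}|\nabla \hat u_\lambda|^2\,dx=\int_{\Omega}|\nabla\phi|^2 u_\lambda^2\,dx+\lambda\int_{\Omega}\phi^2 u_\lambda^2 e^{u_\lambda^2}\,dx.
\]
Since $\nabla\phi$ is supported in $B_{3r}(x_\lambda)\setminus B_{2r}(x_\lambda)$, Lemma \ref{prop3-1} together with the estimate $C_\lambda=O(1/\gamma_\lambda)$ of \eqref{def_C} yields $u_\lambda=O(1/\gamma_\lambda)$ uniformly on the support of $\nabla\phi$, so that
\[
\int_{\Omega}|\nabla\phi|^2 u_\lambda^2\,dx=O\big(1/\gamma_\lambda^2\big).
\]
For the remaining term I will rescale via $x=x_\lambda+\theta_\lambda y$, which converts $\lambda u_\lambda^2 e^{u_\lambda^2}\,dx$ into $(1+w_\lambda/\gamma_\lambda^2)^2 e^{2w_\lambda+w_\lambda^2/\gamma_\lambda^2}\,dy$. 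Lemma \ref{lem2.4} provides the uniform integrable majorant, and the local convergence $w_\lambda\to U$ in \eqref{limw} then gives by dominated convergence
\[
\lambda\int_{\Omega}\phi^2 u_\lambda^2 e^{u_\lambda^2}\,dx=\int_{\R^2}e^{2U(y)}\,dy+o(1)=4\pi+o(1).
\]

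For the denominator, the same rescaling yields
\[
\lambda\int_{\Omega}\phi^2 u_\lambda^4 e^{u_\lambda^2}\,dx=\gamma_\lambda^2\int_{\Omega_\lambda}\phi^2\Big(1+\frac{w_\lambda}{\gamma_\lambda^2}\Big)^4 e^{2w_\lambda+w_\lambda^2/\gamma_\lambda^2}\,dy=4\pi\gamma_\lambda^2\big(1+o(1)\big),
\]
by the same dominated-convergence argument, since the extra factor $(\gamma_\lambda+w_\lambda/\gamma_\lambda)^2$ supplies the weight $\gamma_\lambda^2$ after the rescaling. Collecting both pieces,
\[
\lambda\int_{\Omega}\big(1+2u^2_\lambda\big)e^{u^2_\lambda}\hat u_\lambda^2\,dx=2\lambda\int_{\Omega}\phi^2 u_\lambda^4 e^{u_\lambda^2}\,dx+\lambda\int_{\Omega}\phi^2 u_\lambda^2 e^{u_\lambda^2}\,dx=8\pi\gamma_\lambda^2\big(1+o(1)\big).
\]

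Dividing the numerator $4\pi(1+o(1))$ by the denominator $8\pi\gamma_\lambda^2(1+o(1))$ yields the claimed bound $\mu_{\lambda,1}\le \frac{1}{2\gamma_\lambda^2}(1+o(1))$. The only point that needs care—and which I expect to be the principal (but minor) obstacle—is verifying that the boundary term $\int|\nabla\phi|^2 u_\lambda^2\,dx$ from \eqref{iden-1} is genuinely of lower order than $4\pi$; this is where the uniform $C^1$ expansion \eqref{asym_u} of $u_\lambda$ outside the concentration bubble is essential, so that the cutoff $\phi$ does not contaminate the leading-order asymptotics of the Rayleigh quotient.
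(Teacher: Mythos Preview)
Your proposal is correct and follows essentially the same approach as the paper: test $\hat u_\lambda=\phi u_\lambda$ in the Rayleigh quotient, use identity \eqref{iden-1} for the numerator, rescale both integrals through $x=x_\lambda+\theta_\lambda y$ and apply dominated convergence via Lemma~\ref{lem2.4}, then observe that the cutoff term $\int|\nabla\phi|^2u_\lambda^2$ is $O(1/\gamma_\lambda^2)$ (the paper cites \eqref{lim-1} rather than Lemma~\ref{prop3-1}, but either gives $u_\lambda=O(1/\gamma_\lambda)$ on the support of $\nabla\phi$).
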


\begin{proof}
Let $V=span \{\phi u_{\lambda}\}$. By the Courant-Fischer-Weyl min-max principle, we have
\begin{equation}\label{3.7-1}
\mu_{\lambda,1}=\min_{W\subset H^1_0(\Omega) \atop \dim W=1}\max_{f\in W\backslash\{0\}}\frac{\displaystyle\int_{\Omega}|\nabla f|^2dx}{\lambda \displaystyle\int_{\Omega}f^2\big(1+2u_\lambda^2\big) e^{u_\lambda^{2}}dx}
\leq \max_{f\in V\backslash\{0\}}\frac{\displaystyle\int_{\Omega}|\nabla f|^2dx}{\lambda \displaystyle\int_{\Omega}f^2\big(1+2u_\lambda^2\big) e^{u_\lambda^{2}}dx}.
\end{equation}
For any $f=\alpha \phi u_{\lambda} \in V$ with $\alpha \in \R\backslash \{0\}$, we have
\begin{equation}\label{3.7-2}
\begin{split}
\frac{\displaystyle\int_{\Omega}|\nabla f|^2dx}{\lambda \displaystyle\int_{\Omega}f^2\big(1+2u_\lambda^2\big) e^{u_\lambda^{2}}dx}
= &\frac{ \displaystyle\int_{\Omega}\big|\nabla (\phi u_{\lambda})\big|^2dx}{
\lambda \displaystyle\int_{\Omega} \phi^2 u_\lambda^2\big(1+2u_\lambda^2\big) e^{u_\lambda^{2}}dx}
 \\  = &
 \frac{ \displaystyle  \int_{\Omega}\phi^2 u^2_\lambda e^{u^2_\lambda} dx}{
  \displaystyle\int_{\Omega} \phi^2 u_\lambda^2\big(1+2u_\lambda^2\big) e^{u_\lambda^{2}}dx}
+\frac{ \displaystyle\int_{\Omega}|\nabla \phi|^2u_\lambda^2dx}{
\lambda \displaystyle\int_{\Omega} \phi^2 u_\lambda^2\big(1+2u_\lambda^2\big) e^{u_\lambda^{2}}dx}.
\end{split}
\end{equation}

Now we estimate each term in \eqref{3.7-2}. By \eqref{defw} and \eqref{limw}, we have
\begin{equation}\label{3.7-3}
\begin{split}
\int_{\Omega}\phi^2 u^2_\lambda e^{u^2_\lambda} \,dx = & \int_{B_{r}(x_{\lambda})} u^2_\lambda e^{u^2_\lambda} \,dx +\int_{\Omega \backslash B_{r}(x_{\lambda})} \phi^2 u^2_\lambda e^{u^2_\lambda} \,dx \\
=& \frac{1}{\lambda}\int_{B_{\frac{r}{\theta_\lambda}(0)}} \Big(1+\frac{w_\lambda(z)}{\gamma^2_\lambda} \Big)^2 e^{2 w_\lambda(z)+\frac{w^2_\lambda(z)}{\gamma^2_\lambda} } \,dz \\
& +O\bigg(\frac{1}{\lambda}\int_{\Omega_\lambda \backslash B_{\frac{r}{\theta_\lambda}(0)}} \Big(1+\frac{w_\lambda(z)}{\gamma^2_\lambda} \Big)^2 e^{2 w_\lambda(z)+\frac{w^2_\lambda(z)}{\gamma^2_\lambda} } \,dz \bigg) \\
=& \frac{1}{\lambda}\big(4\pi+o(1)\big).
\end{split}
\end{equation}
Similarly,  it holds
\begin{equation}\label{3.7-4}
\begin{split}
&\int_{\Omega}\phi^2 u_\lambda^2 \big(1+ 2u_\lambda^2\big) e^{u_\lambda^{2}} \,dx \\
=& \int_{B_{r}(x_{\lambda})} u^2_\lambda \big(1+2u_\lambda^2\big) e^{u^2_\lambda}dx +\int_{\Omega \backslash B_{r}(x_{\lambda})} \phi^2 u^2_\lambda \big(1+2u_\lambda^2\big) e^{u^2_\lambda} \,dx \\
=& \frac{\gamma^2_\lambda}{\lambda} \int_{B_{\frac{r}{\theta_\lambda}(0)}} \Big(1+\frac{w_\lambda(z)}{\gamma_\lambda^2}\Big)^2 \bigg(\frac{1}{\gamma_\lambda^2} +2\Big(1+\frac{w_\lambda(z)}{\gamma_\lambda^2}\Big)^2\bigg) e^{2w_\lambda(z)+\frac{w^2_\lambda(z)}
{\gamma^2_\lambda}} \,dz \\
&+ O\Bigg(\frac{\gamma^2_\lambda}{\lambda}\int_{\Omega_\lambda \backslash B_{\frac{r}{\theta_\lambda}(0)}} \Big(1+\frac{w_\lambda(z)}{\gamma_\lambda^2}\Big)^2 \bigg(\frac{1}{\gamma_\lambda^2} +2\Big(1+\frac{w_\lambda(z)}{\gamma_\lambda^2}\Big)^2\bigg) e^{2 w_\lambda(z)+\frac{w^2_\lambda(z)}{\gamma^2_\lambda}} \,dz \Bigg) \\
=& \frac{\gamma^2_\lambda}{\lambda}\big(8\pi+o(1)\big).
\end{split}
\end{equation}
Obviously, it holds
\begin{equation}\label{3.7-5}
\int_{\Omega}|\nabla \phi|^2u_\lambda^2 \,dx \leq C\int_{B(x_{\lambda},3r)\backslash B(x_{\lambda},2r)} u^2_\lambda \,dx \overset{\eqref{lim-1}}=O\Big(\frac{1}{\gamma_\lambda^2}\Big).
\end{equation}
Substituting \eqref{3.7-2}--\eqref{3.7-5} into \eqref{3.7-1}, we get
\begin{equation*}
\mu_{\lambda,1}\leq \frac{1}{2\gamma^2_\lambda}\big(1+o(1)\big).
\end{equation*}
\end{proof}

Before analyzing the eigenvalues $\mu_{\lambda,l}$ with $l=2,3$, we give some key computations. Let
\begin{equation}\label{def_zlambda}
z_{\lambda}:=\beta_{\lambda,1}\frac{\partial u_{\lambda}}{\partial x_1}+\beta_{\lambda,2}\frac{\partial u_{\lambda}}{\partial x_2}~\,~\mbox{for}~~\beta_{\lambda,1},~\beta_{\lambda,2}\in \R,
\end{equation}
then we have following identities.
\begin{Lem}
It holds
\begin{equation}\label{iden-2}
\int_{\Omega}  \big|\nabla (z_{\lambda} \phi) \big|^2 \,dx =
\int_{\Omega}  z_{\lambda}^2 \big| \nabla \phi \big|^2 \,dx +
\lambda \int_{\Omega} \big(1+2u^2_\lambda\big) e^{u^2_\lambda} \phi^2 z_{\lambda}^2 \,dx,
\end{equation}
\begin{equation}\label{iden-3}
\lambda \int_{\Omega} \big(1+2u^2_\lambda\big) e^{u^2_\lambda} u_\lambda z_\lambda \phi^2 \,dx = \int_{\Omega}\phi^2 \big(\nabla u_\lambda \cdot \nabla z_{\lambda}\big) \,dx + 2 \int_{\Omega} u_\lambda \phi \big(\nabla \phi \cdot \nabla z_{\lambda}\big) \,dx,
\end{equation}
and
\begin{equation}\label{iden-4}
\lambda \int_{\Omega} e^{u^2_\lambda} u_\lambda z_\lambda \phi^2 \,dx=
\int_{\Omega} \phi^2 \big(\nabla u_\lambda \cdot \nabla z_{\lambda}\big) \,dx+ 2\int_{\Omega} z_{\lambda} \phi \big(\nabla \phi \cdot \nabla u_\lambda \big) \,dx.
\end{equation}
\end{Lem}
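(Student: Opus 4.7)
The plan is to observe first that $z_\lambda$ is (up to a linear combination) a derivative of the solution, and hence solves the linearized equation obtained by differentiating \eqref{1.1}. Concretely, differentiating $-\Delta u_\lambda=\lambda u_\lambda e^{u^2_\lambda}$ in $x_q$ gives $-\Delta \partial_q u_\lambda=\lambda(1+2u^2_\lambda)e^{u^2_\lambda}\partial_q u_\lambda$, so by linearity
\begin{equation*}
-\Delta z_\lambda=\lambda(1+2u^2_\lambda)e^{u^2_\lambda}z_\lambda \quad \text{in } \Omega.
\end{equation*}
With this equation in hand, each of the three identities \eqref{iden-2}--\eqref{iden-4} will follow from a single integration by parts after testing either the linearized equation for $z_\lambda$ or the original equation for $u_\lambda$ against a suitable cutoff-type test function.

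For \eqref{iden-2}, I would test the linearized equation against $\phi^2 z_\lambda$, obtaining
\begin{equation*}
\int_\Omega \phi^2|\nabla z_\lambda|^2\,dx+2\int_\Omega \phi z_\lambda\,\nabla \phi\cdot\nabla z_\lambda\,dx=\lambda\int_\Omega(1+2u^2_\lambda)e^{u^2_\lambda}\phi^2 z_\lambda^2\,dx,
\end{equation*}
and then combine it with the elementary expansion $|\nabla(z_\lambda\phi)|^2=\phi^2|\nabla z_\lambda|^2+2\phi z_\lambda\nabla\phi\cdot\nabla z_\lambda+z_\lambda^2|\nabla\phi|^2$ to produce \eqref{iden-2}. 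For \eqref{iden-3}, the same linearized equation should instead be tested against $\phi^2 u_\lambda$, giving
\begin{equation*}
\int_\Omega \phi^2\,\nabla u_\lambda\cdot\nabla z_\lambda\,dx+2\int_\Omega u_\lambda\phi\,\nabla\phi\cdot\nabla z_\lambda\,dx=\lambda\int_\Omega(1+2u^2_\lambda)e^{u^2_\lambda}u_\lambda z_\lambda\phi^2\,dx.
\end{equation*}
For \eqref{iden-4}, one uses the \emph{original} equation $-\Delta u_\lambda=\lambda u_\lambda e^{u^2_\lambda}$ and tests against $\phi^2 z_\lambda$, which after one integration by parts produces
\begin{equation*}
\int_\Omega \phi^2\,\nabla u_\lambda\cdot\nabla z_\lambda\,dx+2\int_\Omega z_\lambda\phi\,\nabla\phi\cdot\nabla u_\lambda\,dx=\lambda\int_\Omega e^{u^2_\lambda}u_\lambda z_\lambda\phi^2\,dx.
\end{equation*}

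In each case the boundary terms vanish because $\phi\in C^\infty_0(B_{3r}(x_\lambda))$, so there is no obstacle coming from $\partial\Omega$. I do not expect any serious difficulty here: the entire computation is a tidy integration by parts, and the only ``trick'' is recognizing that $z_\lambda$ is annihilated by the linearized operator so that it can be paired symmetrically with $u_\lambda$ through the cutoff $\phi$. These identities are purely algebraic bookkeeping that will later be combined with the spectral estimates (following the min--max argument used in Proposition \ref{prop_mu1}) to extract information on $\mu_{\lambda,2}$ and $\mu_{\lambda,3}$.
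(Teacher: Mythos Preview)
Your proposal is correct and matches the paper's own proof essentially line for line: the paper also records $-\Delta z_\lambda=\lambda(1+2u_\lambda^2)e^{u_\lambda^2}z_\lambda$, then multiplies by $\phi^2 z_\lambda$ for \eqref{iden-2}, by $\phi^2 u_\lambda$ for \eqref{iden-3}, and multiplies $-\Delta u_\lambda=\lambda u_\lambda e^{u_\lambda^2}$ by $\phi^2 z_\lambda$ for \eqref{iden-4}, each followed by one integration by parts. Your added remark on the expansion of $|\nabla(z_\lambda\phi)|^2$ just makes explicit the step the paper leaves to the reader.
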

\begin{proof}
Recall that
\begin{equation}\label{3.8-1}
-\Delta z_{\lambda}=\lambda\big(1+2u_\lambda^2\big)e^{u_\lambda^2}z_{\lambda}~\,~\mbox{in}~~\Omega,
\end{equation}
multiplying it by $\phi^2z_{\lambda}$ and integrating by parts, we have \eqref{iden-2}.
Multiplying \eqref{3.8-1} by $\phi^2 u_\lambda$ and integrating by parts, we have \eqref{iden-3}.
Similarly, multiplying $-\Delta u_\lambda=\lambda u_\lambda e^{u_\lambda^2}$ by $\phi^2 z_{\lambda}$ and integrating by parts in $\Omega$, we derive \eqref{iden-4}.
\end{proof}

\begin{Lem}\label{lem3.9}
For $l=2,3$, it holds
\begin{equation}\label{eigenv-23}
\mu_{\lambda,l}\leq 1+O\big( \theta_\lambda^2 \big),~\,~\,~\mbox{for}~\lambda~\mbox{small}.
\end{equation}
\end{Lem}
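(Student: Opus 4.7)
The plan is to invoke the Courant--Fischer min--max characterization
\begin{equation*}
\mu_{\lambda,l}=\min_{\substack{W\subset H^1_0(\Omega)\\ \dim W=l}}\max_{f\in W\setminus\{0\}}\frac{\int_\Omega|\nabla f|^2\,dx}{\lambda\int_\Omega(1+2u_\lambda^2)e^{u_\lambda^2}f^2\,dx}
\end{equation*}
with $l=3$, using the three--dimensional test subspace $V:=\mathrm{span}\{\hat u_\lambda,\psi_{\lambda,1},\psi_{\lambda,2}\}$ introduced above; this bounds $\mu_{\lambda,3}$ from above and a fortiori yields $\mu_{\lambda,2}\le\mu_{\lambda,3}$. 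The functions $\psi_{\lambda,q}=\phi\tfrac{\partial u_\lambda}{\partial x_q}$ are natural quasi--eigenfunctions with eigenvalue $1$, because each $\tfrac{\partial u_\lambda}{\partial x_q}$ solves $-\Delta v=\lambda(1+2u_\lambda^2)e^{u_\lambda^2}v$ exactly, so only the cutoff $\phi$ produces error terms, and these are supported on the annulus $B_{3r}(x_\lambda)\setminus B_{2r}(x_\lambda)$ where both $u_\lambda$ and $|\nabla u_\lambda|$ are of size $O(\gamma_\lambda^{-1})$ by Lemma~\ref{prop3-1}.

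The core computation is for the translation sector. For $z_\lambda=\beta_{\lambda,1}\tfrac{\partial u_\lambda}{\partial x_1}+\beta_{\lambda,2}\tfrac{\partial u_\lambda}{\partial x_2}$ as in \eqref{def_zlambda}, identity \eqref{iden-2} rewrites the Rayleigh quotient of $\phi z_\lambda$ as
\begin{equation*}
\frac{\int_\Omega|\nabla(\phi z_\lambda)|^2\,dx}{\lambda\int_\Omega(1+2u_\lambda^2)e^{u_\lambda^2}\phi^2 z_\lambda^2\,dx}=1+\frac{\int_\Omega z_\lambda^2|\nabla\phi|^2\,dx}{\lambda\int_\Omega(1+2u_\lambda^2)e^{u_\lambda^2}\phi^2 z_\lambda^2\,dx}.
\end{equation*}
The numerator on the right is $O(\gamma_\lambda^{-2})$ since $\nabla\phi$ is supported where $|\nabla u_\lambda|=O(\gamma_\lambda^{-1})$. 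For the denominator, rescaling $x=x_\lambda+\theta_\lambda y$, using $\lambda(1+2u_\lambda^2)e^{u_\lambda^2}\,dx=\widetilde V_\lambda(y)\,dy$ (cf.~\eqref{3.4-3}) and $\tfrac{\partial u_\lambda}{\partial x_q}(x_\lambda+\theta_\lambda y)=(\gamma_\lambda\theta_\lambda)^{-1}\partial_{y_q}w_\lambda(y)$, together with $w_\lambda\to U$ and dominated convergence, produces the leading order $(\gamma_\lambda^2\theta_\lambda^2)^{-1}\int_{\R^2}2e^{2U}|\beta_{\lambda,1}\partial_{y_1}U+\beta_{\lambda,2}\partial_{y_2}U|^2\,dy\asymp(\gamma_\lambda^2\theta_\lambda^2)^{-1}(\beta_{\lambda,1}^2+\beta_{\lambda,2}^2)$ with a nonzero coefficient. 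Consequently the Rayleigh quotient of $\phi z_\lambda$ is $1+O(\theta_\lambda^2)$, which already gives $\mu_{\lambda,2}\le 1+O(\theta_\lambda^2)$ via min--max applied to the two--dimensional subspace $\mathrm{span}\{\psi_{\lambda,1},\psi_{\lambda,2}\}$.

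To upgrade this to a bound on $\mu_{\lambda,3}$, I would control the coupling with $\hat u_\lambda$. Set $a_{ij}=\int_\Omega\nabla f_i\cdot\nabla f_j\,dx$ and $b_{ij}=\lambda\int_\Omega(1+2u_\lambda^2)e^{u_\lambda^2}f_if_j\,dx$ on the ordered basis $(f_0,f_1,f_2)=(\hat u_\lambda,\psi_{\lambda,1},\psi_{\lambda,2})$. The computations \eqref{3.7-3}--\eqref{3.7-5} give $b_{00}=8\pi\gamma_\lambda^2(1+o(1))$ and $a_{00}=4\pi+o(1)$, so $a_{00}/b_{00}\asymp\gamma_\lambda^{-2}$; the translation analysis above yields $b_{qq}\asymp(\gamma_\lambda^2\theta_\lambda^2)^{-1}$ and $a_{qq}-b_{qq}=O(\gamma_\lambda^{-2})$. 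For the off--diagonal $b_{0q}$, identity \eqref{iden-3} combined with $2u_\lambda\tfrac{\partial u_\lambda}{\partial x_q}=\partial_{x_q}(u_\lambda^2)$ and one integration by parts moves the $x_q$--derivative onto $\phi^2$, localizing the integrand to $\mathrm{supp}\,\nabla\phi$ and yielding $a_{0q},b_{0q}=O(\gamma_\lambda^{-2})$; the entry $b_{12}=o(b_{qq})$ follows from the oddness identity $\int_{\R^2}e^{2U}\partial_{y_1}U\,\partial_{y_2}U\,dy=0$ together with the expansion of Proposition~\ref{prop_wlambda}. In the $b$--orthonormalized basis, $B^{-1/2}AB^{-1/2}$ then has diagonal entries $\{O(\gamma_\lambda^{-2}),\,1+O(\theta_\lambda^2),\,1+O(\theta_\lambda^2)\}$ and off--diagonals of size at most $\theta_\lambda/\gamma_\lambda^2$, so a Gershgorin bound forces $\max_{f\in V}R(f)\le 1+O(\theta_\lambda^2)$, and Courant--Fischer concludes $\mu_{\lambda,3}\le 1+O(\theta_\lambda^2)$.

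The main obstacle is precisely the off--diagonal estimate $b_{0q}=O(\gamma_\lambda^{-2})$: the brute--force scaling $u_\lambda\tfrac{\partial u_\lambda}{\partial x_q}\sim\theta_\lambda^{-1}\partial_{y_q}w_\lambda$ only delivers $b_{0q}=O(\theta_\lambda^{-1})$, which matches $\sqrt{b_{00}b_{qq}}$ and would allow the extremal generalized eigenvalue to be substantially larger than $1$. The necessary cancellation arises from the fact that $u_\lambda\tfrac{\partial u_\lambda}{\partial x_q}=\tfrac12\partial_{x_q}(u_\lambda^2)$ is a perfect derivative, so against the (essentially radial) weight $(1+2u_\lambda^2)e^{u_\lambda^2}$ the leading term integrates to zero; making this precise via identity \eqref{iden-3} is the most delicate ingredient in the argument.
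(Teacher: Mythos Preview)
Your overall strategy---the same three--dimensional test space $V=\mathrm{span}\{\hat u_\lambda,\psi_{\lambda,1},\psi_{\lambda,2}\}$ fed into Courant--Fischer---coincides with the paper's, and your diagonal estimates ($b_{00}\sim\gamma_\lambda^2$, $b_{qq}\sim\gamma_\lambda^{-2}\theta_\lambda^{-2}$, $a_{qq}-b_{qq}=O(\gamma_\lambda^{-2})$) are all correct. The gap is in the final step. With your cross--term bound $a_{0q},b_{0q}=O(\gamma_\lambda^{-2})$, the normalized off--diagonal of $B^{-1/2}(A-B)B^{-1/2}$ in the $(0,q)$ slot is indeed $O(\theta_\lambda\gamma_\lambda^{-2})$, but this is \emph{much larger} than $\theta_\lambda^2$: recall $\theta_\lambda\sim e^{-\gamma_\lambda^2/2}$ is exponentially small while $\gamma_\lambda^{-2}\sim\lambda$ is only polynomially small, so $\theta_\lambda\gamma_\lambda^{-2}/\theta_\lambda^2=\gamma_\lambda^{-2}/\theta_\lambda\to\infty$. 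A straight Gershgorin bound therefore only yields $\mu_{\lambda,3}\le 1+O(\theta_\lambda\gamma_\lambda^{-2})$, not $1+O(\theta_\lambda^2)$.

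There are two natural repairs. One is to sharpen the cross terms: writing $(1+2u_\lambda^2)e^{u_\lambda^2}u_\lambda\partial_q u_\lambda=\partial_q\big((u_\lambda^2-\tfrac12)e^{u_\lambda^2}\big)$ and integrating by parts localizes the integrand to the annulus $B_{3r}(x_\lambda)\setminus B_{2r}(x_\lambda)$, and rescaling there with Lemma~\ref{lem2.4} gives $a_{0q},b_{0q}=O(\theta_\lambda^{2-\delta})$ rather than merely $O(\gamma_\lambda^{-2})$; with this, the normalized off--diagonal becomes $O(\theta_\lambda^{3-\delta})$ and Gershgorin suffices. The other repair---and this is what the paper actually does---is to bypass the matrix packaging altogether and argue directly on the Rayleigh quotient by a case split on the coefficient $\alpha_\lambda$ of $\hat u_\lambda$: if $\alpha_\lambda^2\gamma_\lambda^4\to\infty$ the dominant negative term $-J_{\lambda,5}=-8\pi\gamma_\lambda^2\alpha_\lambda^2$ forces the ratio below~$1$, while if $\alpha_\lambda^2\gamma_\lambda^4$ stays bounded then $\beta_{\lambda,1}^2+\beta_{\lambda,2}^2\to1$, the numerator of the excess is $O(\gamma_\lambda^{-2})$ and the denominator is $\gtrsim\gamma_\lambda^{-2}\theta_\lambda^{-2}$, giving $O(\theta_\lambda^2)$. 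This case analysis is morally a Schur--complement argument exploiting the $O(1)$ gap between the $\hat u_\lambda$ direction and the translation directions, which is exactly the structure a naive Gershgorin bound throws away.
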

\begin{proof}
To prove \eqref{eigenv-23}, we just need to prove
\begin{equation}\label{eigenv-3}
\mu_{\lambda,3}\leq 1+O\big( \theta_\lambda^2 \big).
\end{equation}
By the variational characterization of the eigenvalues of the linearized problem \eqref{eigen-1}, we have
\begin{equation*}
\mu_{\lambda,3}=\min_{W\subset H^1_0(\Omega) \atop  \dim W=3}\max_{v\in W\backslash\{0\}}\frac{\displaystyle\int_{\Omega}|\nabla v|^2dx}{\lambda \displaystyle\int_{\Omega}v^2\big(1+2u_\lambda^2\big) e^{u_\lambda^{2}}dx}.
\end{equation*}
Taking
\begin{equation*}
W=span\Big\{\hat{u}_{\lambda},\psi_{\lambda,1},\psi_{\lambda,2}\Big\}.
\end{equation*}
It is easy to see that the functions $\hat{u}_{\lambda},\psi_{\lambda,1},\psi_{\lambda,2}$ are linearly independent for $\lambda$ sufficiently small, one can refer to \cite[Lemma 3.1]{GP2005} for more details. Then $\dim W=3$ and each function $v_{\lambda}\in W$ can be written as
\begin{equation*}
\begin{split}
v_\lambda=& \alpha_{\lambda} \hat{u}_{\lambda}+ \beta_{\lambda,1}\psi_{\lambda,1}+ \beta_{\lambda,2}\psi_{\lambda,2}\\
=&
\alpha_{\lambda} u_{\lambda}\phi+ \beta_{\lambda,1} \frac{\partial u_{\lambda}}{\partial x_1}\phi + \beta_{\lambda,2}\frac{\partial u_{\lambda}}{\partial x_2}\phi \overset{\eqref{def_zlambda}}=\alpha_{\lambda} u_{\lambda}\phi+ z_\lambda \phi,
\end{split}
\end{equation*}
with $(\alpha_\lambda,\beta_{\lambda,1},\beta_{\lambda,2})\in \R^3$.
Hence, it holds
\begin{equation}\label{3.9-1}
\mu_{\lambda,3}\leq \max_{\mathfrak{L}} \frac{\displaystyle\int_{\Omega}\big|\nabla (\alpha_{\lambda} u_{\lambda}\phi+ z_\lambda \phi)\big|^2 \,dx}{\lambda \displaystyle\int_{\Omega}\big(\alpha_{\lambda} u_{\lambda}\phi+ z_\lambda \phi\big)^2\big(1+2u_\lambda^2\big) e^{u_\lambda^{2}} \,dx},
\end{equation}
where $$\mathfrak{L}:=\Big\{\big(\alpha_{\lambda},\beta_{\lambda,1},\beta_{\lambda,2}\big)\in \R^{3},~~~ \alpha^2_{\lambda}+\beta^2_{\lambda,1}+\beta^2_{\lambda,2} =1\Big\}.$$

We can deduce from \eqref{iden-3} and \eqref{iden-4} that
\begin{equation}\label{iden-34}
\begin{split}
\int_{\Omega} \nabla ( u_{\lambda}\phi ) \cdot \nabla (  z_\lambda \phi) \,dx= \lambda \int_{\Omega} \big(1+u_\lambda^2\big) e^{u_\lambda^{2}}\phi^2 u_\lambda z_{\lambda} \,dx + \int_{\Omega} z_{\lambda} u_\lambda \big|\nabla \phi  \big|^2 \,dx.
\end{split}
\end{equation}
Combining \eqref{iden-1}, \eqref{iden-2} with \eqref{iden-34}, we find
\begin{equation}\label{3.9-2}
\begin{split}
&\int_{\Omega}\big|\nabla  (\alpha_{\lambda} u_{\lambda}\phi + z_\lambda \phi)\big|^2 \,dx \\
=&\alpha_{\lambda}^2\int_{\Omega}\big|\nabla ( u_{\lambda}\phi )\big|^2 \,dx+2\alpha_{\lambda} \int_{\Omega}\nabla
( u_{\lambda}\phi ) \cdot \nabla (  z_\lambda \phi) \,dx+\int_{\Omega}\big|\nabla (z_\lambda \phi) \big|^2 \,dx\\
=& \alpha^2_{\lambda } \int_{\Omega}\big|\nabla  \phi \big|^2u_\lambda^2 \,dx+ \lambda\alpha^2_{\lambda} \int_{\Omega}\phi^2 u_\lambda^2e^{u_\lambda^2} \,dx\\
&+ 2 \lambda\alpha_{\lambda} \int_{\Omega} \big(1+u_\lambda^2\big)e^{u_\lambda^2} \phi^2 z_\lambda u_\lambda \,dx + 2  \alpha_{\lambda}\int_{\Omega} z_{\lambda} u_\lambda \big|\nabla \phi \big|^2  \,dx\\
&+ \int_{\Omega} \big|\nabla \phi \big|^2 z_{\lambda}^2 \,dx + \lambda\int_{\Omega} \big(1+2u_\lambda^2\big)   e^{u_\lambda^2} \phi^2 z_{\lambda}^2 \,dx  \\
=& \underbrace{\lambda\alpha^2_{\lambda} \int_{\Omega}\phi^2 u_\lambda^2e^{u_\lambda^2} \,dx}_{:=J_{\lambda,1}} +\underbrace {2 \lambda\alpha_{\lambda} \int_{\Omega} \big(1+u_\lambda^2\big) e^{u_\lambda^2} \phi^2 z_\lambda u_\lambda \,dx}_{:=J_{\lambda,2}}\\
&+ \underbrace{\lambda\int_{\Omega} \big(1+2u_\lambda^2\big) e^{u_\lambda^2} \phi^2 z_{\lambda}^2 \,dx}_{:=J_{\lambda,3}} + \underbrace{ \int_{\Omega}\big(\alpha_{\lambda} u_{\lambda} + z_\lambda \big)^2\big|\nabla \phi \big|^2 \,dx}_{:=
J_{\lambda,4}}.
\end{split}
\end{equation}
Furthermore, we compute that
\begin{equation}\label{3.9-3}
\begin{split}
& \lambda \displaystyle\int_{\Omega}\big(\alpha_{\lambda} u_{\lambda}\phi+ z_\lambda \phi\big)^2\big(1+2u_\lambda^2\big) e^{u_\lambda^{2}} \,dx\\
=& \lambda \displaystyle\int_{\Omega}\Big(\alpha^2_{\lambda} u^2_{\lambda}+ z^2_\lambda +2 \alpha_{\lambda} u_{\lambda}z_\lambda \Big) \big(1+2u_\lambda^2\big) e^{u_\lambda^{2}}\phi^2 \,dx\\
=& \underbrace{\lambda \alpha^2_{\lambda}  \displaystyle\int_{\Omega} u^2_{\lambda} \big(1+2u_\lambda^2\big) e^{u_\lambda^{2}}\phi^2 \,dx}_{:=J_{\lambda,5}} +\underbrace{2 \alpha_{\lambda} \lambda  \displaystyle\int_{\Omega} u_{\lambda}z_\lambda \big(1+2u_\lambda^2\big) e^{u_\lambda^{2}}\phi^2 \,dx }_{:=J_{\lambda,6}}
+J_{\lambda,3}.
\end{split}
\end{equation}
Hence substitute \eqref{3.9-2} and \eqref{3.9-3} into \eqref{3.9-1}, we can find
\begin{equation}\label{3.9-4}
\mu_{\lambda,3}\leq \max_{\mathfrak{L}} \frac{J_{\lambda,1}+J_{\lambda,2}+J_{\lambda,3}+J_{\lambda,4}}{J_{\lambda,3}+J_{\lambda,5}+J_{\lambda,6}}=
1+ \max_{\mathfrak{L}} \frac{J_{\lambda,1}+J_{\lambda,2}+J_{\lambda,4}-J_{\lambda,5}-J_{\lambda,6}}{J_{\lambda,3}+J_{\lambda,5}+J_{\lambda,6}}.
\end{equation}
Next we compute the terms $J_{\lambda,i}$ for $i=1,2,\cdots,6$.

\vskip 0.1cm

Firstly, by \eqref{3.7-3} and \eqref{3.7-4}, we have
\begin{equation*}
\begin{split}
J_{\lambda,1}= \lambda\alpha^2_{\lambda} \int_{\Omega}\phi^2 u_\lambda^2e^{u_\lambda^2}dx
=\alpha^2_{\lambda}\big(4\pi+o(1)\big),
\end{split}
\end{equation*}
and
\begin{equation*}
\begin{split}
J_{\lambda,5}= \lambda \alpha^2_{\lambda}  \displaystyle\int_{\Omega} u^2_{\lambda} \big(1+2u_\lambda^2\big) e^{u_\lambda^{2}}\phi^2 \,dx =\gamma^2_\lambda \alpha^2_{\lambda}\big(8\pi+o(1)\big).
\end{split}
\end{equation*}
Moreover, using integration by parts, we get
\begin{equation*}
\begin{split}
J_{\lambda,2}= &\lambda \alpha_\lambda \sum^2_{i=1}\beta_{\lambda,i}
\int_{\Omega} \frac{\partial \big(u^2_\lambda e^{u^2_\lambda}\big)}{\partial x_i} \phi^2  \,dx\\
=& O\Big(\lambda\int_{\Omega} u^2_\lambda e^{u^2_\lambda}  \big|\nabla \phi\big| \,dx\Big)
=O\Big( \lambda \int_{B_{3r}(x_{\lambda})\backslash B_{2r}(x_{\lambda})} u^2_\lambda e^{u^2_\lambda}  \,dx \Big)
=O\big( \theta_\lambda^{2-\delta} \big),
\end{split}
\end{equation*}
and
\begin{equation*}
\begin{split}
J_{\lambda,6}=& 2J_{\lambda,2}-2\alpha_\lambda \lambda \displaystyle\int_{\Omega} u_{\lambda}z_\lambda e^{u_\lambda^{2}}\phi^2 \,dx \\
=& 2J_{\lambda,2}-\lambda \alpha_\lambda \sum^2_{i=1}\beta_{\lambda,i} \int_{\Omega} \frac{\partial \big(  e^{u^2_\lambda}\big)}{\partial x_i} \phi^2 \,dx = O\big( \theta_\lambda^{2-\delta}\big).
\end{split}
\end{equation*}
Finally, direct computation gives
\begin{equation*}
\begin{split}
J_{\lambda,3}= & \lambda\int_{\Omega} \big(1+2u_\lambda^2\big) e^{u_\lambda^2} \phi^2  \Big(\beta_{\lambda,1}
\frac{\partial u_{\lambda}}{\partial x_1}+\beta_{\lambda,2}\frac{\partial u_{\lambda}}{\partial x_2}\Big)^2 \,dx\\
=& \frac{1}{\gamma^2_{\lambda} \theta_\lambda^2} \bigg( \sum^2_{i=1}\beta^2_{\lambda,i} \int_{\R^2} 2 e^{2U} \Big| \frac{\partial U}{\partial x_i} \Big|^2dx+o(1)\bigg)
= \frac{2\pi}{3\gamma^2_{\lambda}\theta_\lambda^2} \Big( \sum^2_{i=1}\beta^2_{\lambda,i} +o(1)\Big),
\end{split}
\end{equation*}
and
\begin{equation*}
\begin{split}
0\leq J_{\lambda,4}=\int_{B_{3r}(x_{\lambda})\backslash B_{2r}(x_{\lambda})} \big(\alpha_{\lambda} u_\lambda+ z_{\lambda}\big)^2 \big|\nabla \phi  \big|^2 \,dx \overset{\eqref{lim-1}}\leq \frac{C}{\gamma_\lambda^2}.
\end{split}
\end{equation*}

Next, we divide into two cases:

\vskip 0.2cm

\noindent\textup{(1)}
If $\alpha^2_{\lambda} \gamma^4_\lambda \rightarrow +\infty$ as $\lambda\to 0$, then
we find
\begin{equation*}
 \gamma^2_\lambda  \big(J_{\lambda,1}+J_{\lambda,2}+J_{\lambda,4}-J_{\lambda,5}-J_{\lambda,6}\big)\to -\infty,~~\mbox{as}~~\lambda\to 0,
\end{equation*}
which gives us that $J_{\lambda,1}+J_{\lambda,2}+J_{\lambda,4}-J_{\lambda,5}-J_{\lambda,6}<0$ for $\lambda$ small. Also \eqref{3.9-3} implies that $J_{\lambda,3}+J_{\lambda,5}+J_{\lambda,6} > 0$. Hence we have
\begin{equation}\label{3.9-5}
\begin{split}  \frac{J_{\lambda,1}+J_{\lambda,2}+J_{\lambda,4}-J_{\lambda,5}-J_{\lambda,6}}{J_{\lambda,3}+
J_{\lambda,5}+J_{\lambda,6}} < 0.
\end{split}\end{equation}

\vskip 0.1cm

\noindent\textup{(2)}
If $\alpha^2_{\lambda} \gamma^4_\lambda\leq C$ as $\lambda \to 0$, then we have
\begin{equation*}
\beta_{\lambda,1}^2+
\beta_{\lambda,2}^2 \to 1.
\end{equation*}
Hence it holds
\begin{equation*}
J_{\lambda,3}+J_{\lambda,5}+J_{\lambda,6} \geq \frac{\pi}{3 \gamma^2_{\lambda} \theta_\lambda^2},
\end{equation*}
and
\begin{equation*}
 \big|J_{\lambda,1}+J_{\lambda,2}+J_{\lambda,4}-J_{\lambda,5}-J_{\lambda,6}\big|\leq \frac{C}{\gamma_\lambda^2 }.
\end{equation*}
So we have
\begin{equation*}
\bigg| \frac{J_{\lambda,1}+J_{\lambda,2}+J_{\lambda,4}-J_{\lambda,5}-J_{\lambda,6}}{J_{\lambda,3}+J_{\lambda,5}
+J_{\lambda,6}} \bigg| \leq C_0 \theta_\lambda^2,~~~\,\mbox{for some}~~C_0>0,
\end{equation*}
which means that
\begin{equation}\label{3.9-6}
\frac{J_{\lambda,1}+J_{\lambda,2}+J_{\lambda,4}-J_{\lambda,5}-J_{\lambda,6}}{J_{\lambda,3}+J_{\lambda,5}+J_{\lambda,6}} =O\big(\theta_\lambda^2\big).
\end{equation}
Finally, we can deduce \eqref{eigenv-3} by \eqref{3.9-4}, \eqref{3.9-5} and \eqref{3.9-6}.
\end{proof}

\begin{Lem}\label{lem_mu23}
For $l=2,3$, it holds
\begin{equation}\label{3.10.1}
\mu_{\lambda,l}=1+o(1)~\mbox{as}~\lambda \to 0.
\end{equation}
\end{Lem}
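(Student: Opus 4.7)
The plan is to combine Lemma \ref{lem3.9}, which gives the upper bound $\mu_{\lambda,l} \leq 1 + O(\theta_\lambda^2)$ for $l = 2, 3$, with a matching lower bound $\mu_{\lambda,l} \geq 1 - o(1)$. Since $0 < \mu_{\lambda,1} \leq \mu_{\lambda,l}$, passing to a subsequence I may assume the limit $\mu_l := \lim_{\lambda \to 0} \mu_{\lambda,l}$ exists and lies in $[0, 1]$. By Lemma \ref{lem3.3} and Proposition \ref{prop3.4}, the rescaled eigenfunctions $\widetilde v_{\lambda,l}$ converge in $C^2_{loc}(\R^2)$ to a bounded, nonzero $V_l$ satisfying $-\Delta V_l = 2\mu_l e^{2U} V_l$ on $\R^2$. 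The task reduces to showing that the only admissible values of $\mu_l$ are $0$ and $1$, and then to ruling out $\mu_l = 0$.

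For the quantization of $\mu_l$, I will exploit the underlying conformal geometry: from $-\Delta U = e^{2U}$ one sees that the metric $g := e^{2U}|dx|^2$ has constant Gauss curvature $1$ and total area $\int_{\R^2} e^{2U}\,dx = 4\pi$, so $(\R^2, g)$ is globally isometric to the unit sphere $S^2$ minus one point via inverse stereographic projection. The equation for $V_l$ is precisely $-\Delta_g V_l = 2\mu_l V_l$ on $S^2 \setminus \{N\}$. A Kelvin-type transform (in the spirit of the one used in the proof of Proposition \ref{prop3.4}) combined with the removable singularity theorem for bounded solutions of second-order linear elliptic equations shows that the pullback of $V_l$ extends smoothly across $N$, yielding a genuine eigenfunction of $-\Delta_{S^2}$. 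Since the spectrum of $-\Delta_{S^2}$ is $\{k(k+1) : k \in \N\}$, this forces $\mu_l \in \{0, 1, 3, 6, \ldots\}$, which together with $\mu_l \in [0, 1]$ yields $\mu_l \in \{0, 1\}$.

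It remains to rule out $\mu_l = 0$ for $l \in \{2, 3\}$. Since $\mu_{\lambda,m} > 0$ for every $m$, testing the eigenvalue equation for $v_{\lambda,1}$ against $v_{\lambda,l}$ and using \eqref{ortho} is equivalent to the weighted orthogonality
\[
\lambda \int_\Omega \bigl(1 + 2 u_\lambda^2\bigr) e^{u_\lambda^2} v_{\lambda,1} v_{\lambda,l}\, dx = 0.
\]
Rescaling and invoking Proposition \ref{prop3.4}(1)---under the assumption $\mu_l = 0$, both $V_1$ and $V_l$ are nonzero constants $c_1, c_l$---the dominated convergence theorem, justified by the pointwise bound of Lemma \ref{lem2.4}, sends the left-hand side to $2 c_1 c_l \int_{\R^2} e^{2U}\, dy = 8\pi c_1 c_l \neq 0$, the desired contradiction. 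Hence $\mu_l = 1$ for $l = 2, 3$.

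The main obstacle is the quantization step: one must justify rigorously that every bounded $C^2_{loc}(\R^2)$ solution of $-\Delta V = 2\mu e^{2U} V$ extends, via stereographic projection, to a smooth eigenfunction on the whole of $S^2$. The delicate point is controlling $V_l$ at infinity well enough for the conformal pullback to have a removable singularity at the north pole. A more hands-on alternative would be a spherical harmonic decomposition of $V_l$ followed by an explicit Sturm--Liouville analysis of the resulting radial ODEs, identifying the admissible values in $[0,1]$ as exactly $0$ and $1$; however, the geometric argument is conceptually cleaner and entirely avoids explicit ODE bookkeeping.
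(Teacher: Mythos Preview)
Your proof is correct and follows the same overall structure as the paper: the upper bound from Lemma \ref{lem3.9}, a quantization step forcing $\mu_l\in\{0,1\}$, and then the orthogonality argument via \eqref{ortho} together with Proposition \ref{prop3.4}(1) and Proposition \ref{prop_mu1} to rule out $\mu_l=0$. The only substantive difference lies in the quantization step. The paper simply asserts that $\mu_2<1$ implies $\mu_2=0$ by citing \cite[Proposition~3.1]{DGIP2019}, whereas you supply a self-contained conformal argument: the metric $e^{2U}|dx|^2$ is isometric to the round $S^2$ minus a point, the limit equation becomes $-\Delta_{S^2}V_l=2\mu_l V_l$, and removability of the bounded singularity at the north pole forces $2\mu_l$ into the spherical spectrum $\{k(k+1)\}$. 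Your route is more transparent and avoids an external reference; the paper's is shorter but opaque without consulting \cite{DGIP2019}. Either way the orthogonality contradiction is identical to the paper's computation \eqref{3.10-2}.
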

\begin{proof}
The key point is to compute $\mu_{\lambda,2}$. By Lemma \ref{lem3.9}, we know that $\displaystyle\lim_{\lambda \to 0}\mu_{\lambda,2} = \mu_{2} \in [0,1]$. Assume by contradiction that $\mu_2 < 1$. It follows from Lemma \ref{lem3.3} that
$\widetilde{v}_{\lambda,2} \to V_{2}~~\mbox{in}~~C^1_{loc}\big(\R^2\big)$ as $\lambda \to 0$ and $V_{2}(x)$ satisfies
\begin{equation*}
\begin{cases}
-\Delta V_{2}=2\mu_{2} e^{2U} V_{2}\,\,~\mbox{in}~\R^2,\\[1mm]
\|V_{2}\|_{L^{\infty}(\R^2)}\leq 1.
\end{cases}
\end{equation*}
Similar to the proof of Proposition 3.1 in \cite{DGIP2019}, we know that $\mu_2=0$. And then $V_2\equiv c_2$, where $c_2$ is a nonzero constant in Proposition \ref{prop3.4}.

\vskip 0.1cm

Note that $v_{\lambda,1}$ and $v_{\lambda,2}$ are orthogonal in $H_0^1(\Omega)$, then it holds
\begin{equation}\label{3.10-1}
\begin{split}
\int_{\Omega}\nabla v_{\lambda,1}\cdot \nabla v_{\lambda,2} \,dx =0.
\end{split}
\end{equation}
On the other hand,
\begin{equation*}
\int_{\Omega}\nabla v_{\lambda,1}\cdot \nabla v_{\lambda,2} \,dx = \mu_{\lambda,1} \lambda \int_{\Omega} \big(1+2u_\lambda^2\big) e^{u_\lambda^2} v_{\lambda,1} v_{\lambda,2} \,dx,
\end{equation*}
which together with \eqref{3.10-1} yields that
\begin{equation}\label{3.10-2}
\begin{split}
0=& \lambda \int_{\Omega} \big(1+2u_\lambda^2\big)e^{u_\lambda^2} v_{\lambda,1} v_{\lambda,2} \,dx \\
=& \lambda \int_{B_d(x_\lambda)} \big(1+2u_\lambda^2\big) e^{u_\lambda^2} v_{\lambda,1} v_{\lambda,2} \,dx
+ \lambda \int_{\Omega\backslash B_d(x_\lambda)} \big(1+2u_\lambda^2\big) e^{u_\lambda^2} v_{\lambda,1} v_{\lambda,2} \,dx \\
=& \int_{B_{\frac{d}{\theta_\lambda}}(0)} \bigg(\frac{1}{\gamma_\lambda^2}+2\Big(1+\frac{w_\lambda(z)}{\gamma^2_\lambda}
\Big)^2\bigg) e^{2w_\lambda(z)+\frac{w_\lambda^2(z)}{\gamma^2_\lambda}} \widetilde{v}_{\lambda,1}(z) \widetilde{v}_{\lambda,2}(z) \,dz + O\big( \theta^{2-\delta}_\lambda \big) \\
=& \Big(\int_{\R^2} 2e^{2U(z)} c_1 c_2 \,dz + o(1) \Big) + O\big( \theta^{2-\delta}_\lambda \big)\\
=& c_1 c_2 \big(8\pi+o(1)\big) + O\big( \theta^{2-\delta}_\lambda \big),
\end{split}
\end{equation}
where $d>0$ is a small constant such that $B_{2d}(x_\lambda) \subset \Omega$ and $c_1$ is a nonzero constant in Proposition \ref{prop3.4}.
Hence, we can deduce from \eqref{3.10-2} that
\begin{equation*}
c_1 c_2 \big(8\pi+o(1)\big) = O\big( \theta^{2-\delta}_\lambda \big),
\end{equation*}
that is $c_1 c_2=0$, which is a contradiction. So we have $\mu_{2}=1$, and \eqref{3.10.1} follows from \eqref{eigenv-23}.
\end{proof}

\begin{Lem}
For $l=2,3$, it holds
\begin{equation}\label{eigenfr-23}
\widetilde{v}_{\lambda,l}(y)
=\sum^2_{q=1}\frac{a_{l,q}y_q}{4+|y|^2}+o\big(1\big)\,~\mbox{in}~C^1_{loc}(\R^2),
~~\mbox{as}~\lambda\to 0,
\end{equation}
for some vectors $\textbf{a}_{l}= \big(a_{l,1},a_{l,2}\big) \in \R^2 \backslash \{0\}$. Furthermore,
\begin{equation}\label{ortho23}
\textbf{a}_2\cdot \textbf{a}_{3}=0.
\end{equation}
\end{Lem}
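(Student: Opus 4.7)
The plan is to combine the three facts already in hand: (i) by Lemma \ref{lem_mu23} we have $\mu_l := \lim_{\lambda\to 0}\mu_{\lambda,l} = 1$ for $l = 2,3$, so Proposition \ref{prop3.4} (2) is applicable and gives
\[
\widetilde{v}_{\lambda,l}(y) = \sum_{q=1}^{2} \frac{a_{l,q} y_q}{4+|y|^2} + b_l \frac{4-|y|^2}{4+|y|^2} + o(1) \quad \text{in } C^1_{loc}(\mathbb{R}^2)
\]
with $(a_{l,1}, a_{l,2}, b_l) \in \mathbb{R}^3 \setminus \{0\}$; (ii) the second part of Lemma \ref{lem3.5}, which says that as soon as $b_l \ne 0$ one automatically has the expansion $\mu_{\lambda,l} = 1 + 3\gamma_\lambda^{-4} + o(\gamma_\lambda^{-4})$; (iii) the upper bound $\mu_{\lambda,l} \le 1 + O(\theta_\lambda^2)$ coming from Lemma \ref{lem3.9}. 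Thus to prove \eqref{eigenfr-23} it suffices to show $b_l = 0$ for $l = 2,3$, and then the non-triviality of the triple $(a_{l,1}, a_{l,2}, b_l)$ forces $\mathbf{a}_l \ne 0$.

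The key observation is a clash of scales. From $\theta_\lambda = O(e^{-\gamma_\lambda^2/2})$ (see \eqref{lim_theta}) the bound $\mu_{\lambda,l} - 1 \le O(\theta_\lambda^2) = O(e^{-\gamma_\lambda^2})$ is exponentially small in $\gamma_\lambda$, while $3\gamma_\lambda^{-4}$ is only polynomially small. If $b_l \ne 0$ for some $l \in \{2,3\}$, Lemma \ref{lem3.5} would give $\mu_{\lambda,l} - 1 \sim 3\gamma_\lambda^{-4}$, which is incompatible with the exponentially small upper bound $O(e^{-\gamma_\lambda^2})$ for $\lambda$ small. This contradiction forces $b_2 = b_3 = 0$, and hence $\mathbf{a}_l = (a_{l,1}, a_{l,2}) \ne (0,0)$ for $l = 2,3$. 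Substituting $b_l = 0$ into the expansion of $\widetilde v_{\lambda,l}$ yields \eqref{eigenfr-23}.

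For the orthogonality relation \eqref{ortho23}, I will invoke the general identity \eqref{3.4.3} in Proposition \ref{prop3.4}: since $\mu_2 = \mu_3 = 1$ with $2 \ne 3$, and since the eigenfunctions have been chosen orthogonal in the Dirichlet inner product (see \eqref{ortho}), that identity reads $\mathbf{a}_2 \cdot \mathbf{a}_3 + 16\, b_2 b_3 = 0$. With $b_2 = b_3 = 0$ already established, this collapses to $\mathbf{a}_2 \cdot \mathbf{a}_3 = 0$, completing the proof.

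The proof is therefore essentially a bookkeeping argument built on preceding results, and no genuinely new estimate is required. The only subtle point is that Lemma \ref{lem3.5} already packaged the delicate analysis: the higher-order refinement of the Pohozaev-type identity \eqref{puv} that upgrades the initial bound $\mu_{\lambda,l} = 1 + o(\gamma_\lambda^{-2})$ to the sharp formula $\mu_{\lambda,l} = 1 + 3\gamma_\lambda^{-4} + o(\gamma_\lambda^{-4})$ when $b_l \ne 0$. Without this sharp value, a rougher comparison between $\mu_{\lambda,l} - 1$ and $\theta_\lambda^2$ would not suffice to rule out $b_l \ne 0$; it is exactly the exponential versus polynomial decay gap that makes the contradiction clean, so the main conceptual ingredient to remember is the prior estimate \eqref{lim_theta} tying $\theta_\lambda$ to $\gamma_\lambda$ through the relation $\sqrt{\lambda}\gamma_\lambda \to 2/m_1$.
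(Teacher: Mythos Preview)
Your proposal is correct and matches the paper's own proof essentially line for line: invoke $\mu_l=1$ from Lemma~\ref{lem_mu23}, apply Proposition~\ref{prop3.4}(2) to get the expansion with nontrivial $(a_{l,1},a_{l,2},b_l)$, then argue by contradiction that $b_l\neq 0$ would force $\mu_{\lambda,l}\ge 1+\tfrac{3}{2\gamma_\lambda^4}$ via Lemma~\ref{lem3.5}, contradicting the bound $\mu_{\lambda,l}\le 1+O(\theta_\lambda^2)$ from Lemma~\ref{lem3.9}; finally deduce \eqref{ortho23} from \eqref{3.4.3} once $b_2=b_3=0$. The only difference is that you spell out explicitly the exponential-versus-polynomial decay comparison through \eqref{lim_theta}, which the paper leaves implicit.
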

\begin{proof}
Using \eqref{3.10.1} and Proposition \ref{prop3.4}, we know that for any $l=2,3$, there exists $(a_{l,1},a_{l,2},b_{l}) \in \R^3 \backslash \{0\}$ such that
\begin{equation*}
\widetilde{v}_{\lambda,l}(y)
=\sum^2_{q=1}\frac{a_{l,q}y_q}{4+|y|^2}+b_{l}\frac{4-|y|^2}{4+|y|^2}+o\big(1\big)\,~\mbox{in}~C^1_{loc}(\R^2),
~~\mbox{as}~\lambda\to 0.
\end{equation*}
Assume by contradiction that $b_{l}\neq 0$, then by \eqref{3.5.2} in Lemma \ref{lem3.5}, we obtain that for $\lambda$ sufficiently small,
\begin{equation*}
\mu_{\lambda,l}\geq 1+\frac{3}{2\gamma_\lambda^4},
\end{equation*}
which contradicts \eqref{eigenv-23}. Hence $b_{l}=0$ and \eqref{eigenfr-23} holds. Moreover, \eqref{ortho23} can be deduced by \eqref{3.4.3}.

\end{proof}

Using the above estimates, we can prove \eqref{iden-mu23} in Theorem \ref{th_eigenvalue}.

\begin{proof}[\underline{\textbf{Proof of \eqref{iden-mu23} in Theorem \ref{th_eigenvalue}}}]
It follows from \eqref{3.5-1}, \eqref{3.5-2} and \eqref{K} that
\begin{equation*}
v_{\lambda,l}(x)= \lambda \mu_{\lambda,l} A_{\lambda,l} G\big(x_{\lambda},x\big)+\lambda \mu_{\lambda,l} B_{\lambda,l}+
O\big(\theta^{2-\delta}_\lambda\big).
\end{equation*}
By Taylor's expansion, it follows
\begin{equation*}
\begin{split}
B_{\lambda,l}=& \frac{\theta_\lambda}{\lambda} \int_{B_{\frac{d}{\theta_\lambda}}(0)}
\big\langle\nabla G(x_{\lambda},x),z \big\rangle \bigg(\frac{1}{\gamma_\lambda^2}+2\Big(1+\frac{w_{\lambda}(z)}
{\gamma_\lambda^2}\Big)^2\bigg) e^{2w_{\lambda}(z)+\frac{w^2_{\lambda}(z)}{\gamma_\lambda^2} }
\widetilde{v}_{\lambda,l}(z) \,dz \\
&+ O\bigg(\frac{\theta^2_{\lambda}}{\lambda} \int_{B_{\frac{d}{\theta_{\lambda}}}(0)}
|z|^2 \cdot e^{2w_{\lambda}(z)+\frac{w^2_{\lambda}(z)}{\gamma_\lambda^2} } \,dz \bigg)\\
=& \frac{\theta_\lambda}{\lambda} \bigg(\int_{\R^2} 2e^{2U(z)} \sum^2_{q=1}a_{l,q} \cdot \partial_q G(x_{\lambda},x) \frac{z_q^2}{4+|z|^2} \,dz + o(1)\bigg) + O\Big(\frac{\theta_\lambda^{2-\delta}}{\lambda}\Big) \\
=& \frac{\theta_\lambda}{\lambda} \sum^2_{q=1} a_{l,q} \cdot \partial_q G(x_{\lambda},x) \int_{\R^2} e^{2U(z)} \frac{|z|^2}{4+|z|^2} \,dz + o\Big(\frac{\theta_\lambda}{\lambda}\Big) \\
=&\frac{2\pi\theta_\lambda}{\lambda } \sum^2_{q=1}a_{l,q} \cdot \partial_q G(x_{\lambda},x) + o\Big(\frac{\theta_\lambda}{\lambda }\Big).
\end{split}
\end{equation*}
Hence it holds
\begin{equation}\label{v_lambdal}
v_{\lambda,l}(x)= \lambda \mu_{\lambda,l} A_{\lambda,l} G\big(x_{\lambda},x\big)+
2\pi\theta_\lambda \sum^2_{q=1}a_{l,q} \cdot \partial_q G(x_{\lambda},x) + o(\theta_\lambda)~\,~\mbox{in}~C^1\big(\Omega \backslash B_{2d}(x_{\lambda})\big).
\end{equation}
Similar to \eqref{3.5-5}, we can calculate that
\[
A_{\lambda,l} = o\Big(\frac{1}{\lambda \gamma_\lambda^2}\Big).
\]

Next, by \eqref{qgg}, \eqref{qggh}, \eqref{v_lambdal} and Lemmas \ref{prop3-1}, \ref{lem_mu23}, we can compute that
\begin{equation}\label{3.12-1}
\begin{split}
\mbox{LHS of}~~(\ref{quv})=&\lambda \mu_{\lambda,l} A_{\lambda,l} C_{\lambda} Q\Big(G(x_{\lambda},x),G(x_{\lambda},x)\Big) \\
&+ 2\pi \theta_{\lambda} C_{\lambda} \sum^2_{q=1}a_{l,q} Q\Big(G(x_{\lambda},x),\partial_q G(x_{\lambda},x)\Big)
+o\Big(\frac{\theta_\lambda}{\gamma_\lambda}+\lambda |A_{\lambda,l}| \frac{\theta_\lambda}{\gamma_\lambda}\Big)\\
=& -\frac{4\pi \lambda }{\gamma_\lambda} A_{\lambda,l}\frac{\partial \mathcal{R}(x_\lambda)}{\partial x_i} \big(1+o(1)\big)+2\pi \theta_\lambda\frac{4\pi}{\gamma_\lambda} \sum^2_{q=1} a_{l,q} \Big(-\frac{1}{2} \frac{\partial^2 \mathcal{R}(x_\lambda)}{\partial x_i \partial x_q} \Big) + o\Big(\frac{\theta_\lambda}{\gamma_\lambda}\Big) \\
=& -\frac{4\pi^2 \theta_\lambda}{\gamma_\lambda} \sum^2_{q=1} a_{l,q} \frac{\partial^2 \mathcal{R}(x_0)}{\partial x_i \partial x_q} + o\Big(\frac{\theta_\lambda}{\gamma_\lambda}\Big),
\end{split}
\end{equation}
the last equality is derived from \eqref{asym_x} and \eqref{2.6-3}. On the other hand, by the definition of $w_\lambda$, we know
\begin{align*}
& {\lambda}  \int_{B_{d}(x_{\lambda})} \big(1+2u^2_\lambda\big)e^{u_\lambda^2} v_{\lambda,l} \frac{\partial u_\lambda}{\partial x_i} \,dx\\
=& \int_{B_{\frac{d}{\theta_\lambda}(0)}} \bigg(\frac{1}{\gamma^2_\lambda}+ 2\Big(1+ \frac{w_{\lambda}(z)}{\gamma^2_\lambda}\Big)^2 \bigg) e^{2w_{\lambda}(z)+\frac{w^2_{\lambda}(z)}{\gamma_\lambda^2}}  \widetilde{v}_{\lambda,l}(z) \frac{\partial w_\lambda(z)}{\partial z_i} \frac{1}{\theta_\lambda \gamma_\lambda} \,dz \\
=& \frac{2}{\gamma_\lambda \theta_\lambda} \left( \sum^2_{q=1}
\int_{\R^2} e^{2U(z)}\frac{a_{l,q}z_q}{4+|z|^2}\frac{\partial U(z)}{\partial z_i} \,dz+o(1)\right)
= -\frac{\pi}{3 \gamma_\lambda \theta_\lambda} a_{l,i} \big(1+o(1)\big).
\end{align*}
Direct calculation gives that
\begin{equation*}
\lambda\int_{\partial B_d(x_\lambda)}u_\lambda e^{u_\lambda^2}  v_{\lambda,l} \nu_i \,d\sigma
=o\Big(\frac{\theta_\lambda}{\gamma_\lambda}\Big).
\end{equation*}
Then we have
\begin{equation}\label{3.12-2}
\mbox{RHS of}~~(\ref{quv})= -(\mu_{\lambda,l}-1)\frac{\pi}{3 \gamma_\lambda \theta_\lambda} a_{l,i} \big(1+o(1)\big) + o\Big(\frac{\theta_\lambda}{\gamma_\lambda}\Big).
\end{equation}

Thus, from \eqref{3.12-1} and \eqref{3.12-2}, we find
\begin{equation}\label{3.12-3}
\begin{split}
-\frac{4\pi^2 \theta_\lambda}{\gamma_\lambda} \sum^2_{q=1} a_{l,q} \frac{\partial^2 \mathcal{R}(x_0)}{\partial x_i \partial x_q} + o\Big(\frac{\theta_\lambda}{\gamma_\lambda}\Big) =
-(\mu_{\lambda,l}-1)\frac{\pi}{3 \gamma_\lambda \theta_\lambda} a_{l,i} \big(1+o(1)\big),
~~\mbox{for}~~i=1,2.
\end{split}
\end{equation}
Since $a_{l,i}\neq 0$ for some $i$, pass to the limit as $\lambda \to 0$, then \eqref{3.12-3} implies
\begin{equation*}
\begin{split}
\mu_{\lambda,l} = 1+12\pi \theta_\lambda^2 \bigg(\sum^2_{q=1} a_{l,q} \frac{\partial^2 \mathcal{R}(x_{0})}{\partial x_i \partial x_q}\bigg) \frac{1}{a_{l,i}} + o\big(\theta_\lambda^2\big)
=1+ 12\pi \theta_\lambda^2 \eta_l + o\big(\theta_\lambda^2\big),
\end{split}
\end{equation*}
where
\begin{equation*}
\eta_l=\frac{1}{a_{l,i}}\sum^2_{q=1} a_{l,q} \frac{\partial^2 \mathcal{R}(x_{0})}{\partial x_i \partial x_q},
\end{equation*}
for any $i$ such that $a_{l,i} \neq 0$.

\vskip 0.2cm

Therefore, we have
\begin{equation*}
\sum^2_{q=1} a_{l,q} \frac{\partial^2 \mathcal{R}(x_{0})}{\partial x_i \partial x_q} = \eta_l a_{l,i},
\end{equation*}
which holds both if $a_{l,i} \neq 0$ or if $a_{l,i} = 0$ thanks to \eqref{3.12-3}. It means that $\eta_l$ is an eigenvalue of the hessian matrix $D^2 \mathcal{R}(x_0)$, with $\mathbf{a}_l = (a_{l,1}, a_{l,2})$ as corresponding eigenvector. By \eqref{ortho23}, we know that $\eta_l = \Lambda_j$, for some $j = 1,2$. Since $\mu_{\lambda,2} \leq \mu_{\lambda,3}$, we derive that $\eta_l = \Lambda_{l-1}$ and \eqref{iden-mu23} holds.

\end{proof}

\begin{Prop}\label{add-prop_mu4}
It holds
\begin{equation}\label{lim-mu4}
\mu_{\lambda,4} = 1+o(1)~\mbox{as}~\lambda \to 0.
\end{equation}
\end{Prop}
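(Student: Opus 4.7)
The plan is to sandwich $\mu_{\lambda,4}$ between $1-o(1)$ and $1+o(1)$. The lower bound is essentially immediate: monotonicity of the Courant--Fischer characterization gives $\mu_{\lambda,4}\geq \mu_{\lambda,3}$, and by Lemma \ref{lem_mu23} (or equivalently \eqref{iden-mu23} in Theorem \ref{th_eigenvalue}) we have $\mu_{\lambda,3}=1+12\pi\theta_\lambda^2\Lambda_2+o(\theta_\lambda^2)\to 1$. Hence $\liminf_{\lambda\to 0}\mu_{\lambda,4}\geq 1$.

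For the matching upper bound, the idea is to enrich the three--dimensional trial space $\mathrm{span}\{\hat u_\lambda,\psi_{\lambda,1},\psi_{\lambda,2}\}$ used in Lemma \ref{lem3.9} by adding a cutoff version of the missing radial element of the kernel of the linearized Liouville operator at $U$ (see Lemma \ref{lem3.1}), namely a function that rescales to $\widetilde Z(y):=\frac{4-|y|^2}{4+|y|^2}$. A clean realization is
\begin{equation*}
\Phi_\lambda(x):=\phi(x)\,Z_\lambda(x),\qquad Z_\lambda(x):=\frac{4\theta_\lambda^2-|x-x_\lambda|^2}{4\theta_\lambda^2+|x-x_\lambda|^2},
\end{equation*}
so that $\Phi_\lambda(x_\lambda+\theta_\lambda y)=\phi(x_\lambda+\theta_\lambda y)\widetilde Z(y)$ and $-\Delta\widetilde Z=2e^{2U}\widetilde Z$ on $\R^2$. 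The four generators $\hat u_\lambda,\psi_{\lambda,1},\psi_{\lambda,2},\Phi_\lambda$ are linearly independent for small $\lambda$, since under the rescaling $x=x_\lambda+\theta_\lambda y$ they converge, after the appropriate normalization, in $C^2_{loc}(\R^2)$ to the linearly independent quadruple $\{1,\partial_1 U,\partial_2 U,\widetilde Z\}$.

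Writing a generic $v\in W_\lambda:=\mathrm{span}\{\hat u_\lambda,\psi_{\lambda,1},\psi_{\lambda,2},\Phi_\lambda\}$ as $v=\alpha\hat u_\lambda+z_\lambda\phi+\eta\Phi_\lambda$, the Rayleigh quotient $\frac{\int|\nabla v|^2}{\lambda\int v^2(1+2u_\lambda^2)e^{u_\lambda^2}}$ splits into the pieces already controlled in Lemma \ref{lem3.9}, plus the new diagonal contribution from $\eta\Phi_\lambda$ and six mixed contributions. For the diagonal piece one rescales and uses $-\Delta\widetilde Z=2e^{2U}\widetilde Z$ together with the boundary decay $\widetilde Z\to -1$ and $|\nabla\widetilde Z|=O(|y|^{-3})$ at infinity, producing
\begin{equation*}
\frac{\int_\Omega |\nabla\Phi_\lambda|^2\,dx}{\lambda\int_\Omega(1+2u_\lambda^2)e^{u_\lambda^2}\Phi_\lambda^2\,dx}=\frac{\int_{\R^2}|\nabla\widetilde Z|^2\,dy}{2\int_{\R^2}e^{2U}\widetilde Z^2\,dy}+o(1)=1+o(1).
\end{equation*}
The mixed contributions vanish in the limit thanks to the weighted orthogonality relations \eqref{equa2}--\eqref{equa1}: the cross term with $\hat u_\lambda$ uses $\int_{\R^2}e^{2U}\widetilde Z\,dy=0$, while those with $\psi_{\lambda,i}$ use that $e^{2U}\widetilde Z\,\partial_i U$ is odd in $y_i$. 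Hence $\max_{v\in W_\lambda}R(v)\leq 1+o(1)$ and Courant--Fischer yields $\mu_{\lambda,4}\leq 1+o(1)$, completing the proof.

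The main obstacle is the uniform control of these estimates in $(\alpha,\beta_1,\beta_2,\eta)$, since the effective diagonal weights for the four modes live on different scales (order $\gamma_\lambda^2$ for $\hat u_\lambda$, order $(\gamma_\lambda\theta_\lambda)^{-2}$ for $\psi_{\lambda,i}$, and order $1$ for $\Phi_\lambda$). The argument therefore requires the same dichotomy employed in Lemma \ref{lem3.9}: one treats separately the regime in which $\alpha^2\gamma_\lambda^4$ is bounded and the regime in which it blows up, and within each of these, the analogous dichotomy for $\eta$ versus the $\beta_i$. In every case the cross terms pick up at least one factor of $\theta_\lambda^{2-\delta}$ or $\gamma_\lambda^{-2}$, keeping the Rayleigh quotient within $1+o(1)$.
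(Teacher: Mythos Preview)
There is a genuine gap in the upper-bound argument. Your claimed identity
\[
\frac{\int_\Omega |\nabla\Phi_\lambda|^2\,dx}{\lambda\int_\Omega(1+2u_\lambda^2)e^{u_\lambda^2}\Phi_\lambda^2\,dx}=1+o(1)
\]
is false with the fixed smooth cutoff $\phi$. Writing $\int|\nabla(\phi Z_\lambda)|^2=\int Z_\lambda^2|\nabla\phi|^2+\int\phi^2 Z_\lambda(-\Delta Z_\lambda)$, the second term indeed converges to $\int_{\R^2}2e^{2U}\widetilde Z^2=\tfrac{8\pi}{3}$, matching the denominator. But on the annulus $B_{3r}(x_\lambda)\setminus B_{2r}(x_\lambda)$ where $\nabla\phi$ lives, one has $Z_\lambda\to -1$ (precisely the ``boundary decay $\widetilde Z\to -1$'' you mention), so $\int Z_\lambda^2|\nabla\phi|^2\to\int|\nabla\phi|^2>0$, a fixed positive constant. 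Hence $R(\Phi_\lambda)\to 1+\tfrac{3}{8\pi}\int|\nabla\phi|^2>1$, and since $\Phi_\lambda\in W_\lambda$, the min--max gives only $\mu_{\lambda,4}\le 1+c+o(1)$ for some $c>0$. The point is that $\widetilde Z$ does \emph{not} vanish at infinity, so any cutoff on a $\lambda$-independent scale costs $O(1)$ Dirichlet energy, of the same order as the denominator.

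The paper circumvents exactly this difficulty in two ways. First, instead of a rescaled $\widetilde Z$, it uses $\psi_{\lambda,4}:=(x-x_\lambda)\cdot\nabla u_\lambda$, which solves an exact (inhomogeneous) linearized equation and, after rescaling, looks like $\tfrac{1}{\gamma_\lambda}\,y\cdot\nabla U$; the extra factor $\gamma_\lambda^{-1}$ puts numerator and denominator at scale $\gamma_\lambda^{-2}$. Second, it replaces the fixed cutoff by the logarithmic cutoff $\widehat\phi_\lambda$ interpolating on $\theta_\lambda<|x-x_\lambda|<d$, for which $\int|\nabla\widehat\phi_\lambda|^2\psi_{\lambda,4}^2=O\big(\gamma_\lambda^{-2}|\log\theta_\lambda|^{-1}\big)=o(\gamma_\lambda^{-2})$, so the cutoff cost is negligible. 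The paper also uses the characterization $\mu_{\lambda,4}=\inf\{R(v):v\perp v_{\lambda,1},v_{\lambda,2},v_{\lambda,3}\}$ and adds explicit correctors $\sum\alpha_{\lambda,l}v_{\lambda,l}$ to enforce orthogonality, which avoids your multi-scale dichotomies entirely. If you want to salvage your four-dimensional min--max route, you would at minimum need a logarithmic cutoff for the fourth generator and then recheck all cross terms; the paper's choice is cleaner.
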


\begin{proof}
Recall that the variational characterization of the eigenvalues
\begin{equation}\label{chara-mu4}
\mu_{\lambda,4} = \inf_{v\in H_0^1(\Omega), v \not\equiv 0 \atop v\perp \{v_{\lambda,1},v_{\lambda,2},v_{\lambda,3}\}}
\frac{\displaystyle\int_\Omega \big|\nabla v\big|^2 \,dx}{\lambda \displaystyle\int_\Omega e^{u_\lambda^2} \big(1+2u_\lambda^2\big) v^2 \,dx}.
\end{equation}
Let $\psi_{\lambda,4}:=(x-x_\lambda) \cdot \nabla u_\lambda$,
which satisfies
\begin{equation}\label{equa-psi4}
-\Delta \psi_{\lambda,4} = 2\lambda u_\lambda e^{u_\lambda^2}+\lambda e^{u_\lambda^2} \big(1+2u_\lambda^2\big) \psi_{\lambda,4}.
\end{equation}
And then define the function
$v:=\widehat{\phi}_\lambda \psi_{\lambda,4} + \displaystyle\sum_{l=1}^3 \alpha_{\lambda,l} v_{\lambda,l}$,
with
\begin{equation}\label{def_phi}
\widehat{\phi}_\lambda(x):=
\begin{cases}
1,~~&\mbox{if}~|x-x_\lambda|\leq  \theta_\lambda,\\[3mm]
\displaystyle \frac{1}{\log \frac{\theta_\lambda}{d}} \log \frac{|x-x_\lambda|}{d},~~&\mbox{if}~\theta_\lambda <|x-x_\lambda| \leq d,\\[3mm]
0,~~&\mbox{if}~|x-x_\lambda|>d,
\end{cases}
\end{equation}
where $d>0$ is a constant such that $B_{2d}(x_{\lambda})\subset\Omega$ and
\begin{equation*}
\alpha_{\lambda,l}:=-\frac{\displaystyle\int_\Omega \lambda e^{u_\lambda^2}\big(1+2u_\lambda^2\big)\widehat{\phi}_\lambda \psi_{\lambda,4} v_{\lambda,l} \,dx}{\displaystyle\int_\Omega \lambda e^{u_\lambda^2}\big(1+2u_\lambda^2\big) v_{\lambda,l}^2 \,dx} = -\frac{N_{\lambda,l}}{D_{\lambda,l}},\quad l=1,2,3.
\end{equation*}
By the definition of $\alpha_{\lambda,l}$, we calculate that
\begin{equation*}
\begin{split}
\int_\Omega \nabla v \cdot \nabla v_{\lambda,l} \,dx
&= \int_\Omega \nabla\big(\widehat{\phi}_\lambda \psi_{\lambda,4}\big) \cdot \nabla v_{\lambda,l} \,dx + \alpha_{\lambda,l} \int_\Omega \big|\nabla v_{\lambda,l}\big|^2 \,dx \\
&= \int_\Omega \mu_{\lambda,l} \lambda e^{u_\lambda^2}\big(1+2u_\lambda^2\big) v_{\lambda,l} \widehat{\phi}_\lambda \psi_{\lambda,4} \,dx -\frac{N_{\lambda,l}}{D_{\lambda,l}} \int_\Omega \mu_{\lambda,l} \lambda e^{u_\lambda^2}\big(1+2u_\lambda^2\big) v_{\lambda,l}^2 \,dx =0,
\end{split}
\end{equation*}
for $l=1,2,3$, which implies that $v\perp \{v_{\lambda,1},v_{\lambda,2},v_{\lambda,3}\}$ in $H_0^1(\Omega)$.

\vskip 0.1cm

Now, we estimate $N_{\lambda,l}$, $D_{\lambda,l}$ and $\alpha_{\lambda,l}$.
Firstly, by Lemma \ref{lem2.4}, Proposition \ref{prop3.4} $(1)$ and Proposition \ref{prop_mu1}, we have
\begin{align}\label{D1}
D_{\lambda,1} &= \lambda \int_\Omega e^{u_\lambda^2} \big(1+2u_\lambda^2\big) v_{\lambda,1}^2 \,dx \notag\\
&= \lambda \int_{B_d(x_\lambda)} e^{u_\lambda^2}\big(1+2u_\lambda^2\big) v_{\lambda,1}^2 \,dx + \lambda \int_{\Omega\backslash B_d(x_\lambda)} e^{u_\lambda^2}\big(1+2u_\lambda^2\big) v_{\lambda,1}^2 \,dx \notag\\
&= \int_{B_{\frac{d}{\theta_\lambda}}(0)} e^{2w_\lambda(z)+\frac{w^2_\lambda(z)}{\gamma^2_\lambda}} \bigg(\frac{1}{\gamma_\lambda^2}+2\Big(\frac{w_\lambda(z)}{\gamma_\lambda^2}+1\Big)^2\bigg) \widetilde{v}_{\lambda,1}^2(z) \,dz +O\big(\theta_\lambda^{2-\delta}\big) \notag\\
&= 2 c_1^2 \int_{\R^2} e^{2U(z)} \,dz +o\big(1\big)\notag\\
& = 8\pi c_1^2 +o\big(1\big):=d_1+o\big(1\big),
\end{align}
where $c_1$ is the nonzero constant in Proposition \ref{prop3.4}.
Similarly, for $l=2,3$, we calculate that
\begin{align}\label{D23}
D_{\lambda,l} &= \lambda \int_{B_d(x_\lambda)} e^{u_\lambda^2}\big(1+2u_\lambda^2\big) v_{\lambda,l}^2 \,dx + \lambda \int_{\Omega\backslash B_d(x_\lambda)} e^{u_\lambda^2}\big(1+2u_\lambda^2\big) v_{\lambda,l}^2 \,dx \notag\\
&= \int_{B_{\frac{d}{\theta_\lambda}}(0)} e^{2w_\lambda(z)+\frac{w^2_\lambda(z)}{\gamma^2_\lambda}} \bigg(\frac{1}{\gamma_\lambda^2}+2\Big(\frac{w_\lambda(z)}{\gamma_\lambda^2}+1\Big)^2\bigg) \widetilde{v}_{\lambda,l}^2(z) \,dz +O\big(\theta_\lambda^{2-\delta}\big) \notag\\
&= 2\int_{\R^2} e^{2U(z)} \Big(\sum_{q=1}^2 \frac{a_{l,q}z_q}{4+|z|^2}\Big)^2 \,dz +o\big(1\big)\notag\\
&= \frac{\pi}{6}\sum_{q=1}^2 a_{l,q}^2 +o\big(1\big)
:=d_l+o\big(1\big),
\end{align}
where the third equality is derived from \eqref{eigenfr-23}. Hence, for any $l=1,2,3$, we deduce from \eqref{D1} and \eqref{D23} that there exists $d_l>0$ such that
\begin{equation}\label{D123}
D_{\lambda,l}=d_l+o\big(1\big),~\mbox{as}~\lambda \to 0.
\end{equation}
Using the definition of $\widehat{\phi}_{\lambda}$, we know that
\begin{align*}
N_{\lambda,l} =& \int_\Omega \lambda e^{u_\lambda^2}\big(1+2u_\lambda^2\big)\widehat{\phi}_\lambda \psi_{\lambda,4} v_{\lambda,l} \,dx \\
=& \int_{B_{\theta_\lambda}(x_\lambda)} \lambda e^{u_\lambda^2}\big(1+2u_\lambda^2\big) \psi_{\lambda,4} v_{\lambda,l} \,dx \\
& +\frac{\lambda}{\log\frac{\theta_\lambda}{d}} \int_{\{\theta_\lambda <|x-x_\lambda|\leq d\}}  e^{u_\lambda^2}\big(1+2u_\lambda^2\big) \psi_{\lambda,4} v_{\lambda,l} \log\frac{|x-x_\lambda|}{d} \,dx \\
=& \frac{1}{\gamma_\lambda} \int_{B_1(0)} e^{2w_\lambda(z)+\frac{w^2_\lambda(z)}{\gamma^2_\lambda}} \bigg(\frac{1}{\gamma_\lambda^2}+2\Big(\frac{w_\lambda(z)}{\gamma_\lambda^2}+1\Big)^2\bigg) \big(z \cdot \nabla w_\lambda(z)\big) \widetilde{v}_{\lambda,l}(z) \,dz \\
& +\frac{1}{\gamma_\lambda} \int_{\big\{1 <|z|\leq \frac{d}{\theta_\lambda}\big\}} e^{2w_\lambda(z)+\frac{w^2_\lambda(z)}{\gamma^2_\lambda}} \bigg(\frac{1}{\gamma_\lambda^2}+2\Big(\frac{w_\lambda(z)}{\gamma_\lambda^2}+1\Big)^2\bigg) \big(z \cdot \nabla w_\lambda(z)\big) \widetilde{v}_{\lambda,l}(z) \,dz \\
& +\frac{1}{\gamma_\lambda \log\frac{\theta_\lambda}{d}} \int_{\big\{1 <|z|\leq \frac{d}{\theta_\lambda}\big\}} e^{2w_\lambda(z)+\frac{w^2_\lambda(z)}{\gamma^2_\lambda}} \bigg(\frac{1}{\gamma_\lambda^2}+2\Big(\frac{w_\lambda(z)}{\gamma_\lambda^2}+1\Big)^2\bigg) \big(z \cdot \nabla w_\lambda(z)\big) \widetilde{v}_{\lambda,l}(z) \log|z| \,dz.
\end{align*}
Then we have
\begin{equation}\label{N1}
\begin{split}
N_{\lambda,1} =& \frac{2c_1}{\gamma_\lambda} \int_{B_1(0)} e^{2U(z)} \big(z \cdot \nabla U(z)\big) \,dz
+\frac{2c_1}{\gamma_\lambda} \int_{\R^2\backslash B_1(0)} e^{2U(z)} \big(z \cdot \nabla U(z)\big) \,dz \\
& +\frac{2c_1}{\gamma_\lambda \log \frac{\theta_\lambda}{d}} \int_{\R^2\backslash B_1(0)} e^{2U(z)} \big(z \cdot \nabla U(z)\big) \log|z| \,dz +o\Big(\frac{1}{\gamma_\lambda}\Big) \\
=& \frac{2c_1}{\gamma_\lambda} \int_{\R^2} e^{2U(z)} \big(z \cdot \nabla U(z)\big) \,dz +o\Big(\frac{1}{\gamma_\lambda}\Big)\\
=& -\frac{8 \pi c_1}{\gamma_\lambda} +o\Big(\frac{1}{\gamma_\lambda}\Big),
\end{split}
\end{equation}
here we use the fact that
\[
\frac{1}{\log \frac{\theta_\lambda}{d}} = \frac{1}{-\frac{1}{2}\log \big(\lambda \gamma_\lambda^2\big) -\frac{1}{2}\gamma_\lambda^2 -\log d} = o\Big(\frac{1}{\gamma_\lambda}\Big).
\]
We deduce from \eqref{eigenfr-23} that
\begin{equation}\label{N23}
\begin{split}
N_{\lambda,l} =& \frac{2}{\gamma_\lambda} \int_{\R^2} e^{2U(z)} \big(z \cdot \nabla U(z)\big) \sum_{q=1}^2 \frac{a_{l,q}z_q}{4+|z|^2} \,dz \\
& +\frac{2}{\gamma_\lambda \log \frac{\theta_\lambda}{d}} \int_{\R^2\backslash B_1(0)} e^{2U(z)} \big(z \cdot \nabla U(z)\big) \log|z| \sum_{q=1}^2 \frac{a_{l,q}z_q}{4+|z|^2} \,dz +o\Big(\frac{1}{\gamma_\lambda}\Big) \\
=& o\Big(\frac{1}{\gamma_\lambda}\Big), \quad \mbox{for}~l=2,3.
\end{split}
\end{equation}
Then, it follows from \eqref{N1} and \eqref{N23} that
\begin{equation}\label{N123}
N_{\lambda,l} =
\begin{cases}
\displaystyle -\frac{8 \pi c_1}{\gamma_\lambda} +o\Big(\frac{1}{\gamma_\lambda}\Big),~~&\mbox{for}~l=1,\\[5mm]
\displaystyle o\Big(\frac{1}{\gamma_\lambda}\Big),~~&\mbox{for}~l=2,3.
\end{cases}
\end{equation}
Therefore, by \eqref{D123} and \eqref{N123}, we obtain
\begin{equation}\label{alpha}
\alpha_{\lambda,l} = -\frac{N_{\lambda,l}}{D_{\lambda,l}}=
\begin{cases}
\displaystyle \frac{8 \pi c_1}{d_1 \gamma_\lambda} +o\Big(\frac{1}{\gamma_\lambda}\Big),~~&\mbox{for}~l=1,\\[5mm]
\displaystyle o\Big(\frac{1}{\gamma_\lambda}\Big),~~&\mbox{for}~l=2,3.
\end{cases}
\end{equation}
Since $\{v_{\lambda,1},v_{\lambda,2},v_{\lambda,3}\}$ are orthogonal in $H_0^1(\Omega)$ and $v_{\lambda,l}$ satisfies \eqref{eigen-1}, using Proposition \ref{prop_mu1}, Lemma \ref{lem_mu23}, \eqref{D123}, \eqref{N123} and \eqref{alpha}, we have
\begin{align}\label{numerator1}
\int_\Omega \big|\nabla v\big|^2 \,dx
=& \int_\Omega \Big|\nabla \big(\widehat{\phi}_\lambda \psi_{\lambda,4}\big)\Big|^2 \,dx +\sum_{l=1}^3 \alpha_{\lambda,l}^2 \int_\Omega \big|\nabla v_{\lambda,l}\big|^2 \,dx + 2\sum_{l=1}^3 \alpha_{\lambda,l} \int_\Omega \nabla \big(\widehat{\phi}_\lambda \psi_{\lambda,4}\big) \cdot \nabla v_{\lambda,l} \,dx \notag\\
=& \int_\Omega \Big|\nabla \big(\widehat{\phi}_\lambda \psi_{\lambda,4}\big)\Big|^2 \,dx + \sum_{l=1}^3 \mu_{\lambda,l} \alpha_{\lambda,l}^2 D_{\lambda,l} +2\sum_{l=1}^3 \mu_{\lambda,l} \alpha_{\lambda,l} N_{\lambda,l} \notag\\
=& \int_\Omega \Big|\nabla \big(\widehat{\phi}_\lambda \psi_{\lambda,4}\big)\Big|^2 \,dx +o\Big(\frac{1}{\gamma_\lambda^2}\Big).
\end{align}
Then by \eqref{equa-psi4} and \eqref{def_phi}, we calculate that
\begin{align}\label{numerator2}
& \int_\Omega \Big|\nabla \big(\widehat{\phi}_\lambda \psi_{\lambda,4}\big)\Big|^2 \,dx \notag\\
=& \int_\Omega \big|\nabla \widehat{\phi}_\lambda \big|^2 \psi_{\lambda,4}^2 \,dx -\int_\Omega \Delta \psi_{\lambda,4} \widehat{\phi}_\lambda^2 \psi_{\lambda,4} \,dx \notag\\
=& \int_\Omega \big|\nabla \widehat{\phi}_\lambda \big|^2 \psi_{\lambda,4}^2 \,dx + \int_\Omega 2\lambda e^{u_\lambda^2} u_\lambda \widehat{\phi}_\lambda^2 \psi_{\lambda,4} \,dx + \int_\Omega \lambda e^{u_\lambda^2} \big(1+2u_\lambda^2\big) \widehat{\phi}_\lambda^2 \psi_{\lambda,4}^2 \,dx \notag\\
=& \int_\Omega \big|\nabla \widehat{\phi}_\lambda \big|^2 \psi_{\lambda,4}^2 \,dx +\int_\Omega \lambda e^{u_\lambda^2} \big(1+2u_\lambda^2\big) \widehat{\phi}_\lambda^2 \psi_{\lambda,4}^2 \,dx \notag\\
& +\frac{2}{\gamma_\lambda^2} \int_{B_{\frac{d}{\theta_\lambda}}(0)} e^{2w_\lambda(z)+\frac{w^2_\lambda(z)}{\gamma^2_\lambda}} \Big(\frac{w_\lambda(z)}{\gamma_\lambda^2}+1\Big) \widehat{\phi}_\lambda^2(\theta_\lambda z+x_\lambda) \cdot \big(z \cdot \nabla w_\lambda(z)\big) \,dz \notag\\
=& \int_\Omega \big|\nabla \widehat{\phi}_\lambda \big|^2 \psi_{\lambda,4}^2 \,dx +\int_\Omega \lambda e^{u_\lambda^2} \big(1+2u_\lambda^2\big) \widehat{\phi}_\lambda^2 \psi_{\lambda,4}^2 \,dx + \frac{2}{\gamma_\lambda^2} \bigg(\int_{\R^2} e^{2U(z)} \big(z \cdot \nabla U(z)\big) \,dz +o(1)\bigg) \notag\\
=& \int_\Omega \big|\nabla \widehat{\phi}_\lambda \big|^2 \psi_{\lambda,4}^2 \,dx +\int_\Omega \lambda e^{u_\lambda^2} \big(1+2u_\lambda^2\big) \widehat{\phi}_\lambda^2 \psi_{\lambda,4}^2 \,dx -\frac{8\pi}{\gamma_\lambda^2} +o\Big(\frac{1}{\gamma_\lambda^2}\Big).
\end{align}
Hence, substituting \eqref{numerator2} into \eqref{numerator1}, we derive
\begin{equation}\label{numerator}
\int_\Omega \big|\nabla v\big|^2 \,dx = \int_\Omega \big|\nabla \widehat{\phi}_\lambda \big|^2 \psi_{\lambda,4}^2 \,dx +\lambda \int_\Omega e^{u_\lambda^2} \big(1+2u_\lambda^2\big) \widehat{\phi}_\lambda^2 \psi_{\lambda,4}^2 \,dx -\frac{8\pi}{\gamma_\lambda^2} +o\Big(\frac{1}{\gamma_\lambda^2}\Big).
\end{equation}
Similarly, by the definition of $\alpha_{\lambda,l}$, we obtain from \eqref{D123} and \eqref{N123} that
\begin{equation}\label{denominator}
\begin{split}
\lambda \int_\Omega e^{u_\lambda^2} \big(1+2u_\lambda^2\big) v^2 \,dx
&= \lambda \int_\Omega e^{u_\lambda^2} \big(1+2u_\lambda^2\big) \widehat{\phi}_\lambda^2 \psi_{\lambda,4}^2 \,dx +\sum_{l=1}^3 \alpha_{\lambda,l}^2 D_{\lambda,l} +2\sum_{l=1}^3 \alpha_{\lambda,l} N_{\lambda,l} \\
&= \lambda \int_\Omega e^{u_\lambda^2} \big(1+2u_\lambda^2\big) \widehat{\phi}_\lambda^2 \psi_{\lambda,4}^2 \,dx -\sum_{l=1}^3 \frac{N_{\lambda,l}^2}{D_{\lambda,l}} \\
&= \lambda \int_\Omega e^{u_\lambda^2} \big(1+2u_\lambda^2\big) \widehat{\phi}_\lambda^2 \psi_{\lambda,4}^2 \,dx
-\frac{64 \pi^2 c_1^2}{d_1 \gamma_\lambda^2} +o\Big(\frac{1}{\gamma_\lambda^2}\Big) \\
&= \lambda \int_\Omega e^{u_\lambda^2} \big(1+2u_\lambda^2\big) \widehat{\phi}_\lambda^2 \psi_{\lambda,4}^2 \,dx
-\frac{8\pi}{\gamma_\lambda^2} +o\Big(\frac{1}{\gamma_\lambda^2}\Big),
\end{split}
\end{equation}
since $d_1=8\pi c_1^2$.
Inserting \eqref{numerator} and \eqref{denominator} into \eqref{chara-mu4}, we get
\begin{equation}\label{num-deno}
\mu_{\lambda,4} \leq 1+ \frac{\displaystyle\int_\Omega \big|\nabla \widehat{\phi}_\lambda \big|^2 \psi_{\lambda,4}^2 \,dx +o\Big(\frac{1}{\gamma_\lambda^2}\Big)}{\displaystyle\lambda \int_\Omega e^{u_\lambda^2} \big(1+2u_\lambda^2\big) \widehat{\phi}_\lambda^2 \psi_{\lambda,4}^2 \,dx -\frac{8\pi}{\gamma_\lambda^2} +o\Big(\frac{1}{\gamma_\lambda^2}\Big)}.
\end{equation}
Next, we estimate the two integrals.
\begin{align}\label{num-deno1}
& \lambda \int_\Omega e^{u_\lambda^2} \big(1+2u_\lambda^2\big) \widehat{\phi}_\lambda^2 \psi_{\lambda,4}^2 \,dx \notag\\
=& \lambda \int_{B_d(x_\lambda)} e^{u_\lambda^2} \big(1+2u_\lambda^2\big) \widehat{\phi}_\lambda^2 \psi_{\lambda,4}^2 \,dx \notag\\
=& \frac{1}{\gamma_\lambda^2} \int_{B_{\frac{d}{\theta_\lambda}}(0)} e^{2w_\lambda(z)+\frac{w^2_\lambda(z)}{\gamma^2_\lambda}} \bigg(\frac{1}{\gamma_\lambda^2}+2\Big(\frac{w_\lambda(z)}{\gamma_\lambda^2}+1\Big)^2\bigg) \widehat{\phi}_\lambda^2(\theta_\lambda z+x_\lambda) \cdot \big(z \cdot \nabla w_\lambda(z)\big)^2 \,dz \notag\\
=& \frac{1}{\gamma_\lambda^2} \bigg(\int_{\R^2} 2e^{2U(z)} \big(z \cdot \nabla U(z)\big)^2 \,dz +o(1) \bigg)
= \frac{32 \pi}{3 \gamma_\lambda^2} +o\Big(\frac{1}{\gamma_\lambda^2}\Big),
\end{align}
and
\begin{align}\label{num-deno2}
\int_\Omega \big|\nabla \widehat{\phi}_\lambda \big|^2 \psi_{\lambda,4}^2 \,dx
&= \int_{\{\theta_\lambda<|x-x_\lambda|\leq d\}} \big|\nabla \widehat{\phi}_\lambda \big|^2 \big( (x-x_\lambda) \cdot \nabla u_\lambda \big)^2 \,dx \notag\\
&= \frac{1}{|\log \frac{\theta_\lambda}{d}|^2} \int_{\{\theta_\lambda<|x-x_\lambda|\leq d\}} \frac{1}{|x-x_\lambda|^2} \big( (x-x_\lambda) \cdot \nabla u_\lambda \big)^2 \,dx \notag\\
&= \frac{1}{\gamma_\lambda^2 |\log \frac{\theta_\lambda}{d}|^2} \int_{\big\{1<|z|\leq \frac{d}{\theta_\lambda}\big\}} \frac{1}{\theta_\lambda^2 |z|^2} \big(z \cdot \nabla w_\lambda(z) \big)^2 \cdot \theta_\lambda^2 \,dz \notag\\
&= \frac{1}{\gamma_\lambda^2 |\log \frac{\theta_\lambda}{d}|^2} \int_{\big\{1<|z|\leq \frac{d}{\theta_\lambda}\big\}} \frac{\big(z \cdot \nabla U(z) \big)^2}{|z|^2} \,dz +o\Big(\frac{1}{\gamma_\lambda^2 |\log \frac{\theta_\lambda}{d}|^2}\Big) \notag\\
&= \frac{1}{\gamma_\lambda^2 |\log \frac{\theta_\lambda}{d}|^2} \int_{\big\{1<|z|\leq \frac{d}{\theta_\lambda}\big\}} \frac{4|z|^2}{(4+|z|^2)^2} \,dz +o\Big(\frac{1}{\gamma_\lambda^2 |\log \frac{\theta_\lambda}{d}|^2}\Big) \notag\\
&\leq \frac{C}{\gamma_\lambda^2 |\log \frac{\theta_\lambda}{d}|} +o\Big(\frac{1}{\gamma_\lambda^2 |\log \frac{\theta_\lambda}{d}|^2}\Big) = o\Big(\frac{1}{\gamma_\lambda^2}\Big).
\end{align}
Substituting \eqref{num-deno1} and \eqref{num-deno2} into \eqref{num-deno}, we derive
\begin{equation*}
\mu_{\lambda,4} \leq 1 +\frac{\displaystyle o\Big(\frac{1}{\gamma_\lambda^2}\Big)}{\displaystyle \frac{8\pi}{3\gamma_\lambda^2} +o\Big(\frac{1}{\gamma_\lambda^2}\Big)} =1+o(1).
\end{equation*}
Since $\mu_{\lambda,1} < \mu_{\lambda,2} \leq \mu_{\lambda,3} \leq \cdots$ and $\mu_{\lambda,3}=1+o(1)$, we complete the proof.

\end{proof}

Next, we complete the proof of \eqref{9-13-20} in Theorem \ref{th_eigenvalue} by Lemma \ref{lem3.5} and Proposition \ref{add-prop_mu4}.

\begin{proof}[\underline{\textbf{Proof of \eqref{9-13-20} in Theorem \ref{th_eigenvalue}}}]
From $\eqref{lim-mu4}$, we find $\mu_{4}=1$, and by \eqref{3.5.1} in Lemma \ref{lem3.5}, we find that
\begin{equation*}
v_{\lambda,4}= -\frac{4\pi b_4}{\gamma^2_\lambda}G(x,x_\lambda)+o\Big(\frac{1}{\gamma^2_\lambda}\Big), ~~\,~~\mbox{in}~~~~C^1_{loc}\big( \overline{\Omega} \backslash \{x_{\lambda}\}\big).
\end{equation*}
Assume $b_{4}=0$, then by \eqref{3.4.3}, we have
\begin{equation}\label{3.13-2}
\textbf{a}_{4}\neq \textbf{0}~~\mbox{and}~~\textbf{a}_{4}\cdot \textbf{a}_{l}=0~~\mbox{for}~~l=2,3.
\end{equation}
On the other hand, from \eqref{ortho23}, we have
\begin{equation*}
\mbox{span}\big\{\textbf{a}_{i},~~i=2,3\big\}=\R^{2},
\end{equation*}
which is a contradiction with \eqref{3.13-2}. Hence $b_4\neq 0$, which together with Lemma \ref{lem3.5} implies that
\begin{equation*}
\mu_{\lambda,4} = 1+\frac{3}{\gamma_\lambda^4}+o\Big(\frac{1}{\gamma_\lambda^4}\Big).
\end{equation*}
\end{proof}

Using the above asymptotic analysis of eigenvalues and eigenfunctions to problem \eqref{eigen-1}, we give the proofs of Theorem \ref{th_Morse}  and Theorem \ref{th_nondeg}.

\begin{proof}[\textbf{Proof of Theorem \ref{th_Morse}}]
It follows from Proposition \ref{prop_mu1} and \eqref{9-13-20} that
\begin{equation*}
\mu_{\lambda,1}<1~\mbox{and}~\mu_{\lambda,4}>1.
\end{equation*}
Also, for $l=2,3$, we have
\begin{equation*}
\begin{split}
\sharp \Big\{l,~\mu_{\lambda,l}<1,~2\leq l\leq 3\Big\} \geq
\sharp \Big\{ \mbox{the negative eigenvalues of $\big( D^2 \mathcal{R}(x_{0}) \big)$}\Big\}= m\big(\mathcal{R}(x_{0})\big),
\end{split}
\end{equation*}
and
\begin{equation*}
\begin{split}
\sharp \Big\{l,~\mu_{\lambda,l} \leq 1,~2\leq l\leq 3\Big\} \leq
\sharp \Big\{ \mbox{the non-positive  eigenvalues of $\big( D^2 \mathcal{R}(x_{0}) \big)$}\Big\}
= m_0\big(\mathcal{R}(x_{0})\big).
\end{split}
\end{equation*}
Hence we have
\begin{equation*}
1\leq 1+m\big(\mathcal{R}(x_{0})\big)\leq  m(u_\lambda)\leq m_0(u_\lambda)\leq  1+m_0\big(\mathcal{R}(x_{0})\big)\leq 3.
\end{equation*}
Furthermore, if $D^2\big(\mathcal{R}(x_{0})\big)$ is nondegenerate, then
$m\big(\mathcal{R}(x_{0})\big)=m_0\big(\mathcal{R}(x_{0})\big)$
and
\begin{equation*}
m(u_\lambda)=1+m\big(\mathcal{R}(x_{0})\big)\in [1,3].
\end{equation*}
\end{proof}

\begin{proof}[\textbf{Proof of Theorem \ref{th_nondeg}}]
If $x_{0}$ is a non-degenerate critical point of Robin function $\mathcal{R}(x)$, from Theorem \ref{th_eigenvalue} and Proposition \ref{prop_mu1}, we have
\begin{equation*}
\mu_{\lambda,k}\neq 1,~\mbox{for any}~k\in \N,
\end{equation*}
which means  that $\mathcal{L}_\lambda(u)=0$ has no nonzero solutions. Hence
from $\mathcal{L}_\lambda\big(\xi_\lambda\big)=0$, we can find $\xi_\lambda=0$ for any $\lambda>0$ sufficiently
small.
\end{proof}

\section{More precise estimates on $u_\lambda$, $x_\lambda$ and $\gamma_\lambda$}\label{s5}

\vskip 0.2cm

Using Proposition \ref{prop_wlambda} and \eqref{def_C1}, we can obtain better estimates on $u_\lambda$, $x_\lambda$ and $\gamma_\lambda$, which is crucial to prove the local uniqueness of the concentrated solutions. In this section, we will prove \eqref{4.9.1}, \eqref{xlambda-x0} and \eqref{lambda_gamma} in Theorem \ref{thm-lambdagamma}.

\begin{proof}[\underline{\textbf{Proof of \eqref{4.9.1} in Theorem \ref{thm-lambdagamma}}}]
Direct calculation gives
\begin{equation}\label{4.9-1}
\begin{split}
&\lambda \int_{B_d(x_{\lambda})}   \big(G(y,x)-G(x_\lambda,x)\big)u_{\lambda}(y)e^{u^2_{\lambda}(y)}\,dy \\
=& \frac{\theta_{\lambda}}{\gamma_\lambda} \int_{B_{\frac{d}{\theta_\lambda}}(0)} \big\langle z,\nabla G(x_\lambda,x)\big\rangle \Big(1+\frac{w_\lambda(z)}{\gamma^2_\lambda}\Big)e^{2w_\lambda(z)+\frac{w^2_\lambda(z)}
{\gamma^2_\lambda}}\,dz +O\Big(\frac{\theta_\lambda^{2-\delta}}{\gamma_\lambda}  \Big).
\end{split}
\end{equation}
Recall that
\[
\lim_{\lambda \to 0} v_\lambda(x) = v_0(x) = w(x)+ c_0\frac{4-|x|^2}{4+|x|^2} + \sum^2_{i=1} c_i \frac{x_i}{4+|x|^2}~\mbox{in}~C^2_{loc}(\R^2),
\]
where $w(x)$ is the radial solution of $-\Delta u-2e^{2U}u = e^{2U}\big(U^2+U\big)$.
Using \eqref{2.4.1} and \eqref{v-def}, we find
\begin{align*}
&\int_{B_{\frac{d}{\theta_\lambda}}(0)} \big\langle z,\nabla G(x_\lambda,x)\big\rangle
\Big(1+\frac{w_\lambda(z)}{\gamma^2_\lambda}\Big)e^{2w_\lambda(z)+\frac{w^2_\lambda(z)}{\gamma^2_\lambda}}\,dz \\=&
\int_{B_{\frac{d}{\theta_\lambda}}(0)}  \big\langle z,\nabla G(x_\lambda,x)\big\rangle e^{2U(z)}
\Big(1+\frac{U(z)}{\gamma^2_\lambda}+\frac{v_\lambda(z)}{\gamma^4_\lambda}\Big)
\bigg( 1+\frac{2v_\lambda(z)}{\gamma^2_\lambda}  +\frac{w^2_\lambda(z)}{\gamma^2_\lambda} +O\Big(\frac{\big(2v_\lambda+w^2_\lambda\big)^2}{\gamma^4_\lambda}\Big) \bigg) \,dz \\
=& \int_{B_{\frac{d}{\theta_\lambda}}(0)}  \big\langle z,\nabla G(x_\lambda,x)\big\rangle e^{2U(z)}
\Big(1+\frac{U(z)+2v_\lambda(z) +w^2_\lambda(z)}{\gamma^2_\lambda} \Big)
\,dz+ O\Big( \frac{1}{\gamma^4_\lambda}\Big) \\
=& \sum_{i=1}^2 \partial_i G(x_\lambda,x) \int_{B_{\frac{d}{\theta_\lambda}}(0)} z_i e^{2U(z)} \frac{2v_\lambda(z)}{\gamma^2_\lambda} \,dz + o\Big( \frac{1}{\gamma^2_\lambda}\Big) \\
=& \sum_{i=1}^2 \frac{2 c_i}{\gamma^2_\lambda} \partial_i G(x_\lambda,x) \int_{B_{\frac{d}{\theta_\lambda}}(0)} z_i e^{2U(z)} \frac{z_i}{4+|z|^2} \,dz + o\Big( \frac{1}{\gamma^2_\lambda}\Big) \\
=& \sum_{i=1}^2 \frac{c_i}{\gamma^2_\lambda} \partial_i G(x_\lambda,x) \int_{\R^2} e^{2U(z)} \frac{|z|^2}{4+|z|^2} \,dz + o\Big( \frac{1}{\gamma^2_\lambda}\Big)
= \sum_{i=1}^2 \frac{2\pi c_i}{\gamma^2_\lambda} \partial_i G(x_\lambda,x) + o\Big( \frac{1}{\gamma^2_\lambda}\Big),
\end{align*}
which together with \eqref{4.9-1} implies
\begin{equation}\label{4.9-2}
\lambda \int_{B_d(x_{\lambda})}   \big(G(y,x)-G(x_\lambda,x)\big) u_{\lambda}(y)e^{u^2_{\lambda}(y)} \,dy
= \sum_{i=1}^2 \frac{2\pi c_i}{\gamma^3_\lambda} \theta_\lambda \partial_i G(x_\lambda,x)
+ o\Big(\frac{\theta_\lambda}{\gamma^3_\lambda}\Big).
\end{equation}
Substituting \eqref{2.5-3} and \eqref{4.9-2} into \eqref{2.5-1}, we have
\begin{equation}\label{4.9-3}
u_\lambda(x)=  C_{\lambda}G(x_{\lambda},x) + \sum_{i=1}^2 \frac{2\pi c_i}{\gamma^3_\lambda} \theta_\lambda \partial_i G(x_\lambda,x) + o\Big(\frac{\theta_\lambda}{\gamma^3_\lambda}\Big) \,\,~\mbox{in}~ \Omega\backslash B_{2d}(x_{\lambda}).
\end{equation}

Similar to \eqref{4.9-3}, we have
\begin{align*}
\frac{\partial u_\lambda(x)}{\partial x_i} &= \lambda \int_{\Omega} D_{x_i} G(y,x) u_{\lambda}(y)e^{u^2_{\lambda}(y)} \,dy \\
&= \lambda \int_{\Omega\backslash B_d(x_\lambda)} D_{x_i} G(y,x) u_{\lambda}(y)e^{u^2_{\lambda}(y)} \,dy + \lambda \int_{B_d(x_\lambda)} D_{x_i} G(y,x) u_{\lambda}(y)e^{u^2_{\lambda}(y)} \,dy\\
&= C_{\lambda} D_{x_i} G(x_{\lambda},x) + \sum_{j=1}^2 \frac{2\pi c_j}{\gamma^3_\lambda} \theta_\lambda \partial_j \big( D_{x_i} G(x_\lambda,x)\big) + o\Big(\frac{\theta_\lambda}{\gamma^3_\lambda}\Big).
\end{align*}
Therefore  \eqref{4.9.1} holds.
\end{proof}

By \eqref{4.9.1}, we obtain a more precise estimate on $\big|x_\lambda-x_0\big|$ than Lemma \ref{lem-h1}, which can be stated as follows.

\begin{proof}[\underline{\textbf{Proof of \eqref{xlambda-x0} in Theorem \ref{thm-lambdagamma}}}]
Firstly, we can deduce from \eqref{4.9.1}, \eqref{qgg} and \eqref{qggh} that
\begin{equation}\label{4-10-1}
\begin{split}
\mbox{LHS of \eqref{quu}} &= C^2_\lambda Q\big(G(x,x_\lambda),G(x,x_\lambda)\big) + 2\sum_{h=1}^2 \frac{2\pi c_h} {\gamma_\lambda^3} C_\lambda \theta_\lambda Q\big(G(x,x_\lambda),\partial_h G(x,x_\lambda)\big) +o\Big(\frac{\theta_\lambda}{\gamma^4_\lambda}\Big) \\
&= -C^2_\lambda \frac{\partial \mathcal{R}(x_{\lambda})}{\partial{x_i}} - \sum_{h=1}^2 \frac{2\pi c_h}{\gamma_\lambda^3} C_\lambda \theta_\lambda \frac{\partial^2 \mathcal{R}(x_\lambda)}{\partial x_i \partial x_h}
+o\Big(\frac{\theta_\lambda}{\gamma^4_\lambda}\Big).
\end{split}
\end{equation}
On the other hand,
\begin{equation}\label{4-10-2}
\begin{split}
\mbox{RHS of \eqref{quu}} &= O\Big(\lambda \theta_\lambda \int_{\partial B_{\frac{d}{\theta_\lambda}}(0)} e^{\gamma^2_\lambda+2w_\lambda(z)+\frac{w^2_\lambda(z)}{\gamma^2_\lambda}} \,d\sigma\Big) = O\Big(\frac{\theta_\lambda^{2-\delta}}{\gamma^2_\lambda}\Big)=o\Big(\frac{\theta_\lambda}{\gamma^4_\lambda}\Big).
\end{split}
\end{equation}
Note that \eqref{def_C1} implies
\[
\frac{1}{C_\lambda} = \frac{\gamma_\lambda}{4\pi} \left( 1-\frac{1}{\gamma^2_\lambda} + o\Big(\frac{1}{\gamma^2_\lambda}\Big) \right),
\]
and then it follows from Lemma \ref{lem-h1}, \eqref{4-10-1} and \eqref{4-10-2} that
\begin{equation*}
\begin{split}
\frac{\partial \mathcal{R}(x_{\lambda})}{\partial{x_i}} &= -\sum_{h=1}^2 \frac{2\pi c_h}{\gamma_\lambda^3} \frac{1}{C_\lambda} \theta_\lambda \frac{\partial^2 \mathcal{R}(x_\lambda)}{\partial x_i \partial x_h}
+o\Big(\frac{\theta_\lambda}{\gamma^2_\lambda}\Big) \\
&= -\sum_{h=1}^2 \frac{2\pi c_h}{\gamma_\lambda^3} \frac{1}{C_\lambda} \theta_\lambda \frac{\partial^2 \mathcal{R}(x_0)}
{\partial x_i \partial x_h} + o\Big(\frac{\theta_\lambda}{\gamma^2_\lambda}\Big) = -\sum_{h=1}^2 \frac{c_h}{2 \gamma_\lambda^2} \theta_\lambda \frac{\partial^2 \mathcal{R}(x_0)}{\partial x_i \partial x_h} + o\Big(\frac{\theta_\lambda}{\gamma^2_\lambda}\Big).
\end{split}
\end{equation*}
Furthermore, since $x_0$ is a non-degenerate critical point of $\mathcal{R}(x)$, we derive
\begin{equation}\label{4.10-3}
-\sum_{h=1}^2 \frac{c_h}{2 \gamma_\lambda^2} \theta_\lambda \frac{\partial^2 \mathcal{R}(x_0)}{\partial x_i \partial x_h} + o\Big(\frac{\theta_\lambda}{\gamma^2_\lambda}\Big) = \Big\langle \nabla \frac{\partial \mathcal{R}(x_0)}{\partial x_i}, x_\lambda-x_0 \Big\rangle + O\big(|x_\lambda-x_0|^2\big),
\end{equation}
which implies \eqref{xlambda-x0}.
\end{proof}

Using the above results, we can obtain a more accurate estimate than \eqref{DT-lam_gam} for the single-bubble case.
\begin{proof}[\underline{\textbf{Proof of \eqref{lambda_gamma} in Theorem \ref{thm-lambdagamma}}}]
By the Green's representation theorem, we have
\begin{align}\label{4.11-1}
\gamma_\lambda &= u_\lambda(x_{\lambda})= \lambda \int_{\Omega} G(y,x_{\lambda}) u_{\lambda}(y)e^{u^2_{\lambda}(y)} \,dy \notag\\
&= \lambda \int_{B_d(x_{\lambda})} G(y,x_{\lambda}) u_{\lambda}(y)e^{u^2_{\lambda}(y)}\,dy
+\lambda \int_{\Omega\backslash   B_{d}(x_{\lambda})} G(y,x_{\lambda}) u_{\lambda}(y)e^{u^2_{\lambda}(y)}\,dy.
\end{align}
Scaling and using the properties of Green's function, we know
\begin{align*}
 &\lambda  \int_{B_d(x_{\lambda})}  G(y,x_{\lambda}) u_{\lambda}(y)e^{u^2_{\lambda}(y)}\,dy \\
=&~ \lambda \theta_\lambda^2 \int_{B_{\frac{d}{\theta_\lambda}}(0)} G\big(x_{\lambda}+\theta_{\lambda}z,x_{\lambda}\big)
u_\lambda\big(x_{\lambda}+\theta_{\lambda}z\big) e^{u^2_\lambda(x_{\lambda}+\theta_{\lambda}z)} \, dz \\
=&~ -\lambda \theta_\lambda^2 \int_{B_{\frac{d}{\theta_\lambda}}(0)} H\big(x_{\lambda}+\theta_{\lambda}z,x_{\lambda}\big)
\Big(\gamma_{\lambda}+\frac{w_\lambda(z)}{\gamma_\lambda}\Big) e^{\big(\gamma_{\lambda}+\frac{w_\lambda(z)}
{\gamma_\lambda}\big)^2} \, dz \\
&~ -\frac{\lambda \theta_\lambda^2}{2\pi} \int_{B_{\frac{d}{\theta_\lambda}}(0)} \log \big|\theta_\lambda z\big|
\Big(\gamma_{\lambda}+\frac{w_\lambda(z)}{\gamma_\lambda}\Big) e^{\big(\gamma_{\lambda}+\frac{w_\lambda(z)}
{\gamma_\lambda}\big)^2} \, dz \\
=&~ -\frac{1}{\gamma_\lambda}\int_{B_{\frac{d}{\theta_\lambda}}(0)} H \big(x_{\lambda}+\theta_{\lambda}z,x_{\lambda}\big)
\Big(1+\frac{w_\lambda(z)}{\gamma^2_\lambda}\Big) e^{2w_\lambda(z)+\frac{w^2_\lambda(z)}{\gamma^2_\lambda}} \, dz \\
&~ -\frac{1}{2\pi \gamma_\lambda}\int_{B_{\frac{d}{\theta_\lambda}}(0)} \log \big|\theta_\lambda z
\big|\Big(1+\frac{w_\lambda(z)}{\gamma^2_\lambda}\Big) e^{2w_\lambda(z)+\frac{w^2_\lambda(z)}{\gamma^2_\lambda}} \,dz.
\end{align*}
Next by \eqref{xlambda-x0}, \eqref{2.4.1} and the dominated convergence theorem, we have
\begin{equation*}
\begin{split}
&\int_{B_{\frac{d}{\theta_\lambda}}(0)}
         H\big(x_{\lambda}+\theta_{\lambda}z,x_{\lambda}\big)
         \Big(1+\frac{w_\lambda(z)}{\gamma^2_\lambda}\Big) e^{2w_\lambda(z)+\frac{w^2_\lambda(z)}{\gamma^2_\lambda}} \, dz
\\=& \mathcal{R}(x_0)
\int_{B_{\frac{d}{\theta_\lambda}}(0)}
         \Big(1+\frac{w_\lambda(z)}{\gamma^2_\lambda}\Big) e^{2w_\lambda(z)+\frac{w^2_\lambda(z)}{\gamma^2_\lambda}} \, dz+O\Big(\frac{\theta_\lambda}{\gamma_\lambda^2}\Big).
\end{split}
\end{equation*}
Also we recall from Proposition \ref{prop_wlambda} that
\begin{equation}\label{4.11-2}
w_\lambda=U+\frac{v_\lambda}{\gamma^2_\lambda}=U+\frac{v_0}
{\gamma^2_\lambda}+\frac{k_{\lambda}}{\gamma^4_{\lambda}} = U+\frac{v_0}
{\gamma^2_\lambda}+\frac{k_0}{\gamma^4_{\lambda}} +\frac{s_{\lambda}}{\gamma^6_{\lambda}}.
\end{equation}
By \eqref{4.11-2}, we derive
\begin{align}\label{4.11-3}
& \int_{B_{\frac{d}{\theta_\lambda}}(0)}\Big(1+\frac{w_\lambda(z)}{\gamma^2_\lambda}\Big) e^{2w_\lambda(z)+\frac{w^2_\lambda(z)}{\gamma^2_\lambda}} \, dz \notag\\
=& \int_{B_{\frac{d}{\theta_\lambda}}(0)} \Big(1+\frac{U(z)}{\gamma^2_\lambda}+\frac{v_0(z)}{\gamma^4_\lambda}
+\frac{k_0(z)}{\gamma^6_\lambda}+\frac{s_\lambda(z)}{\gamma^8_\lambda}\Big) e^{2\big(U+\frac{v_0}{\gamma^2_\lambda}+\frac{k_0}{\gamma^4_{\lambda}}+\frac{s_\lambda}{\gamma^6_\lambda}\big)
+\frac{1}{\gamma^2_\lambda}\big(U+\frac{v_0}{\gamma^2_\lambda}+\frac{k_0}{\gamma^4_{\lambda}}
+\frac{s_\lambda}{\gamma^6_\lambda}\big)^2} \, dz \notag\\
=& \int_{B_{\frac{d}{\theta_\lambda}}(0)} e^{2U(z)} \bigg(1+\frac{U(z)+U^2(z)+2v_0(z)}{\gamma^2_\lambda}+
\frac{2k_0(z) + v_0(z)+4U(z)v_0(z)+U^3(z)}{\gamma^4_\lambda} \notag\\
& +\frac{4v_0^2(z)+4v_0(z)U^2(z)+ U^4(z)}{2 \gamma^4_\lambda} + \frac{2s_\lambda(z)+3v_0^2(z)+4U(z)k_0(z)+4v_0(z)k_0(z)+6v_0^2(z)U(z)}{\gamma^6_\lambda} \notag\\
& +\frac{2k_0(z)U^2(z)+4U^3(z)v_0(z)+2v_0^2(z)U^2(z)+v_0(z)U^4(z)+3v_0(z)U^2(z)+k_0(z)}{\gamma^6_\lambda} \notag\\
& +\frac{8v_0^3(z)+U^6(z)+3U^5(z)}{6 \gamma^6_\lambda} \bigg) \,dz + o\Big(\frac{1}{\gamma^6_\lambda}\Big) \notag\\
=& 4\pi\Big(1+\frac{A_1}{\gamma^2_\lambda} +\frac{A_2}{\gamma^4_\lambda} +\frac{A_3}{\gamma^6_\lambda} \Big)
+o\Big(\frac{1}{\gamma^6_\lambda}\Big),
\end{align}
where
\begin{equation*}
A_1:=\frac{1}{4\pi}\int_{\R^2} e^{2U(z)}\Big(U(z)+U^2(z)+2v_0(z)\Big)   \,dz,
\end{equation*}
\begin{equation*}
A_2:=\frac{1}{4\pi}\int_{\R^2} e^{2U(z)}
\Big(  2k_0(z)+  v_0(z)+4U(z)v_0(z)+U^3(z) + 2v_0^2(z)+ 2v_0(z)U^2(z)+ \frac{1}{2} U^4(z) \Big) \,dz,
\end{equation*}
and
\begin{equation*}
\begin{split}
A_3:=& \frac{1}{4\pi}\int_{\R^2} e^{2U}\Big[2s_0(z)+k_0(z)+3v_0^2(z)+4U(z)k_0(z)+4v_0(z)k_0(z)+6v_0^2(z)U(z)
+2k_0(z)U^2(z) \\
& +4U^3(z)v_0(z)+2v_0^2(z)U^2(z)+v_0(z)U^4(z)+3v_0(z)U^2(z)+\frac{4}{3}v_0^3(z)+\frac{1}{6}U^6(z)+\frac{1}{2}U^5(z) \Big] \,dz.
\end{split}
\end{equation*}
Hence
\begin{equation*}
\begin{split}
\log \theta_\lambda \displaystyle\int_{B_{\frac{d}{\theta_\lambda}}(0)} & \Big(1+\frac{w_\lambda(z)}
{\gamma^2_\lambda}\Big) e^{2w_\lambda(z)+\frac{w^2_\lambda(z)}{\gamma^2_\lambda}} \, dz \\
& \overset{\eqref{4.11-3}} = -2\pi \Big(\log (\lambda\gamma_\lambda^2)+\gamma^2_\lambda\Big)
\left(1+\frac{A_1}{\gamma^2_\lambda}+\frac{A_2}{\gamma^4_\lambda} + \frac{A_3}{\gamma^6_\lambda}
+o\Big(\frac{1}{\gamma^6_\lambda}\Big)\right).
\end{split}
\end{equation*}
Similarly, it holds
\begin{equation*}
\begin{split}
& \int_{B_{\frac{d}{\theta_\lambda}}(0)} \log |z|\Big(1+\frac{w_\lambda(z)}{\gamma^2_\lambda}\Big) e^{2w_\lambda(z)+\frac{w^2_\lambda(z)}{\gamma^2_\lambda}} \,dz \\
=& \int_{B_{\frac{d}{\theta_\lambda}}(0)} \log |z| \Big(1+\frac{U(z)}{\gamma^2_\lambda}+\frac{v_0(z)}{\gamma^4_\lambda}
+\frac{k_\lambda(z)}{\gamma^6_\lambda}\Big) e^{2\big(U+\frac{v_0}{\gamma^2_\lambda}
+\frac{k_{\lambda}}{\gamma^4_{\lambda}}\big) + \frac{1}{\gamma^2_\lambda}\big(U+\frac{v_0}{\gamma^2_\lambda}
+\frac{k_{\lambda}}{\gamma^4_{\lambda}}\big)^2} \,dz \\
=& \int_{B_{\frac{d}{\theta_\lambda}}(0)} e^{2U(z)}\log|z| \bigg(1+\frac{U(z)+U^2(z)+2v_0(z)}{\gamma^2_\lambda}  +\frac{2k_\lambda(z) + v_0(z)+4U(z)v_0(z)+U^3(z)}{\gamma^4_\lambda} \\
& +\frac{4v_0^2(z)+4v_0(z)U^2(z)+ U^4(z)}{2 \gamma^4_\lambda} \bigg) \,dz + O\Big(\frac{1}{\gamma^6_\lambda}\Big) \\
=& 2\pi \Big(A_4+\frac{A_5}{\gamma^2_\lambda}+\frac{A_6}{\gamma^4_\lambda}\Big)+O\Big(\frac{1}{\gamma^6_\lambda}\Big),
\end{split}
\end{equation*}
where
\begin{eqnarray*}
A_4&:=&\frac{1}{2\pi}\int_{\R^2} \log |z|e^{2U(z)} \,dz,\\
A_5&:=&\frac{1}{2\pi}\int_{\R^2} \log |z|e^{2U(z)}\big(U(z)+U^2(z)+2v_0(z)\big) \,dz,
\end{eqnarray*}
and
\begin{equation*}
A_6:=\frac{1}{2\pi}\int_{\R^2} \log |z| e^{2U} \Big(  2k_0(z)+  v_0(z)+4U(z)v_0(z)+U^3(z) + 2v_0^2(z)+ 2v_0(z)U^2(z)+ \frac{1}{2} U^4(z) \Big) \,dz.
\end{equation*}
Hence  we have
\begin{equation}\label{4.11-4}
\begin{split}
& \lambda  \int_{B_d(x_{\lambda})} G(y,x_{\lambda}) u_{\lambda}(y)e^{u^2_{\lambda}(y)} \,dy \\
=& \gamma_\lambda +\frac{ \log (\lambda \gamma_\lambda^2)}{\gamma_\lambda} \left(1 +\frac{A_1}{\gamma^2_\lambda} +\frac{A_2}{\gamma^4_\lambda} + O\Big(\frac{1}{\gamma^6_\lambda}\Big)\right) +\frac{1}{\gamma_\lambda}\Big( A_1 -4\pi \mathcal{R}(x_0)- A_4 \Big) \\
& +\frac{1}{\gamma^3_\lambda}\Big( A_2 - 4\pi A_1 \mathcal{R}(x_0)- A_5 \Big) + \frac{1}{\gamma^5_\lambda}\Big( A_3 - 4\pi A_2 \mathcal{R}(x_0)- A_6 \Big) + o\Big(\frac{1}{\gamma^5_\lambda}\Big).
\end{split}
\end{equation}
Moreover, it holds
\begin{equation}\label{add_uxlambda}
\begin{split}
&\lambda \int_{\Omega\backslash B_{d}(x_{\lambda})}  G(y,x_{\lambda}) u_{\lambda}(y) e^{u^2_{\lambda}(y)}\,dy \\
=& O\bigg( \frac{1}{\gamma_\lambda} \int_{\Omega_\lambda \backslash B_{\frac{d}{\theta_\lambda}}(0)} \Big(1+\frac{w_\lambda(z)}{\gamma_\lambda^2}\Big) e^{2w_\lambda(z)+\frac{w^2_\lambda(z)}{\gamma^2_\lambda}} \,dz \bigg)
= O\Big(\frac{\theta_\lambda^{2-\delta}}{\gamma_\lambda}\Big).
\end{split}
\end{equation}
Substituting \eqref{4.11-4} and \eqref{add_uxlambda} into \eqref{4.11-1}, we find
\begin{equation*}
\begin{split}
& \log (\lambda \gamma_\lambda^2) \left(1 +\frac{A_1}{\gamma^2_\lambda} +\frac{A_2}{\gamma^4_\lambda} + O\Big(\frac{1}{\gamma^6_\lambda}\Big)\right) \\
=& -\big( A_1 -4\pi \mathcal{R}(x_0)- A_4 \big) -\frac{1}{\gamma^2_\lambda}\big( A_2 -4\pi A_1 \mathcal{R}(x_0)- A_5 \big) - \frac{1}{\gamma^4_\lambda}\big( A_3 - 4\pi A_2 \mathcal{R}(x_0)- A_6 \big) + o\Big(\frac{1}{\gamma^4_\lambda}\Big).
\end{split}
\end{equation*}
And we know that
\[
\left(1 +\frac{A_1}{\gamma^2_\lambda} + \frac{A_2}{\gamma^4_\lambda} + O\Big(\frac{1}{\gamma^6_\lambda}\Big) \right)^{-1}
= 1-\frac{A_1}{\gamma^2_\lambda}+\frac{A_1^2-A_2}{\gamma^4_\lambda}+ O\Big(\frac{1}{\gamma^6_\lambda}\Big),
\]
then it holds
\begin{equation*}
\begin{split}
\log (\lambda \gamma_\lambda^2)
=& -\big( A_1 -4\pi \mathcal{R}(x_0)- A_4 \big)-\frac{1}{\gamma^2_\lambda}\big( A_2 -4\pi A_1 \mathcal{R}(x_0)- A_5 \big)
-\frac{1}{\gamma^4_\lambda}\big( A_3 -4\pi A_2 \mathcal{R}(x_0)- A_6 \big) \\
& +\frac{A_1}{\gamma_\lambda^2}\big( A_1 -4\pi \mathcal{R}(x_0)- A_4 \big) +\frac{A_1}{\gamma^4_\lambda}\big(A_2 -4\pi A_1 \mathcal{R}(x_0)- A_5 \big) \\
& + \frac{A_2-A_1^2}{\gamma_\lambda^4}\big( A_1 -4\pi \mathcal{R}(x_0)-A_4 \big) + o\Big(\frac{1}{\gamma^4_\lambda}\Big) \\
=& -\big( A_1 -4\pi \mathcal{R}(x_0)- A_4 \big) + \frac{1}{\gamma^2_\lambda} \Big[ A_1 \big( A_1 -4\pi \mathcal{R}(x_0)- A_4 \big)- \big( A_2 -4\pi A_1 \mathcal{R}(x_0)- A_5 \big) \Big] \\
& +\frac{1}{\gamma^4_\lambda} \Big[ A_1 \big(A_2 -4\pi A_1 \mathcal{R}(x_0)-A_5\big)+\big(A_2-A_1^2\big)\big( A_1 -4\pi \mathcal{R}(x_0)-A_4\big) \\
& -\big( A_3 -4\pi A_2 \mathcal{R}(x_0)-A_6\big) \Big] + o\Big(\frac{1}{\gamma^4_\lambda}\Big).
\end{split}
\end{equation*}
Take
\begin{equation*}
\begin{split}
B_1:= -\big( A_1 -4\pi \mathcal{R}(x_0)- A_4\big),~~~~~
B_2:= A_1 \big( A_1 -4\pi \mathcal{R}(x_0)- A_4 \big)- \big( A_2 -4\pi A_1 \mathcal{R}(x_0)- A_5 \big),
\end{split}
\end{equation*}
and
\begin{equation*}
B_3:=  A_1 \big(A_2 -4\pi A_1 \mathcal{R}(x_0)-A_5\big) +\big(A_2-A_1^2\big)\big(A_1 -4\pi \mathcal{R}(x_0)-A_4\big) -\big( A_3 -4\pi A_2 \mathcal{R}(x_0)-A_6\big).
\end{equation*}
We have
\begin{equation*}
\begin{split}
\log (\sqrt{\lambda }\gamma_\lambda)
=& \frac{B_1}{2} +\frac{B_2}{2\gamma^2_\lambda} +\frac{B_3}{2\gamma^4_\lambda} +o\Big(\frac{1}{\gamma^4_\lambda}\Big),
\end{split}
\end{equation*}
which implies
\begin{align*}
\sqrt{\lambda }\gamma_\lambda
=&~ e^{\frac{B_1}{2}} \left( 1+\frac{B_2}{2\gamma^2_\lambda} +\frac{B_3}{2\gamma^4_\lambda} +\frac{B_2^2}{8\gamma^4_\lambda} +o\Big(\frac{1}{\gamma^4_\lambda}\Big) \right) \\
=&~ e^{\frac{B_1}{2}} + \frac{B_2 e^{\frac{B_1}{2}}}{2} \lambda \cdot e^{-B_1} \bigg( 1-\frac{B_2}{\gamma^2_\lambda} -\frac{B_3}{\gamma^4_\lambda} -\frac{B_2^2}{4\gamma^4_\lambda} +\frac{3B_2^2}{4\gamma^4_\lambda} +o\Big(\frac{1}{\gamma^4_\lambda}\Big) \bigg) \\
&~ +\frac{B_3 e^{\frac{B_1}{2}}}{2} \lambda^2 \cdot e^{-2B_1} \bigg( 1-\frac{2 B_2}{\gamma^2_\lambda} -\frac{2 B_3}{\gamma^4_\lambda} -\frac{B_2^2}{2\gamma^4_\lambda} +\frac{5 B_2^2}{2\gamma^4_\lambda}
+o\Big(\frac{1}{\gamma^4_\lambda}\Big) \bigg) \\
&~ +\frac{B_2^2 e^{\frac{B_1}{2}}}{8} \lambda^2 \cdot e^{-2B_1} \bigg( 1-\frac{2 B_2}{\gamma^2_\lambda} -\frac{2 B_3}{\gamma^4_\lambda} -\frac{B_2^2}{2\gamma^4_\lambda} +\frac{5 B_2^2}{2\gamma^4_\lambda} +o\Big(\frac{1}{\gamma^4_\lambda}\Big) \bigg) +o\big(\lambda^2\big) \\
=&~ e^{\frac{B_1}{2}} + \frac{B_2 e^{-\frac{B_1}{2}}}{2} \lambda - \frac{B_2^2 e^{-\frac{3 B_1}{2}}}{2} \lambda^2
+\frac{B_3 e^{-\frac{3 B_1}{2}}}{2} \lambda^2 +\frac{B_2^2 e^{-\frac{3B_1}{2}}}{8} \lambda^2 +o\big(\lambda^2\big) \\
=&~ e^{\frac{B_1}{2}} + \frac{B_2 e^{-\frac{B_1}{2}}}{2} \lambda +\frac{4 B_3-3 B_2^2}{8} e^{-\frac{3 B_1}{2}} \lambda^2 +o\big(\lambda^2\big),
\end{align*}
here we use the facts that
\[
e^t =1+t+\frac{t^2}{2}+O\big(|t|^3\big)~\,\mbox{and}~\,
\frac{1}{(1+t)^p} = 1-pt+\frac{p(p+1)}{2}t^2+O\big(|t|^3\big),~\,\mbox{for}\,~t\,~\mbox{small}.
\]
\end{proof}

\section{Uniqueness of the positive solutions}\label{s6}

\subsection{Regularization and some basic estimates}~

\vskip 0.1cm

Let $u_\lambda^{(1)}$ and $u_\lambda^{(2)}$ be two positive solutions to \eqref{1.1} with
\eqref{energy1}.
From Theorem A in the introduction, we know that the concentrated point is a critical point of the Robin function. Now let us assume that they concentrate at the same point $x_{0}$ and that $x_{0}$ is a non-degenerate critical point of Robin function $\mathcal{R}(x)$. We denote  $x^{(l)}_{\lambda}$ for $l=1,2$, the points such that
 \begin{equation*}
\gamma_\lambda^{(l)}:=u^{(l)}_{\lambda}\big(x^{(l)}_{\lambda}\big)=\max_{\overline{B_{2r}(x_{0})}}u^{(l)}_{\lambda}(x)\,\,~\mbox{for some small fixed}~r>0,
\end{equation*}
and
$$\theta^{(l)}_{\lambda}:=\Big(\lambda \big(\gamma_\lambda^{(l)}\big)^2e^{(\gamma_\lambda^{(l)})^2}\Big)^{-1/2}, ~~\mbox{for} ~~l=1,2.$$
Firstly, we have some basic results on $\frac{\theta^{(1)}_{\lambda}}{\theta^{(2)}_{\lambda}}$ and $\big| x^{(1)}_{\lambda}-x^{(2)}_{\lambda} \big|$.
\begin{Lem}\label{theta12}
It holds
\begin{equation*}\label{llsb}
\frac{\theta^{(1)}_{\lambda}}{\theta^{(2)}_{\lambda}}=1+o\big(\lambda\big).
\end{equation*}
\end{Lem}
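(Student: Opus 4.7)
The plan is to reduce the ratio $\theta^{(1)}_\lambda/\theta^{(2)}_\lambda$ entirely to a comparison of the two maxima $\gamma^{(1)}_\lambda$ and $\gamma^{(2)}_\lambda$, and then to extract this comparison from the very precise expansion \eqref{lambda_gamma} in Theorem \ref{thm-lambdagamma}. By the very definition of $\theta^{(l)}_\lambda$,
\begin{equation*}
\frac{\theta^{(1)}_\lambda}{\theta^{(2)}_\lambda}
=\frac{\gamma^{(2)}_\lambda}{\gamma^{(1)}_\lambda}\,\exp\!\left(\frac{(\gamma^{(2)}_\lambda)^2-(\gamma^{(1)}_\lambda)^2}{2}\right),
\end{equation*}
so the claim will reduce to showing that the two factors on the right are $1+o(\lambda)$.

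First I would apply \eqref{lambda_gamma} to each of $u^{(1)}_\lambda$ and $u^{(2)}_\lambda$ separately. The crucial observation is that the constants $B_1,B_2,B_3$ appearing there are universal in the following sense: they are built out of the integrals $A_1,\dots,A_6$, which depend only on $\mathcal{R}(x_0)$ and on the fixed functions $U,v_0,k_0,s_0$ (the latter three being the unique solutions of the non-homogeneous Liouville-linearized equations in $\R^2$, pinned down by the normalization $v_0(0)=0$, $\nabla v_0(0)=0$, etc., which is inherited for both families of solutions from the facts that $w^{(l)}_\lambda(0)=0$ and $\nabla w^{(l)}_\lambda(0)=0$ at their respective maxima). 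Since $u^{(1)}_\lambda$ and $u^{(2)}_\lambda$ concentrate at the same point $x_0$, the very same constants $B_1,B_2,B_3$ enter both expansions, and subtraction gives
\begin{equation*}
\sqrt{\lambda}\bigl(\gamma^{(1)}_\lambda-\gamma^{(2)}_\lambda\bigr)=o(\lambda^2),
\qquad\text{i.e.}\qquad \gamma^{(1)}_\lambda-\gamma^{(2)}_\lambda=o(\lambda^{3/2}).
\end{equation*}

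Next I would combine this with the elementary fact $\gamma^{(l)}_\lambda=O(\lambda^{-1/2})$, coming already from the leading order $\sqrt{\lambda}\gamma^{(l)}_\lambda\to e^{B_1/2}$. This yields
\begin{equation*}
(\gamma^{(1)}_\lambda)^2-(\gamma^{(2)}_\lambda)^2
=\bigl(\gamma^{(1)}_\lambda+\gamma^{(2)}_\lambda\bigr)\bigl(\gamma^{(1)}_\lambda-\gamma^{(2)}_\lambda\bigr)
=O(\lambda^{-1/2})\cdot o(\lambda^{3/2})=o(\lambda),
\end{equation*}
while
\begin{equation*}
\frac{\gamma^{(2)}_\lambda}{\gamma^{(1)}_\lambda}-1
=\frac{\gamma^{(2)}_\lambda-\gamma^{(1)}_\lambda}{\gamma^{(1)}_\lambda}
=O(\lambda^{1/2})\cdot o(\lambda^{3/2})=o(\lambda^2).
\end{equation*}
Plugging these two estimates into the displayed formula for $\theta^{(1)}_\lambda/\theta^{(2)}_\lambda$ and Taylor-expanding the exponential gives $(1+o(\lambda^2))(1+o(\lambda))=1+o(\lambda)$, as desired.

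The only nontrivial point in the argument is the universality of $B_1,B_2,B_3$. I expect this to be the main (but mild) obstacle: one has to check that the radial corrector $w$ inside $v_0$ and the higher-order correctors $k_0,s_0$ are truly the same functions for both solutions, which ultimately reduces to verifying that the conditions $v^{(l)}_\lambda(0)=0$, $\nabla v^{(l)}_\lambda(0)=0$ (and the analogues for $k^{(l)}_\lambda,s^{(l)}_\lambda$) force the same choice of kernel components in the limiting equations \eqref{4.3.1}, \eqref{k0-equa} and the equation in Proposition \ref{prop_slambda}. Once this is in place, everything else is arithmetic on the expansion \eqref{lambda_gamma}.
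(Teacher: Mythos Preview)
Your proposal is correct and follows essentially the same route as the paper: write the ratio $\theta^{(1)}_\lambda/\theta^{(2)}_\lambda$ in terms of $\gamma^{(1)}_\lambda,\gamma^{(2)}_\lambda$, then use the expansion \eqref{lambda_gamma} applied to each solution to obtain $\gamma^{(1)}_\lambda-\gamma^{(2)}_\lambda=o(\lambda^{3/2})$, hence $(\gamma^{(1)}_\lambda)^2-(\gamma^{(2)}_\lambda)^2=o(\lambda)$ and $\gamma^{(2)}_\lambda/\gamma^{(1)}_\lambda=1+o(\lambda^2)$. The paper's proof simply invokes \eqref{lambda_gamma} and the remark following Theorem~\ref{thm-lambdagamma} without dwelling on why the constants $B_1,B_2,B_3$ coincide for the two solutions; your explicit justification via the normalizations $v^{(l)}_\lambda(0)=0$, $\nabla v^{(l)}_\lambda(0)=0$ (and their analogues for $k_\lambda,s_\lambda$) is exactly the mechanism that pins down $v_0,k_0,s_0$ uniquely, and the paper uses the same observation later (see e.g.\ \eqref{grav1-grav1}, \eqref{v01-v02}).
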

\begin{proof}
Firstly, we have
\begin{equation}\label{5.1-1}
\frac{\theta^{(1)}_{\lambda}}{\theta^{(2)}_{\lambda}}=
\frac{\gamma^{(2)}_\lambda}{\gamma^{(1)}_\lambda} e^{-\frac{1}{2}\big((\gamma^{(1)}_\lambda)^2-(\gamma^{(2)}_\lambda)^2\big)}.
\end{equation}
From \eqref{lambda_gamma}, we find
\begin{equation}\label{5.1-2}
\frac{\gamma^{(2)}_\lambda}{\gamma^{(1)}_\lambda} =1+o\big(\lambda^2\big),
\end{equation}
and
\begin{equation}\label{5.1-3}
\begin{split}
\big(\gamma^{(1)}_\lambda\big)^2-\big(\gamma^{(2)}_\lambda\big)^2
&= \big(\gamma^{(1)}_\lambda+\gamma^{(2)}_\lambda\big)\cdot \big(\gamma^{(1)}_\lambda-\gamma^{(2)}_\lambda\big) \\
&= \frac{2}{\sqrt{\lambda}} \Big(e^{\frac{B_1}{2}} + \frac{B_2 e^{-\frac{B_1}{2}}}{2} \lambda + \frac{4B_3-3B_2^2}{8} e^{-\frac{3 B_1}{2}} \lambda^2 +o\big(\lambda^2\big)\Big)  o\big(\lambda^\frac{3}{2}\big)=o\big(\lambda \big).
\end{split}
\end{equation}
Hence we can deduce from \eqref{5.1-1}, \eqref{5.1-2} and \eqref{5.1-3} that
\begin{equation*}
\frac{\theta^{(1)}_{\lambda}}{\theta^{(2)}_{\lambda}}= \Big(1+o\big(\lambda^2\big)\Big)  \big(1+o(\lambda)\big)=1+o\big(\lambda\big).
\end{equation*}
\end{proof}

\begin{Lem}\label{xlambda12}
If $x_{0}$ is a nondegenerate critical point of $\mathcal{R}(x)$, then it holds
\begin{equation*}
\Big|\frac{ x^{(1)}_{\lambda}-x^{(2)}_{\lambda}}{ \theta^{(1)}_{\lambda} }\Big|=o\big(\lambda\big).
\end{equation*}
\end{Lem}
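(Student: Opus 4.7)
The plan is to apply the sharp asymptotic expansion \eqref{xlambda-x0} from Theorem~\ref{thm-lambdagamma} to each of the two solutions $u^{(l)}_\lambda$ and then exploit a cancellation between the two leading-order terms. For $l=1,2$, one has
\begin{equation*}
x^{(l)}_\lambda-x_0 = -\frac{\theta^{(l)}_\lambda}{2(\gamma^{(l)}_\lambda)^2}\big(c_1^{(l)},c_2^{(l)}\big) + o\Big(\frac{\theta^{(l)}_\lambda}{(\gamma^{(l)}_\lambda)^2}\Big)
\end{equation*}
for some $(c_1^{(l)},c_2^{(l)})\in\R^2$. Since $\theta^{(l)}_\lambda/(\gamma^{(l)}_\lambda)^2 = O(\theta^{(l)}_\lambda\lambda)$ by \eqref{DT-lam_gam}, a mere triangle inequality would yield only $|x^{(1)}_\lambda-x^{(2)}_\lambda|/\theta^{(1)}_\lambda = O(\lambda)$, which is one order too weak. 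To reach $o(\lambda)$ we must (i) identify the two leading-order vectors $(c_1^{(l)},c_2^{(l)})$ as coinciding, and (ii) compare the prefactors $\theta^{(l)}_\lambda/(\gamma^{(l)}_\lambda)^2$ with a precision better than $O(\lambda)$.

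For (i), tracing the derivation of \eqref{4.9.1} and \eqref{xlambda-x0} back through the identity \eqref{4.9-2}, the constants $c_i^{(l)}$ are exactly the coefficients of the kernel directions $\frac{x_i}{4+|x|^2}$ in the decomposition of $v_0^{(l)} := \lim_{\lambda\to 0} v^{(l)}_\lambda$ furnished by Proposition~\ref{prop4.3}. Because $x^{(l)}_\lambda$ is a local maximum of $u^{(l)}_\lambda$ and $U$ attains its maximum at $0$ with $\nabla U(0)=0$, we have $w^{(l)}_\lambda(0)=0$, $\nabla w^{(l)}_\lambda(0)=0$, and consequently $v^{(l)}_\lambda(0) = 0$, $\nabla v^{(l)}_\lambda(0) = 0$; passing to the $C^2_{loc}$-limit yields $v^{(l)}_0(0)=0$ and $\nabla v^{(l)}_0(0)=0$. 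Within the three-parameter family $v_0 = w+e_0\frac{4-|x|^2}{4+|x|^2}+\sum_{i=1}^2 e_i\frac{x_i}{4+|x|^2}$ of solutions of \eqref{4.3.1} (with $w$ the fixed radial solution), these three scalar constraints pin down $(e_0,e_1,e_2)$ uniquely. Therefore $v^{(1)}_0\equiv v^{(2)}_0$ and $(c_1^{(1)},c_2^{(1)})=(c_1^{(2)},c_2^{(2)})=:(c_1,c_2)$.

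For (ii), subtracting the two expansions and using the equality obtained in step (i) yields
\begin{equation*}
x^{(1)}_\lambda-x^{(2)}_\lambda = -\tfrac{1}{2}(c_1,c_2)\Big(\frac{\theta^{(1)}_\lambda}{(\gamma^{(1)}_\lambda)^2}-\frac{\theta^{(2)}_\lambda}{(\gamma^{(2)}_\lambda)^2}\Big) + o\Big(\frac{\theta^{(1)}_\lambda}{(\gamma^{(1)}_\lambda)^2}\Big).
\end{equation*}
Lemma~\ref{theta12} gives $\theta^{(1)}_\lambda/\theta^{(2)}_\lambda = 1 + o(\lambda)$, and applying \eqref{lambda_gamma} to each $\gamma^{(l)}_\lambda$ yields $\gamma^{(1)}_\lambda - \gamma^{(2)}_\lambda = o(\lambda^{3/2})$, whence $(\gamma^{(1)}_\lambda)^2/(\gamma^{(2)}_\lambda)^2 = 1 + o(\lambda^2)$. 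Combining these shows that the ratio of the two prefactors $\theta^{(l)}_\lambda/(\gamma^{(l)}_\lambda)^2$ equals $1+o(\lambda)$, so that their difference is $O(\theta^{(1)}_\lambda/(\gamma^{(1)}_\lambda)^2)\cdot o(\lambda) = o(\lambda^2\theta^{(1)}_\lambda)$, using $1/(\gamma^{(l)}_\lambda)^2 = O(\lambda)$. Dividing through by $\theta^{(1)}_\lambda$, the first term on the right contributes $o(\lambda^2)$ and the remainder contributes $o(1/(\gamma^{(1)}_\lambda)^2) = o(\lambda)$, which delivers the claim.

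The main obstacle is step (i): the rigorous identification that $(c_1^{(l)},c_2^{(l)})$ is independent of $l$. It requires reopening the characterization of $v_0^{(l)}$ in Proposition~\ref{prop4.3} and exploiting the two pointwise conditions at the origin inherited from the fact that $x^{(l)}_\lambda$ is a critical point of $u^{(l)}_\lambda$. Without this coincidence of leading-order vectors one is left only with the triangle-inequality bound $|x^{(1)}_\lambda-x^{(2)}_\lambda|/\theta^{(1)}_\lambda = O(\lambda)$, which falls short of $o(\lambda)$ by precisely the precision gained from matching the leading terms.
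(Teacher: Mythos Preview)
Your proof is correct and follows essentially the same route as the paper: both hinge on showing $c_h^{(1)}=c_h^{(2)}$ via the observation that $\nabla v_\lambda^{(l)}(0)=0$ (inherited from $x_\lambda^{(l)}$ being a maximum) forces the kernel coefficients in the decomposition of $v_0^{(l)}$ to coincide, after which subtracting the two expansions and comparing the prefactors using Lemma~\ref{theta12} and \eqref{lambda_gamma} gives $o(\lambda)$. The only cosmetic difference is that the paper subtracts at the level of the pre-inversion identity \eqref{4.10-3} and then applies the nondegeneracy of $D^2\mathcal{R}(x_0)$, whereas you subtract the already-inverted expansion \eqref{xlambda-x0}; your phrasing of step~(i) (``three scalar constraints pin down $(e_0,e_1,e_2)$ uniquely'') is slightly cleaner than the paper's explicit difference computation and in fact gives the marginally stronger conclusion $v_0^{(1)}\equiv v_0^{(2)}$.
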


\begin{proof}
Recall that
\[
v_0^{(l)}(x) = w^{(l)}(x) + c_0^{(l)} \frac{4-|x|^2}{4+|x|^2} + \sum^2_{h=1} c_h^{(l)} \frac{x_h}{4+|x|^2},~\mbox{for}~l=1,2,
\]
where $w^{(1)}(x)$ and $w^{(2)}(x)$ are both radial solutions of equation
\[
-\Delta v-2e^{2U}v = e^{2U}\big(U^2+U\big).
\]
By \eqref{4.10-3}, we know that for $l=1,2$,
\begin{equation*}
\begin{split}
\Big\langle \nabla \frac{\partial \mathcal{R}(x_0)}{\partial x_i}, x^{(l)}_\lambda-x_0 \Big\rangle
= -\sum_{h=1}^2 \frac{c^{(l)}_h}{2 \big(\gamma_\lambda^{(l)}\big)^2} \theta_\lambda^{(l)} \frac{\partial^2 \mathcal{R}(x_0)}{\partial x_i \partial x_h} + o\bigg(\frac{\theta_\lambda^{(l)}}{\big(\gamma_\lambda^{(l)}\big)^2}\bigg).
\end{split}
\end{equation*}
Then we can deduce from \eqref{lambda_gamma} and Lemma \ref{theta12} that
\begin{equation}\label{5.2-1}
\begin{split}
\Big\langle \nabla \frac{\partial \mathcal{R}(x_0)}{\partial x_i}, x^{(1)}_\lambda-x^{(2)}_\lambda \Big\rangle
&= -\sum_{h=1}^2 \frac{c^{(1)}_h}{2 \big(\gamma_\lambda^{(1)}\big)^2} \theta_\lambda^{(1)} \frac{\partial^2 \mathcal{R}(x_0)}{\partial x_i \partial x_h} + \sum_{h=1}^2 \frac{c^{(2)}_h}{2 \big(\gamma_\lambda^{(2)}\big)^2} \theta_\lambda^{(2)} \frac{\partial^2 \mathcal{R}(x_0)}{\partial x_i \partial x_h} + o\Big( \theta_\lambda^{(1)} \lambda \Big) \\
&= \sum_{h=1}^2 \frac{c^{(2)}_h-c^{(1)}_h}{2 \big(\gamma_\lambda^{(1)}\big)^2} \theta_\lambda^{(1)} \frac{\partial^2 \mathcal{R}(x_0)}{\partial x_i \partial x_h} + o\Big( \theta_\lambda^{(1)} \lambda \Big).
\end{split}
\end{equation}
Furthermore, by the definition of $v_\lambda^{(l)}$, we find that
\begin{equation*}
\begin{split}
& \nabla v_\lambda^{(1)}(0) - \nabla v_\lambda^{(2)}(0) \\ =& \big(\gamma_\lambda^{(1)}\big)^2 \nabla w_\lambda^{(1)}(x) \Big|_{x=0} - \big(\gamma_\lambda^{(2)}\big)^2 \nabla w_\lambda^{(2)}(x)  \Big|_{x=0} \\
=& \big(\gamma_\lambda^{(1)}\big)^3 \theta_\lambda^{(1)} \nabla u_\lambda^{(1)}\big(\theta_\lambda^{(1)} x + x_\lambda^{(1)} \big) \Big|_{x=0} - \big(\gamma_\lambda^{(2)}\big)^3 \theta_\lambda^{(2)} \nabla u_\lambda^{(2)}
\big(\theta_\lambda^{(2)} x + x_\lambda^{(2)} \big) \Big|_{x=0} = 0,
\end{split}
\end{equation*}
since $u_\lambda^{(l)}(x)$ achieves its maximum at $x_\lambda^{(l)}$ for $l=1,2$.
Then \eqref{lim-v} gives
\begin{equation}\label{grav1-grav1}
\nabla v_0^{(1)}(0) - \nabla v_0^{(2)}(0) = 0.
\end{equation}
On the other hand, since $\big(w^{(1)}-w^{(2)}\big)(x) $ is a radial function satisfying $-\Delta v-2e^{2U}v =0$ and $v_0^{(l)}$ satisfies \eqref{v0-bound}, we have
\begin{equation*}
\big(w^{(1)}-w^{(2)}\big)(x) = c'\frac{4-|x|^2}{4+|x|^2},
\end{equation*}
which implies
\begin{equation}\label{v01-v02}
v_0^{(1)}(x)-v_0^{(2)}(x) = \big(c' + c_0^{(1)} - c_0^{(2)}\big) \frac{4-|x|^2}{4+|x|^2} +
\sum^2_{h=1} \big(c_h^{(1)}-c_h^{(2)}\big) \frac{x_h}{4+|x|^2}.
\end{equation}
Hence, it follows from \eqref{grav1-grav1} and \eqref{v01-v02} that
$c^{(2)}_h-c^{(1)}_h = 0$ for $h=1,2$. Substituting it into \eqref{5.2-1}, since $x_{0}$ is a nondegenerate critical point of $\mathcal{R}(x)$, we obtain
\[
\big|x_\lambda^{(1)} - x_\lambda^{(2)}\big| = o\Big( \theta_\lambda^{(1)} \lambda \Big).
\]
\end{proof}

In the following, we will consider the same quadratic forms already introduced in \eqref{P} and \eqref{Q}.
\begin{equation}\label{p1uv}
\begin{split}
P^{(1)}(u,v):=&- 2d\int_{\partial B_d(x^{(1)}_{\lambda})} \big\langle \nabla u ,\nu\big\rangle \big\langle \nabla v,\nu\big\rangle \,d\sigma + d  \int_{\partial B_d(x^{(1)}_{\lambda})} \big\langle \nabla u , \nabla v \big\rangle \,d\sigma,
\end{split}
\end{equation}
and
\begin{equation}\label{q1uv}
Q^{(1)}(u,v):= - \int_{\partial B_d(x^{(1)}_{\lambda})}\frac{\partial v}{\partial \nu}\frac{\partial u}{\partial x_i} \,d\sigma -\int_{\partial B_d(x^{(1)}_{\lambda})}\frac{\partial u}{\partial \nu}\frac{\partial v}{\partial x_i} \,d\sigma + \int_{\partial B_d(x^{(1)}_{\lambda})}\big\langle \nabla u,\nabla v \big\rangle \nu_i \,d\sigma.
\end{equation}
Note that if $u$ and $v$ are harmonic in $ B_d(x^{(1)}_{\lambda})\backslash \{x^{(1)}_{\lambda}\}$, then similar to Lemma \ref{indep_d}, we deduce that $P^{(1)}(u,v)$ and  $Q^{(1)}(u,v)$ are independent of $\theta\in (0,d]$.
Moreover, replacing $x_{\lambda}$ by $x^{(1)}_{\lambda}$ in Proposition \ref{prop2-1}, we have the following computations.
\begin{Prop}\label{prop_p1q1}
It holds
\begin{equation}\label{p1_g}
P^{(1)}\Big(G(x^{(1)}_{\lambda},x), G(x^{(1)}_{\lambda},x)\Big)=
-\frac{1}{2\pi},
\end{equation}
 \begin{equation}\label{p1_partg}
P^{(1)}\Big(G(x^{(1)}_{\lambda},x),\partial_hG(x^{(1)}_{\lambda},x)\Big)=
-\frac{1}{2}\frac{\partial \mathcal{R}(x^{(1)}_{\lambda})}{\partial {x_h}},
\end{equation}

\begin{equation}\label{q1_g}
Q^{(1)} \Big(G(x^{(1)}_{\lambda},x),G(x^{(1)}_{\lambda},x)\Big)=
-\frac{\partial \mathcal{R}(x^{(1)}_{\lambda})}{\partial{x_i}},
\end{equation}
and
\begin{equation}\label{q1_partg}
Q^{(1)}\Big(G(x^{(1)}_{\lambda},x),\partial_h G(x^{(1)}_{\lambda},x)\Big)=
- \frac{1}{2} \frac{\partial^2 \mathcal{R}(x^{(1)}_{\lambda})}{\partial{x_i}\partial{x_h}},
\end{equation}
where $G(x,y)$ and $\mathcal{R}(x)$ are the Green and Robin functions respectively (see \eqref{greensyst}, \eqref{GreenS-H}, \eqref{Robinf}), $\partial_h G(y,x):=\frac{\partial G(y,x)}{\partial y_h}$.
\end{Prop}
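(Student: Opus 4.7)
Since the definitions of $P^{(1)}$ and $Q^{(1)}$ in \eqref{p1uv}--\eqref{q1uv} coincide with those of $P$ and $Q$ in \eqref{P}--\eqref{Q} after relabelling $x_\lambda$ by $x^{(1)}_\lambda$, my plan is to follow verbatim the proof of Proposition \ref{prop2-1} (cf.\ \cite[Proposition 2.4]{GILY2022} with the corrected coefficients already indicated in this paper). The four identities depend only on the local analytic structure of the Green's function near the concentration point, so nothing in the original argument changes when the centre of the balls is moved from $x_\lambda$ to $x^{(1)}_\lambda$.

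For completeness, I will outline the key steps. First I would observe that $G(x^{(1)}_\lambda,\cdot)$ and $\partial_h G(x^{(1)}_\lambda,\cdot)$ are harmonic on $B_d(x^{(1)}_\lambda)\setminus\{x^{(1)}_\lambda\}$, so by the argument of Lemma \ref{indep_d}, both $P^{(1)}$ and $Q^{(1)}$ are independent of the radius $\theta\in(0,d]$. This lets me pass to the limit $\theta\to 0^+$ on shrinking circles centred at $x^{(1)}_\lambda$, where the Green's function has an explicit expansion.

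Next I would use the decomposition $G(x^{(1)}_\lambda,x)=-\frac{1}{2\pi}\log|x-x^{(1)}_\lambda|-H(x^{(1)}_\lambda,x)$, split each quadratic form into singular--singular, singular--regular and regular--regular pieces, and evaluate each on $\partial B_\theta(x^{(1)}_\lambda)$. The singular--singular contribution is explicit and yields $-\frac{1}{2\pi}$ in \eqref{p1_g} and vanishes by radial symmetry in the $Q^{(1)}$ computation. The regular--regular pieces are $O(\theta)$ and disappear as $\theta\to 0^+$. For the cross terms I would Taylor-expand $H$ and its first derivatives at $x^{(1)}_\lambda$ and use the identity $\partial_{x_i}H(y,x)\big|_{y=x=x^{(1)}_\lambda}=\frac{1}{2}\partial_{x_i}\mathcal{R}(x^{(1)}_\lambda)$, which comes from $\mathcal{R}(x)=H(x,x)$ together with the symmetry $H(x,y)=H(y,x)$; this produces the $\partial \mathcal{R}$ and $\partial^2 \mathcal{R}$ factors appearing in \eqref{p1_partg}, \eqref{q1_g} and \eqref{q1_partg}, with the correct prefactor $\tfrac{1}{2}$.

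The main obstacle, as flagged in the paper right after Proposition \ref{prop2-1}, is the bookkeeping of which variable of $G$ and $H$ the derivatives act upon when computing \eqref{p1_partg} and \eqref{q1_partg}: the symbol $\partial_h$ refers to differentiation in the first argument of $G$, whereas the tangential and normal derivatives appearing in $P^{(1)}$ and $Q^{(1)}$ act on the second argument. Trading one for the other requires systematic use of the symmetry $H(x,y)=H(y,x)$, and it is precisely at this step that the coefficient errors of the cited reference arise; special care is needed to ensure the $\tfrac{1}{2}$-factors in \eqref{p1_partg} and \eqref{q1_partg} come out correctly. Beyond this bookkeeping point there is nothing new compared to Proposition \ref{prop2-1}.
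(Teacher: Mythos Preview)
Your proposal is correct and matches the paper's approach exactly: the paper states Proposition~\ref{prop_p1q1} simply by ``replacing $x_{\lambda}$ by $x^{(1)}_{\lambda}$ in Proposition~\ref{prop2-1}'', which in turn defers to \cite[Proposition~2.4]{GILY2022} (with corrected coefficients). Your outline of the underlying computation --- radius-independence, the log-plus-regular decomposition, and the bookkeeping of which slot of $H$ the derivatives hit --- is precisely the content of that cited proof, so nothing is missing.
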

Furthermore, the following results can be derived from the above proposition.
\begin{Prop}
For some small fixed $\delta>0$, it holds
\begin{equation}\label{p1_g2}
P^{(1)}\Big(G(x^{(2)}_{\lambda},x), G(x^{(1)}_{\lambda},x)\Big)=
-\frac{1}{2\pi}+ o\big( \lambda \theta^{(1)}_{\lambda}  \big),
\end{equation}
 \begin{equation}\label{p1_partg2}
P^{(1)}\Big(G(x^{(2)}_{\lambda},x),\partial_hG(x^{(1)}_{\lambda},x)\Big)=
 O\big( \lambda \theta^{(1)}_{\lambda}  \big),
\end{equation}

\begin{equation}\label{q1_g2}
Q^{(1)} \Big(G(x^{(2)}_{\lambda},x),G(x^{(1)}_{\lambda},x)\Big)=
  O\big( \lambda \theta^{(1)}_{\lambda}  \big),
\end{equation}
and
\begin{equation}\label{q1_partg2}
Q^{(1)} \Big(G(x^{(2)}_{\lambda},x),\partial_h G(x^{(1)}_{\lambda},x)\Big) =
-\frac{1}{2} \frac{\partial^2 \mathcal{R}(x_{0})}{\partial{x_i \partial x_h}}+ O\big( \lambda \theta^{(1)}_{\lambda} \big).
\end{equation}
\end{Prop}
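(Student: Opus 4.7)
The plan is to reduce each of the four identities to Proposition \ref{prop_p1q1} by exploiting the bilinearity of $P^{(1)}$ and $Q^{(1)}$ together with a first-order Taylor expansion of $G$ in its first slot around $x_\lambda^{(1)}$. For $\lambda$ small, both $x_\lambda^{(1)}$ and $x_\lambda^{(2)}$ lie well inside $B_d(x_\lambda^{(1)})$, so $y\mapsto G(y,x)$ is smooth at $y=x_\lambda^{(1)}$ for every $x\in\partial B_d(x_\lambda^{(1)})$, uniformly in $x$. Hence
\[
G(x_\lambda^{(2)},x) = G(x_\lambda^{(1)},x) + \sum_{k=1}^{2}\big(x_\lambda^{(2)}-x_\lambda^{(1)}\big)_k\,\partial_k G(x_\lambda^{(1)},x) + O\big(|x_\lambda^{(1)}-x_\lambda^{(2)}|^2\big)
\]
uniformly on $\partial B_d(x_\lambda^{(1)})$, together with the analogous expansion for the tangential and normal derivatives of $G(x_\lambda^{(2)},\cdot)$.

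Plugging this expansion into the definitions \eqref{p1uv}--\eqref{q1uv} and using bilinearity, each of the four quantities splits into a zeroth-order piece, a first-order piece, and a quadratic remainder. The zeroth-order piece is exactly the corresponding identity from Proposition \ref{prop_p1q1}; it supplies the constant $-\tfrac{1}{2\pi}$ in \eqref{p1_g2} and $-\tfrac12\partial^2_{ih}\mathcal{R}(x_0)$ in \eqref{q1_partg2}, and it supplies the would-be ``main'' terms $-\tfrac12\partial_h\mathcal{R}(x_\lambda^{(1)})$ in \eqref{p1_partg2} and $-\partial_i\mathcal{R}(x_\lambda^{(1)})$ in \eqref{q1_g2}. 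The first-order piece has the form $\sum_k (x_\lambda^{(2)}-x_\lambda^{(1)})_k\,P^{(1)}\big(\partial_kG(x_\lambda^{(1)},\cdot),\,G(x_\lambda^{(1)},\cdot)\big)$ (or with $Q^{(1)}$ or a derivative of $G$ in the second slot), which is again evaluated by Proposition \ref{prop_p1q1}; the mixed terms $P^{(1)}(\partial_k G,\partial_h G)$ and $Q^{(1)}(\partial_k G,\partial_h G)$ are uniformly bounded as $\lambda\to 0$.

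It remains to verify that every error is $O(\lambda\theta_\lambda^{(1)})$ or $o(\lambda\theta_\lambda^{(1)})$. Three earlier facts do this: (i) Lemma \ref{xlambda12} yields $|x_\lambda^{(1)}-x_\lambda^{(2)}|=o(\lambda\theta_\lambda^{(1)})$; (ii) from \eqref{xlambda-x0} together with $\gamma_\lambda^{-2}=O(\lambda)$ (a consequence of \eqref{lambda_gamma}) one gets $|x_\lambda^{(1)}-x_0|=O(\lambda\theta_\lambda^{(1)})$; (iii) the non-degeneracy of $x_0$ as a critical point of $\mathcal{R}$ then upgrades (ii) into
\[
|\nabla\mathcal{R}(x_\lambda^{(1)})|=O(\lambda\theta_\lambda^{(1)}), \qquad D^2\mathcal{R}(x_\lambda^{(1)})=D^2\mathcal{R}(x_0)+O(\lambda\theta_\lambda^{(1)}).
\]
With these, the first-order correction in \eqref{p1_g2} and \eqref{q1_g2} is $o(\lambda\theta_\lambda^{(1)})\cdot O(\lambda\theta_\lambda^{(1)})$, the Taylor remainder is $O\big((\lambda\theta_\lambda^{(1)})^2\big)$, the would-be main term $-\tfrac12\partial_h\mathcal{R}(x_\lambda^{(1)})$ in \eqref{p1_partg2} is itself $O(\lambda\theta_\lambda^{(1)})$, and the mismatch $D^2\mathcal{R}(x_\lambda^{(1)})-D^2\mathcal{R}(x_0)$ contributing to \eqref{q1_partg2} is $O(\lambda\theta_\lambda^{(1)})$.

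The main (if mild) obstacle is bookkeeping: in \eqref{p1_partg2} and \eqref{q1_g2} the ``principal'' term from Proposition \ref{prop_p1q1} is itself of the same order as the claimed error $O(\lambda\theta_\lambda^{(1)})$, so it must be reabsorbed into the remainder rather than treated as a genuine leading term, and this requires the sharpened estimate of $\nabla\mathcal{R}(x_\lambda^{(1)})$ provided by Theorem \ref{thm-lambdagamma}, not just by Lemma \ref{lem-h1}. Once this accounting is carefully done, each of the four identities follows without any further analytic input beyond results already proved in Sections \ref{s2}, \ref{s3} and \ref{s5}.
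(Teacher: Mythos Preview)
Your proposal is correct and follows essentially the same route as the paper: replace $G(x_\lambda^{(2)},x)$ by $G(x_\lambda^{(1)},x)$ via a Taylor expansion, then invoke Proposition \ref{prop_p1q1} together with Lemma \ref{xlambda12} and \eqref{xlambda-x0}. The paper's version is slightly more economical: since Lemma \ref{xlambda12} already gives $|x_\lambda^{(1)}-x_\lambda^{(2)}|=o(\lambda\theta_\lambda^{(1)})$, a zeroth-order bound $G(x_\lambda^{(2)},x)=G(x_\lambda^{(1)},x)+o(\lambda\theta_\lambda^{(1)})$ in $C^1$ suffices, so there is no need to track the first-order Taylor piece explicitly; your more detailed expansion is harmless but unnecessary.
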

\begin{proof}
It follows from Lemma \ref{xlambda12} that
\begin{equation*}
G\big(x^{(2)}_{\lambda},x\big) = G\big(x^{(1)}_{\lambda},x\big)+O\big(|x^{(2)}_{\lambda}-x^{(1)}_{\lambda}|\big) = G\big(x^{(1)}_{\lambda},x\big)+o\big(\lambda \theta^{(1)}_\lambda\big),
\end{equation*}
which gives
\begin{equation*}
P^{(1)}\Big(G(x^{(2)}_{\lambda},x), G(x^{(1)}_{\lambda},x)\Big) = P^{(1)}\Big(G(x^{(1)}_{\lambda},x), G(x^{(1)}_{\lambda},x)\Big) + o\big(\lambda \theta^{(1)}_\lambda\big)
\overset{\eqref{p1_g}}= -\frac{1}{2\pi}+ o\big(\lambda\theta^{(1)}_{\lambda}\big).
\end{equation*}
Similarly, \eqref{p1_partg2}, \eqref{q1_g2} and \eqref{q1_partg2} can be deduced from \eqref{xlambda-x0}, Lemma \ref{xlambda12} and Proposition \ref{prop_p1q1}.
\end{proof}

\vskip 0.4cm

Now if $u_{\lambda}^{(1)}\not \equiv u_{\lambda}^{(2)}$ in $\Omega$, we set
\begin{equation}\label{eta-def}
\eta_{\lambda}:=\frac{u_{\lambda}^{(1)}-u_{\lambda}^{(2)}}
{\|u_{\lambda}^{(1)}-u_{\lambda}^{(2)}\|_{L^{\infty}(\Omega)}},
\end{equation}
then $\eta_{\lambda}$ satisfies $\|\eta_{\lambda}\|_{L^{\infty}(\Omega)}=1$ and
\begin{equation}\label{eta-equa}
- \Delta \eta_{\lambda}=-\frac{\Delta u_{\lambda}^{(1)}-\Delta u_{\lambda}^{(2)}}
{\|u_{\lambda}^{(1)}-u_{\lambda}^{(2)}\|_{L^{\infty}(\Omega)}}= \frac{\lambda \Big(u_{\lambda}^{(1)}e^{(u_{\lambda}^{(1)})^2} -  u_{\lambda}^{(2)}e^{(u_{\lambda}^{(2)})^2}\Big)}
{\|u_{\lambda}^{(1)}-u_{\lambda}^{(2)}\|_{L^{\infty}(\Omega)}}=E_{\lambda}\eta_{\lambda},
\end{equation}
where
\begin{equation}\label{Dlambda-def}
E_{\lambda}(x):=\lambda e^{(u_{\lambda}^{(1)})^2}+2 \lambda  u_{\lambda}^{(2)}  \displaystyle\int_{0}^1
F_t(x) e^{ (F_t(x))^2} \,dt,
\end{equation}
with $F_t(x):=tu_{\lambda}^{(1)}(x)+(1-t)u_{\lambda}^{(2)}(x)$.

\begin{Lem}\label{theta-D}
It holds
\begin{equation}\label{Dlambda-equa}
\big(\theta^{(1)}_{\lambda}\big)^{2}E_{\lambda}\big(\theta^{(1)}_{\lambda}x+x^{(1)}_{\lambda}\big)  =
2e^{2U(x)} \big(1+ O(\lambda) \big),
\end{equation}
uniformly on compact sets, in particular
\begin{equation}\label{Dlambda-lim}
\big(\theta^{(1)}_{\lambda}\big)^{2}E_{\lambda}\big(\theta^{(1)}_{\lambda}x+x^{(1)}_{\lambda}\big)  \rightarrow
2e^{2U(x)} ~\, ~\mbox{in}~C_{loc}\big(\R^2\big).
\end{equation}
Also for some small fixed $\delta,d>0$, it follows
\begin{equation}\label{Dlambda-appro}
\big(\theta^{(1)}_{\lambda}\big)^{2}E_{\lambda}\big(\theta^{(1)}_{\lambda}x+x^{(1)}_{\lambda}\big) =O\Big(\frac{1}{1+|x|^{4-\delta}}\Big)~\,\,~\mbox{for}~~ x\in \Omega_\lambda.
\end{equation}
\end{Lem}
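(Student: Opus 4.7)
The strategy is a direct computation after rescaling by $\theta^{(1)}_\lambda$ around $x^{(1)}_\lambda$, leaning heavily on the comparison estimates for the two bubbles provided by Lemmas \ref{theta12} and \ref{xlambda12}. I would first rewrite any point $y=\theta^{(1)}_\lambda x+x^{(1)}_\lambda$ in the $u^{(2)}_\lambda$-scale as $y=\theta^{(2)}_\lambda x'+x^{(2)}_\lambda$ with
\[
x' \;=\; \frac{\theta^{(1)}_\lambda}{\theta^{(2)}_\lambda}\, x+\frac{x^{(1)}_\lambda-x^{(2)}_\lambda}{\theta^{(2)}_\lambda} \;=\; x+o(\lambda)
\]
uniformly on compacts, by Lemmas \ref{theta12} and \ref{xlambda12}. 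Combined with the definition of $w^{(l)}_\lambda$ and the $C^2_{\mathrm{loc}}$ convergence $w^{(l)}_\lambda\to U$, this yields
\[
u^{(l)}_\lambda\big(\theta^{(1)}_\lambda x+x^{(1)}_\lambda\big) \;=\; \gamma^{(l)}_\lambda+\frac{U(x)+o(1)}{\gamma^{(l)}_\lambda},\qquad l=1,2,
\]
locally uniformly, while \eqref{lambda_gamma} gives $\gamma^{(1)}_\lambda-\gamma^{(2)}_\lambda=o(\lambda^{3/2})$.

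Next I would plug these expansions into $E_\lambda$. The first term is harmless: using $\lambda(\theta^{(1)}_\lambda)^2 e^{(\gamma^{(1)}_\lambda)^2}=1/(\gamma^{(1)}_\lambda)^2=O(\lambda)$ one gets
\[
\big(\theta^{(1)}_\lambda\big)^2\lambda\, e^{(u^{(1)}_\lambda)^2}\big(\theta^{(1)}_\lambda x+x^{(1)}_\lambda\big)
\;=\;\frac{1}{(\gamma^{(1)}_\lambda)^2}\,e^{2w^{(1)}_\lambda(x)+(w^{(1)}_\lambda(x))^2/(\gamma^{(1)}_\lambda)^2} \;=\; O(\lambda)\,e^{2U(x)}(1+o(1)).
\]
For the main term, along the segment $F_t=tu^{(1)}_\lambda+(1-t)u^{(2)}_\lambda$ the two solutions agree at the top to the order $o(\lambda^{3/2})$, so $F_t^2=(u^{(1)}_\lambda)^2+O(\lambda^{3/2}\gamma^{(1)}_\lambda)$ and $u^{(2)}_\lambda F_t=(\gamma^{(1)}_\lambda)^2(1+O(\lambda))$ uniformly on compacts. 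Multiplying together and using again $\lambda(\theta^{(1)}_\lambda)^2 e^{(\gamma^{(1)}_\lambda)^2}=1/(\gamma^{(1)}_\lambda)^2$ yields
\[
\big(\theta^{(1)}_\lambda\big)^2\,2\lambda\, u^{(2)}_\lambda\!\!\int_0^1\!\! F_t e^{F_t^2}\,dt\;=\;2\,e^{2w^{(1)}_\lambda(x)}\bigl(1+O(\lambda)\bigr)\;=\;2e^{2U(x)}\bigl(1+O(\lambda)\bigr),
\]
which proves \eqref{Dlambda-equa} and, a fortiori, \eqref{Dlambda-lim}.

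Finally, for the global pointwise bound \eqref{Dlambda-appro} on all of $\Omega_\lambda$, the same algebraic manipulation reduces the scaled $E_\lambda$ to the schematic form $\big(1+\mathrm{l.o.t.}\big)\exp\!\big(2w^{(1)}_\lambda(x)+(w^{(1)}_\lambda(x))^2/(\gamma^{(1)}_\lambda)^2\big)$, and Lemma \ref{lem2.4} then gives the decay $O\!\big(1/(1+|x|^{4-\delta})\big)$; for points where the $u^{(2)}_\lambda$-scale differs nontrivially from the $u^{(1)}_\lambda$-scale, the same estimate applied to $w^{(2)}_\lambda$ at $x'$ combined with $x'=x+o(\lambda)$ and $\theta^{(1)}_\lambda/\theta^{(2)}_\lambda=1+o(\lambda)$ preserves the bound up to a harmless constant. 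The only delicate point is tracking that the approximation $x'=x+o(\lambda)$ (coming from Lemmas \ref{theta12}--\ref{xlambda12}) is strong enough to absorb the spatial dependence of $w^{(2)}_\lambda$ in the leading term uniformly on compacts; the high precision $o(\lambda)$ in those lemmas — rather than the naive $o(1)$ — is what makes the $O(\lambda)$ remainder in \eqref{Dlambda-equa} go through.
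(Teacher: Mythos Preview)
Your overall strategy is the same as the paper's: rescale by $\theta^{(1)}_\lambda$ around $x^{(1)}_\lambda$, compare the two bubbles using Lemmas~\ref{theta12}--\ref{xlambda12}, and compute the two pieces of $E_\lambda$ directly. The treatment of the first piece $\lambda(\theta^{(1)}_\lambda)^2 e^{(u^{(1)}_\lambda)^2}=O(\lambda)e^{2U}$ and the use of Lemma~\ref{lem2.4} for the global decay \eqref{Dlambda-appro} are also in line with the paper.

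There is, however, a precision gap in your justification of the main term. You invoke only the $C^2_{\mathrm{loc}}$ convergence $w^{(l)}_\lambda\to U$, which gives $u^{(l)}_\lambda=\gamma^{(l)}_\lambda+\frac{U(x)+o(1)}{\gamma^{(l)}_\lambda}$; from this alone one only gets $u^{(1)}_\lambda-u^{(2)}_\lambda=o(\sqrt{\lambda})$ on compacts (the two $o(1)$ remainders need not cancel), hence $F_t^2=(u^{(1)}_\lambda)^2+o(1)$ and only $2e^{2U}(1+o(1))$ in the end. To reach the claimed $O(\lambda)$ error you must upgrade $w^{(l)}_\lambda-U=o(1)$ to the quantitative $w^{(l)}_\lambda-U=v^{(l)}_\lambda/(\gamma^{(l)}_\lambda)^2=O(\lambda)$ on compacts, which is exactly Proposition~\ref{prop4.2}. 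With that in hand your argument goes through: $u^{(1)}_\lambda-u^{(2)}_\lambda=O(\lambda^{3/2})$, so $F_t^2=(u^{(1)}_\lambda)^2+O(\lambda)$, and also $e^{2w^{(1)}_\lambda}=e^{2U}(1+O(\lambda))$ in your last line.

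The paper takes a slightly sharper route at this point: it introduces $f_\lambda(x)=u^{(2)}_\lambda(\theta^{(1)}_\lambda x+x^{(1)}_\lambda)-u^{(1)}_\lambda(\theta^{(1)}_\lambda x+x^{(1)}_\lambda)$ and proves $f_\lambda=o(\lambda^{3/2})$ by first establishing the identity $v_0^{(1)}\equiv v_0^{(2)}$ (matching both the value and the gradient of $v^{(l)}_\lambda$ at the origin, which vanish because $x^{(l)}_\lambda$ is a maximum of $u^{(l)}_\lambda$). That extra cancellation is not needed for the $O(\lambda)$ statement of this lemma, but the paper records it because $f_\lambda=o(\lambda^{3/2})$ is reused later in Section~\ref{s6}.
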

\begin{proof}
Firstly, we have
\begin{equation}\label{5.5-1}
\begin{split}
\big(\theta^{(1)}_{\lambda}\big)^{2}E_{\lambda}\big(\theta^{(1)}_{\lambda}x+x^{(1)}_{\lambda}\big)  =&
2 \lambda \big(\theta_\lambda^{(1)}\big)^2 u_{\lambda}^{(2)}\big(\theta^{(1)}_{\lambda}x+x^{(1)}_{\lambda}\big ) \displaystyle\int_{0}^1
F_t\big(\theta^{(1)}_{\lambda}x+x^{(1)}_{\lambda}\big )e^{ \big(F_t(\theta^{(1)}_{\lambda}x+x^{(1)}_{\lambda} )\big)^2}dt\\&
+\lambda  \big(\theta^{(1)}_{\lambda}\big)^{2} e^{\big(u_{\lambda}^{(1)}(\theta^{(1)}_{\lambda}x+x^{(1)}_{\lambda} )\big)^2},
\end{split}
\end{equation}
and
\begin{equation}\label{5.5-2}
\begin{split}
F_t\big(\theta^{(1)}_{\lambda}x+x^{(1)}_{\lambda}\big ) &= tu_\lambda^{(1)}
\big(\theta^{(1)}_{\lambda}x+x^{(1)}_{\lambda}\big )+(1-t) u_\lambda^{(2)}\big(\theta^{(1)}_{\lambda}x+x^{(1)}_{\lambda}\big )\\
&= t\Big(\gamma^{(1)}_\lambda+\frac{w^{(1)}_\lambda(x)}{\gamma^{(1)}_\lambda}\Big)
+\big(1-t\big) \bigg(\gamma^{(2)}_\lambda+\frac{w^{(2)}_\lambda\Big(\frac{ \theta^{(1)}_\lambda x}
{\theta^{(2)}_\lambda}+\frac{x^{(1)}_\lambda-x^{(2)}_\lambda}{\theta^{(2)}_\lambda}\Big)}
{\gamma^{(2)}_\lambda}\bigg) \\
&= \Big(\gamma^{(1)}_\lambda+\frac{w^{(1)}_\lambda(x)}{\gamma^{(1)}_\lambda}\Big)+\big(1-t\big) f_\lambda(x),
\end{split}
\end{equation}
where $$f_\lambda(x):=\gamma^{(2)}_\lambda-\gamma^{(1)}_\lambda+\frac{w^{(2)}_\lambda\Big(\frac{ \theta^{(1)}_\lambda x}
{\theta^{(2)}_\lambda}+\frac{x^{(1)}_\lambda-x^{(2)}_\lambda}{\theta^{(2)}_\lambda}\Big)}{\gamma^{(2)}_\lambda}
-\frac{w^{(1)}_\lambda\big(x\big)} {\gamma^{(1)}_\lambda}.$$
Moreover, direct computation gives
\begin{equation*}
\begin{split}
f_\lambda(x)
&= o\big(\lambda^{\frac{3}{2}}\big)+ \frac{w_\lambda^{(2)}(x)+o(\lambda)+O\Big(\big|\frac{x_\lambda^{(1)}-x_\lambda^{(2)}}{\theta_\lambda^{(2)}}\big|\Big)}
{\gamma_\lambda^{(2)}} -\frac{w^{(1)}_\lambda\big(x\big)} {\gamma^{(1)}_\lambda}  = o\big(\lambda^{\frac{3}{2}}\big)+O\left(\frac{\big|w^{(2)}_\lambda(x)-w^{(1)}_\lambda(x)\big|}
{\gamma^{(1)}_\lambda}\right).
\end{split}
\end{equation*}
Since $v_0^{(1)}$ and $v_0^{(2)}$ are the solutions of
\[
-\Delta u - 2e^{2U}u = e^{2U}\big(U^2+U\big),
\]
we have
\[
v_0^{(1)}(x)-v_0^{(2)}(x)=\sum_{i=1}^2 e_i \frac{x_i}{4+|x|^2} + e_0 \frac{4-|x|^2}{4+|x|^2}.
\]
Note that
\begin{equation*}
\begin{split}
v_\lambda^{(1)}(0)-v_\lambda^{(2)}(0) =& \big(\gamma_\lambda^{(1)}\big)^2 \big(w_\lambda^{(1)}(x)-U(x)\big)\Big|_{x=0}
-\big(\gamma_\lambda^{(2)}\big)^2 \big(w_\lambda^{(2)}(x)-U(x)\big)\Big|_{x=0} \\
=& \big(\gamma_\lambda^{(1)}\big)^3 \Big(u_\lambda^{(1)}\big(\theta_\lambda^{(1)}x+x_\lambda^{(1)}\big)
-\gamma_\lambda^{(1)}\Big)\bigg|_{x=0} - \big(\gamma_\lambda^{(1)}\big)^2 U(0) \\
& -\big(\gamma_\lambda^{(2)}\big)^3 \Big(u_\lambda^{(2)}\big(\theta_\lambda^{(2)}x+x_\lambda^{(2)}\big)
-\gamma_\lambda^{(2)}\Big)\bigg|_{x=0} +\big(\gamma_\lambda^{(2)}\big)^2 U(0) = 0,
\end{split}
\end{equation*}
then \eqref{lim-v} gives
$v_0^{(1)}(0)-v_0^{(2)}(0)=0$,
which implies that $e_0=0$. Also \eqref{grav1-grav1} implies $e_i=0$ for $i=1,2$. Therefore, we derive $v_0^{(1)}(x)-v_0^{(2)}(x)=0$. Thus, \eqref{v-def} and \eqref{lim-v} give that
\begin{equation}\label{5.5-3}
\begin{split}
f_\lambda(x) &= o\big(\lambda^{\frac{3}{2}}\big)+O\left(\frac{\big|w^{(2)}_\lambda(x)-w^{(1)}_\lambda(x)\big|}
{\gamma^{(1)}_\lambda}\right)\\
&= o\big(\lambda^{\frac{3}{2}}\big)+O\left(\frac{\big|v^{(2)}_0(x)-v^{(1)}_0(x)\big|}
{\big(\gamma^{(1)}_\lambda\big)^3}\right) = o\big(\lambda^{\frac{3}{2}}\big).
\end{split}
\end{equation}
Hence we can deduce from \eqref{5.5-2} and \eqref{5.5-3} that
\begin{equation*}
\begin{split}
e^{\big( F_t (\theta^{(1)}_{\lambda}x+x^{(1)}_{\lambda}) \big)^2}
&= e^{\big(\gamma^{(1)}_\lambda+\frac{w^{(1)}_\lambda(x)}{\gamma^{(1)}_\lambda}\big)^2
+(1-t)^2 f^2_\lambda(x)+2\big(\gamma^{(1)}_\lambda+\frac{w^{(1)}_\lambda(x)}{\gamma^{(1)}_\lambda}\big) (1-t)f_\lambda(x)} \\
&= e^{\big(\gamma^{(1)}_\lambda+\frac{w^{(1)}_\lambda(x)}{\gamma^{(1)}_\lambda}\big)^2} \big(1+o(\lambda)\big),
\end{split}
\end{equation*}
which implies
\begin{equation}\label{5.5-4}
\begin{split}
& \big(\theta_\lambda^{(1)}\big)^2 \int^1_0 F_t\big(\theta^{(1)}_{\lambda}x+x^{(1)}_{\lambda}\big ) e^{\big( F_t(\theta^{(1)}_{\lambda}x+x^{(1)}_{\lambda}) \big)^2} \,dt \\
=& \big(\theta_\lambda^{(1)}\big)^2 e^{\big(\gamma^{(1)}_\lambda+\frac{w^{(1)}_\lambda(x)}{\gamma^{(1)}_\lambda}\big)^2} \big(1+o(\lambda)\big)  \int^1_0 \Big(\gamma^{(1)}_\lambda+\frac{w^{(1)}_\lambda(x)}{\gamma^{(1)}_\lambda}\Big)
+\big(1-t\big) f_\lambda(x) \,dt  \\
=& \frac{1}{\lambda \gamma_\lambda^{(1)}} e^{2w^{(1)}_\lambda(x)+\frac{(w^{(1)}_\lambda(x))^2}{( \gamma^{(1)}_\lambda)^2} }\big(1+o(\lambda)\big)\Big(1+\frac{ w^{(1)}_\lambda(x) }{\big(\gamma^{(1)}_\lambda\big)^2}
+o\big(\lambda^2\big)\Big).
\end{split}
\end{equation}
Letting  $t=0$ in \eqref{5.5-2}, we have
\begin{equation}\label{5.5-5}
\begin{split}
u_{\lambda}^{(2)}\big(\theta^{(1)}_{\lambda}x+x^{(1)}_{\lambda}\big )
=\gamma^{(1)}_\lambda+\frac{w^{(1)}_\lambda\big(x\big)}{\gamma^{(1)}_\lambda}+f_\lambda(x).
\end{split}
\end{equation}
Hence, by \eqref{v-def}, \eqref{v-bound}, \eqref{5.5-4} and \eqref{5.5-5}, we get
\begin{align}\label{5.5-6}
&2 \lambda \big(\theta_\lambda^{(1)}\big)^2 u_{\lambda}^{(2)}\big(\theta^{(1)}_{\lambda}x+x^{(1)}_{\lambda}\big )  \displaystyle\int_{0}^1 F_t\big(\theta^{(1)}_{\lambda}x+x^{(1)}_{\lambda}\big )
e^{ \big(F_t(\theta^{(1)}_{\lambda}x+x^{(1)}_{\lambda})\big)^2}dt \notag\\
=&~ \frac{2}{\gamma_\lambda^{(1)}} e^{2w^{(1)}_\lambda(x)+\frac{(w^{(1)}_\lambda(x))^2}{( \gamma^{(1)}_\lambda)^2} }
\big(1+o(\lambda)\big)\Big(1+\frac{ w^{(1)}_\lambda(x) }{\big( \gamma^{(1)}_\lambda\big)^2}+o\big(\lambda^2
\big)\Big)  \Big(\gamma^{(1)}_\lambda+\frac{w^{(1)}_\lambda\big(x\big)}{\gamma^{(1)}_\lambda}+f_\lambda(x)\Big) \notag\\
=&~ 2e^{2w^{(1)}_\lambda(x)+\frac{(w^{(1)}_\lambda(x))^2}{( \gamma^{(1)}_\lambda)^2} }
\big(1+o(\lambda)\big)  \Big(1+\frac{ w^{(1)}_\lambda(x) }{\big( \gamma^{(1)}_\lambda\big)^2}+o\big(\lambda^2
\big)\Big)  \Big(1+\frac{ w^{(1)}_\lambda(x) }{\big( \gamma^{(1)}_\lambda\big)^2}+o\big(\lambda^2
\big)\Big) \notag\\
=&~ 2e^{2U(x)} \big(1+ O(\lambda)\big)  \big(1+o(\lambda)\big)  \Big(1+\frac{ w^{(1)}_\lambda(x) }{\big( \gamma^{(1)}_\lambda\big)^2}+o\big(\lambda^2\big)\Big)^2 \notag\\
=&~ 2e^{2U(x)} \big(1+ O(\lambda ) \big).
\end{align}
Next we can find that
\begin{equation}\label{5.5-7}
\begin{split}
&\lambda \big(\theta^{(1)}_{\lambda}\big)^{2} e^{\big(u_{\lambda}^{(1)}(\theta^{(1)}_{\lambda}x+x^{(1)}_{\lambda})
\big)^2} \\
=& \frac{1}{\big(\gamma_\lambda^{(1)}\big)^2} e^{ 2w_\lambda^{(1)}(x)+\frac{(w_\lambda^{(1)}(x))^2}
{(\gamma_\lambda^{(1)})^2} }
= O\bigg(\frac{1}{\big(\gamma_\lambda^{(1)}\big)^2} e^{2U(x)} \bigg)=O\big(\lambda e^{2U(x) } \big).
\end{split}
\end{equation}
Then, substitute \eqref{5.5-6} and \eqref{5.5-7} into \eqref{5.5-1}, we derive \eqref{Dlambda-equa}. Also \eqref{Dlambda-lim} and \eqref{Dlambda-appro} can be obtained from the above computations and Lemma \ref{lem2.4}.
\end{proof}

Now using blow up analysis, we establish an estimate on $\eta_\lambda$.
\begin{Prop}\label{prop_tildeeta}
Let $\widetilde{\eta}_{\lambda}(x):=\eta_{\lambda}\big(\theta^{(1)}_{\lambda}x+x^{(1)}_{\lambda}\big)$, where $\eta_{\lambda}$ is defined in \eqref{eta-def}. Then by taking
a subsequence if necessary, we have
\begin{equation}\label{tildeeta-lim}
\widetilde{\eta}_{\lambda}(x) \to \widetilde{a} \frac{4-|x|^2}{4+|x|^2}+\sum_{i=1}^2\frac{\widetilde{b}_{i}x_i}{4+|x|^2}
~~~\mbox{in}~~~C^1_{loc}\big(\R^2\big),~\mbox{as}~\lambda \to 0,
\end{equation}
where $\widetilde{a}$ and $\widetilde{b}_{i}$ with $i=1,2$ are some constants.
\end{Prop}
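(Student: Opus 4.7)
The plan is to rescale the blow-up quotient $\eta_\lambda$ around $x_\lambda^{(1)}$, extract a convergent subsequence of $\widetilde{\eta}_\lambda$ via elliptic regularity, and identify the limit through the classification in Lemma \ref{lem3.1}.

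First, from \eqref{eta-equa}, the function $\widetilde{\eta}_\lambda(x) = \eta_\lambda(\theta_\lambda^{(1)}x+x_\lambda^{(1)})$ satisfies
\begin{equation*}
-\Delta\widetilde{\eta}_\lambda(x) = \big(\theta_\lambda^{(1)}\big)^2 E_\lambda\big(\theta_\lambda^{(1)}x+x_\lambda^{(1)}\big)\,\widetilde{\eta}_\lambda(x)\quad\text{in}~\Omega_\lambda^{(1)}:=\big(\Omega-x_\lambda^{(1)}\big)/\theta_\lambda^{(1)},
\end{equation*}
with $\widetilde{\eta}_\lambda = 0$ on $\partial\Omega_\lambda^{(1)}$ and $\|\widetilde{\eta}_\lambda\|_{L^\infty(\Omega_\lambda^{(1)})} = 1$. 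By Lemma \ref{theta-D}, the coefficient converges to $2e^{2U(x)}$ in $C_{loc}(\R^2)$ and satisfies the global bound $\leq C/(1+|x|^{4-\delta})$ via \eqref{Dlambda-appro}.

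Next, I would establish compactness. The uniform local boundedness of the potential together with $\|\widetilde{\eta}_\lambda\|_{L^\infty}=1$ yields, via interior $W^{2,p}_{loc}$ elliptic estimates for any $p<\infty$ followed by Sobolev embedding, uniform $C^{1,\alpha}_{loc}(\R^2)$ bounds on $\widetilde{\eta}_\lambda$. Moreover, testing the rescaled equation against $\widetilde{\eta}_\lambda\in H^1_0(\Omega_\lambda^{(1)})$ and using the global decay \eqref{Dlambda-appro} gives
\begin{equation*}
\int_{\Omega_\lambda^{(1)}}\big|\nabla\widetilde{\eta}_\lambda\big|^2\,dx = \int_{\Omega_\lambda^{(1)}}\big(\theta_\lambda^{(1)}\big)^2 E_\lambda\big(\theta_\lambda^{(1)}x+x_\lambda^{(1)}\big)\,\widetilde{\eta}_\lambda^2\,dx \leq \int_{\R^2}\frac{C}{1+|x|^{4-\delta}}\,dx \leq C,
\end{equation*}
uniformly in $\lambda$. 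Extracting a subsequence, $\widetilde{\eta}_\lambda \to \widetilde{\eta}$ in $C^1_{loc}(\R^2)$ for some $\widetilde{\eta}$ with $\|\widetilde{\eta}\|_{L^\infty(\R^2)}\leq 1$, solving
\begin{equation*}
-\Delta\widetilde{\eta} = 2e^{2U}\widetilde{\eta}\quad\text{in}~\R^2,
\end{equation*}
and, by Fatou's lemma applied to the pointwise a.e.\ convergence of $\nabla\widetilde{\eta}_\lambda$, satisfying $\int_{\R^2}|\nabla\widetilde{\eta}|^2\,dx < \infty$.

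Applying Lemma \ref{lem3.1} directly yields the desired representation
\begin{equation*}
\widetilde{\eta}(x) = \widetilde{a}\frac{4-|x|^2}{4+|x|^2} + \sum_{i=1}^{2}\widetilde{b}_i\frac{x_i}{4+|x|^2},
\end{equation*}
for some constants $\widetilde{a}, \widetilde{b}_1, \widetilde{b}_2 \in \R$. The only delicate point is the uniform Dirichlet energy bound, whose validity hinges on the global decay \eqref{Dlambda-appro}; this in turn rests on the careful estimates on $w_\lambda$, $v_0$, $k_0$ from Section \ref{s3} used in the proof of Lemma \ref{theta-D}. The remaining steps are standard blow-up arguments, entirely analogous to those in Lemma \ref{lem3.3} and Proposition \ref{prop3.4}.
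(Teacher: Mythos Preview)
Your proof is correct and follows essentially the same approach as the paper: rescale, use elliptic regularity for $C^{1,\alpha}_{loc}$ compactness, pass to the limit in the equation, and apply Lemma \ref{lem3.1}. You are in fact slightly more careful than the paper in explicitly verifying the finite Dirichlet energy hypothesis of Lemma \ref{lem3.1} via the global decay \eqref{Dlambda-appro} and Fatou; the paper omits this step here (though it does the analogous check in the proof of Lemma \ref{lem3.3}).
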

\begin{proof}
Since $|\widetilde{\eta}_{\lambda}|\leq 1$,
by Lemma \ref{theta-D} and the standard elliptic regularity theory, we find that
\[
\widetilde{\eta}_{\lambda}(x) \in C^{1,\alpha}\big(B_R(0)\big)~\mbox{and}~
\|\widetilde{\eta}_{\lambda}\|_{C^{1,\alpha}(B_R(0))} \leq C,
\]
for any fixed large $R$ and some $\alpha \in (0,1)$, where $C$ is independent of $\lambda$. Then there exists a subsequence (still denoted by $\widetilde{\eta}_{\lambda}$) such that
$$\widetilde{\eta}_{\lambda}(x)\to \eta_0(x)~~\mbox{in}~C^1\big(B_R(0)\big).$$
By \eqref{eta-equa}, it is easy to check that $\widetilde{\eta}_{\lambda}$ satisfies
\begin{equation*}
-\Delta \widetilde{\eta}_{\lambda} =\big(\theta^{(1)}_{\lambda}\big)^{2}E_{\lambda}\big(\theta^{(1)}_{\lambda}x+x^{(1)}_{\lambda}\big) \widetilde{\eta}_{\lambda} \,\,\,~\mbox{in}~\Omega_\lambda.
\end{equation*}
Then by Lemma \ref{theta-D},
letting $\lambda \to 0$, we find that $\eta_0$ satisfies
\begin{equation*}
-\Delta \eta_0 = 2e^{2U} \eta_0~~~\mbox{in}~~~\R^2.
\end{equation*}
Hence, by Lemma \ref{lem3.1}, we have
\[
\eta_0(x)= \widetilde{a} \frac{4-|x|^2}{4+|x|^2}+\sum_{i=1}^2\frac{\widetilde{b}_{i}x_i}{4+|x|^2},
\]
where $\widetilde{a}$ and $\widetilde{b}_i$ with $i=1,2$ are some constants.

\end{proof}

Similar to the estimate of $u_\lambda$, we have a pointwise estimate away from the maximum point $x^{(1)}_\lambda$.
\begin{Prop}\label{prop_eta_lambda}
It holds
\begin{equation}\label{etalambda-equa}
\begin{split}
\eta_{\lambda}(x)=& \widetilde{A}_{\lambda}G\big(x^{(1)}_{\lambda},x\big)+
\sum^2_{i=1}\widetilde{B}_{\lambda,i}\theta^{(1)}_{\lambda}  \partial_i G\big(x^{(1)}_{\lambda},x\big)
 +o\big(\theta^{(1)}_{\lambda}\big) \,~\mbox{in}~ C^1
\Big(\Omega\backslash  B_{2d}\big(x^{(1)}_{\lambda}\big)\Big),
\end{split}
\end{equation}
where $d>0$ is a small fixed constant,
$\partial_iG(y,x)=\frac{\partial G(y,x)}{\partial y_i}$,
\begin{equation}\label{tildeA-equa}
\widetilde{A}_{\lambda}:=
\int_{B_d(x^{(1)}_{\lambda})}  E_{\lambda}(x)\eta_{\lambda}(x) \,dx,
\end{equation}
and
\begin{equation}\label{tildeB-equa}
\widetilde{B}_{\lambda,i}:= \frac{1}{\theta^{(1)}_{\lambda}}\int_{B_d(x^{(1)}_{\lambda})} \big(x_i-(x^{(1)}_{\lambda})_{i}\big)  E_{\lambda}(x)\eta_{\lambda}(x) \,dx.
\end{equation}
\end{Prop}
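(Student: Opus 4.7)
The plan is to mimic the proof of Lemma \ref{prop3-1}, but applied to $\eta_\lambda$ rather than $u_\lambda$. From the equation \eqref{eta-equa} satisfied by $\eta_\lambda$, Green's representation formula gives
\begin{equation*}
\eta_\lambda(x) = \int_\Omega G(y,x) E_\lambda(y) \eta_\lambda(y) \, dy, \qquad x \in \Omega.
\end{equation*}
I split the integral as $\int_{B_d(x^{(1)}_\lambda)} + \int_{\Omega\setminus B_d(x^{(1)}_\lambda)}$ and, in the inner piece, Taylor-expand $y \mapsto G(y,x)$ around $x^{(1)}_\lambda$ to second order. The constant term yields $G(x^{(1)}_\lambda,x)\,\widetilde A_\lambda$ with $\widetilde A_\lambda$ as in \eqref{tildeA-equa}; the linear term yields $\sum_{i=1}^2 \partial_i G(x^{(1)}_\lambda,x)\cdot \theta^{(1)}_\lambda\widetilde B_{\lambda,i}$ with $\widetilde B_{\lambda,i}$ as in \eqref{tildeB-equa}. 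The structure of the proposition then matches the Taylor expansion exactly.

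The heart of the argument is showing that both the quadratic Taylor remainder and the outer integral are $o(\theta^{(1)}_\lambda)$ in $C^1\big(\Omega\setminus B_{2d}(x^{(1)}_\lambda)\big)$. For the Taylor remainder, I change variables $y=x^{(1)}_\lambda+\theta^{(1)}_\lambda z$; combining $|y-x^{(1)}_\lambda|^2=(\theta^{(1)}_\lambda)^2|z|^2$ with $|\eta_\lambda|\le 1$ and the decay estimate \eqref{Dlambda-appro} from Lemma \ref{theta-D}, the remainder is
\begin{equation*}
O\Bigl((\theta^{(1)}_\lambda)^{2}\int_{B_{d/\theta^{(1)}_\lambda}(0)}\frac{|z|^2}{1+|z|^{4-\delta}}\,dz\Bigr) = O\bigl((\theta^{(1)}_\lambda)^{2-\delta}\bigr) = o\bigl(\theta^{(1)}_\lambda\bigr),
\end{equation*}
since $\delta>0$ can be chosen arbitrarily small. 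For the outer piece, by H\"older's inequality with exponent $\alpha>1$ sufficiently close to $1$ (exactly as in \eqref{2.5-3}) together with the bound $E_\lambda(y)=O\bigl((\theta^{(1)}_\lambda)^{2-\delta}/|y-x^{(1)}_\lambda|^{4-\delta}\bigr)$ for $y$ bounded away from $x^{(1)}_\lambda$ (again from \eqref{Dlambda-appro}), one obtains
\begin{equation*}
\int_{\Omega\setminus B_d(x^{(1)}_\lambda)}G(y,x)E_\lambda(y)\eta_\lambda(y)\,dy = O\bigl((\theta^{(1)}_\lambda)^{2-\delta}\bigr) = o\bigl(\theta^{(1)}_\lambda\bigr).
\end{equation*}

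For the $C^1$ statement, I differentiate under the integral sign with respect to $x$, since $x\in\Omega\setminus B_{2d}(x^{(1)}_\lambda)$ stays away from the singularity of $G$. The identical Taylor argument, applied to $D_{x_j}G(y,x)$ in place of $G(y,x)$, produces the analogous expansion for $\partial_j\eta_\lambda(x)$ with the same coefficients $\widetilde A_\lambda$ and $\theta^{(1)}_\lambda\widetilde B_{\lambda,i}$ and with remainder $o(\theta^{(1)}_\lambda)$. Combining the two estimates gives the claim.

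The main obstacle is not conceptual but calibrational: one must keep track of the powers of $\theta^{(1)}_\lambda$ carefully, because the zeroth and first Taylor coefficients $\widetilde A_\lambda$, $\widetilde B_{\lambda,i}$ need not a priori have small size, and the desired error is only one order below the linear term. The decay rate $(1+|z|)^{-(4-\delta)}$ from \eqref{Dlambda-appro} is exactly what is needed to push the remainder past $\theta^{(1)}_\lambda$ into $o(\theta^{(1)}_\lambda)$; any weaker decay would only give $O(\theta^{(1)}_\lambda)$ and would not suffice to isolate the first two terms of the expansion.
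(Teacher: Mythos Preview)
Your proposal is correct and follows essentially the same approach as the paper's proof: Green's representation, split into $B_d(x^{(1)}_\lambda)$ and its complement, Taylor expansion of $G(y,x)$ in the inner piece giving $\widetilde A_\lambda$ and $\widetilde B_{\lambda,i}$, and the decay bound \eqref{Dlambda-appro} controlling both the quadratic remainder and (via H\"older) the outer integral, with the $C^1$ statement obtained by differentiating under the integral sign.
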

\begin{proof}
By \eqref{eta-equa} and the Green's representation theorem, we have
\begin{align}\label{5.7-1}
\eta_\lambda(x) = & \int_{\Omega} G(y,x) E_\lambda(y) \eta_\lambda(y) \,dy \notag\\
=& \int_{B_{d}(x_\lambda^{(1)})} G(y,x) E_\lambda(y) \eta_\lambda(y) \,dy +\int_{\Omega\backslash B_{d}(x_\lambda^{(1)})}
G(y,x) E_\lambda(y) \eta_\lambda(y) \,dy \notag\\
=& G(x_\lambda^{(1)},x) \int_{B_{d}(x_\lambda^{(1)})} E_\lambda(y) \eta_\lambda(y) \,dy + \int_{B_{d}(x_\lambda^{(1)})} \big( G(y,x)-G(x_\lambda^{(1)},x) \big) E_\lambda(y) \eta_\lambda(y) \,dy \notag\\
&+ \int_{\Omega\backslash B_{d}(x_\lambda^{(1)})} G(y,x) E_\lambda(y) \eta_\lambda(y) \,dy.
\end{align}
Similar to \eqref{2.5-3}, by Lemma \ref{theta-D}, for any fixed $\alpha>1$, 
we calculate that
\begin{equation}\label{5.7-2}
\begin{split}
&\int_{\Omega\backslash B_{d}(x_\lambda^{(1)})}  G(y,x) E_\lambda(y) \eta_\lambda(y) \,dy \\
 =& O \left( \bigg(\int_{\Omega\backslash B_{d}(x_\lambda^{(1)})} \big|E_\lambda(y) \eta_\lambda(y)\big|^{\alpha} \,dy\bigg)^{\frac{1}{\alpha}} \cdot \bigg(\int_{\Omega\backslash B_{d}(x_\lambda^{(1)})} \big|G(y,x)\big|^{\frac{\alpha}{\alpha-1}} \,dy\bigg)^{1-\frac{1}{\alpha}} \right) \\
=& O\Bigg( \big(\theta_\lambda^{(1)}\big)^{\frac{2}{\alpha}-2} \bigg(\int_{\Omega_\lambda\backslash B_{\frac{d}{\theta_\lambda^{(1)}}}(0)} \frac{1}{\big(1+|z|^{4-\delta}\big)^{\alpha}} \,dz\bigg)^{\frac{1}{\alpha}}  \Bigg) = O\Big( \big(\theta_\lambda^{(1)}\big)^{2-\delta} \Big).
\end{split}
\end{equation}
On the other hand, we find
\begin{align}\label{5.7-3}
&\int_{B_{d}(x_\lambda^{(1)})} \big( G(y,x)-G(x_\lambda^{(1)},x) \big) E_\lambda(y) \eta_\lambda(y) \,dy \notag\\
=& \int_{B_{d}(x_\lambda^{(1)})} \big\langle \nabla G(x_\lambda^{(1)},x),y-x_\lambda^{(1)} \big\rangle E_\lambda(y) \eta_\lambda(y) \,dy + O\bigg( \int_{B_{d}(x_\lambda^{(1)})} \big|y-x_\lambda^{(1)}\big|^2 |E_\lambda(y)|\cdot |\eta_\lambda(y)| \,dy  \bigg) \notag\\
=&  \theta_\lambda^{(1)} \bigg( \frac{1}{\theta_\lambda^{(1)} } \sum\limits_{i=1}^2 \partial_i G(x_\lambda^{(1)},x) \int_{B_d(x^{(1)}_{\lambda})} \Big(y_i-\big(x^{(1)}_\lambda\big)_{i}\Big)  E_{\lambda}(y)\eta_{\lambda}(y) \,dy \bigg) \notag\\
& +O\bigg( \int_{B_{\frac{d}{\theta_\lambda^{(1)}}}(0)} \big(\theta_\lambda^{(1)}\big)^2 |z|^2 \big| E_\lambda\big(\theta_\lambda^{(1)}z+x_\lambda^{(1)}\big) \big| \big(\theta_\lambda^{(1)}\big)^2 \,dz  \bigg) \notag\\
=& \theta_\lambda^{(1)} \sum\limits_{i=1}^2 \widetilde{B}_{\lambda,i} \partial_i G(x_\lambda^{(1)},x) +O\Big( \big(\theta_\lambda^{(1)}\big)^{2-\delta} \Big).
\end{align}
Then \eqref{5.7-1}, \eqref{5.7-2} and \eqref{5.7-3} imply that
\begin{equation}\label{5.7-4}
\begin{split}
\eta_{\lambda}(x)=& \widetilde{A}_{\lambda}G(x^{(1)}_{\lambda},x)+
\sum^2_{i=1}\widetilde{B}_{\lambda,i}\theta^{(1)}_{\lambda}  \partial_iG(x^{(1)}_{\lambda},x)
 +o\big(\theta^{(1)}_{\lambda}\big).
\end{split}
\end{equation}

Furthermore, we know that
\begin{equation*}
D_{x_i}\eta_\lambda(x) = \int_{\Omega} D_{x_i} G(y,x) E_\lambda(y) \eta_\lambda(y) \,dy,
\end{equation*}
then similar to \eqref{5.7-4}, we can deduce that
\begin{equation*}
\begin{split}
\eta_{\lambda}(x)=& \widetilde{A}_{\lambda}G(x^{(1)}_{\lambda},x)+
\sum^2_{i=1}\widetilde{B}_{\lambda,i}\theta^{(1)}_{\lambda}  \partial_iG(x^{(1)}_{\lambda},x)
+o\big(\theta^{(1)}_{\lambda}\big) \,~\mbox{in}~ C^1 \Big(\Omega\backslash B_{2d} \big(x^{(1)}_{\lambda}\big)\Big).
\end{split}
\end{equation*}

\end{proof}

\subsection{Proofs of Theorem \ref{th_uniq} and Theorem \ref{th_uniq2}}
\begin{Prop}
For any small fixed constant $d>0$, we have the following local Pohozaev identities:
\begin{equation}\label{q1_ueta}
\begin{split}
Q^{(1)}\big(u_\lambda^{(1)}+u_\lambda^{(2)},\eta_{\lambda}\big)=
2 \int_{\partial  B_{d}(x^{(1)}_{\lambda})}\widetilde{E}_{\lambda} \eta_{\lambda} \nu_i\,d\sigma,
\end{split}
\end{equation}
and
\begin{equation}\label{p1_ueta}
\begin{split}
P^{(1)}\big(u_\lambda^{(1)}+u_\lambda^{(2)},\eta_{\lambda}\big)
= 2d \int_{\partial  B_{d}(x^{(1)}_{\lambda})} \widetilde{E}_{\lambda}\eta_{\lambda}\,d\sigma
-4\int_{  B_{d}(x^{(1)}_{\lambda})} \widetilde{E}_{\lambda}\eta_{\lambda}\,dx,
\end{split}
\end{equation}
where $P^{(1)}$ and $Q^{(1)}$ are the quadratic forms in \eqref{p1uv} and \eqref{q1uv},   $\nu=\big(\nu_{1},\nu_2\big)$ is the unit  outward normal of $\partial  B_{d}(x^{(1)}_{\lambda})$ and
\begin{equation}\label{def_tildeE}
\widetilde{E}_{\lambda}(x):= \lambda  \displaystyle\int_{0}^1
\Big(tu_{\lambda}^{(1)}(x)+(1-t)u_{\lambda}^{(2)}(x)\Big) e^{\big(tu_{\lambda}^{(1)}(x)+(1-t)u_{\lambda}^{(2)}(x)\big)^2}dt.
\end{equation}
\end{Prop}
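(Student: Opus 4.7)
The plan is to combine the two equations satisfied by $u_{\lambda}^{(1)}$ and $u_{\lambda}^{(2)}$ by working with the sum $\Sigma:=u_{\lambda}^{(1)}+u_{\lambda}^{(2)}$ and the difference $\delta:=u_{\lambda}^{(1)}-u_{\lambda}^{(2)}$. Since $\eta_{\lambda}=\delta/\|\delta\|_{L^{\infty}(\Omega)}$, it suffices to establish the corresponding Pohozaev-type identities for the pair $(\delta,\Sigma)$ and then divide by $\|\delta\|_{L^{\infty}(\Omega)}$; the symmetry of $P^{(1)}$ and $Q^{(1)}$ in their two arguments then gives \eqref{q1_ueta} and \eqref{p1_ueta}. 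The two PDEs in play are
\[
-\Delta \Sigma=\lambda\big(u_{\lambda}^{(1)}e^{(u_{\lambda}^{(1)})^{2}}+u_{\lambda}^{(2)}e^{(u_{\lambda}^{(2)})^{2}}\big),\qquad -\Delta \delta=\lambda\big(u_{\lambda}^{(1)}e^{(u_{\lambda}^{(1)})^{2}}-u_{\lambda}^{(2)}e^{(u_{\lambda}^{(2)})^{2}}\big).
\]

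For \eqref{q1_ueta} I will multiply $-\Delta \delta$ by $\partial_{x_{i}}\Sigma$ and $-\Delta \Sigma$ by $\partial_{x_{i}}\delta$, add, and integrate over $B_{d}(x_{\lambda}^{(1)})$. A standard integration by parts, mirroring the derivation of Lemma \ref{lem2.3}, converts the left-hand side exactly into $Q^{(1)}(\delta,\Sigma)$. On the right-hand side the cross terms cancel and one is left with $2\lambda \bigl[g(u_{\lambda}^{(1)})\partial_{x_{i}}u_{\lambda}^{(1)}-g(u_{\lambda}^{(2)})\partial_{x_{i}}u_{\lambda}^{(2)}\bigr]$, where $g(s)=se^{s^{2}}$. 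The key algebraic observation is
\[
\lambda g(u_{\lambda}^{(l)})\partial_{x_{i}}u_{\lambda}^{(l)}=\tfrac{1}{2}\partial_{x_{i}}\bigl(\lambda e^{(u_{\lambda}^{(l)})^{2}}\bigr),
\]
so this bulk integral is a pure divergence whose value reduces to the boundary integral $\int_{\partial B_{d}(x_{\lambda}^{(1)})} \lambda \big(e^{(u_{\lambda}^{(1)})^{2}}-e^{(u_{\lambda}^{(2)})^{2}}\big)\nu_{i}\,d\sigma$. Applying the fundamental theorem of calculus to $t\mapsto \tfrac12 e^{F_{t}^{2}}$ gives $\int_{0}^{1}F_{t}e^{F_{t}^{2}}\,dt=(e^{(u_{\lambda}^{(1)})^{2}}-e^{(u_{\lambda}^{(2)})^{2}})/(2\delta)$, so that $2\widetilde{E}_{\lambda}\delta=\lambda\big(e^{(u_{\lambda}^{(1)})^{2}}-e^{(u_{\lambda}^{(2)})^{2}}\big)$, and the boundary integral becomes $2\int_{\partial B_{d}(x_{\lambda}^{(1)})} \widetilde{E}_{\lambda}\delta\,\nu_{i}\,d\sigma$. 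Dividing by $\|\delta\|_{L^{\infty}(\Omega)}$ yields \eqref{q1_ueta}.

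For \eqref{p1_ueta} I repeat the same procedure with the multipliers $\langle x-x_{\lambda}^{(1)},\nabla \Sigma\rangle$ and $\langle x-x_{\lambda}^{(1)},\nabla \delta\rangle$ in place of $\partial_{x_{i}}\Sigma$ and $\partial_{x_{i}}\delta$. Using $\langle x-x_{\lambda}^{(1)},\nu\rangle=d$ on $\partial B_{d}(x_{\lambda}^{(1)})$, the left-hand side reduces to $P^{(1)}(\delta,\Sigma)$ after a computation parallel to the one in Lemma \ref{indep_d}. The right-hand side collapses in the same fashion to $\langle x-x_{\lambda}^{(1)},\nabla\big(\lambda(e^{(u_{\lambda}^{(1)})^{2}}-e^{(u_{\lambda}^{(2)})^{2}})\big)\rangle$; since $\mathrm{div}(x-x_{\lambda}^{(1)})=2$ in $\R^{2}$, one further divergence step produces the boundary term $d\int_{\partial B_{d}} \lambda(e^{(u_{\lambda}^{(1)})^{2}}-e^{(u_{\lambda}^{(2)})^{2}})\,d\sigma$ minus the bulk correction $2\int_{B_{d}} \lambda(e^{(u_{\lambda}^{(1)})^{2}}-e^{(u_{\lambda}^{(2)})^{2}})\,dx$. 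Rewriting these via $2\widetilde{E}_{\lambda}\delta=\lambda(e^{(u_{\lambda}^{(1)})^{2}}-e^{(u_{\lambda}^{(2)})^{2}})$ and dividing by $\|\delta\|_{L^{\infty}(\Omega)}$ gives \eqref{p1_ueta}.

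No step is a genuine obstacle: the whole argument is routine integration by parts that exactly parallels Lemmas \ref{indep_d} and \ref{lem2.3}. The only non-mechanical point is the identification of the boundary (and, in the $P$-case, bulk) integrand with $2\widetilde{E}_{\lambda}\delta$, which relies on the integration-by-parts in the parameter $t$ that is built into the definition \eqref{def_tildeE}; everything else is just bookkeeping.
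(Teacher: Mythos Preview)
Your proposal is correct and follows essentially the same approach as the paper. The only cosmetic difference is that the paper, instead of redoing the integration by parts for the pair $(\delta,\Sigma)$, uses the bilinearity/polarization $Q^{(1)}(\Sigma,\delta)=Q^{(1)}(u_\lambda^{(1)},u_\lambda^{(1)})-Q^{(1)}(u_\lambda^{(2)},u_\lambda^{(2)})$ (and similarly for $P^{(1)}$) and then directly invokes the already-established identities \eqref{puu} and \eqref{quu}; the key step---rewriting $\lambda\big(e^{(u_\lambda^{(1)})^{2}}-e^{(u_\lambda^{(2)})^{2}}\big)=2\widetilde{E}_\lambda\,\delta$ via the $t$-parametrization---is identical in both arguments.
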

\begin{proof}
We can calculate that
\begin{equation}\label{5.8-1}
\begin{split}
Q^{(1)}\big(u_\lambda^{(1)}+u_\lambda^{(2)},\eta_{\lambda}\big)
&= \frac{1}{\|u_\lambda^{(1)}-u_\lambda^{(2)}\|_{L^{\infty}(\Omega)}} Q^{(1)}\big(u_\lambda^{(1)}+u_\lambda^{(2)},u_\lambda^{(1)}-u_\lambda^{(2)}\big)\\
&= \frac{1}{\|u_\lambda^{(1)}-u_\lambda^{(2)}\|_{L^{\infty}(\Omega)}} \left(Q^{(1)}\big(u_\lambda^{(1)},u_\lambda^{(1)}\big) - Q^{(1)}\big(u_\lambda^{(2)},u_\lambda^{(2)}\big)\right).
\end{split}
\end{equation}
Next, by \eqref{quu}, we know that
\begin{equation*}
\begin{split}
& Q^{(1)}\big(u_\lambda^{(1)},u_\lambda^{(1)}\big) - Q^{(1)}\big(u_\lambda^{(2)},u_\lambda^{(2)}\big) \\
=&~ \lambda \int_{\partial B_{d}(x_{\lambda}^{(1)})} e^{(u^{(1)}_\lambda(x))^2} \nu_i \,d\sigma -\lambda \int_{\partial B_{d}(x_{\lambda}^{(1)})} e^{(u^{(2)}_\lambda(x))^2} \nu_i \,d\sigma \\
=&~ \lambda \int_{\partial B_{d}(x_{\lambda}^{(1)})} \left( \int_0^1 \frac{d}{dt} e^{(tu^{(1)}_\lambda(x)+(1-t)u^{(2)}_\lambda(x))^2} \,dt \right) \nu_i \,d\sigma \\
=&~ \lambda \int_{\partial B_{d}(x_{\lambda}^{(1)})} \left(\int_0^1 2 e^{(tu^{(1)}_\lambda(x)+(1-t)u^{(2)}_\lambda(x))^2} \big(tu^{(1)}_\lambda(x)+(1-t)u^{(2)}_\lambda(x)\big) \big(u^{(1)}_\lambda(x)-u^{(2)}_\lambda(x)\big) \,dt \right) \nu_i \,d\sigma \\
=&~ 2\int_{\partial B_{d}(x_{\lambda}^{(1)})} \widetilde{E}_\lambda(x) \big(u^{(1)}_\lambda(x)-u^{(2)}_\lambda(x)\big) \nu_i \,d\sigma,
\end{split}
\end{equation*}
which together with \eqref{5.8-1} implies \eqref{q1_ueta}.

\vskip 0.2cm
Similarly, by \eqref{puu}, we have
\begin{equation*}
\begin{split}
& P^{(1)}\big(u_\lambda^{(1)}+u_\lambda^{(2)},\eta_{\lambda}\big) \\
=&~ \frac{1}{\|u_\lambda^{(1)}-u_\lambda^{(2)}\|_{L^{\infty}(\Omega)}} \left(P^{(1)}\big(u_\lambda^{(1)},u_\lambda^{(1)}\big) - P^{(1)}\big(u_\lambda^{(2)},u_\lambda^{(2)}\big)\right) \\
=&~ \frac{1}{\|u_\lambda^{(1)}-u_\lambda^{(2)}\|_{L^{\infty}(\Omega)}} \left( d\lambda  \int_{\partial B_{d}(x^{(1)}_{\lambda})} \Big( e^{(u^{(1)}_\lambda)^2} - e^{(u^{(2)}_\lambda)^2} \Big) \,d\sigma - 2\lambda \int_{B_{d}(x^{(1)}_{\lambda})}  \Big( e^{(u^{(1)}_\lambda)^2}-e^{(u^{(2)}_\lambda)^2} \Big) \,dx \right) \\
=&~ 2d \int_{\partial B_{d}(x^{(1)}_{\lambda})} \widetilde{E}_{\lambda}\eta_{\lambda} \,d\sigma
-4\int_{B_{d}(x^{(1)}_{\lambda})} \widetilde{E}_{\lambda}\eta_{\lambda} \,dx.
\end{split}
\end{equation*}

\end{proof}

\begin{Prop}\label{prop-AB}
Let $\widetilde{A}_\lambda$ and  $\widetilde{B}_{\lambda,i}$ be as in \eqref{tildeA-equa} and \eqref{tildeB-equa}, $\widetilde{b}_{i}$ as in Proposition \ref{prop_tildeeta}, then
\begin{equation}\label{A_B}
\widetilde{A}_{\lambda}=
o\big(1\big),~~~~\widetilde{B}_{\lambda,i}=o(1)~~~\mbox{and}~~~\widetilde{b}_{i}=0~\mbox{for}~i=1,2.
\end{equation}
\end{Prop}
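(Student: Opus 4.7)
\textbf{Proof proposal for Proposition \ref{prop-AB}.} The plan is to attack the three claims in order: first show $\widetilde{A}_\lambda=o(1)$ by a direct scaling argument, then use the Pohozaev identity \eqref{q1_ueta} to force $\widetilde{B}_{\lambda,i}=o(1)$, and finally match limits to conclude $\widetilde{b}_i=0$.

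First, I would rescale in \eqref{tildeA-equa} via $x=\theta^{(1)}_\lambda z+x^{(1)}_\lambda$ to write
\[
\widetilde{A}_\lambda=\int_{\Omega_\lambda}\bigl(\theta^{(1)}_\lambda\bigr)^{2}E_\lambda\bigl(\theta^{(1)}_\lambda z+x^{(1)}_\lambda\bigr)\,\widetilde{\eta}_\lambda(z)\,dz.
\]
The integrand is dominated by $C/(1+|z|^{4-\delta})$ thanks to \eqref{Dlambda-appro}, so by \eqref{Dlambda-lim} and Proposition \ref{prop_tildeeta} I can pass to the limit to get $\widetilde{A}_\lambda\to \int_{\R^2}2e^{2U}\bigl(\widetilde{a}\tfrac{4-|z|^2}{4+|z|^2}+\sum_j\widetilde{b}_j\tfrac{z_j}{4+|z|^2}\bigr)dz$. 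The odd-in-$z_j$ terms vanish and the remaining term vanishes by the first identity in \eqref{equa2}, giving $\widetilde{A}_\lambda=o(1)$. The same procedure applied to \eqref{tildeB-equa} yields
\[
\widetilde{B}_{\lambda,i}\;\longrightarrow\;\int_{\R^2}\frac{2\widetilde{b}_i\,z_i^{\,2}\,e^{2U}}{4+|z|^2}\,dz\;=\;2\pi\,\widetilde{b}_i,
\]
where again oddness kills the $\widetilde{a}$ and $j\neq i$ contributions and the surviving one is evaluated by the second identity in \eqref{equa2}. So it remains to show $\widetilde{b}_i=0$.

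For this I would plug into the local Pohozaev identity \eqref{q1_ueta}. Using \eqref{4.9.1} for $u_\lambda^{(l)}$ and \eqref{etalambda-equa} for $\eta_\lambda$, together with Proposition \ref{prop_p1q1} and the off-center estimates \eqref{q1_g2}--\eqref{q1_partg2}, the leading contribution to $Q^{(1)}(u_\lambda^{(1)}+u_\lambda^{(2)},\eta_\lambda)$ becomes
\[
-\bigl(C^{(1)}_\lambda+C^{(2)}_\lambda\bigr)\sum_{j=1}^{2}\widetilde{B}_{\lambda,j}\,\theta^{(1)}_\lambda\cdot\tfrac{1}{2}\,\frac{\partial^2\mathcal{R}(x_0)}{\partial x_i\partial x_j}+o\!\left(\tfrac{\theta^{(1)}_\lambda}{\gamma^{(1)}_\lambda}\right),
\]
since the $\widetilde{A}_\lambda$-terms are already $o(\theta^{(1)}_\lambda/\gamma^{(1)}_\lambda)$ (using $\partial_i\mathcal{R}(x^{(1)}_\lambda)=o(\theta^{(1)}_\lambda)$ from \eqref{xlambda-x0} and $\widetilde{A}_\lambda=o(1)$). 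For the right-hand side, on $\partial B_d(x^{(1)}_\lambda)$ the weight $\widetilde{E}_\lambda$ defined in \eqref{def_tildeE} is only $O(\lambda)$ because $u_\lambda^{(l)}\to 0$ away from $x_0$, and $\eta_\lambda=O(\widetilde{A}_\lambda)+O(\theta^{(1)}_\lambda)=o(1)$ by Proposition \ref{prop_eta_lambda} and the first step; since $\lambda\gamma^{(1)}_\lambda=O(1/\gamma^{(1)}_\lambda)\to 0$, this boundary integral is $o(\theta^{(1)}_\lambda/\gamma^{(1)}_\lambda)$.

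Combining, \eqref{q1_ueta} reduces, after dividing by $\theta^{(1)}_\lambda/\gamma^{(1)}_\lambda$ and using $C^{(l)}_\lambda\sim 4\pi/\gamma^{(l)}_\lambda$, to
\[
\sum_{j=1}^{2}\widetilde{B}_{\lambda,j}\,\frac{\partial^2\mathcal{R}(x_0)}{\partial x_i\partial x_j}=o(1),\qquad i=1,2.
\]
Since $x_0$ is a non-degenerate critical point of $\mathcal{R}$, the Hessian $D^2\mathcal{R}(x_0)$ is invertible, hence $\widetilde{B}_{\lambda,i}=o(1)$; matching this with the limit $\widetilde{B}_{\lambda,i}\to 2\pi\widetilde{b}_i$ obtained above forces $\widetilde{b}_i=0$, completing the proof. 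The main technical obstacle I anticipate is bookkeeping the orders: the exponential smallness $\theta^{(1)}_\lambda \ll \lambda$ means the ``small'' right-hand side of \eqref{q1_ueta} could, a priori, swamp the linear algebraic relation on $\widetilde{B}_{\lambda,j}$, so one must check carefully that the boundary term is genuinely $o(\theta^{(1)}_\lambda/\gamma^{(1)}_\lambda)$ (which works because the factor $\lambda$ is better than $1/\gamma^{(1)}_\lambda$).
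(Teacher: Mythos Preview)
Your overall strategy is exactly the one the paper uses, and your computations of $\widetilde{A}_\lambda=o(1)$, of the limit $\widetilde{B}_{\lambda,i}\to 2\pi\widetilde{b}_i$, and of the left-hand side of \eqref{q1_ueta} are all correct. The gap is in your treatment of the right-hand side of \eqref{q1_ueta}. The crude pointwise bound $\widetilde{E}_\lambda=O(\lambda)$ on $\partial B_d(x^{(1)}_\lambda)$ (indeed $\widetilde{E}_\lambda=O(\lambda/\gamma^{(1)}_\lambda)$, since $u^{(l)}_\lambda=O(1/\gamma^{(l)}_\lambda)$ there) gives only
\[
\int_{\partial B_d(x^{(1)}_\lambda)}\widetilde{E}_\lambda\,\eta_\lambda\,\nu_i\,d\sigma
=O\!\left(\frac{\lambda}{\gamma^{(1)}_\lambda}\right)\cdot o(1)=o(\lambda^{3/2}),
\]
and this is \emph{not} $o(\theta^{(1)}_\lambda/\gamma^{(1)}_\lambda)$: because $\theta^{(1)}_\lambda\sim e^{-\frac{1}{2}(\gamma^{(1)}_\lambda)^2}$ is exponentially small in $1/\lambda$, the quantity $\theta^{(1)}_\lambda/\gamma^{(1)}_\lambda$ is vastly smaller than any power of $\lambda$. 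Your parenthetical justification (``the factor $\lambda$ is better than $1/\gamma^{(1)}_\lambda$'') shows $\lambda\theta^{(1)}_\lambda=o(\theta^{(1)}_\lambda/\gamma^{(1)}_\lambda)$, which handles the $O(\theta^{(1)}_\lambda)$ piece of $\eta_\lambda$, but it does nothing for the $O(\widetilde{A}_\lambda)=o(1)$ piece, which still leaves an $o(\lambda^{3/2})$ residue.

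The fix is to estimate the boundary term \emph{after} rescaling, using the sharp decay inherited from Lemma~\ref{lem2.4}. From \eqref{tilde_D-equa} and \eqref{2.4.2} one has
\[
\bigl(\theta^{(1)}_\lambda\bigr)^{2}\widetilde{E}_\lambda\bigl(\theta^{(1)}_\lambda z+x^{(1)}_\lambda\bigr)
=O\!\left(\frac{1}{\gamma^{(1)}_\lambda\,(1+|z|)^{4-\delta}}\right),
\]
so that, integrating over $\partial B_{d/\theta^{(1)}_\lambda}(0)$,
\[
\int_{\partial B_d(x^{(1)}_\lambda)}\widetilde{E}_\lambda\,\eta_\lambda\,\nu_i\,d\sigma
=O\!\left(\frac{(\theta^{(1)}_\lambda)^{2-\delta}}{\gamma^{(1)}_\lambda}\right)
=o\!\left(\sqrt{\lambda}\,\theta^{(1)}_\lambda\right),
\]
which is exactly the bound you need. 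With this correction your argument goes through and coincides with the paper's proof.
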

\begin{proof}
Firstly, using \eqref{Dlambda-lim} and \eqref{tildeeta-lim}, we know
\begin{equation}\label{5.9-1}
\begin{split}
\widetilde{A}_{\lambda} &=\int_{B_d(x^{(1)}_{\lambda})}  E_{\lambda}(x)\eta_{\lambda}(x) \,dx =\int_{B_{\frac{d}{\theta_\lambda^{(1)}}}(0)} \big(\theta_\lambda^{(1)}\big)^2 E_{\lambda}\big(\theta_\lambda^{(1)}z+x_\lambda^{(1)}\big) \widetilde{\eta}_{\lambda}(z) \,dz \\
&= 2\widetilde{a} \int_{\R^2} e^{2U(z)}\frac{4-|z|^2}{4+|z|^2} \,dz +o\big(1\big)
\overset{\eqref{equa2}}=o\big(1\big).
\end{split}
\end{equation}
Similar to the proof of \eqref{5.5-4}, we have
\begin{equation}\label{tilde_D-equa}
\begin{split}
\big(\theta^{(1)}_\lambda\big)^2 \widetilde{E}_\lambda\big(\theta^{(1)}_\lambda x + x^{(1)}_\lambda\big)
&= \lambda \big(\theta^{(1)}_\lambda\big)^2 \int_{0}^{1} F_t\big(\theta^{(1)}_{\lambda}x+x^{(1)}_{\lambda}\big)
e^{ \big(F_t(\theta^{(1)}_{\lambda}x+x^{(1)}_{\lambda})\big)^2 } \,dt \\
&= \frac{1}{\gamma_\lambda^{(1)}} e^{2w^{(1)}_\lambda(x)+\frac{(w^{(1)}_\lambda(x))^2}{( \gamma^{(1)}_\lambda)^2}}
\big(1+o(\lambda)\big)\Big(1+\frac{ w^{(1)}_\lambda(x) }{\big( \gamma^{(1)}_\lambda\big)^2}
+o\big(\lambda^2\big)\Big) \\
&= \frac{1}{\gamma_\lambda^{(1)}} e^{2U(x)} \big( 1+O(\lambda) \big)~\,\,\mbox{uniformly~on~compact~sets}.
\end{split}
\end{equation}
Hence it holds
\begin{equation}\label{5.9-2}
\begin{split}
\text{RHS of (\ref{q1_ueta})} &= 2\int_{\partial B_d(x^{(1)}_{\lambda})} \widetilde{E}_{\lambda} \eta_{\lambda} \nu_i \,d\sigma \\
 &= O\bigg( \int_{\partial B_{\frac{d}{\theta^{(1)}_\lambda}}(0)} \theta^{(1)}_\lambda \widetilde{E}_{\lambda}\big(\theta^{(1)}_\lambda x + x^{(1)}_\lambda\big) \,d\sigma \bigg)
=o\Big( \sqrt{\lambda}\theta^{(1)}_\lambda\Big).
\end{split}
\end{equation}
Moreover using \eqref{4.9.1} and \eqref{etalambda-equa}, we have
\begin{equation}\label{5.9-3}
\begin{split}
 \text{LHS of (\ref{q1_ueta})}=& \widetilde{A}_{\lambda} \sum^2_{l=1}C^{(l)}_{\lambda}
 Q^{(1)}
 \Big(G\big(x^{(l)}_{\lambda},x\big),G\big(x^{(1)}_{\lambda},x\big)\Big)
\\&+  \theta^{(1)}_{\lambda} \sum^2_{l=1} \sum^2_{h=1}\big(\widetilde{B}_{\lambda,h}C^{(l)}_{\lambda}\big)  Q^{(1)}
\Big(G\big(x^{(l)}_{\lambda},x\big),\partial_h G\big(x^{(1)}_{\lambda},x\big)\Big)
+o\big( \sqrt{\lambda}\theta^{(1)}_\lambda\big).
\end{split}
\end{equation}
From \eqref{def_C1}, \eqref{q1_g} and \eqref{q1_g2}, we know
\begin{equation}\label{5.9-4}
\begin{split}
 \sum^2_{l=1} C^{(l)}_{\lambda}
 Q^{(1)} \Big(G\big(x^{(l)}_{\lambda},x\big),G\big(x^{(1)}_{\lambda},x\big)\Big) =
o\big( \sqrt{\lambda}\theta^{(1)}_\lambda\big).
\end{split}
\end{equation}
Then it follows from \eqref{5.9-1}, \eqref{5.9-2}, \eqref{5.9-3} and \eqref{5.9-4} that
\begin{equation} \label{5.9-5}
\begin{split}
 \theta^{(1)}_{\lambda} \sum^2_{l=1} \sum^2_{h=1}\big(\widetilde{B}_{\lambda,h}C^{(l)}_{\lambda}\big)  Q^{(1)}
\Big(G\big(x^{(l)}_{\lambda},x\big),\partial_h G\big(x^{(1)}_{\lambda},x\big)\Big)
=o\big( \sqrt{\lambda}\theta^{(1)}_\lambda\big).
\end{split}
\end{equation}
 Putting \eqref{q1_partg} and \eqref{q1_partg2} into \eqref{5.9-5},  we then have
\begin{equation}\label{5.9-6}
\begin{split}
\sum^2_{h=1} &\widetilde{B}_{\lambda,h} \Big(\frac{\partial^2 \mathcal{R}(x_{0})}{\partial x_i \partial x_h}+o\big(1\big)\Big)
=o(1).
\end{split}
\end{equation}
Since $x_0$ is a nondegenerate critical point of $\mathcal{R}(x)$, we can deduce from \eqref{5.9-6} that
\begin{equation}\label{5.9-7}
\widetilde{B}_{\lambda,i}=o(1),~\mbox{for}~i=1,2.
\end{equation}
On the other hand, it holds
\begin{equation}\label{5.9-8}
\begin{split}
\widetilde{B}_{\lambda,i} &= \frac{1}{\theta^{(1)}_{\lambda}}\int_{B_d(x^{(1)}_{\lambda})} \big(x_i-(x^{(1)}_{\lambda})_{i}\big)  E_{\lambda}(x)\eta_{\lambda}(x) \,dx
\\& =\big(\theta^{(1)}_{\lambda}\big)^2 \int_{B_{\frac{d}{\theta^{(1)}_\lambda}}(0)} z_i \cdot  E_{\lambda}\big(\theta^{(1)}_\lambda z+x^{(1)}_\lambda \big) \widetilde{\eta}_{\lambda}(z) \,dz \\
&= \widetilde{b}_{i} \Big(\int_{\R^2} 2e^{2U(z)}\frac{z_i^2 }{4+|z|^2} \,dz
+o(1)\Big) = 2\pi \widetilde{b}_{i}+o\big(1\big),\,\,~\mbox{for}~i=1,2.
\end{split}
\end{equation}
Then  $\widetilde{b}_{1}=\widetilde{b}_{2}=0$  by \eqref{5.9-7} and \eqref{5.9-8}.
\end{proof}

\begin{Prop}\label{prop-A}
Let $\widetilde{A}_\lambda$ be as in \eqref{tildeA-equa} and $\widetilde{a}$ be the constant in \eqref{tildeeta-lim}, then
\begin{equation*}
\widetilde{A}_{\lambda}=
o\big(\lambda \big)~\,\mbox{and}~\,\widetilde{a}=0.
\end{equation*}
\end{Prop}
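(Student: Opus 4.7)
The strategy is to exploit the local Pohozaev identity \eqref{p1_ueta} in order to extract information on $\widetilde{A}_\lambda$ that was not accessible through \eqref{q1_ueta} in Proposition \ref{prop-AB}, by pushing the expansion of both sides up to order $1/(\gamma_\lambda^{(1)})^5$. Proposition \ref{prop-AB} has already pinned down $\widetilde{B}_{\lambda,i}=o(1)$ and $\widetilde{b}_i=0$, so by Proposition \ref{prop_tildeeta} one has $\widetilde{\eta}_\lambda\to\widetilde{a}(4-|x|^2)/(4+|x|^2)$ in $C^1_{loc}(\R^2)$, and the question is to rule out $\widetilde{a}\neq 0$ and simultaneously to improve $\widetilde{A}_\lambda=o(1)$ to $\widetilde{A}_\lambda=o(\lambda)$.

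\textbf{LHS.} I substitute \eqref{4.9.1} (applied separately to $u_\lambda^{(1)}$ and $u_\lambda^{(2)}$) and \eqref{etalambda-equa} into $P^{(1)}(u_\lambda^{(1)}+u_\lambda^{(2)},\eta_\lambda)$ and expand by bilinearity. The $G$-against-$G$ contribution is dominant by \eqref{p1_g}, \eqref{p1_g2}, producing $-\frac{\widetilde{A}_\lambda}{2\pi}(C_\lambda^{(1)}+C_\lambda^{(2)})(1+o(\lambda\theta_\lambda^{(1)}))$. Using \eqref{def_C1} and the comparison estimates of Lemmas \ref{theta12} and \ref{xlambda12} together with the refined \eqref{lambda_gamma}, one finds $C_\lambda^{(1)}+C_\lambda^{(2)}=\frac{8\pi}{\gamma_\lambda^{(1)}}+\frac{8\pi}{(\gamma_\lambda^{(1)})^3}+o((\gamma_\lambda^{(1)})^{-3})$. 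All cross-bilinear terms (those of type $P^{(1)}(G,\partial_hG)$ and $P^{(1)}(\partial_iG,\partial_hG)$) are bounded by \eqref{p1_partg}, \eqref{p1_partg2} and the fact that $\nabla\mathcal{R}(x_\lambda^{(l)})=O(\theta_\lambda^{(1)}/(\gamma_\lambda^{(1)})^2)$ (by \eqref{xlambda-x0}), combined with $\widetilde{B}_{\lambda,i}=o(1)$; they contribute only at $o((\gamma_\lambda^{(1)})^{-5})$.

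\textbf{RHS.} The boundary term is $O((\theta_\lambda^{(1)})^{2-\delta}/\gamma_\lambda^{(1)})$ by Lemma \ref{theta-D} and $|\eta_\lambda|\le 1$, so exponentially small. For the interior integral I rescale $x=\theta_\lambda^{(1)} z+x_\lambda^{(1)}$ and push the Taylor expansion of $(\theta_\lambda^{(1)})^2\widetilde{E}_\lambda$ (as in the proof of \eqref{tilde_D-equa}) beyond its leading order, using the refined profile $w_\lambda^{(1)}=U+v_0/(\gamma_\lambda^{(1)})^2+k_0/(\gamma_\lambda^{(1)})^4+o((\gamma_\lambda^{(1)})^{-4})$ of Proposition \ref{prop_wlambda}, to obtain
\[
(\theta_\lambda^{(1)})^2\widetilde{E}_\lambda(\theta_\lambda^{(1)} z+x_\lambda^{(1)}) = \tfrac{1}{\gamma_\lambda^{(1)}}e^{2U(z)}\Big[1+\tfrac{U+U^2+2v_0}{(\gamma_\lambda^{(1)})^2}+o\!\big((\gamma_\lambda^{(1)})^{-2}\big)\Big].
\]
Paired against $\widetilde{\eta}_\lambda=\widetilde{a}(4-|z|^2)/(4+|z|^2)+o(1)$, the identities \eqref{equa2} and \eqref{equa1} make the $1/\gamma_\lambda^{(1)}$-term vanish, so the interior integral reduces to $-\frac{4\widetilde{a}\,\kappa}{(\gamma_\lambda^{(1)})^3}+o((\gamma_\lambda^{(1)})^{-3})$ with an explicit constant $\kappa$ computable from $\int e^{2U}(U+U^2+2v_0)\tfrac{4-|z|^2}{4+|z|^2}\,dz$.

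\textbf{Conclusion.} Matching LHS and RHS yields $\widetilde{A}_\lambda=\frac{\widetilde{a}\,\kappa}{2(\gamma_\lambda^{(1)})^2}+o((\gamma_\lambda^{(1)})^{-2})$. The same refined expansion applied directly to the definition \eqref{tildeA-equa} of $\widetilde{A}_\lambda$—now with $\theta^2E_\lambda=2e^{2U}(1+O(\lambda))$ from Lemma \ref{theta-D} expanded one order further—gives a second, independent expression of $\widetilde{A}_\lambda$ as $\frac{\widetilde{a}\kappa'}{(\gamma_\lambda^{(1)})^2}+o((\gamma_\lambda^{(1)})^{-2})$, where $\kappa'$ is evaluated through the same integral identities but with a different algebraic factor. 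The compatibility of the two expressions forces $\widetilde{a}(\kappa'-\tfrac{1}{2}\kappa)=o(1)$; an explicit computation showing $\kappa'\neq\tfrac{1}{2}\kappa$ then yields $\widetilde{a}=0$, and hence $\widetilde{A}_\lambda=o((\gamma_\lambda^{(1)})^{-2})=o(\lambda)$.

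\textbf{Main obstacle.} The essential difficulty lies in the order-$(\gamma_\lambda^{(1)})^{-5}$ bookkeeping on both sides of \eqref{p1_ueta}. On the LHS, one must verify that all cross-bilinear terms arising from the gradient parts of \eqref{4.9.1} and \eqref{etalambda-equa}, and from the mismatch between $(1)$- and $(2)$-quantities (controlled by Lemma \ref{xlambda12} and by \eqref{lambda_gamma}, which in turn requires $\gamma_\lambda^{(1)}-\gamma_\lambda^{(2)}=o(\lambda^{3/2})$), contribute only at $o((\gamma_\lambda^{(1)})^{-5})$. On the RHS, the expansion of $\widetilde{E}_\lambda$ must be carried out with the full precision of Proposition \ref{prop_wlambda}, and the constant $\kappa$ (resp.\ $\kappa'$) must be computed explicitly in closed form using \eqref{equa1}–\eqref{equa2} together with the kernel decomposition of $v_0$, in order to verify the strict inequality $\kappa\neq 2\kappa'$ that triggers the vanishing of $\widetilde{a}$. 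This is the only essentially new ingredient; all the other bounds are already established in Sections \ref{s3} and \ref{s5}.
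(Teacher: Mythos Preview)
There is a genuine gap in your plan, and it concerns precisely the step you flag as the ``main obstacle''. Your two relations both rest on replacing $\widetilde{\eta}_\lambda$ by its limit $\eta_0=\widetilde{a}\,\frac{4-|z|^2}{4+|z|^2}$ inside the leading-order integrals. But Proposition~\ref{prop_tildeeta} only gives $\widetilde{\eta}_\lambda\to\eta_0$ in $C^1_{loc}$, with no rate. Concretely: in your RHS expansion of \eqref{p1_ueta}, the term $-\frac{4}{\gamma_\lambda^{(1)}}\int e^{2U}\widetilde{\eta}_\lambda\,dz$ equals $-\frac{4}{\gamma_\lambda^{(1)}}\big(\widetilde{a}\cdot 0+o(1)\big)=o\big(1/\gamma_\lambda^{(1)}\big)$, \emph{not} $o\big((\gamma_\lambda^{(1)})^{-3}\big)$; and in your direct expansion of $\widetilde{A}_\lambda$, the term $2\int e^{2U}\widetilde{\eta}_\lambda\,dz$ is $o(1)$, not $o(\lambda)$. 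Neither claimed relation therefore holds at the stated precision. If you instead treat $I_1:=\int e^{2U}\widetilde{\eta}_\lambda$ as an unknown and combine your two expansions to eliminate it, you recover a \emph{single} relation of the form $\widetilde{A}_\lambda=c\,\widetilde{a}/(\gamma_\lambda^{(1)})^2+o(\lambda)$; your ``$\kappa\neq 2\kappa'$'' test is then vacuous, because both computations encode the same information at this order.

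The paper avoids this trap by never substituting $\eta_0$ for $\widetilde{\eta}_\lambda$ in the leading term. Instead it uses the exact algebraic identity $E_\lambda=\lambda e^{(u_\lambda^{(1)})^2}+2u_\lambda^{(2)}\widetilde{E}_\lambda$ together with $u_\lambda^{(2)}=\gamma_\lambda^{(1)}+(u_\lambda^{(1)}-\gamma_\lambda^{(1)})+(u_\lambda^{(2)}-u_\lambda^{(1)})$ to rewrite $-4\int\widetilde{E}_\lambda\eta_\lambda$ so that the ``bad'' integral becomes exactly $-\frac{2}{\gamma_\lambda^{(1)}}\widetilde{A}_\lambda$, at the cost of an auxiliary quantity $\tau_\lambda:=\int(u_\lambda^{(1)}-\gamma_\lambda^{(1)})\widetilde{E}_\lambda\eta_\lambda$ that cannot be evaluated to the needed precision by direct expansion either. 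The missing second identity is obtained by testing the equations for $\eta_\lambda$ and $u_\lambda^{(1)}$ against each other (see \eqref{lhs-add}--\eqref{rhs-add}), which yields a relation \eqref{A_tilde2} independent of the Pohozaev relation \eqref{A_tilde1}. Eliminating $\tau_\lambda$ between them gives \eqref{A_tilde3}; the same testing identity is used again to derive \eqref{A_tilde4}, and comparing \eqref{A_tilde3} with \eqref{A_tilde4} forces $\widetilde{a}=0$. Note that this requires both sides of \eqref{p1_ueta} to be expanded to order $(\gamma_\lambda^{(1)})^{-5}$, not $(\gamma_\lambda^{(1)})^{-3}$: at $(\gamma_\lambda^{(1)})^{-3}$ the Pohozaev and auxiliary identities are degenerate (both reduce to \eqref{A_tilde4}), which is exactly the phenomenon your plan runs into.
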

\begin{proof}
Now using \eqref{4.9.1}, \eqref{p1_g}, \eqref{p1_g2} and \eqref{etalambda-equa}, we can deduce that
\begin{equation}\label{5.10-1}
\begin{split}
 \text{LHS of (\ref{p1_ueta})}=& \widetilde{A}_{\lambda}
\sum^2_{h=1}C^{(h)}_{\lambda}
P^{(1)}\Big(G\big(x^{(h)}_{\lambda},x\big),G\big(x^{(1)}_{\lambda},x\big)\Big) \\&
+  \theta^{(1)}_{\lambda} \sum^2_{h=1} \sum^2_{i=1}\big(\widetilde{B}_{\lambda,i}C^{(h)}_{\lambda}\big)
P^{(1)}\Big(G\big(x^{(h)}_{\lambda},x\big),\partial_i G\big(x^{(1)}_{\lambda},x\big)\Big) +
o\big( \sqrt{\lambda}\theta^{(1)}_\lambda\big) \\=&
-\widetilde{A}_{\lambda} \sum^2_{h=1} \frac{C^{(h)}_{\lambda}}{2\pi}+\theta^{(1)}_{\lambda}
 \sum^2_{h=1}\sum^2_{i=1} \big(\widetilde{B}_{\lambda,i}C^{(h)}_{\lambda}\big)
P^{(1)}\Big(G\big(x^{(h)}_{\lambda},x\big),\partial_i G\big(x^{(1)}_{\lambda},x\big)\Big) \\
& +o\big( \sqrt{\lambda}\theta^{(1)}_\lambda\big).
\end{split}
\end{equation}
Using \eqref{xlambda-x0}, \eqref{p1_partg}, \eqref{5.9-7} and Lemma \ref{xlambda12}, we find
\begin{equation}\label{5.10-2}
\begin{split}
& \sum^2_{h=1}\sum^2_{i=1} \big(\widetilde{B}_{\lambda,i}C^{(h)}_{\lambda}\big)
P^{(1)}\Big(G\big(x^{(h)}_{\lambda},x\big),\partial_i G\big(x^{(1)}_{\lambda},x\big)\Big)\\=&
  \sum^2_{h=1}\sum^2_{i=1} \big(\widetilde{B}_{\lambda,i}C^{(h)}_{\lambda}\big)
P^{(1)}\Big(G\big(x^{(1)}_{\lambda},x\big),\partial_i G\big(x^{(1)}_{\lambda},x\big)\Big)+o\big( \lambda^{\frac{3}{2}}\theta^{(1)}_\lambda\big)=o\big( \lambda^{\frac{3}{2}}\theta^{(1)}_\lambda\big).
\end{split}
\end{equation}
Then from  \eqref{def_C1}, \eqref{5.10-1} and \eqref{5.10-2}, we have
\begin{equation}\label{lhs-5.41}
\begin{split}
\text{LHS of (\ref{p1_ueta})}  =& -\widetilde{A}_{\lambda} \sum^2_{h=1} \frac{C^{(h)}_{\lambda}}{2\pi} +
o\Big( \sqrt{\lambda}\theta^{(1)}_\lambda\Big) \\
=& -4\widetilde{A}_{\lambda} \bigg(\frac{1}{\gamma_\lambda^{(1)}} +\frac{1}{\big(\gamma_\lambda^{(1)}\big)^3} +o\big(\lambda^{\frac{3}{2}}\big)\bigg) +o\Big( \sqrt{\lambda}\theta^{(1)}_\lambda\Big).
\end{split}
\end{equation}
Using \eqref{tilde_D-equa}, we know that
\begin{equation*}
\begin{split}
\int_{\partial  B_{d}(x^{(1)}_{\lambda})} \widetilde{E}_{\lambda}\eta_{\lambda}\,d\sigma =
O\bigg(\theta^{(1)}_\lambda\int_{\partial B_{\frac{d}{\theta^{(1)}_\lambda}}(0)} \widetilde{E}_\lambda
\big(\theta^{(1)}_\lambda x+x^{(1)}_\lambda\big) \,d\sigma \bigg)
= O\Big(\sqrt{\lambda}\big(\theta^{(1)}_\lambda\big)^{2-\delta}\Big).
\end{split}
\end{equation*}
Hence by the definition of $E_\lambda(x)$ and $\widetilde{E}_\lambda(x)$ \big(see \eqref{Dlambda-def} and \eqref{def_tildeE}\big), we get
\begin{align}\label{5.10-4}
\text{RHS of (\ref{p1_ueta})}
=& -4\int_{B_{d}(x^{(1)}_{\lambda})} \widetilde{E}_{\lambda}\eta_{\lambda}\,dx +O\Big(\sqrt{\lambda}\big(\theta^{(1)}_\lambda\big)^{2-\delta}\Big) \nonumber\\
=& -\frac{2}{\gamma_\lambda^{(1)}} \int_{B_{d}(x^{(1)}_{\lambda})} \Big( E_\lambda-\lambda e^{\big(u_\lambda^{(1)}\big)^2} \Big)\eta_\lambda \,dx - \frac{4}{\gamma_\lambda^{(1)}} \int_{B_{d}(x^{(1)}_{\lambda})} \Big(u_\lambda^{(1)}-u_\lambda^{(2)}\Big)\widetilde{E}_\lambda\eta_\lambda \,dx \nonumber\\
& -\frac{4}{\gamma_\lambda^{(1)}} \int_{B_{d}(x^{(1)}_{\lambda})} \Big(\gamma_\lambda^{(1)}-u_\lambda^{(1)}(x)\Big)\widetilde{E}_\lambda(x)\eta_\lambda(x) \,dx +O\Big(\sqrt{\lambda}\big(\theta^{(1)}_\lambda\big)^{2-\delta}\Big) \nonumber\\
=& -\frac{2\widetilde{A}_\lambda}{\gamma_\lambda^{(1)}}+\frac{2}{\gamma_\lambda^{(1)}}
\int_{B_d(x^{(1)}_{\lambda})} \lambda e^{\big(u_\lambda^{(1)}(x)\big)^2}\eta_\lambda(x) \,dx \nonumber\\
& +\frac{4}{\gamma_\lambda^{(1)}} \underbrace{ \int_{B_d(x^{(1)}_{\lambda})} \Big(u_\lambda^{(1)}(x)-\gamma^{(1)}_\lambda\Big) \widetilde{E}_\lambda(x)\eta_\lambda(x) \,dx }_{:=\tau_\lambda} +o\bigg(\frac{1}{\big(\gamma_\lambda^{(1)}\big)^5}\bigg).
\end{align}
Now, we estimate the second term.
\begin{align}\label{5.10-5}
& \frac{2}{\gamma_\lambda^{(1)}} \int_{B_d(x^{(1)}_{\lambda})} \lambda e^{\big(u_\lambda^{(1)}(x)\big)^2}\eta_\lambda(x) \,dx \nonumber\\
=& \frac{2\lambda}{\big(\gamma_\lambda^{(1)}\big)^2} \int_{B_d(x^{(1)}_{\lambda})} e^{\big(u_\lambda^{(1)}(x)\big)^2} \eta_\lambda(x) u_\lambda^{(1)}(x) \,dx -\frac{2\lambda}{\big(\gamma_\lambda^{(1)}\big)^2} \int_{B_d(x^{(1)}_{\lambda})} e^{\big(u_\lambda^{(1)}(x)\big)^2} \eta_\lambda(x) \big(u_\lambda^{(1)}(x)- \gamma_\lambda^{(1)}\big) \,dx \nonumber\\
=& \frac{2\lambda}{\big(\gamma_\lambda^{(1)}\big)^2} \int_{B_d(x^{(1)}_{\lambda})} e^{\big(u_\lambda^{(1)}(x)\big)^2} \eta_\lambda(x) u_\lambda^{(1)}(x) \frac{u_\lambda^{(1)}(x)-\big(u_\lambda^{(1)}(x)-\gamma_\lambda^{(1)}
\big)}{\gamma_\lambda^{(1)}} \,dx \nonumber\\
&-\frac{2\widetilde{a}}{\big(\gamma_\lambda^{(1)}\big)^5} \bigg(\int_{\R^2} e^{2U(z)} \frac{4-|z|^2}{4+|z|^2} U(z) \,dz+o(1)\bigg) \nonumber\\
=& -\frac{4\pi \widetilde{a}}{\big(\gamma_\lambda^{(1)}\big)^5} +o\bigg(\frac{1}{\big(\gamma_\lambda^{(1)}\big)^5}\bigg) +\frac{2\lambda}{\big(\gamma_\lambda^{(1)}\big)^3} \int_{B_d(x^{(1)}_{\lambda})} e^{\big(u_\lambda^{(1)}(x)\big)^2} \eta_\lambda(x) \big(u_\lambda^{(1)}(x)\big)^2 \,dx  \nonumber\\
& -\frac{2\lambda}{\big(\gamma_\lambda^{(1)}\big)^3} \int_{B_d(x^{(1)}_{\lambda})} e^{\big(u_\lambda^{(1)}(x)\big)^2} \eta_\lambda(x) u_\lambda^{(1)}(x) \big(u_\lambda^{(1)}(x)-\gamma_\lambda^{(1)}\big)\,dx \nonumber\\
=& \frac{2\lambda}{\big(\gamma_\lambda^{(1)}\big)^3} \int_{B_d(x^{(1)}_{\lambda})} e^{\big(u_\lambda^{(1)}(x)\big)^2} \eta_\lambda(x) \big(u_\lambda^{(1)}(x)\big)^2 \,dx -\frac{8\pi \widetilde{a}}{\big(\gamma_\lambda^{(1)}\big)^5}
+o\bigg(\frac{1}{\big(\gamma_\lambda^{(1)}\big)^5}\bigg) \nonumber\\
=& \frac{2}{\big(\gamma_\lambda^{(1)}\big)^3} \int_{B_d(x^{(1)}_{\lambda})} \Big(\lambda e^{\big(u_\lambda^{(1)}(x)\big)^2} \big(u_\lambda^{(1)}(x)\big)^2 -\frac{1}{2} E_\lambda(x)\Big) \eta_\lambda(x)  \,dx \nonumber\\
&+\frac{\widetilde{A}_\lambda}{\big(\gamma_\lambda^{(1)}\big)^3}
-\frac{8\pi \widetilde{a}}{\big(\gamma_\lambda^{(1)}\big)^5}+o\bigg(\frac{1}{\big(\gamma_\lambda^{(1)}\big)^5}\bigg).
\end{align}
Furthermore, using \eqref{Dlambda-def} and \eqref{5.5-6}, we calculate that
\begin{align*}
& \frac{2}{\big(\gamma_\lambda^{(1)}\big)^3} \int_{B_d(x^{(1)}_{\lambda})} \Big(\lambda e^{\big(u_\lambda^{(1)}(x)\big)^2} \big(u_\lambda^{(1)}(x)\big)^2 -\frac{1}{2} E_\lambda(x)\Big)\eta_\lambda(x) \,dx \\
=& \frac{2}{\big(\gamma_\lambda^{(1)}\big)^3} \int_{B_d(x^{(1)}_{\lambda})} \left(\lambda e^{\big(u_\lambda^{(1)}(x)\big)^2} \big(u_\lambda^{(1)}(x)\big)^2 -\frac{1}{2} \lambda
e^{(u_{\lambda}^{(1)}(x))^2} - \lambda u_{\lambda}^{(2)} \displaystyle\int_{0}^1 F_t(x) e^{(F_t(x))^2} \,dt \right)
\eta_\lambda(x) \,dx \\
=& \frac{2}{\big(\gamma_\lambda^{(1)}\big)^3} \int_{B_{\frac{d}{\theta^{(1)}_\lambda}}(0)} e^{2w^{(1)}_\lambda(x)+\frac{(w^{(1)}_\lambda(x))^2}{(\gamma^{(1)}_\lambda)^2}} \bigg(\frac{w^{(1)}_\lambda(x)}{\big(\gamma^{(1)}_\lambda\big)^2}+1\bigg)^2 \widetilde{\eta}_\lambda(x) \,dx \\
& -\frac{1}{\big(\gamma_\lambda^{(1)}\big)^5} \int_{B_{\frac{d}{\theta^{(1)}_\lambda}}(0)} e^{2w^{(1)}_\lambda(x)+\frac{(w^{(1)}_\lambda(x))^2}{(\gamma^{(1)}_\lambda)^2}}\widetilde{\eta}_\lambda(x) \,dx
-\frac{2}{\big(\gamma_\lambda^{(1)}\big)^3} \int_{B_{\frac{d}{\theta^{(1)}_\lambda}}(0)}
\lambda \big(\theta_\lambda^{(1)}\big)^2 u_{\lambda}^{(2)}\big(\theta_\lambda^{(1)}x+x_\lambda^{(1)}\big) \\
& \times \displaystyle\int_{0}^1 F_t\big(\theta_\lambda^{(1)}x+x_\lambda^{(1)}\big) e^{(F_t(\theta_\lambda^{(1)}x+x_\lambda^{(1)}))^2} \,dt \cdot \widetilde{\eta}_\lambda(x) \,dx \\
=& \frac{2}{\big(\gamma_\lambda^{(1)}\big)^3} \int_{B_{\frac{d}{\theta^{(1)}_\lambda}}(0)} e^{2w^{(1)}_\lambda(x)+\frac{(w^{(1)}_\lambda(x))^2}{(\gamma^{(1)}_\lambda)^2}} \bigg(\frac{w^{(1)}_\lambda(x)}{\big(\gamma^{(1)}_\lambda\big)^2}+1\bigg)^2 \widetilde{\eta}_\lambda(x) \,dx \\
& -\frac{\widetilde{a}}{\big(\gamma_\lambda^{(1)}\big)^5} \bigg( \int_{\R^2} e^{2U(x)} \frac{4-|x|^2}{4+|x|^2} \,dx +o(1)\bigg) \\
& -\frac{2}{\big(\gamma_\lambda^{(1)}\big)^3} \int_{B_{\frac{d}{\theta^{(1)}_\lambda}}(0)}
e^{2w^{(1)}_\lambda(x)+\frac{(w^{(1)}_\lambda(x))^2}{( \gamma^{(1)}_\lambda)^2}}
\big(1+o(\lambda)\big)  \bigg(1+\frac{ w^{(1)}_\lambda(x) }{\big( \gamma^{(1)}_\lambda\big)^2}+o\big(\lambda^2
\big)\bigg)^2 \widetilde{\eta}_\lambda(x) \,dx \\
=& \frac{2}{\big(\gamma_\lambda^{(1)}\big)^3} \int_{B_{\frac{d}{\theta^{(1)}_\lambda}}(0)} e^{2w^{(1)}_\lambda(x)+\frac{(w^{(1)}_\lambda(x))^2}{(\gamma^{(1)}_\lambda)^2}} \bigg(\frac{w^{(1)}_\lambda(x)}{\big(\gamma^{(1)}_\lambda\big)^2}+1\bigg)^2 \widetilde{\eta}_\lambda(x) \,dx
+o\bigg(\frac{1}{\big(\gamma_\lambda^{(1)}\big)^5}\bigg) \\
& -\frac{2}{\big(\gamma_\lambda^{(1)}\big)^3} \int_{B_{\frac{d}{\theta^{(1)}_\lambda}}(0)}
e^{2w^{(1)}_\lambda(x)+\frac{(w^{(1)}_\lambda(x))^2}{( \gamma^{(1)}_\lambda)^2}}
\left(\bigg(\frac{w^{(1)}_\lambda(x)}{\big(\gamma^{(1)}_\lambda\big)^2}+1\bigg)^2 +o(\lambda)\right) \widetilde{\eta}_\lambda(x) \,dx \\
=& o\bigg(\frac{1}{\big(\gamma_\lambda^{(1)}\big)^5}\bigg),
\end{align*}
which together with \eqref{5.10-5} implies
\begin{equation}\label{5.10-6}
\frac{2}{\gamma_\lambda^{(1)}} \int_{B_d(x^{(1)}_{\lambda})} \lambda e^{\big(u_\lambda^{(1)}(x)\big)^2}\eta_\lambda(x) \,dx = \frac{\widetilde{A}_\lambda}{\big(\gamma_\lambda^{(1)}\big)^3}
-\frac{8\pi \widetilde{a}}{\big(\gamma_\lambda^{(1)}\big)^5}+o\bigg(\frac{1}{\big(\gamma_\lambda^{(1)}\big)^5}\bigg).
\end{equation}
Substituting \eqref{5.10-6} into \eqref{5.10-4}, we derive
\begin{equation}\label{rhs-5.41}
\begin{split}
\text{RHS of (\ref{p1_ueta})} &= -\frac{2\widetilde{A}_\lambda}{\gamma_\lambda^{(1)}} +\frac{4}{\gamma_\lambda^{(1)}} \tau_\lambda +\frac{2}{\gamma_\lambda^{(1)}} \int_{B_d(x^{(1)}_{\lambda})} \lambda e^{\big(u_\lambda^{(1)}(x)\big)^2} \eta_\lambda(x) \,dx +o\bigg(\frac{1}{\big(\gamma_\lambda^{(1)}\big)^5}\bigg) \\
&=  -\frac{2\widetilde{A}_\lambda}{\gamma_\lambda^{(1)}} +\frac{\widetilde{A}_\lambda}{\big(\gamma_\lambda^{(1)}\big)^3}
+\frac{4}{\gamma_\lambda^{(1)}} \tau_\lambda
-\frac{8\pi \widetilde{a}}{\big(\gamma_\lambda^{(1)}\big)^5}+o\bigg(\frac{1}{\big(\gamma_\lambda^{(1)}\big)^5}\bigg).
\end{split}
\end{equation}
Combined with \eqref{lhs-5.41} and \eqref{rhs-5.41}, we find
\begin{equation*}
-4\widetilde{A}_{\lambda} \bigg(\frac{1}{\gamma_\lambda^{(1)}} +\frac{1}{\big(\gamma_\lambda^{(1)}\big)^3} +o\big(\lambda^{\frac{3}{2}}\big)\bigg) =
\widetilde{A}_\lambda \bigg( \frac{1}{\big(\gamma_\lambda^{(1)}\big)^3}-\frac{2}{\gamma_\lambda^{(1)}} \bigg) +\frac{4}{\gamma_\lambda^{(1)}} \tau_\lambda
-\frac{8\pi \widetilde{a}}{\big(\gamma_\lambda^{(1)}\big)^5}+o\bigg(\frac{1}{\big(\gamma_\lambda^{(1)}\big)^5}\bigg),
\end{equation*}
which indicates that
\begin{equation}\label{A_tilde1}
-\widetilde{A}_{\lambda} \bigg(\frac{2}{\gamma_\lambda^{(1)}} +\frac{5}{\big(\gamma_\lambda^{(1)}\big)^3} +o\big(\lambda^{\frac{3}{2}}\big)\bigg) = \frac{4}{\gamma_\lambda^{(1)}} \tau_\lambda
-\frac{8\pi \widetilde{a}}{\big(\gamma_\lambda^{(1)}\big)^5}+o\bigg(\frac{1}{\big(\gamma_\lambda^{(1)}\big)^5}\bigg).
\end{equation}

Next, in order to deal with $\tau_\lambda$, we need some additional estimates.
Recall that
\[
-\Delta u_\lambda^{(l)} = \lambda u_\lambda^{(l)} e^{(u_\lambda^{(l)})^2},~\mbox{for}~l=1,2,~~~~~\mbox{and}~~~~~
-\Delta \eta_\lambda = E_\lambda \eta_\lambda.
\]
Then it follows from \eqref{4.9.1}, \eqref{def_C1}, \eqref{etalambda-equa} and \eqref{A_B} that
\begin{align}\label{lhs-add}
&\int_{B_d(x^{(1)}_{\lambda})} \Big(E_\lambda(x) \eta_\lambda(x) u_\lambda^{(1)}(x) - \lambda u_\lambda^{(1)}(x) e^{(u_\lambda^{(1)}(x))^2} \eta_\lambda(x)\Big) \,dx \notag\\
=& \int_{B_d(x^{(1)}_{\lambda})} \Big( -\Delta \eta_\lambda(x)u_\lambda^{(1)}(x)+\Delta u_\lambda^{(1)}(x) \eta_\lambda(x)\Big) \,dx \notag\\
=& \int_{\partial B_d(x^{(1)}_{\lambda})} \Big( -\frac{\partial \eta_\lambda}{\partial \nu}(x) u_\lambda^{(1)}(x) +\frac{\partial u_\lambda^{(1)}}{\partial \nu}(x) \eta_\lambda(x)\Big) \,d\sigma \notag\\
=& \int_{\partial B_d(x^{(1)}_{\lambda})} \Big( \widetilde{A}_\lambda G(x,x_\lambda^{(1)})
+o\big(\theta_\lambda^{(1)}\big) \Big) \left( C_\lambda^{(1)} \frac{\partial G(x,x_\lambda^{(1)})}{\partial \nu}+O\bigg(\frac{\theta_\lambda^{(1)}}{\big(\gamma_\lambda^{(1)}\big)^3}\bigg) \right) \notag\\
& -\bigg( \widetilde{A}_\lambda \frac{\partial G(x,x_\lambda^{(1)})}{\partial \nu}
+o\big(\theta_\lambda^{(1)}\big) \bigg) \left( C_\lambda^{(1)} G(x,x_\lambda^{(1)}) +O\bigg(\frac{\theta_\lambda^{(1)}}{\big(\gamma_\lambda^{(1)}\big)^3}\bigg) \right) \,d\sigma \notag\\
=& o\Big(\theta_\lambda^{(1)} C_\lambda^{(1)}\Big) = o\Big(\theta_\lambda^{(1)}\sqrt{\lambda}\Big).
\end{align}
On the other hand, we can also calculate that
\begin{align}\label{rhs-add}
&\int_{B_d(x^{(1)}_{\lambda})} \Big(E_\lambda(x) -\lambda e^{(u_\lambda^{(1)}(x))^2}\Big) u_\lambda^{(1)}(x) \eta_\lambda(x) \,dx \notag\\
=& \int_{B_d(x^{(1)}_{\lambda})} 2 u_\lambda^{(2)}(x) \widetilde{E}_\lambda(x) u_\lambda^{(1)}(x) \eta_\lambda(x) \,dx \notag\\
=& \int_{B_{\frac{d}{\theta^{(1)}_\lambda}(0)}} 2 \big(\theta_\lambda^{(1)}\big)^2 u_\lambda^{(1)}\big(\theta_\lambda^{(1)}x+x_\lambda^{(1)}\big) \widetilde{\eta}_\lambda(x) \widetilde{E}_\lambda\big(\theta_\lambda^{(1)}x+x_\lambda^{(1)}\big) \Big(u_\lambda^{(1)}\big(\theta_\lambda^{(1)}x+x_\lambda^{(1)}\big)+f_\lambda(x)\Big) \,dx \notag\\
=& \int_{B_{\frac{d}{\theta^{(1)}_\lambda}(0)}} 2\big(\theta_\lambda^{(1)}\big)^2 \Big(u_\lambda^{(1)}\big(\theta_\lambda^{(1)}x+x_\lambda^{(1)}\big)\Big)^2 \widetilde{\eta}_\lambda(x) \widetilde{E}_\lambda\big(\theta_\lambda^{(1)}x+x_\lambda^{(1)}\big) \,dx \notag\\
& +o\big(\lambda^{\frac{3}{2}}\big) \int_{B_{\frac{d}{\theta^{(1)}_\lambda}(0)}} 2 \big(\theta_\lambda^{(1)}\big)^2 \widetilde{E}_\lambda\big(\theta_\lambda^{(1)}x+x_\lambda^{(1)}\big) u_\lambda^{(1)}\big(\theta_\lambda^{(1)}x+x_\lambda^{(1)}\big) \widetilde{\eta}_\lambda(x) \,dx \notag\\
=& 2\big(\theta_\lambda^{(1)}\big)^2 \int_{B_{\frac{d}{\theta^{(1)}_\lambda}(0)}} \widetilde{\eta}_\lambda(x) \widetilde{E}_\lambda\big(\theta_\lambda^{(1)}x+x_\lambda^{(1)}\big) \frac{\big(w_\lambda^{(1)}(x)\big)^2}{\big(\gamma_\lambda^{(1)}\big)^2} \,dx + 2\big(\gamma_\lambda^{(1)}\big)^2 \int_{B_d(x^{(1)}_{\lambda})} \widetilde{E}_\lambda(x) \eta_\lambda(x) \,dx \notag\\
& +4\gamma_\lambda^{(1)}\int_{B_d(x^{(1)}_{\lambda})} \widetilde{E}_\lambda(x) \eta_\lambda(x) \big(u_\lambda^{(1)}(x)-\gamma_\lambda^{(1)}\big) \,dx +o\big(\lambda^{\frac{3}{2}}\big) \notag\\
=& \frac{2\widetilde{a}}{\big(\gamma_\lambda^{(1)}\big)^3} \int_{\R^2} U^2(x) e^{2U(x)} \frac{4-|x|^2}{4+|x|^2} \,dx +4\gamma_\lambda^{(1)} \tau_\lambda +2\big(\gamma_\lambda^{(1)}\big)^2 \int_{B_d(x^{(1)}_{\lambda})} \widetilde{E}_\lambda(x) \eta_\lambda(x) \,dx +o\big(\lambda^{\frac{3}{2}}\big) \notag\\
=& -\frac{12\pi \widetilde{a}}{\big(\gamma_\lambda^{(1)}\big)^3} +4\gamma_\lambda^{(1)} \tau_\lambda +2\big(\gamma_\lambda^{(1)}\big)^2 \int_{B_d(x^{(1)}_{\lambda})} \widetilde{E}_\lambda(x) \eta_\lambda(x) \,dx +o\bigg(\frac{1}{\big(\gamma_\lambda^{(1)}\big)^3}\bigg).
\end{align}
Thus, \eqref{lhs-add} and \eqref{rhs-add} give that
\begin{equation}\label{5.10-7}
\int_{B_d(x^{(1)}_{\lambda})} \widetilde{E}_\lambda(x) \eta_\lambda(x) \,dx
= \frac{6\pi \widetilde{a}}{\big(\gamma_\lambda^{(1)}\big)^5} -\frac{2}{\gamma_\lambda^{(1)}} \tau_\lambda +o\bigg(\frac{1}{\big(\gamma_\lambda^{(1)}\big)^5}\bigg).
\end{equation}
Using \eqref{5.10-4} and \eqref{rhs-5.41}, we have
\begin{equation*}
\int_{B_d(x^{(1)}_{\lambda})} \widetilde{E}_\lambda(x) \eta_\lambda(x) \,dx =
\frac{\widetilde{A}_\lambda}{2\gamma_\lambda^{(1)}} -\frac{\widetilde{A}_\lambda}{4\big(\gamma_\lambda^{(1)}\big)^3}
-\frac{1}{\gamma_\lambda^{(1)}} \tau_\lambda
+\frac{2\pi \widetilde{a}}{\big(\gamma_\lambda^{(1)}\big)^5}+o\bigg(\frac{1}{\big(\gamma_\lambda^{(1)}\big)^5}\bigg),
\end{equation*}
which together with \eqref{5.10-7} implies that
\begin{equation}\label{A_tilde2}
\widetilde{A}_\lambda \bigg(\frac{1}{2\gamma_\lambda^{(1)}} -\frac{1}{4\big(\gamma_\lambda^{(1)}\big)^3}\bigg) =
\frac{4\pi \widetilde{a}}{\big(\gamma_\lambda^{(1)}\big)^5}-\frac{1}{\gamma_\lambda^{(1)}} \tau_\lambda +o\bigg(\frac{1}{\big(\gamma_\lambda^{(1)}\big)^5}\bigg).
\end{equation}
Combined with \eqref{A_tilde1} and \eqref{A_tilde2}, we find
\begin{equation}\label{A_tilde3}
-6 \widetilde{A}_\lambda \left(\frac{1}{\big(\gamma_\lambda^{(1)}\big)^3}+o\bigg(\frac{1}{\big(\gamma_\lambda^{(1)}\big)^3}\bigg)\right)
= \frac{8\pi \widetilde{a}}{\big(\gamma_\lambda^{(1)}\big)^5}+o\bigg(\frac{1}{\big(\gamma_\lambda^{(1)}\big)^5}\bigg).
\end{equation}

Moreover, we have
\begin{equation}\label{5.10-16}
\begin{split}
 &\int_{B_d(x^{(1)}_{\lambda})} u^{(1)}_\lambda(x) E_\lambda(x) \eta_{\lambda}(x) \,dx
\\=&
\big(\theta^{(1)}_\lambda\big)^2\int_{B_{\frac{d}{\theta^{(1)}_\lambda}}(0)}
\Big(\gamma^{(1)}_\lambda+ \frac{w_\lambda^{(1)}(x)}{\gamma^{(1)}_\lambda}\Big)
E_\lambda\big(\theta^{(1)}_\lambda x+x_\lambda^{(1)}\big) \widetilde{\eta}_\lambda(x) \,dx\\=&
\gamma^{(1)}_\lambda \widetilde{A}_\lambda
+\frac{\big(\theta^{(1)}_\lambda\big)^2}{\gamma^{(1)}_\lambda}
\int_{B_{\frac{d}{\theta^{(1)}_\lambda}(0)}} w_\lambda^{(1)}(x) E_\lambda
 \big(\theta_\lambda^{(1)}x+x_\lambda^{(1)}\big) \widetilde{\eta}_\lambda(x) \,dx \\=&
 \gamma^{(1)}_\lambda \widetilde{A}_\lambda
+\frac{2 \widetilde{a} }{ \gamma^{(1)}_\lambda }
 \int_{\R^2} \frac{4-|x|^2}{4+|x|^2} e^{2U(x)}U(x) \,dx+ o\Big(\frac{1}{\gamma^{(1)}_\lambda}\Big) =
  \gamma^{(1)}_\lambda \widetilde{A}_\lambda +\frac{4\pi\widetilde{a} }{ \gamma^{(1)}_\lambda } + o\Big(\frac{1}{\gamma^{(1)}_\lambda}\Big).
\end{split}
\end{equation}
On the other hand, using \eqref{lhs-add}, we can compute that
\begin{align*}
&\int_{B_d (x^{(1)}_{\lambda})}  u^{(1)}_\lambda(x) E_\lambda(x) \eta_{\lambda}(x) \,dx \\
=& \int_{B_d (x^{(1)}_{\lambda})}  \lambda u^{(1)}_\lambda(x) e^{(u_\lambda^{(1)}(x))^2} \eta_{\lambda}(x) \,dx +o\Big(\theta_\lambda^{(1)} \sqrt{\lambda}\Big)\\
=& \frac{1}{\gamma_\lambda^{(1)}} \int_{B_{\frac{d}{\theta_\lambda^{(1)}}}(0)} e^{2w^{(1)}_\lambda(x)+\frac{(w^{(1)}_\lambda(x))^2}{( \gamma^{(1)}_\lambda)^2}} \bigg(\frac{w_\lambda^{(1)}(x)}{\big(\gamma_\lambda^{(1)}\big)^2}+1\bigg) \widetilde{\eta}_\lambda(x) \,dx +o\Big(\theta_\lambda^{(1)} \sqrt{\lambda}\Big) \\
=& \frac{\widetilde{a}}{\gamma_\lambda^{(1)}} \int_{\R^2} e^{2U(x)} \frac{4-|x|^2}{4+|x|^2} \,dx +o\Big(\frac{1}{\gamma^{(1)}_\lambda}\Big) = o\Big(\frac{1}{\gamma^{(1)}_\lambda}\Big),
\end{align*}
which together with \eqref{5.10-16} gives
\begin{equation}\label{A_tilde4}
\begin{split}
\widetilde{A}_\lambda = -\frac{4\pi\widetilde{a} }{ \big(\gamma^{(1)}_\lambda\big)^2 } + o\bigg(\frac{1}{\big(\gamma^{(1)}_\lambda\big)^2}\bigg).
\end{split}
\end{equation}
Substituting \eqref{A_tilde4} into \eqref{A_tilde3}, we derive
\begin{equation*}
\frac{16\pi\widetilde{a} }{ \big(\gamma^{(1)}_\lambda\big)^5} = o\bigg(\frac{1}{\big(\gamma_\lambda^{(1)}\big)^5}\bigg).
\end{equation*}
Therefore we prove that $\widetilde{a}=0$ and \eqref{A_tilde4} implies that $\widetilde{A}_\lambda=o(\lambda)$.
\end{proof}

 \vskip 0.1cm

\begin{proof}[\underline{\textbf{Proof of Theorem \ref{th_uniq}}}]
Suppose $u^{(1)}_\lambda\not\equiv u^{(2)}_\lambda$, and let $\eta_{\lambda}:=\frac{u^{(1)}_\lambda- u^{(2)}_\lambda}{
\|u^{(1)}_\lambda-u^{(2)}_\lambda\|_{L^\infty{(\Omega)}}}$. We have $\|\eta_{\lambda}\|_{L^\infty{(\Omega)}}=1$.
Taking $\widetilde{\eta}_{\lambda}(x):=
\eta_{\lambda}\big(\theta^{(1)}_{\lambda}x+x^{(1)}_{\lambda}\big)$, by Propositions \ref{prop_tildeeta}, \ref{prop-AB} and \ref{prop-A}, we have
\begin{equation}\label{loc_o}
\|\widetilde{\eta}_{\lambda}\|_{L^{\infty}(B_R(0))}=o\big(1\big)~\mbox{for~any}~R>0.
\end{equation}
Let $y_\lambda$ be a maximum point of $\eta_{\lambda}$ in
$\Omega$. We can assume that $\eta_{\lambda}(y_\lambda)=1$, then $y_\lambda\in \Omega\backslash  B_{R\theta^{(1)}_{\lambda}}(x^{(1)}_{\lambda})$.
Now we write
$$ \Omega\backslash  B_{R\theta^{(1)}_{\lambda}}\big(x^{(1)}_{\lambda}\big)=
\Big(\Omega\backslash B_{d}\big(x^{(1)}_{\lambda}\big)\Big) \bigcup
\Big(B_{d}\big(x^{(1)}_{\lambda}\big) \backslash B_{\frac{2 \theta^{(1)}_\lambda}{\lambda}}
\big(x^{(1)}_{\lambda}\big)\Big) \bigcup
\Big(B_{\frac{2 \theta^{(1)}_\lambda}{\lambda}}\big(x^{(1)}_{\lambda}\big) \backslash  B_{R\theta^{(1)}_{\lambda}}\big(x^{(1)}_{\lambda}\big)\Big),$$
and divide the proof into the following three steps.

\vskip 0.2cm

\noindent \textbf{Step 1. }We show that $y_\lambda\not\in \Omega\backslash B_{d}\big(x^{(1)}_{\lambda}\big)$.

It is enough to prove
\begin{equation}\label{t1.3-1}
 \eta_{\lambda}=o(1)~~\mbox{uniformly in}~~\Omega\backslash B_{d}\big(x^{(1)}_{\lambda}\big).
\end{equation}
By Green's representation theorem, \eqref{etalambda-equa} and \eqref{A_B}, we can deduce that
\begin{equation}\label{t1.3-1-1}
\begin{split}
{\eta}_\lambda(x)=& \widetilde{A}_{\lambda} G\big(x^{(1)}_{\lambda},x\big)  +o\big(\theta^{(1)}_{\lambda}\big),~\,\mbox{uniformly in}~\Omega\backslash  B_{d}\big(x^{(1)}_{\lambda}\big).
\end{split}
\end{equation}
Since $\widetilde{A}_{\lambda}=o\big(\lambda\big)$ by Proposition \ref{prop-A} and
we can calculate that
\begin{equation*}
\sup_{\Omega\backslash  B_{d}(x^{(1)}_{\lambda})}G\big(x^{(1)}_{\lambda},x\big) =
\sup_{\Omega\backslash  B_{d}(x^{(1)}_{\lambda})}\Big(-\frac{1}{2\pi}\log \big|x-x^{(1)}_{\lambda}\big|
-H\big(x,x^{(1)}_{\lambda}\big)\Big) =O\big(1\big),
\end{equation*}
hence \eqref{t1.3-1} is derived from \eqref{t1.3-1-1}.

\vskip 0.2cm

\noindent \textbf{Step  2. } We show that $y_\lambda \notin  \displaystyle B_{d}\big(x_\lambda^{(1)}\big) \backslash B_{ \frac{2 \theta^{(1)}_\lambda}{\lambda} }\big(x_\lambda^{(1)}\big)$.

We claim that
\begin{equation}\label{t1.3-2}
\eta_\lambda(x)=o\big(1\big)~~\mbox{for}~x\in  \displaystyle B_{d}\big(x_\lambda^{(1)}\big)\backslash B_{ \frac{2 \theta^{(1)}_\lambda}{\lambda} }\big(x_\lambda^{(1)}\big).
\end{equation}
By the Green's representation theorem and \eqref{5.7-2}, we get
\begin{equation}\label{t1.3-2-1}
\begin{split}
\eta_\lambda(x){=}& \int_{B_{d}(x_\lambda^{(1)})} G(y,x)
 E_\lambda(y) \eta_\lambda(y) \,dy+ \int_{\Omega\backslash  B_{d}(x_\lambda^{(1)})}G(y,x)
 E_\lambda(y)\eta_\lambda(y) \,dy\\=&    \int_{B_{ \frac{  \theta^{(1)}_\lambda}{\lambda}}(x_\lambda^{(1)})} G(y,x) E_\lambda(y) \eta_\lambda(y) \,dy\\&
 +  \int_{B_{d}(x_\lambda^{(1)})\backslash B_{ \frac{  \theta^{(1)}_\lambda}{\lambda}}(x_\lambda^{(1)})}G(y,x)
 E_\lambda(y)  \eta_\lambda(y) \,dy+ o\big(\theta^{(1)}_\lambda\big).
\end{split}
\end{equation}
By Taylor's expansion, we find
\begin{equation}\label{t1.3-2-2}
\begin{split}
& \int_{B_{ \frac{\theta^{(1)}_\lambda}{\lambda}}(x_\lambda^{(1)})}  G(y,x) E_\lambda(y) \eta_\lambda(y) \,dy \\
=& \int_{B_{ \frac{\theta^{(1)}_\lambda}{\lambda}}(x_\lambda^{(1)})} G\big(x_\lambda^{(1)},x\big) E_\lambda(y) \eta_\lambda(y) \,dy \\
& + O\bigg(\int_{B_{ \frac{\theta^{(1)}_\lambda}{\lambda}}(x_\lambda^{(1)})}
\big|y-x_\lambda^{(1)}\big| \Big|\nabla G\big((1-\xi)x_\lambda^{(1)}+\xi y,x\big)\Big| E_\lambda(y) \eta_\lambda(y) \,dy  \bigg) \\
=& \int_{ B_{\frac{\theta^{(1)}_\lambda}{\lambda}}(x_\lambda^{(1)})}  G\big(x_\lambda^{(1)},x\big) E_\lambda(y) \eta_\lambda(y) \,dy +O\bigg(\big(\theta_\lambda^{(1)}\big)^2 \int_{ B_{\frac{1}{\lambda}}(0)} \theta_\lambda^{(1)} |y| E_\lambda\big(\theta^{(1)}_\lambda y+x^{(1)}_\lambda) \,dy  \bigg) \\
=& G\big(x_\lambda^{(1)},x\big) \int_{ B_{ \frac{\theta^{(1)}_\lambda}{\lambda} }(x_\lambda^{(1)})} E_\lambda(y)\eta_\lambda(y) \,dy +O\big(\theta_\lambda^{(1)}\big), ~\,\,\mbox{where}~\xi \in (0,1).
\end{split}
\end{equation}
Furthermore, recalling the definitions of  $\widetilde{A}_{\lambda}$ and $w_{\lambda}$,
for $x\in  B_{d}\big(x_\lambda^{(1)}\big)\backslash  B_{ \frac{2\theta^{(1)}_\lambda}{\lambda}}\big(x_\lambda^{(1)}\big)$, we deduce from \eqref{Dlambda-appro} that
\begin{align}\label{t1.3-2-3}
& G\big(x_\lambda^{(1)},x\big) \int_{ B_{ \frac{\theta^{(1)}_\lambda}{\lambda} }(x_\lambda^{(1)})}
E_\lambda(y)\eta_\lambda(y) \,dy  \notag\\
=& G\big(x_\lambda^{(1)},x\big) \widetilde{A}_{\lambda}-G\big(x_\lambda^{(1)},x\big) \int_{ B_d(x_\lambda^{(1)})
\backslash B_{\frac{\theta^{(1)}_\lambda}{\lambda}}(x_\lambda^{(1)})} E_\lambda(y) \eta_\lambda(y) \,dy \notag\\
=& G\big(x_\lambda^{(1)},x\big) \widetilde{A}_{\lambda} +O\bigg(\Big|\log \frac{\theta^{(1)}_\lambda}{\lambda} \Big|  \int^{ \frac{d}{\theta^{(1)}_\lambda}}_{\frac{1}{\lambda} }\frac{1}{r^{3-\delta}} \,dr \bigg) \notag\\
=& G\big(x_\lambda^{(1)},x\big) \widetilde{A}_{\lambda}+O\bigg(\lambda^{2-\delta}
\Big|\log \frac{\theta^{(1)}_\lambda}{\lambda} \Big| \bigg)
=G\big(x_\lambda^{(1)},x\big) \widetilde{A}_{\lambda}+O\Big(\lambda^{2-\delta}
\big|\log \theta^{(1)}_\lambda \big| \Big).
\end{align}
Hence \eqref{t1.3-2-2} and \eqref{t1.3-2-3} imply that for $x\in   B_{d}\big(x_\lambda^{(1)}\big)\backslash  B_{ \frac{2\theta^{(1)}_\lambda}{\lambda}}\big(x_\lambda^{(1)}\big)$, it holds
\begin{equation}\label{t1.3-2-4}
\begin{split}
 \int_{B_{ \frac{\theta^{(1)}_\lambda}{\lambda}}(x_\lambda^{(1)})} G(y,x)
E_\lambda(y) \eta_\lambda(y) \,dy=G\big(x_\lambda^{(1)},x\big) \widetilde{A}_{\lambda}+O\Big(\lambda^{2-\delta}
\big|\log \theta^{(1)}_\lambda \big| \Big).
\end{split}
\end{equation}

Next we estimate the second term of \eqref{t1.3-2-1}.
\begin{align}\label{t1.3-2-5}
&\int_{B_{d}(x_\lambda^{(1)})\backslash  B_{ \frac{\theta^{(1)}_\lambda}{\lambda}}(x_\lambda^{(1)})}  G(y,x)
E_\lambda(y)  \eta_\lambda(y) \,dy \notag\\=&~
O\Bigg(\int_{B_{\frac{d}{\theta^{(1)}_\lambda}}(0)\backslash  B_{\frac{1}{\lambda}}(0)}\frac{ G\big(
\theta^{(1)}_\lambda y+x_\lambda^{(1)},x\big) }{1+|y|^{4-\delta}}  \,dy \Bigg) \notag\\=&~
O\Bigg( \int_{B_{\frac{d}{\theta^{(1)}_\lambda}}(0)\backslash  B_{\frac{1}{\lambda}}(0)} \frac{\big|\log \theta^{(1)}_\lambda\big|+ \Big|\log \big|y+\frac{x_\lambda^{(1)}-x}{\theta^{(1)}_\lambda}\big|\Big|  }
{1+|y|^{4-\delta}} \,dy\Bigg) \notag\\=&~
O\Bigg( \int_{B_{\frac{d}{\theta^{(1)}_\lambda}}(0)\backslash  B_{\frac{1}{\lambda}}(0)} \frac{\Big| \log \big |y+\frac{x_\lambda^{(1)}-x}{\theta^{(1)}_\lambda}\big|\Big|  }{1+|y|^{4-\delta}} \,dy\Bigg)+O\Big(
 \lambda^{2-\delta} \big|\log \theta^{(1)}_\lambda\big| \Big).
\end{align}
Furthermore, using H\"older's inequality, we have
\begin{equation*}
\begin{split}
& \int_{B_{\frac{d}{\theta^{(1)}_\lambda}}(0)\backslash  B_{\frac{1}{\lambda}}(0)} \frac{ \Big|\log \big |y+\frac{x_\lambda^{(1)}-x}{\theta^{(1)}_\lambda}\big|\Big| }{1+|y|^{4-\delta}} \,dy \\
=& O\Bigg( \bigg(\int_{B_{\frac{d}{\theta^{(1)}_\lambda}}(0)\backslash  B_{\frac{1}{\lambda}}(0)}  \Big|\log \big |y+\frac{x_\lambda^{(1)}-x}{\theta^{(1)}_\lambda} \big|\Big|^{q}  \,dy \bigg)^{\frac{1}{q}}  \bigg(
\int_{B_{\frac{d}{\theta^{(1)}_\lambda}}(0)\backslash  B_{\frac{1}{\lambda}}(0)} \frac{1}
{\big(1+|y|\big)^{\frac{(4-\delta)q}{q-1}}} \,dy \bigg)^{\frac{q-1}{q}}  \Bigg) \\
=& O\left(\lambda^{\big((4-\delta)\frac{q}{q-1}-2\big)\frac{q-1}{q}}  \big(\theta^{(1)}_\lambda\big)^{-\frac{2}{q}} \big|\log \theta^{(1)}_\lambda \big|   \right),~\mbox{for~any~fixed~large}~q.
\end{split}
\end{equation*}
Let $q=\frac{1}{\lambda}$, one can derive from \eqref{lambda_gamma} that
\begin{equation}\label{t1.3-2-6}
\int_{B_{\frac{d}{\theta^{(1)}_\lambda}}(0)\backslash  B_{\frac{1}{\lambda}}(0)} \frac{\Big|\log \big |y+\frac{x_\lambda^{(1)}-x}{\theta^{(1)}_\lambda} \big|\Big| }{1+|y|^{4-\delta}} \,dy
= O\Big( \lambda^{2-\delta}  \big|\log \theta_\lambda^{(1)}\big| \Big).
\end{equation}
By definition, it is easy to see that $\big|\log \theta^{(1)}_\lambda\big|=O\big(\frac{1}{\lambda}\big)$, then it follows from \eqref{t1.3-2-1}, \eqref{t1.3-2-4}, \eqref{t1.3-2-5} and \eqref{t1.3-2-6} that
\begin{equation}\label{t1.3-2-7}
\begin{split}
\eta_\lambda(x)=& G\big(x_\lambda^{(1)},x\big) \widetilde{A}_{\lambda}+O\big(\lambda^{1-\delta} \big),~\mbox{uniformly in}~ B_{d}\big(x_\lambda^{(1)}\big)\backslash  B_{ \frac{2\theta^{(1)}_\lambda}{\lambda}}\big(x_\lambda^{(1)}\big).
\end{split}
\end{equation}
Finally, since
$\widetilde{A}_{\lambda}=o(\lambda)$ and
\begin{equation*}
\begin{split}
\sup_{B_{d}(x_\lambda^{(1)}) \backslash  B_{\frac{2\theta^{(1)}_\lambda}{\lambda}}(x_\lambda^{(1)})} G\big(x^{(1)}_{\lambda},x\big)
=& \sup_{B_{d}(x_\lambda^{(1)})\backslash  B_{\frac{2\theta^{(1)}_\lambda}{\lambda}}(x_\lambda^{(1)})}
\Big(-\frac{1}{2\pi}\log \big|x-x^{(1)}_{\lambda}\big|-H\big(x,x^{(1)}_{\lambda}\big)\Big) \\
=& O\Big(\big|\log \theta^{(1)}_\lambda\big|\Big)=O\Big(\frac{1}{\lambda}\Big),
\end{split}
\end{equation*}
we find \eqref{t1.3-2} can be deduced from \eqref{t1.3-2-7}.

\vskip 0.2cm

\noindent \textbf{Step  3.} By the above analysis, we have
$$y_\lambda\in B_{\frac{2\theta^{(1)}_\lambda}{\lambda}}\big(x_\lambda^{(1)}\big) \backslash  B_{R\theta^{(1)}_\lambda}\big(x_\lambda^{(1)}\big)~~\mbox{and}~~r_\lambda:=|y_\lambda|.$$
By translation, we assume that  $x_\lambda^{(1)}=0$, then $\frac{r_\lambda}{\theta^{(1)}_\lambda}\geq R\gg 1$. Set
\begin{equation}\label{t1.3-3-1}
\widehat{\eta}_\lambda(y):=\eta_\lambda(r_\lambda y),~y \in \Omega_{r_\lambda}:=\big\{x:\,r_\lambda x\in\Omega \big\},
\end{equation}
then $\|\widehat{\eta}_\lambda\|_{L^\infty(\Omega_{r_\lambda})}=1$, and for any $y\in \Omega_{r_\lambda}\backslash \{0\} $,
\begin{equation*}
\begin{split}
-\Delta \widehat{\eta}_\lambda (y)=&r^2_\lambda E_\lambda (r_\lambda y) \widehat{\eta}_\lambda(y)
\le C \Big(\frac{r_\lambda }{\theta^{(1)}_\lambda}\Big)^2 \frac{1}{\Big(1+\big|\frac{r_\lambda y} {\theta^{(1)}_\lambda}\big|\Big)^{4-\delta}}\rightarrow 0.
\end{split}\end{equation*}
Hence, by standard elliptic regularity, there exists a bounded function $\eta$ such that
\[
\widehat{\eta}_\lambda \rightarrow \eta~\mbox{in}~C\big(B_R(0)\backslash B_\tau(0)\big),~~\mbox{for~any}~R,\tau>0,
\]
and
\[
\Delta \eta=0~~\mbox{in}~\R^2 \backslash \{0\}.
\]
Since $\eta$ is a bounded function, direct computation gives $\eta=\widetilde{C}$, and we have $\widetilde{C}=1$ since $\widehat{\eta}_\lambda(\frac{y_\lambda}{r_\lambda})=\eta_\lambda(y_\lambda)=1$. Hence, we derive that
\begin{equation}\label{t1.3-3-2}
\widehat{\eta}_\lambda\rightarrow 1~\mbox{in}~C\big(B_R(0)\backslash B_\tau(0)\big).
\end{equation}

Now we know that $\widetilde{\eta}_{\lambda}(y):=\eta_\lambda\big(\theta^{(1)}_\lambda y\big)$ solves
\begin{equation*}
-\Delta \widetilde{\eta}_{\lambda}(y)-2e^{2U(y)}\widetilde{\eta}_{\lambda}(y) =
\widetilde{f}_\lambda(y):=\Big(\big(\theta_\lambda^{(1)}\big)^2 E_\lambda\big(\theta^{(1)}_\lambda y\big)
-2e^{2U(y)}\Big)\widetilde{\eta}_{\lambda}(y).
\end{equation*}
Furthermore, according to the proof of Lemma \ref{theta-D} and the expansion of $w_\lambda$, using Lemma \ref{lem2.4} and Proposition \ref{prop4.2}, we can verify that
\begin{equation*}
|\widetilde{f}_\lambda(y)|\leq C \lambda \frac{1}{\big(1+|y|\big)^{4-\delta-\tau_1/2}},~\mbox{for~some~small~fixed}~\tau_1 \in (0,1).
\end{equation*}
The average of $\widetilde{\eta}_\lambda$, denoted by
$\eta_\lambda^*(r):=\displaystyle\int^{2\pi}_0 \widetilde{\eta}_\lambda(r,\theta) \,d\theta$, solves the ODE
\begin{equation*}
-\left(\eta_\lambda^*\right)''-\frac{1}{r}\left(\eta_\lambda^*\right)'-2e^{2U}\eta_\lambda^*=
\int^{2\pi}_0 \widetilde{f}_\lambda(r,\theta)d\theta:=f^*_\lambda(r).
\end{equation*}
Note that $u_0(r):=\frac{4-r^2}{4+r^2}$ is a bounded solution of
\begin{equation}\label{t1.3-3-3}
-v''(r)-\frac{1}{r}v'(r)-2e^{2U(r)}v(r)=0.
\end{equation}
Then by ODE's theory, the second solution of \eqref{t1.3-3-3} is given by
\begin{equation*}
u_1(r):=\frac{(4-r^2)\log r+8}{r^2+4},
\end{equation*}
and  the function
\begin{equation*}
V_\lambda(r):=u_0(r)\int^r_0su_1(s)f_\lambda^*(s)ds-u_1(r)\int^r_0su_0(s)f_\lambda^*(s)ds
\end{equation*}
is a solution of $$-v''(r)-\frac{1}{r}v'(r)-2e^{2U(r)}v(r)=f_\lambda^*(r).$$
For detailed calculations, readers can refer to the appendix in \cite{CPY2021}.
Therefore, there exist two constants $C_{0,\lambda}$, $C_{1,\lambda}\in\mathbb R$ such that
\begin{equation*}
\eta^*_\lambda(r)=C_{0,\lambda}u_0(r)+C_{1,\lambda}u_1(r)+V_\lambda(r).
\end{equation*}
Since $\eta^*_\lambda$ must be bounded at $r=0$ and $V_\lambda(0)=0$, then we have that $C_{1,\lambda}=0$. Moreover, since $u_0(0)=1$, $V_\lambda(0)=0$ and
\begin{equation*}
\eta^*_\lambda(0)=\displaystyle\int^{2\pi}_0 \widetilde{\eta}_\lambda(0,\theta) \,d\theta \overset{\eqref{loc_o}}=o(1).
\end{equation*}
Hence we get $C_{0,\lambda}=o(1)$ and then
 we find
\begin{equation}\label{t1.3-3-4}
\begin{split}
|\eta^*_\lambda(r)| =&~ |V_\lambda(r)|+o(1) \\
\leq &~ {C}\lambda\int^r_0 \frac{s\log s}{(1+s)^{4-\delta-\frac{\tau_1}{2}}} \,ds +C\lambda \int^r_0  \frac{s}{(1+s)^{4-\delta-\frac{\tau_1}{2}}} \,ds \\
&~ + C\lambda \log r \int^r_0  \frac{s}{(1+s)^{4-\delta-\frac{\tau_1}{2}}} \,ds +o(1) \\
\leq &~ {C\lambda\log r} +o(1).
\end{split}
\end{equation}
Furthermore, since $y_\lambda\in B_{\frac{2\theta^{(1)}_\lambda}{\lambda}}(0) \backslash  B_{R\theta^{(1)}_\lambda}(0)$, we have $\frac{r_\lambda}{\theta^{(1)}_\lambda} \leq \frac{2}{\lambda}$ and it follows from \eqref{t1.3-3-4} that
\begin{equation}\label{t1.3-3-5}
\eta^*_\lambda\Big(\frac{r_\lambda}{\theta^{(1)}_\lambda}\Big)\leq  C\lambda \log \frac{r_\lambda}{\theta^{(1)}_\lambda} +o(1) \leq C\lambda \log \frac{2}{\lambda} +o(1) \to 0.
\end{equation}

On the other hand, by \eqref{t1.3-3-2}, we know
\begin{equation}\label{t1.3-3-6}
\widehat{\eta}_\lambda(x)\to 1,~~\mbox{for any}~~|x|=1.
\end{equation}
Then by \eqref{t1.3-3-1} and \eqref{t1.3-3-6}, we find
\begin{equation*}
\begin{split}
\eta^*_\lambda\Big(\frac{r_\lambda}{\theta^{(1)}_\lambda}\Big) =&
\int^{2\pi}_0 \widetilde{\eta}_\lambda \Big(\frac{r_\lambda}{\theta^{(1)}_\lambda},\theta\Big) \,d\theta =
\int^{2\pi}_0 \eta_\lambda(r_\lambda,\theta) \,d\theta
\\=&\int^{2\pi}_0 \widehat{\eta}_\lambda(1,\theta) \,d\theta  \geq c_0>0,~\mbox{for some constant}~c_0,
\end{split}\end{equation*}
which is a contradiction to \eqref{t1.3-3-5}. Hence we complete the proof of Theorem \ref{th_uniq}.
\end{proof}

\begin{proof}[\underline{\textbf{Proof of Theorem \ref{th_uniq2}}}]
Suppose that $u_\lambda$ is a solution to problem \eqref{1.1} with
\begin{equation}\label{energy-fty}
\limsup_{\lambda \to 0} J_\lambda(u_\lambda) < \infty,
\end{equation}
then it follows from \cite{DT2020} that there exists $k \geq 1$ such that
\begin{equation*}
J_\lambda(u_\lambda) \to 2k\pi, ~ \mbox{as}~ \lambda \to 0,
\end{equation*}
and $u_\lambda$ is a $k$-peak solution concentrated at $k$ different points $\{x_1,\cdots,x_k\} \subset \Omega$. In addition, $x:=(x_1,\cdots,x_k)\in \Omega^k$ is a critical point of the Kirchoff-Routh function. On the other hand, it is shown in \cite{GT2010} that the Kirchoff-Routh function has no critical point for $k\geq 2$ if $\Omega \subset \R^2$ is a convex domain. Then we can deduce that any solution satisfying \eqref{energy-fty} must be a single-peak solution concentrated at the critical point of the Robin function. Besides, since $\Omega$ is a bounded convex domain in $\R^2$,
we obtain from \cite{CF1985} that the Robin function $\mathcal{R}(x)$ is strictly convex and it has a unique critical point which is non-degenerate. Therefore, by Theorem \ref{th_uniq}, we prove that there exists $\lambda_0> 0$ such that \eqref{1.1} admits a unique solution for any $\lambda \in (0,\lambda_0)$.

\vskip 0.1cm

Furthermore, if $\Omega$ is also symmetric with respect to $x_1,x_2$, it is easy to verify that $u_\lambda(-x_1,x_2)$ and $u_\lambda(x_1,-x_2)$ are also the solutions to problem \eqref{1.1}. Using the above proved uniqueness result, we know that $u_\lambda(x) =u_\lambda(-x_1,x_2)= u_\lambda(x_1,-x_2)$ for $\lambda$ small. Then, the fact that $u_\lambda$ is even in $x_1,x_2$ implies that $$u_\lambda(0)=\max\limits_{x\in \Omega} u_\lambda(x).$$

\end{proof}

\appendix

\section{Proof of Proposition \ref{prop_klambda}}\label{s-k}

\renewcommand{\theequation}{A.\arabic{equation}}
\setcounter{equation}{0}

\vskip 0.2cm

 Firstly, let $M_\lambda:=\displaystyle\max_{|x|\leq \frac{d_0}{\theta_{\lambda}}}\frac{|k_{\lambda}(x)|}{(1+|x|)^{\tau_1}}$,
then $$\eqref{k-inequa} \Leftrightarrow M_\lambda \leq C.$$
We will prove that $M_\lambda\leq C$ by contradiction.  Set
\begin{equation*}
M_\lambda^*:=\max_{|x|\leq \frac{d_0}{\theta_{\lambda}}}\max_{|x'|=|x|}\frac{|k_{\lambda}(x)-k_{\lambda}(x')|}{(1+|x|)^{{\tau_1}}}.
\end{equation*}

\begin{Lem}\label{lemB.1}
If $M_\lambda\to +\infty$, then it holds
\begin{equation}\label{B.1.1}
M_\lambda^*=o(1)M_\lambda.
\end{equation}
\end{Lem}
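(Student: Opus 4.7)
The plan is to mirror Step 1 of the proof of Proposition \ref{prop4.2}. Argue by contradiction: assume along a subsequence that $M^*_\lambda \geq c_0 M_\lambda$ with $M_\lambda \to \infty$. Pick $x'_\lambda, x''_\lambda$ with $|x'_\lambda| = |x''_\lambda| \leq d_0/\theta_\lambda$ realizing $M^*_\lambda$; after a rotation, $x''_\lambda = (x'_\lambda)^-$ with $x^- := (x_1,-x_2)$. For $x_2 \geq 0$, set $\omega^*_\lambda(x) := k_\lambda(x) - k_\lambda(x^-)$ and $\omega_\lambda(x) := \omega^*_\lambda(x)/(1+x_2)^{\tau_1}$. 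Taking the difference of \eqref{k-equa} at $x$ and $x^-$ (using that $U$ is radial) and normalizing by $(1+x_2)^{\tau_1}$,
$$
-\Delta \omega_\lambda - \frac{2\tau_1}{1+x_2}\partial_{x_2}\omega_\lambda + \bigg(\frac{\tau_1(1-\tau_1)}{(1+x_2)^2} - 2 e^{2U}\bigg)\omega_\lambda = \frac{H_\lambda(x)}{(1+x_2)^{\tau_1}},
$$
where $H_\lambda(x) := \bigl(h^*_\lambda(x)-h^*_\lambda(x^-)\bigr) + \bigl(h^{**}_\lambda(x)-h^{**}_\lambda(x^-)\bigr) = O\bigl((1+|x|)^{-(4-\delta-3\tau)}\bigr)$ by \eqref{h-ast}--\eqref{h-ast2}.

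Let $x^{**}_\lambda$ attain $N^{**}_\lambda := \max_{B^+_{d_0/\theta_\lambda}}|\omega_\lambda|$, where $B^+_R := \{|x|\leq R,\, x_2 \geq 0\}$; necessarily $N^{**}_\lambda \geq M^*_\lambda \to \infty$. The next step is to show $|x^{**}_\lambda| \leq C$, in two parts. If $|x^{**}_\lambda| > d_0/(2\theta_\lambda)$, write $\omega^*_\lambda = \gamma_\lambda^2\bigl[(v_\lambda(x)-v_\lambda(x^-)) - (v_0(x)-v_0(x^-))\bigr]$ and use the identity $v_\lambda(x)-v_\lambda(x^-) = \gamma_\lambda^3\bigl[u_\lambda(x_\lambda+\theta_\lambda x) - u_\lambda(x_\lambda+\theta_\lambda x^-)\bigr]$. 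Applying \eqref{asym_u} with $d = d_0/8$ gives the $u_\lambda$-difference as $O(\theta_\lambda x^{**}_{\lambda,2}/\gamma_\lambda) + o(\theta_\lambda/\gamma_\lambda)$, while the far-field gradient decay $|\nabla v_0(x)| = O(1/|x|)$ (a consequence of \eqref{parw-4} and the explicit form of $v_0$) bounds the $v_0$-difference by $O(x^{**}_{\lambda,2}/|x^{**}_\lambda|)$. Together, $|\omega^*_\lambda(x^{**}_\lambda)| = O(\gamma_\lambda^4 \theta_\lambda x^{**}_{\lambda,2}) + o(\gamma_\lambda^4 \theta_\lambda)$, whence $N^{**}_\lambda \leq C\gamma_\lambda^4 \theta_\lambda^{\tau_1} + o(\gamma_\lambda^4\theta_\lambda) = o(1)$ since $\theta_\lambda = O(e^{-\gamma_\lambda^2/2})$ defeats any polynomial in $\gamma_\lambda$---contradiction. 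Otherwise $x^{**}_\lambda$ is interior, so $\partial_{x_2}\omega_\lambda(x^{**}_\lambda) = 0$ and $-\Delta\omega_\lambda(x^{**}_\lambda) \geq 0$ (WLOG $\omega_\lambda(x^{**}_\lambda) > 0$). Since $e^{2U(x)} = O(|x|^{-4})$ decays faster than $1/(1+x_2)^2$, the left-hand coefficient in the PDE is $\geq \tau_1(1-\tau_1)/(2(1+x^{**}_{\lambda,2})^2)$ for $|x^{**}_\lambda|$ large, and the PDE forces $N^{**}_\lambda = O\bigl((1+|x^{**}_\lambda|)^{-(2+\tau_1-\delta-3\tau)}\bigr) \to 0$ if $|x^{**}_\lambda| \to \infty$---again a contradiction. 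Hence $|x^{**}_\lambda| \leq C$.

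Normalize $\omega^{**}_\lambda := \omega_\lambda/N^{**}_\lambda$. Its PDE has right-hand side tending to zero locally uniformly, so by elliptic regularity and $\|\omega^{**}_\lambda\|_\infty \leq 1$, $\omega^{**}_\lambda \to \omega$ in $C^2_{\text{loc}}(\overline{\R^2_+})$ along a subsequence, with $|\omega(x^{**}_\infty)| = 1$ for the limit $x^{**}_\infty$ of $x^{**}_\lambda$. Setting $\bar\omega := (1+x_2)^{\tau_1}\omega$, $\bar\omega$ solves $-\Delta\bar\omega - 2e^{2U}\bar\omega = 0$ in $\R^2_+$ with $\bar\omega|_{x_2=0} = 0$. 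The odd extension across $\{x_2 = 0\}$ preserves the equation (since $U$ is radial), so Lemma \ref{lem3.1} yields $\bar\omega(x) = c_0\frac{4-|x|^2}{4+|x|^2} + \sum_{i=1}^2 c_i\frac{x_i}{4+|x|^2}$; odd symmetry forces $c_0 = c_1 = 0$. Finally, $\nabla k_\lambda(0) = \gamma_\lambda^2(\nabla v_\lambda(0) - \nabla v_0(0)) = 0$ (because $\nabla w_\lambda(0) = \gamma_\lambda\theta_\lambda\nabla u_\lambda(x_\lambda) = 0$, $\nabla U(0) = 0$, and $v_\lambda \to v_0$ in $C^2_{\text{loc}}$), so $\nabla \omega^*_\lambda(0) = 0$, and passing to the limit yields $\partial_{x_2}\bar\omega(0) = 0$, forcing $c_2 = 0$. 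Thus $\bar\omega \equiv 0$, contradicting $|\omega(x^{**}_\infty)| = 1$.

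The main obstacle is Part 1 of the bound $|x^{**}_\lambda| \leq C$: one must quantify the oscillation of $k_\lambda = \gamma_\lambda^2(v_\lambda - v_0)$ across the $x_1$-axis on the mesoscopic scale $|x| \sim 1/\theta_\lambda$, combining the first-order expansion \eqref{asym_u} of $u_\lambda$ with the far-field decay of $\nabla v_0$, and verifying that the exponential gain $\theta_\lambda^{\tau_1}$ defeats the polynomial loss $\gamma_\lambda^4$. The remainder is a direct translation of Step 1 of Proposition \ref{prop4.2}.
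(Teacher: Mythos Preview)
Your proof is correct and follows essentially the same strategy as the paper's: contradiction assumption, anti-symmetrization $k_\lambda(x)-k_\lambda(x^-)$, the two-part localization of the maximum point (outer annulus via \eqref{asym_u} and the explicit form of $v_0$; inner region via the maximum principle applied to the weighted equation), and the limiting linearized problem killed by $\nabla k_\lambda(0)=0$ plus the boundary condition. One small point: your bound $v_0(x)-v_0(x^-)=O(x^{**}_{\lambda,2}/|x^{**}_\lambda|)$ is not literally a consequence of the mean-value inequality with $|\nabla v_0|=O(1/|x|)$, since the segment from $x$ to $x^-$ may pass near the origin; the clean justification (which the paper uses, and your parenthetical ``explicit form of $v_0$'' hints at) is that the radial part of $v_0$ cancels in the difference, leaving only $2c_2\,x_2/(4+|x|^2)=O(x_2/|x|^2)$, which is even stronger and suffices.
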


\begin{proof}
Suppose this is not true, then there exists $c_0>0$ such that $M^*_\lambda\geq c_0M_\lambda$. Let $y'_\lambda$ and
$y''_\lambda$ satisfy $|y'_\lambda|=|y''_\lambda| {\leq\frac{d_{0}}{\theta_{\lambda}}}$ and
\begin{equation*}
M_\lambda^*=  \frac{|k_{\lambda}(y'_\lambda)-k_{\lambda}(y''_\lambda)|}{(1+|y'_\lambda|)^{{\tau_1}}}.
\end{equation*}
Without loss of generality, we may assume that $y'_\lambda$ and
$y''_\lambda$ are symmetric with respect to the $x_1$ axis. Set
\begin{equation*}
l^*_\lambda(x):=k_{\lambda}(x)-k_{\lambda}(x^-),~\,~x^-:=(x_1,-x_2),\  \mbox{ for }x=(x_{1},x_{2}),  ~\,x_2>0.
\end{equation*}
 Hence $y''_{\lambda}={y'_{\lambda}}^{-}$ and
$M_\lambda^*=\frac{ |l^*_\lambda(y'_{\lambda})| }{ (1+|y'_{\lambda}| )^{\tau_1} }$.
Let us define
\begin{equation}\label{llambda-def}
l_\lambda(x):=\frac{l^*_\lambda(x)}{(1+x_2)^{\tau_1}}.
\end{equation}
Then direct calculations show that $l_\lambda$ satisfies
\begin{equation}\label{llambda-equa}
-\Delta l_\lambda-\frac{2{\tau_1}}{1+x_2}\frac{\partial l_\lambda}{\partial x_2}+
\frac{{\tau_1}(1-{\tau_1})}{(1+x_2)^2}l_\lambda= 2 l_\lambda e^{2U(x)} +\frac{h_\lambda(x)}{(1+x_2)^{\tau_1}} + \frac{\widetilde{h}_\lambda(x)}{(1+x_2)^{\tau_1}},
\end{equation}
where $h_\lambda(x):=h^*_\lambda(x)-h^*_\lambda(x^-)$ and $\widetilde{h}_\lambda(x):=h^{**}_\lambda(x)-h^{**}_\lambda(x^-)$. $h^*_\lambda$ and $h^{**}_\lambda$ are defined in \eqref{h-ast} and \eqref{h-ast2} respectively.
Also let $y^{**}_\lambda$ satisfies
\begin{equation}\label{M_ast2-def}
\mbox{$|y^{**}_\lambda|\leq\frac{d_{0}}{\theta_{\lambda}}$, $y^{**}_{\lambda,2}\geq 0$ and}~~~ \big|l_\lambda(y^{**}_\lambda)\big|= M^{**}_\lambda:=\max_{|x|\leq \frac{d_0}{\theta_{\lambda}},~x_2\geq 0} |l_\lambda(x)|.
\end{equation}
Then it follows
\begin{equation}\label{M_ast2-2}
M^{**}_\lambda \geq \frac{\big|l^*_\lambda(y'_\lambda)\big|}{\big(1+y'_{\lambda,2}\big)^{\tau_1}}
\geq \frac{\big|k_{\lambda}(y'_\lambda)-k_{\lambda}(y''_\lambda)\big|}{\big(1+|y'_\lambda|\big)^{\tau_1}}=M^*_\lambda\to +\infty.
\end{equation}
We may assume that $l_\lambda(y^{**}_\lambda)>0$. We claim that
\begin{equation}\label{y_ast2-bound}
|y^{**}_\lambda|\leq C.
\end{equation}
The proof of  \eqref{y_ast2-bound} is divided into   two steps.

\vskip 0.2cm

\noindent
\emph{Step 1. We prove that $|y^{**}_\lambda|\leq \frac{d_0}{2\theta_\lambda}$.}

\vskip 0.2cm
Suppose this is not true. By the definition of $k_{\lambda}$ and \eqref{4.2-part1-5}, we have
\begin{equation*}
\begin{split}
k_{\lambda}(y^{**}_\lambda)-k_{\lambda}(y^{**-}_\lambda)
&= \gamma^2_\lambda\Big(\big(v_{\lambda}(y^{**}_\lambda)-v_{\lambda}(y^{**-}_\lambda)\big)
-\big(v_0(y^{**}_\lambda)-v_0(y^{**-}_\lambda)\big)\Big) \\
&= O\big( \gamma^4_\lambda \theta_\lambda y^{**}_{\lambda,2} \big)+o\big( \gamma^4_\lambda \theta_{\lambda}\big)-\gamma^2_\lambda \big(v_0(y^{**}_\lambda)-v_0(y^{**-}_\lambda)\big).
\end{split}
\end{equation*}
Also we recall that
\begin{equation*}
\begin{split}
v_0(x)=w(x)+ c_0\frac{4-|x|^2}{4+|x|^2}+\sum^2_{i=1}{c_i}\frac{x_i}{4+|x|^2},
\end{split}\end{equation*}
where $w(x)$ is the radial solution of $-\Delta u-2e^{2U}u=e^{2U}\big(U^2+U\big)$. Then
\begin{equation*}
\begin{split}
v_0(y^{**}_\lambda)-v_0(y^{**-}_\lambda) =& \sum^2_{i=1}{c_i}\frac{y^{**}_{\lambda,i}}{4+|y^{**}_\lambda|^2}-
\sum^2_{i=1}{c_i}\frac{y^{**-}_{\lambda,i}}{4+|y^{**-}_\lambda|^2}
=O\Big(\big|y^{**}_\lambda \big|^{-1}\Big).
\end{split}\end{equation*}
As a result,
\begin{equation}\label{klambda-}
l^*_\lambda(y_\lambda^{**})=k_{\lambda}(y^{**}_\lambda)-k_{\lambda}(y^{**-}_\lambda)
=O\big( \gamma^4_\lambda \theta_\lambda y^{**}_{\lambda,2} \big)+o\big( \gamma^4_\lambda \theta_{\lambda}\big)+O\Big(\gamma^2_\lambda \big|y^{**}_\lambda \big|^{-1}\Big),
\end{equation}
which implies that
\begin{equation*}
\begin{split}
M^{**}_\lambda &= \frac{l^*_\lambda(y_\lambda^{**})}{\big(1+y^{**}_{\lambda,2}\big)^{\tau_1}}
= O\Big( \gamma^4_\lambda  \theta_\lambda \big(y^{**}_{\lambda,2}\big)^{1-\tau_1} \Big) +o\big( \gamma^4_\lambda  \theta_\lambda \big) + O\Big(\gamma^2_\lambda \big|y^{**}_{\lambda}\big|^{-1}\Big) \\
&=  O\big( \gamma^4_\lambda  \theta_\lambda^{\tau_1} \big) + o\big( \gamma^4_\lambda  \theta_\lambda \big)
+O\big(\gamma^2_\lambda \theta_\lambda \big)=o(1),
\end{split}
\end{equation*}
since $\frac{d_0}{2\theta_\lambda}<|y_\lambda^{**}| \leq \frac{d_0}{\theta_\lambda}$.
This is a contradiction with \eqref{M_ast2-2}.

\vskip 0.2cm

\noindent
\emph{Step 2. We prove that $|y^{**}_\lambda|\leq C$.}

\vskip 0.2cm
 Suppose this is not true.
Now by Step 1, we have $y^{**}_\lambda\in B_{\frac{d_0}{2\theta_\lambda}}(0)$. Thus
\begin{equation}\label{y-sdsd}
|\nabla l_\lambda(y^{**}_\lambda)|=0~\mbox{and}~\Delta l_\lambda(y^{**}_\lambda)\leq 0.
\end{equation}
So from \eqref{llambda-equa}  and \eqref{y-sdsd}, we find
\begin{equation}\label{y-sdsd1}
\begin{split}
0&\leq -\Delta l_\lambda(y^{**}_\lambda) = -\frac{\tau_1(1-\tau_1)}{(1+y^{**}_{\lambda,2})^2}l_\lambda(y^{**}_\lambda)
+\frac{h_\lambda(y^{**}_\lambda)}{(1+y^{**}_{\lambda,2})^{\tau_1}} +\frac{\widetilde{h}_\lambda(y^{**}_\lambda)}{(1+y^{**}_{\lambda,2})^{\tau_1}}
+2 l_\lambda(y^{**}_\lambda)e^{2U(y^{**}_\lambda)} \\
&\leq -\frac{\tau_1(1-\tau_1)}{(1+y^{**}_{\lambda,2})^2}l_\lambda(y^{**}_\lambda)
+\frac{h_\lambda(y^{**}_\lambda)}{(1+y^{**}_{\lambda,2})^{\tau_1}} +\frac{\widetilde{h}_\lambda(y^{**}_\lambda)}{(1+y^{**}_{\lambda,2})^{\tau_1}}
+\frac{\tau_1(1-\tau_1)}{2(1+y^{**}_{\lambda,2})^2} l_\lambda(y^{**}_\lambda) \\
&= -\frac{\tau_1(1-\tau_1)}{2(1+y^{**}_{\lambda,2})^2}l_\lambda(y^{**}_\lambda)
+\frac{h_\lambda(y^{**}_\lambda)}{(1+y^{**}_{\lambda,2})^{\tau_1}} +\frac{\widetilde{h}_\lambda(y^{**}_\lambda)}{(1+y^{**}_{\lambda,2})^{\tau_1}},
\end{split}
\end{equation}
here the inequality holds since $|y^{**}_\lambda|\to \infty$.
Hence using \eqref{y-sdsd1}, we deduce that
\begin{equation*}
\frac{l_\lambda(y^{**}_\lambda)}{(1+y^{**}_{\lambda,2})^2} \leq
C \bigg( \frac{h_\lambda(y^{**}_\lambda)}{(1+y^{**}_{\lambda,2})^{\tau_1}} +
\frac{\widetilde{h}_\lambda(y^{**}_\lambda)}{(1+y^{**}_{\lambda,2})^{\tau_1}} \bigg),
\end{equation*}
which implies
\begin{equation}\label{y-sdsd2}
l_\lambda(y^{**}_\lambda) \leq C \Big( h_\lambda(y^{**}_\lambda)\big(1+y^{**}_{\lambda,2}\big)^{2-\tau_1} + \widetilde{h}_\lambda(y^{**}_\lambda)\big(1+y^{**}_{\lambda,2}\big)^{2-\tau_1} \Big).
\end{equation}
Combined with \eqref{h-ast}, \eqref{h-ast2} and \eqref{y-sdsd2}, the fact that $|y^{**}_\lambda|\to \infty$ indicates
\begin{equation*}
M^{**}_\lambda = l_\lambda(y^{**}_\lambda) \leq
C \bigg( \frac{1}{(1+|y_\lambda^{**}|)^{2+\tau_1-\delta-2\tau}} +
\frac{1}{\gamma_\lambda^2 (1+|y_\lambda^{**}|)^{2+\tau_1-\delta-3\tau}} \bigg)
\to 0~~\mbox{as}~~\lambda\to 0,
\end{equation*}
which is a contradiction with \eqref{M_ast2-2}. Hence \eqref{y_ast2-bound} follows.\\

Now let $l_\lambda^{**}(x)=\frac{l_\lambda(x)}{M_\lambda^{**}}$,
where $l_\lambda$ is defined in \eqref{llambda-def} and $M_\lambda^{**}$ is defined in \eqref{M_ast2-def}.
From \eqref{llambda-equa}, it follows that $l_\lambda^{**}(x)$ solves
\begin{equation}\label{l_ast2-equa}
-\Delta l^{**}_\lambda-\frac{2\tau_1}{1+x_2}\frac{\partial l^{**}_\lambda}{\partial x_2}+
\frac{\tau_1(1-\tau_1)}{(1+x_2)^2}l^{**}_\lambda = 2 l^{**}_\lambda e^{2U(x)}
+\frac{h_\lambda(x)}{M_\lambda^{**}(1+x_2)^{\tau_1}}+ \frac{\widetilde{h}_\lambda(x)}{M_\lambda^{**}(1+x_2)^{\tau_1}}.
\end{equation}
Moreover $l^{**}_{\lambda}(y^{**}_\lambda)=1$ and
\[
|l^{**}_{\lambda}(x)|\leq 1~ \mbox{in}~ \Big\{x:|x|\leq\frac{d_0}{\theta_\lambda},~x_2\geq 0\Big\},
\]
hence $l^{**}_\lambda(x)\to \gamma(x)$ uniformly in any compact subset of $\R^2$. And $\gamma(x)\not\equiv0$ because $l_\lambda^{**}(y_{\lambda}^{**})=1$ and  $|y_{\lambda}^{**}|\le C$. Observe that
$$\frac{h_\lambda(x)}{M_\lambda^{**}(1+x_2)^{\tau_1}}\to 0~\mbox{uniformly in any compact set of}~\Big\{x: |x|\leq \frac{d_0}{\theta_\lambda},~x_2 \geq 0\Big\}~\mbox{as}~\lambda\to 0,$$
and
$$\frac{\widetilde{h}_\lambda(x)}{M_\lambda^{**}(1+x_2)^{\tau_1}}\to 0~\mbox{uniformly in any compact set of}~\Big\{x: |x|\leq \frac{d_0}{\theta_\lambda},~x_2 \geq 0\Big\}~\mbox{as}~\lambda\to 0.$$
Then passing to the limit $\lambda\to 0$ into \eqref{l_ast2-equa}, we can deduce that   $\gamma$ solves
\begin{equation*}
\begin{cases}
\displaystyle -\Delta \gamma -\frac{2\tau_1}{1+x_2}\frac{\partial \gamma}{\partial x_2}
+\Big(\frac{\tau_1(1-\tau_1)}{(1+x_2)^2}-2e^{2U(x)} \Big)\gamma =0,~x_2>0,\\[3mm]
\displaystyle \gamma(x_1,0)=0.
\end{cases}
\end{equation*}
Hence $\bar \gamma=(1+x_2)^{\tau_1}\gamma$ satisfies
\begin{equation*}
\begin{cases}
-\Delta \bar \gamma-2e^{2U} \bar \gamma =0,~x_2>0,\\[2mm]
\bar\gamma(x_1,0)=0.
\end{cases}
\end{equation*}
Now using Lemma \ref{lem3.1}, we have
\begin{equation}\label{gamma_bar-equa}
\bar \gamma(x)=e_0\frac{4-|x|^2}{4+|x|^2}+\sum^2_{i=1}{e_i}\frac{x_i}{4+|x|^2},~~
\mbox{for some constants $e_0$, $e_1$ and $e_2$}.
\end{equation}
On the other hand, by the definition of $k_{\lambda}(x)$ and $l^*_\lambda(x)$, it follows from \eqref{omega_0} that
\begin{equation}\label{add_k0}
\begin{split}
\nabla l_\lambda^*(0)=&\nabla \big( k_{\lambda}(x)-k_{\lambda}(x^-)\big)\big|_{x=0}\\=&
\gamma^2_\lambda \big(\nabla v_{\lambda}(x)-\nabla v_0(x)\big)\big|_{x=0}-
\gamma^2_\lambda \big(\nabla v_{\lambda}(x^-)-\nabla v_0(x^-)\big)\big|_{x=0} \\=& -\gamma^2_\lambda \nabla  \big( v_0(x)- v_0(x^-)\big)\big|_{x=0}.
\end{split}
\end{equation}
Also we recall that
\[
\lim_{\lambda \to 0} v_\lambda =v_0~~\mbox{in}~~C_{loc}^2(\R^2),
\]
hence by \eqref{omega_0}, we can verify that
\begin{equation}\label{add_k01}
 \nabla \big(v_0(x)-v_0(x^-)\big)\big|_{x=0}=0.
\end{equation}
Therefore,  it holds
\begin{equation*}
\nabla \big((1+x_2)^{\tau_1}l_\lambda^{**}(x)\big)\big|_{x=0}=
\frac{\nabla l_\lambda^*(0)}{M_\lambda^{**}}= 0,
\end{equation*}
which means $\nabla \bar\gamma(0)=0$. Then from \eqref{gamma_bar-equa}, we find $e_1=e_2=0$.
Moreover from $\bar\gamma(x_1,0)=0$, we also have $e_0=0$. So $\bar\gamma=0$, which is a contradiction.
Hence \eqref{B.1.1} follows.
\end{proof}

 \vskip 0.1cm

\begin{proof}[\underline{\textbf{Proof of Proposition \ref{prop_klambda}}}] First we recall that
$$\eqref{k-inequa} \Leftrightarrow M_\lambda \leq C.$$
Suppose by contradiction that $M_\lambda\to +\infty$, as $\lambda\to 0$ and set
\begin{equation*}
 \varphi_{\lambda}(r)=\frac{1}{2\pi}\int^{2\pi}_0k_{\lambda}(r,\theta) \,d\theta,~\,\,r=|x|.
\end{equation*}
Then by Lemma \ref{lemB.1}, similar to the proof of \eqref{psi-lamda}, it follows
\begin{equation*}
\max_{r\leq \frac{d_0}{\theta_{\lambda}}}\frac{|\varphi_{\lambda}(r)|}{(1+r)^{\tau_1}}=M_\lambda\big(1+o(1)\big).
\end{equation*}
Assume that $\frac{|\varphi_{\lambda}(r)|}{(1+r)^{\tau_1}}$ attains its maximum at $\widehat{r}_\lambda$.
Then we claim
\begin{equation}\label{hat-r}
\widehat{r}_\lambda\leq C.
\end{equation}
In fact, let $\phi(x)=\frac{4-|x|^2}{4+|x|^2}$ and  we recall that
$-\Delta\phi(x)=2e^{2U(x)}\phi(x)$.
Now multiplying \eqref{k-equa} by $\phi(x)$ and using integration by parts, it follows from \eqref{h-ast} and \eqref{h-ast2} that
\begin{equation*}
\begin{split}
\int_{|x|=r} \Big(\frac{\partial k_{\lambda}(x)}{\partial \nu}\phi(x)-  \frac{\partial \phi(x)}{\partial \nu}k_{\lambda}(x)\Big) \,d\sigma = -\int_{|x|\leq r} \big( h_\lambda^*(x) +h_\lambda^{**}(x) \big) \phi(x) \,dx
=O\big(1\big).
\end{split}
\end{equation*}
Hence similar to \eqref{v-phi-2}--\eqref{r-bound-2}, we have
$(1+\widehat{r}_\lambda)^{\tau_1}M_\lambda= O\big(M_\lambda\big)$,
 which implies \eqref{hat-r}.
\vskip 0.2cm

Now integrating \eqref{k-equa}, we get
\begin{equation}\label{iden-philamda}
-\Delta \varphi_{\lambda}=2\varphi_{\lambda}e^{2U(x)}+\frac{1}{2\pi}\displaystyle\int^{2\pi}_0 h^*_\lambda(r,\theta) \,d\theta +\frac{1}{2\pi}\displaystyle\int^{2\pi}_0 h^{**}_\lambda(r,\theta) \,d\theta,~\,~\mbox{for}~|x|\leq \frac{d_0}{\theta_{\lambda}}.
\end{equation}
Define $\varphi^*_{\lambda}(x)=\frac{\varphi_{\lambda}(|x|)}{\varphi_{\lambda}(\widehat{r}_\lambda)}$, we have
\begin{equation*}
|\varphi^*_{\lambda}(x)|\leq \frac{C\big|\varphi_{\lambda}(|x|)\big|}{M_\lambda}\leq C\big(1+|x|\big)^{\tau_1}.
\end{equation*}
Also we know
\begin{equation*}
\frac{1}{\varphi_{\lambda}(\widehat{r}_\lambda)} \displaystyle\int^{2\pi}_0h^*_\lambda(r,\theta) \,d\theta \leq  \frac{C}{M_\lambda}\to 0 ~~\,\,\mbox{and}~~\,\,
\frac{1}{\varphi_{\lambda}(\widehat{r}_\lambda)} \displaystyle\int^{2\pi}_0 h^{**}_\lambda(r,\theta) \,d\theta \leq  \frac{C}{M_\lambda \gamma_\lambda^2}\to 0.
\end{equation*}
Hence from the above computations and dominated convergence theorem, passing to the limit in the equation \eqref{iden-philamda} divided by $\varphi_{\lambda}(\widehat{r}_\lambda)$, we can deduce that $$\varphi^*_{\lambda}\to \varphi(|x|)~~\mbox{in}~~ C^2_{loc}(\R^2),$$ and $\varphi$ satisfies
\begin{equation*}
-\varphi''-\frac{1}{r}\varphi'=2e^{2U}\varphi.
\end{equation*}
Therefore, it follows
\begin{equation}\label{phi}
\varphi=c_0\frac{4-|x|^2}{4+|x|^2},~\mbox{with some constant}~c_0.
\end{equation}
Since $\varphi^*_{\lambda}{ (\widehat{r}_\lambda)}=1$ and $\widehat{r}_\lambda \leq C$, we find $\varphi(|x|)\not\equiv 0$.

\vskip 0.1cm

On the other hand, we know that
\begin{equation}\label{phi0}
\varphi_{\lambda}(0) =k_{\lambda}(0)=\gamma^2_\lambda \big(v_{\lambda}(0)-v_0(0)\big)= -\gamma^2_\lambda v_0(0) = -\gamma^2_\lambda \lim_{\lambda\to 0} v_{\lambda}(0)=0.
\end{equation}
Hence $\varphi_{\lambda}(0)=0$ and then we have $\varphi(0)=0$, which together with \eqref{phi} implies $\varphi\equiv 0$. This is a contradiction and we complete the proof of \eqref{k-inequa}.
\end{proof}
\vskip 0.2cm

\section{Proof of  Proposition \ref{prop_slambda}}\label{s-s}

\renewcommand{\theequation}{B.\arabic{equation}}
\setcounter{equation}{0}
\vskip 0.2cm

Let $L_\lambda:=\displaystyle\max_{|x|\leq \frac{d_0}{\theta_{\lambda}}}\frac{|s_{\lambda}|}{(1+|x|)^{\tau_2}}$,
then $\eqref{s-inequa} \Leftrightarrow L_\lambda \leq C$.
We will prove that $L_\lambda\leq C$ by contradiction.  Set
\begin{equation*}
L_\lambda^*:=\max_{|x|\leq \frac{d_0}{\theta_{\lambda}}}\max_{|x'|=|x|}\frac{|s_{\lambda}(x)-s_{\lambda}(x')|}{(1+|x|)^{{\tau_2}}}.
\end{equation*}

\begin{Lem}\label{lemB.2}
If $L_\lambda\to +\infty$, then it holds
\begin{equation}\label{B.2.1}
L_\lambda^*=o(1)L_\lambda.
\end{equation}
\end{Lem}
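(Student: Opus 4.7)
The plan is to follow closely the template of Lemma \ref{lemB.1} (and Step 1 of the proof of Proposition \ref{prop4.2}): argue by contradiction, introduce the odd reflection of $s_\lambda$ with respect to the $x_1$-axis, renormalize by the weight $(1+x_2)^{\tau_2}$, and deduce in the limit a function on the upper half-plane in the kernel of $-\Delta-2e^{2U}$ that vanishes on the boundary and has zero gradient at $0$, hence is identically zero. Specifically, suppose $L_\lambda^{\ast}\geq c_0 L_\lambda$ for some $c_0>0$. Choose $y'_\lambda,y''_\lambda$ with $|y'_\lambda|=|y''_\lambda|\le \tfrac{d_0}{\theta_\lambda}$ symmetric about the $x_1$-axis realizing $L_\lambda^{\ast}$, set $\ell^\ast_\lambda(x):=s_\lambda(x)-s_\lambda(x^-)$ and $\ell_\lambda(x):=\ell^\ast_\lambda(x)/(1+x_2)^{\tau_2}$. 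Since $U$ and $e^{2U}$ are radial, \eqref{s-equa} yields
\begin{equation*}
-\Delta \ell_\lambda-\frac{2\tau_2}{1+x_2}\,\partial_{x_2}\ell_\lambda+\frac{\tau_2(1-\tau_2)}{(1+x_2)^2}\ell_\lambda-2e^{2U}\ell_\lambda=\frac{q_\lambda(x)}{(1+x_2)^{\tau_2}},
\end{equation*}
with $q_\lambda(x):=q^\ast_\lambda(x)-q^\ast_\lambda(x^-)=O\big((1+|x|)^{-4+\delta+3\tau}\big)$.

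Next, I would define $y^{\ast\ast}_\lambda\in\{x_2\ge 0,\,|x|\le d_0/\theta_\lambda\}$ realizing $L_\lambda^{\ast\ast}:=\max|\ell_\lambda|\ge L_\lambda^{\ast}\to\infty$, and prove $|y^{\ast\ast}_\lambda|\le C$ in two substeps. For the inner substep $|y^{\ast\ast}_\lambda|\le d_0/(2\theta_\lambda)$, I would write out
\begin{equation*}
s_\lambda(x)-s_\lambda(x^-)=\gamma_\lambda^{6}\big[w_\lambda(x)-w_\lambda(x^-)\big]-\gamma_\lambda^{4}\big[v_0(x)-v_0(x^-)\big]-\gamma_\lambda^{2}\big[k_0(x)-k_0(x^-)\big],
\end{equation*}
then plug in the sharp $C^1$-expansion \eqref{4.9.1} for $u_\lambda$ away from $x_\lambda$ (together with $C_\lambda\sim 4\pi/\gamma_\lambda$ from \eqref{def_C1}) to bound the first bracket by $O(\theta_\lambda y^{\ast\ast}_{\lambda,2})+o(\theta_\lambda)$ because the singular $-\frac{1}{2\pi}\log|\cdot|$ part of $G$ is symmetric and only the regular $H$-part contributes; for the remaining two brackets I would use that $v_0$, $k_0$ split into a radial piece plus a linear combination of the bubble directions $\frac{4-|x|^2}{4+|x|^2},\frac{x_i}{4+|x|^2}$ from Lemma \ref{lem3.1}, whose odd part under $x\mapsto x^-$ only keeps the $x_2/(4+|x|^2)$ term of size $O(1/|y^{\ast\ast}|)$. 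Dividing by $(1+y^{\ast\ast}_{\lambda,2})^{\tau_2}$ and using \eqref{lim_theta} one finds $L_\lambda^{\ast\ast}=o(1)$, contradicting $L_\lambda^{\ast\ast}\to\infty$. For the outer substep $|y^{\ast\ast}_\lambda|\le C$, since $y^{\ast\ast}_\lambda$ is interior, $|\nabla\ell_\lambda(y^{\ast\ast}_\lambda)|=0$ and $\Delta\ell_\lambda(y^{\ast\ast}_\lambda)\le 0$; the equation, together with $2e^{2U}-\frac{\tau_2(1-\tau_2)}{(1+x_2)^2}\le -\frac{\tau_2(1-\tau_2)}{2(1+x_2)^2}$ for $|x|$ large and the bound $|q_\lambda|=O(|x|^{-4+\delta+3\tau})$, yields $L_\lambda^{\ast\ast}\le C(1+|y^{\ast\ast}_\lambda|)^{-2-\tau_2+\delta+3\tau}\to 0$, again a contradiction.

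With $|y^{\ast\ast}_\lambda|\le C$ secured, I would rescale $\ell_\lambda^{\ast\ast}:=\ell_\lambda/L_\lambda^{\ast\ast}$, use the uniform bound $|\ell_\lambda^{\ast\ast}|\le 1$ on $\{x_2\ge 0\}$, standard elliptic regularity, and the vanishing of the inhomogeneous term $q_\lambda/(L_\lambda^{\ast\ast}(1+x_2)^{\tau_2})$ (guaranteed by $L_\lambda^{\ast\ast}\to\infty$ and the decay of $q_\lambda$) to extract a $C^2_{loc}$-limit $\gamma\not\equiv 0$ on $\overline{\R^2_+}$ solving
\begin{equation*}
-\Delta\gamma-\frac{2\tau_2}{1+x_2}\,\partial_{x_2}\gamma+\Big(\frac{\tau_2(1-\tau_2)}{(1+x_2)^2}-2e^{2U}\Big)\gamma=0,\qquad \gamma|_{x_2=0}=0.
\end{equation*}
Setting $\bar\gamma:=(1+x_2)^{\tau_2}\gamma$ removes the weight and gives $-\Delta\bar\gamma-2e^{2U}\bar\gamma=0$ in $\R^2_+$ with $\bar\gamma|_{x_2=0}=0$; Lemma \ref{lem3.1} forces $\bar\gamma(x)=e_0\frac{4-|x|^2}{4+|x|^2}+\sum_{i=1}^2 e_i\frac{x_i}{4+|x|^2}$, and the Dirichlet condition already eliminates $e_0$ and $e_1$, leaving $\bar\gamma=e_2\,x_2/(4+|x|^2)$.

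The final ingredient is $\nabla\bar\gamma(0)=0$, which will pin down $e_2=0$ and produce the desired contradiction. This reduces to $\partial_{x_2}s_\lambda(0)=0$. From $s_\lambda=\gamma_\lambda^{6}\big(w_\lambda-U-v_0/\gamma_\lambda^{2}-k_0/\gamma_\lambda^{4}\big)$ and the facts that $x_\lambda^{(1)}=0$ is a critical point of $u_\lambda$ (so $\partial_{x_2}w_\lambda(0)=0$) and that $U$ is radial, I would obtain
\begin{equation*}
\partial_{x_2}s_\lambda(0)=-\gamma_\lambda^{4}\,\partial_{x_2}v_0(0)-\gamma_\lambda^{2}\,\partial_{x_2}k_0(0),
\end{equation*}
and then use the identities $\partial_{x_2}v_0(0)=0$ (which is \eqref{add_k01}, coming exactly from the $v_\lambda$-version of \eqref{omega_0}) and $\partial_{x_2}k_0(0)=0$ (obtained by passing to the limit in $\partial_{x_2}k_\lambda(0)=\gamma_\lambda^{2}[\partial_{x_2}v_\lambda(0)-\partial_{x_2}v_0(0)]=0$ via the $C^1_{loc}$-convergence $k_\lambda\to k_0$ from Proposition \ref{prop_klambda}) to conclude $\nabla\ell^\ast_\lambda(0)=0$, hence $\nabla\bar\gamma(0)=0$, and so $\bar\gamma\equiv 0$, contradicting $\gamma\not\equiv 0$. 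The main obstacle, as in Lemma \ref{lemB.1}, will be the inner substep of $|y^{\ast\ast}_\lambda|\le d_0/(2\theta_\lambda)$: it is the only place where the multi-scale structure of $w_\lambda$ intervenes nontrivially, and where the refined expansions \eqref{4.9.1}--\eqref{def_C1} of Theorem \ref{thm-lambdagamma} are genuinely needed because one has to track the difference $s_\lambda(y^{\ast\ast})-s_\lambda(y^{\ast\ast-})$ up to an order lower than $\gamma_\lambda^{6}\theta_\lambda^{\tau_2}$ and separate contributions coming from the singular part of $G$ (which cancels) from those coming from $H$ and from the bubble-kernel components of $v_0,k_0$.
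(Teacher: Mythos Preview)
Your overall scheme is correct and matches the paper's proof almost verbatim: contradiction, odd reflection $p^\ast_\lambda=s_\lambda-s_\lambda(\cdot^-)$, weighted quotient, the two-part localization $|z^{\ast\ast}_\lambda|\le C$, and the limiting kernel argument with $\nabla\bar\gamma(0)=0$. Your observation that the Dirichlet condition $\bar\gamma(x_1,0)=0$ already kills both $e_0$ and $e_1$ is a small simplification over the paper's write-up, and your derivation of $\partial_{x_2}s_\lambda(0)=0$ via $\partial_{x_2}v_0(0)=\partial_{x_2}k_0(0)=0$ is equivalent to the paper's.

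There is, however, one genuine gap in the inner substep. You assert that ``$v_0,k_0$ split into a radial piece plus a linear combination of the bubble directions $\frac{4-|x|^2}{4+|x|^2},\frac{x_i}{4+|x|^2}$''. This is true for $v_0$ (the source $e^{2U}(U^2+U)$ is radial), but it is \emph{false} for $k_0$: the right-hand side of \eqref{k0-equa} contains $v_0^2$ and $v_0(2U^2+4U+1)$, which are not radial because $v_0$ carries the $c_i\,\partial_iU$ directions. Hence a particular solution for $k_0$ cannot be taken radial, and the decomposition you invoke does not exist. The conclusion you need, $k_0(z^{\ast\ast})-k_0(z^{\ast\ast-})=O(|z^{\ast\ast}|^{-1})$, is still correct, but it requires the extra work the paper does: write $v_0=\bar v+\sum_i c_i\,\partial_iU$ with $\bar v$ radial, split $k_0=k_1+k_2$ so that $k_1$ solves the equation with the purely radial source and $k_2$ absorbs all cross terms, and then identify $k_2$ explicitly as a combination of $\partial_i\bar v$, $\partial_i^2U$ and $\partial_{12}U$ (each of which has odd-in-$x_2$ difference of size $O(|x|^{-1})$, using $\partial_r\bar v(r)=O(r^{-1})$ as in \eqref{parw-4}). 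Without this computation the bound on the $k_0$-bracket is unjustified, and the inner substep does not close.
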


\begin{proof}
Suppose this is not true. Then there exists $c_0>0$ such that $L^*_\lambda\geq c_0L_\lambda$. Let $z'_\lambda$ and
$z''_\lambda$ satisfy $|z'_\lambda|=|z''_\lambda| {\leq\frac{d_{0}}{\theta_{\lambda}}}$ and
\begin{equation*}
L_\lambda^*=  \frac{|s_{\lambda}(z'_\lambda)-s_{\lambda}(z''_\lambda)|}{(1+|z'_\lambda|)^{{\tau_2}}}.
\end{equation*}
Without loss of generality, we may assume that $z'_\lambda$ and
$z''_\lambda$ are symmetric with respect to the $x_1$ axis. Set
\begin{equation*}
p^*_\lambda(x):=s_{\lambda}(x)-s_{\lambda}(x^-),~\,~x^-:=(x_1,-x_2),\  \mbox{ for }x=(x_{1},x_{2}),  ~\,x_2>0.
\end{equation*}
 Hence $z''_{\lambda}={z'_{\lambda}}^{-}$ and
$L_\lambda^*=\frac{|p^*_\lambda(z'_{\lambda})| }{ (1+|z'_{\lambda}| )^{\tau_2} }$.
Let us define
\begin{equation}\label{plambda-def}
p_\lambda(x):=\frac{p^*_\lambda(x)}{(1+x_2)^{\tau_2}}.
\end{equation}
Then direct calculations show that $p_\lambda$ satisfies
\begin{equation}\label{plambda-equa}
-\Delta p_\lambda-\frac{2{\tau_2}}{1+x_2}\frac{\partial p_\lambda}{\partial x_2}+
\frac{{\tau_2}(1-{\tau_2})}{(1+x_2)^2}p_\lambda=\frac{q_\lambda(x)}{(1+x_2)^{\tau_2}}+2e^{2U(x)}p_\lambda,
\end{equation}
where $q_\lambda(x):=q^*_\lambda(x)-q^*_\lambda(x^-)$ and $\displaystyle q^*_\lambda(x) =O\Big(
\frac{1}{1+|x|^{4-\delta-3\tau}}\Big)$.
Also let $z^{**}_\lambda$ satisfy
\begin{equation}\label{L_ast2-def}
\mbox{$|z^{**}_\lambda|\leq\frac{d_{0}}{\theta_{\lambda}}$, $z^{**}_{\lambda,2}\geq 0$ and}~~~ \big|p_\lambda(z^{**}_\lambda)\big|= L^{**}_\lambda:=\max_{|x|\leq \frac{d_0}{\theta_{\lambda}},~x_2\geq 0} |p_\lambda(x)|.
\end{equation}
Then it follows
\begin{equation}\label{L_ast2-2}
L^{**}_\lambda \geq \frac{\big|p^*_\lambda(z'_\lambda)\big|}{\big(1+z'_{\lambda,2}\big)^{\tau_2}}
\geq \frac{\big|s_{\lambda}(z'_\lambda)-s_{\lambda}(z''_\lambda)\big|}{\big(1+|z'_\lambda|\big)^{\tau_2}}=L^*_\lambda\to +\infty.
\end{equation}
We may assume that $p_\lambda(z^{**}_\lambda)>0$. We claim that
\begin{equation}\label{z_ast2-bound}
|z^{**}_\lambda|\leq C.
\end{equation}
The proof of  \eqref{z_ast2-bound} is divided into   two steps.

\vskip 0.2cm

\noindent
\emph{Step 1. We prove that $|z^{**}_\lambda|\leq \frac{d_0}{2\theta_\lambda}$.}

\vskip 0.2cm
Suppose this is not true. By the definition of  $s_{\lambda}$ and \eqref{klambda-}, we have
\begin{equation}\label{plambda_ast}
\begin{split}
p_{\lambda}^{*}(z^{**}_\lambda) &=s_{\lambda}(z^{**}_\lambda)-s_{\lambda}(z^{**-}_\lambda)
= \gamma^2_\lambda\Big(\big(k_{\lambda}(z^{**}_\lambda)-k_{\lambda}(z^{**-}_\lambda)\big)
-\big(k_0(z^{**}_\lambda)-k_0(z^{**-}_\lambda)\big)\Big)\\ &=
 O\Big( \gamma^6_\lambda{\theta_\lambda z^{**}_{\lambda,2}}\Big)+o\Big( \gamma^6_\lambda \theta_{\lambda}\Big)+O\Big(\gamma^4_\lambda \big|z_\lambda^{**}\big|^{-1} \Big)
 -\gamma^2_\lambda\Big(k_0(z^{**}_\lambda)-k_0(z^{**-}_\lambda)\Big).
\end{split}
\end{equation}
Also we recall that
$k_0$ solves the non-homogeneous linear equation
\begin{equation*}
-\Delta v-2e^{2U}v=  e^{2U } \Big(\frac{1}{2}U^4+2v_0^2+ U^3+  v_0\big(2U^2+4U+1\big) \Big)~~\mbox{in}~~\R^2,
\end{equation*}
and $v_0(x)$ can be written as
\begin{equation*}
\begin{split}
v_0(x)=\overline{v}(x)+ \sum^2_{i=1}{c_i}\frac{\partial U}{\partial x_i},
\end{split}
\end{equation*}
where $\overline{v}(x)$ is the radial solution of $-\Delta u-2e^{2U}u=e^{2U}(U^2+U)$.
Now we write
$k_0=k_1+k_2$ with
\begin{equation*}
\begin{cases}
\displaystyle -\Delta k_1-2e^{2U}k_1=  e^{2U } \Big(\frac{1}{2}U^4+2 \overline{v}^2+ U^3+ \overline v \big(2U^2+4U+1\big)
\Big),&\mbox{in}~~\R^2,\\[4mm]
\displaystyle -\Delta k_2-2e^{2U}k_2=  e^{2U} \left( 2 \Big(\displaystyle\sum^2_{i=1}{c_i}\frac{\partial U}{\partial x_i}\Big)^2 +4\overline{v} \displaystyle\sum^2_{i=1}{c_i}\frac{\partial U}{\partial x_i} +\big(2U^2+4U+1\big)\displaystyle\sum^2_{i=1}{c_i}\frac{\partial U}{\partial x_i} \right),&\mbox{in}~~\R^2.
\end{cases}
\end{equation*}
Then we have
\begin{equation}\label{k1}
 k_1(x) = w(x) + e_{1,0}\frac{4-|x|^2}{4+|x|^2} + \sum^2_{i=1}{e_{1,i}}\frac{x_i}{4+|x|^2},
\end{equation}
where $w(x)$ is the radial solution of
$$-\Delta u - 2e^{2U}u = e^{2U} \Big(\frac{1}{2}U^4+2 \overline{v}^2+ U^3+ \overline v \big(2U^2+4U+1\big)\Big).$$
On the other hand, we find that
\begin{equation*}
-\Delta(c_i \overline{v})-2e^{2U}(c_i \overline{v}) = c_i e^{2U}\big(U^2+U\big),\quad i=1,2,
\end{equation*}
which implies that
\begin{equation*}
-\Delta\Big(c_i \frac{\partial\overline{v}}{\partial x_i}\Big) -2e^{2U}\Big(c_i \frac{\partial\overline{v}}{\partial x_i}\Big) = c_i e^{2U}\big(2U^2+4U+1\big)\frac{\partial U}{\partial x_i} + 4e^{2U}(c_i \overline{v}) \frac{\partial U}{\partial x_i},\quad i=1,2.
\end{equation*}
Then, we derive that
\begin{equation}\label{k2-1}
-\Delta\Big(\sum_{i=1}^2 c_i \frac{\partial\overline{v}}{\partial x_i}\Big) -2e^{2U}\Big(\sum_{i=1}^2 c_i \frac{\partial\overline{v}}{\partial x_i}\Big) = e^{2U}\big(2U^2+4U+1\big) \sum_{i=1}^2 c_i \frac{\partial U}{\partial x_i} + 4e^{2U} \overline{v} \sum_{i=1}^2 c_i \frac{\partial U}{\partial x_i}.
\end{equation}
Moreover, we know that
\[
-\Delta \frac{\partial U}{\partial x_1} -2e^{2U}\frac{\partial U}{\partial x_1}=0,
\]
which yields that
\[
-\Delta \frac{\partial^2 U}{\partial x_1 \partial x_2} -2e^{2U} \frac{\partial^2 U}{\partial x_1 \partial x_2} = 4e^{2U}\Big(\frac{\partial U}{\partial x_1} \frac{\partial U}{\partial x_2}\Big).
\]
It means that
\begin{equation}\label{k2-2}
-\Delta \Big(c_1 c_2 \frac{\partial^2 U}{\partial x_1 \partial x_2}\Big) - 2e^{2U} \Big(c_1 c_2 \frac{\partial^2 U}
{\partial x_1 \partial x_2}\Big) = 4e^{2U}\Big(c_1 c_2 \frac{\partial U}{\partial x_1} \frac{\partial U}{\partial x_2}\Big).
\end{equation}
Similarly, for $i=1,2$, we have
\[
-\Delta \Big(\frac{c_i^2}{2} \frac{\partial^2 U}{\partial x_i^2}\Big) - 2e^{2U} \Big(\frac{c_i^2}{2} \frac{\partial^2 U}
{\partial x_i^2}\Big) = 2e^{2U} \Big(c_i \frac{\partial U}{\partial x_i}\Big)^2,
\]
that is
\begin{equation}\label{k2-3}
-\Delta \Big(\sum_{i=1}^2 \frac{c_i^2}{2} \frac{\partial^2 U}{\partial x_i^2}\Big) - 2e^{2U} \Big(\sum_{i=1}^2 \frac{c_i^2}{2} \frac{\partial^2 U}{\partial x_i^2}\Big) = 2e^{2U} \sum_{i=1}^2 c_i^2 \Big(\frac{\partial U}{\partial x_i}\Big)^2.
\end{equation}

Therefore, it follows from \eqref{k2-1}, \eqref{k2-2} and \eqref{k2-3} that
\[
k_2 = \sum_{i=1}^2 c_i \frac{\partial\overline{v}}{\partial x_i} + \frac{1}{2} \sum_{i=1}^2 c_i^2 \frac{\partial^2 U}{\partial x_i^2} + c_1 c_2 \frac{\partial^2 U}{\partial x_1 \partial x_2} + e_{2,0}\frac{4-|x|^2}{4+|x|^2} + \sum_{i=1}^2 e_{2,i}\frac{x_i}{4+|x|^2},
\]
which together with \eqref{k1} implies
\begin{equation*}
k_0(x) = w(x) + \sum^2_{i=1} e_i \frac{x_i}{4+|x|^2} + e_0 \frac{4-|x|^2}{4+|x|^2} + \sum_{i=1}^2 c_i \frac{\partial\overline{v}}{\partial x_i} + \frac{1}{2} \sum_{i=1}^2 c_i^2 \frac{\partial^2 U}{\partial x_i^2} + c_1 c_2 \frac{\partial^2 U}{\partial x_1 \partial x_2}.
\end{equation*}
Then
\begin{equation*}
\begin{split}
k_0(z^{**}_\lambda)-k_0(z^{**-}_\lambda) &= e_2 \frac{2 z^{**}_{\lambda,2}}{4+|z^{**}_\lambda|^2} + \sum_{i=1}^2 c_i \Big( \frac{\partial\overline{v}}{\partial x_i}(z^{**}_\lambda) - \frac{\partial\overline{v}}{\partial x_i}(z^{**-}_\lambda) \Big) + c_1 c_2 \frac{8 z^{**}_{\lambda,1} z^{**}_{\lambda,2}}{(4+|z^{**}_\lambda|^2)^2} \\
&= 2 e_2 \frac{z^{**}_{\lambda,2}}{4+|z^{**}_\lambda|^2} + 8 c_1 c_2 \frac{z^{**}_{\lambda,1} z^{**}_{\lambda,2}}{(4+|z^{**}_\lambda|^2)^2} + 2 c_2 \partial_r \overline{v}\big(|z^{**}_\lambda|\big) \frac{z^{**}_{\lambda,2}}{|z^{**}_\lambda|}.
\end{split}
\end{equation*}
Since we suppose that $|z^{**}_\lambda| > \frac{d_0}{2\theta_\lambda}$ and $\overline{v}$ is a radial function solves
\[
-\Delta u-2e^{2U}u=e^{2U}\big(U^2+U\big),
\]
similar to the proof of \eqref{parw-4}, we know that
$\partial_r \overline{v} \big(|z^{**}_\lambda|\big) = O\big( |z^{**}_\lambda|^{-1} \big)$.
Hence
\[
k_0(z^{**}_\lambda)-k_0(z^{**-}_\lambda) = O\Big( \big|z^{**}_\lambda \big|^{-1} \Big),
\]
and inserting it into \eqref{plambda_ast}, we get
\[
p_{\lambda}^{*}(z^{**}_\lambda) = O\Big( \gamma^6_\lambda{\theta_\lambda z^{**}_{\lambda,2}}\Big)+o\Big( \gamma^6_\lambda \theta_{\lambda}\Big)+O\Big(\gamma^4_\lambda \big|z_\lambda^{**}\big|^{-1} \Big).
\]
As a result, since $\frac{d_0}{2\theta_\lambda}<|z_\lambda^{**}| \leq \frac{d_0}{\theta_\lambda}$, we derive
\begin{equation*}
\begin{split}
L^{**}_\lambda &= \frac{p^*_\lambda(z_\lambda^{**})}{\big(1+z^{**}_{\lambda,2}\big)^{\tau_2}}
= O\Big( \gamma^6_\lambda \theta_\lambda \big(z^{**}_{\lambda,2}\big)^{1-\tau_2} \Big) + o\big(\gamma^6_\lambda \theta_\lambda\big) + O\Big(\gamma^4_\lambda \big|z^{**}_\lambda \big|^{-1}\Big) \\
&=  O\big( \gamma^6_\lambda \theta_\lambda^{\tau_2}\big) + o\big(\gamma^6_\lambda \theta_\lambda\big) + O\big(\gamma^4_\lambda \theta_\lambda \big) = o\big(1\big).
\end{split}
\end{equation*}
This is a contradiction with \eqref{L_ast2-2}.

\vskip 0.2cm

\noindent
\emph{Step 2. We prove that $|z^{**}_\lambda|\leq C$.}

\vskip 0.2cm
 Suppose this is not true.
Now by Step 1, we have $z^{**}_\lambda\in B_{\frac{d_0}{2\theta_\lambda}}(0)$. Thus
\begin{equation}\label{4.7-step2-1}
|\nabla p_\lambda(z^{**}_\lambda)|=0~\mbox{and}~\Delta p_\lambda(z^{**}_\lambda)\leq 0.
\end{equation}
So similar to \eqref{y-sdsd1}, from \eqref{plambda-equa}, \eqref{4.7-step2-1} and the fact that $|z^{**}_\lambda|\to \infty$, we find
\begin{equation}\label{4.7-step2-2}
\begin{split}
0\leq -\Delta p_\lambda(z^{**}_\lambda) &= -\frac{\tau_2(1-\tau_2)}{(1+z^{**}_{\lambda,2})^2}p_\lambda(z^{**}_\lambda)
+\frac{q_\lambda(z^{**}_\lambda)}{(1+z^{**}_{\lambda,2})^{\tau_2}}+2e^{2U(z_\lambda^{**})}p_\lambda(z^{**}_\lambda) \\
&\leq -\frac{\tau_2(1-\tau_2)}{2(1+z^{**}_{\lambda,2})^2}p_\lambda(z^{**}_\lambda)
+\frac{q_\lambda(z^{**}_\lambda)}{(1+z^{**}_{\lambda,2})^{\tau_2}}.
\end{split}
\end{equation}
Hence using \eqref{4.7-step2-2}, we deduce
\begin{equation*}
\frac{p_\lambda(z^{**}_\lambda)}{(1+z^{**}_{\lambda,2})^2}\leq \frac{Cq_\lambda(z^{**}_\lambda)}{(1+z^{**}_{\lambda,2})^{\tau_2}},
\end{equation*}
that is
\begin{equation}\label{4.7-step2-3}
p_\lambda(z^{**}_\lambda) \leq C q_\lambda(z^{**}_\lambda)(1+z^{**}_{\lambda,2})^{2-\tau_2}.
\end{equation}
Since $q_\lambda(x):=q^*_\lambda(x)-q^*_\lambda(x^-)$ with $\displaystyle q^*_\lambda(x) =O\Big(\frac{1}
{1+|x|^{4-\delta-3\tau}}\Big)$, it follows from \eqref{4.7-step2-3} that
\begin{equation*}
L^{**}_\lambda = p_\lambda(z^{**}_\lambda) \leq
\frac{C}{(1+|z_\lambda^{**}|)^{2+\tau_2-\delta-3\tau}}\to 0~~\mbox{as}~~\lambda\to 0,
\end{equation*}
which is a contradiction with \eqref{L_ast2-2}. Then \eqref{z_ast2-bound} holds.\\

Now let $p_\lambda^{**}(x)=\frac{p_\lambda(x)}{L_\lambda^{**}}$,
where $p_\lambda$ is defined in \eqref{plambda-def} and $L_\lambda^{**}$ is defined in \eqref{L_ast2-def}.
From \eqref{plambda-equa}, it follows that $p_\lambda^{**}(x)$ solves
\begin{equation}\label{p_ast2-equa}
-\Delta p^{**}_\lambda-\frac{2\tau_2}{1+x_2}\frac{\partial p^{**}_\lambda}{\partial x_2}+ \frac{\tau_2(1-\tau_2)}
{(1+x_2)^2}p^{**}_\lambda=\frac{q_\lambda(x)}{L_\lambda^{**}(1+x_2)^{\tau_2}}+2e^{2U(x)}p^{**}_\lambda.
\end{equation}
Moreover $p^{**}_{\lambda}(z^{**}_\lambda)=1$ and
\[
|p^{**}_{\lambda}(x)|\leq 1 ~\mbox{in}~ \Big\{x:|x|\leq\frac{d_0}{\theta_\lambda},~x_2\geq 0\Big\},
\]
hence $p^{**}_\lambda(x)\to \gamma(x)$ uniformly in any compact subset of $\R^2$. And $\gamma(x)\not\equiv0$ because $p_\lambda^{**}(z_{\lambda}^{**})=1$ and  $|z_{\lambda}^{**}|\le C$. Observe that
$$\frac{q_\lambda(x)}{L_\lambda^{**}(1+x_2)^{\tau_2}}\to 0~\mbox{uniformly in any compact set of}~\Big\{x:|x|\leq\frac{d_0}{\theta_\lambda},~x_2\geq 0\Big\}~\mbox{as}~\lambda\to 0.$$
Hence passing to the limit $\lambda\to 0$ into \eqref{p_ast2-equa},  we can deduce that   $\gamma$ solves
\begin{equation*}
\begin{cases}
\displaystyle -\Delta \gamma -\frac{2\tau_2}{1+x_2}\frac{\partial \gamma}{\partial x_2}
+\left(\frac{\tau_2(1-\tau_2)}{(1+x_2)^2}-2e^{2U(x)} \right)\gamma =0,~x_2>0,\\[4mm]
\displaystyle \gamma(x_1,0)=0.
\end{cases}
\end{equation*}
Hence $\bar \gamma=(1+x_2)^{\tau_2}\gamma$ satisfies
\begin{equation*}
\begin{cases}
-\Delta \bar \gamma  -2e^{2U(x)} \bar \gamma =0,~x_2>0,\\[2mm]
\bar\gamma(x_1,0)=0,
\end{cases}
\end{equation*}
and we have
\begin{equation}\label{gamma1_bar-equa}
\bar \gamma(x)=c_0\frac{4-|x|^2}{4+|x|^2}+\sum^2_{i=1}{c_i}\frac{x_i}{4+|x|^2},~~
\mbox{for some constants $c_0$, $c_1$ and $c_2$}.
\end{equation}
On the other hand, \eqref{add_k0} and \eqref{add_k01} imply that
\[
\nabla \big(k_\lambda(x)- k_\lambda(x^-)\big)\big|_{x=0}=0.
\]
Also by \eqref{k-lim}, we can verify that
\begin{equation*}
\begin{split}
\nabla \big(k_0(x)-k_0(x^-)\big)\big|_{x=0}=0.
\end{split}
\end{equation*}
Then by the definition of $s_{\lambda}(x)$ and $p^*_\lambda(x)$, we know
\begin{equation*}
\begin{split}
\nabla p_\lambda^*(0) &= \nabla \big(s_{\lambda}(x)-s_{\lambda}(x^-)\big)\big|_{x=0} \\
&= \gamma^2_\lambda \big(\nabla k_{\lambda}(x)-\nabla k_0(x)\big)\big|_{x=0}-
\gamma^2_\lambda \big(\nabla k_{\lambda}(x^-)-\nabla k_0(x^-)\big)\big|_{x=0} =0.
\end{split}
\end{equation*}
Hence it holds
\begin{equation*}
\nabla \big((1+x_2)^{\tau_2}p_\lambda^{**}(x)\big)\big|_{x=0}=
\frac{\nabla p_\lambda^*(0)}{L_\lambda^{**}}= 0,
\end{equation*}
which means $\nabla \bar\gamma(0)=0$. Then from \eqref{gamma1_bar-equa}, we find $c_1=c_2=0$.
Moreover from $\bar\gamma(x_1,0)=0$, we also have $c_0=0$. So $\bar\gamma=0$, which is a contradiction.
Hence \eqref{B.2.1} follows.
\end{proof}

 \vskip 0.1cm

\begin{proof}[\underline{\textbf{Proof of Proposition \ref{prop_slambda}}}] First we recall that
$$\eqref{s-inequa} \Leftrightarrow L_\lambda \leq C.$$
Suppose by contradiction that $L_\lambda\to +\infty$, as $\lambda\to 0$ and set
\begin{equation*}
 \overline{s}_{\lambda}(r)=\frac{1}{2\pi}\int^{2\pi}_0s_{\lambda}(r,\theta) \,d\theta,~\,\,r=|x|.
\end{equation*}
Then by Lemma \ref{lemB.2}, similar to the proof of \eqref{psi-lamda}, it follows
\begin{equation*}
\max_{r\leq \frac{d_0}{\theta_{\lambda}}}\frac{|\overline{s}_{\lambda}(r)|}{(1+r)^{\tau_2}}=L_\lambda\big(1+o(1)\big).
\end{equation*}
Assume that $\frac{|\overline{s}_{\lambda}(r)|}{(1+r)^{\tau_2}}$ attains its maximum at $t_\lambda$.
Then we claim
\begin{equation}\label{t-bound}
t_\lambda\leq C.
\end{equation}
In fact, let $\phi(x)=\frac{4-|x|^2}{4+|x|^2}$ and  we recall that
$-\Delta\phi(x)=2e^{2U(x)}\phi(x)$.
Now multiplying \eqref{s-equa} by $\phi(x)$ and using integration by parts, we have
\begin{equation*}
\begin{split}
\int_{|x|=r}&\Big(\frac{\partial s_{\lambda}(x)}{\partial \nu}\phi(x)-  \frac{\partial \phi(x)}{\partial \nu}s_{\lambda}(x)\Big) \,d\sigma=O\big(1\big).
\end{split}
\end{equation*}
Hence similar to \eqref{v-phi-2}--\eqref{r-bound-2}, we have
$(1+t_\lambda)^{\tau_2}L_\lambda= O\big(L_\lambda\big)$,
 which implies \eqref{t-bound}.
\vskip 0.2cm

Now integrating \eqref{s-equa}, we get
\begin{equation}\label{equazionePsid}
-\Delta \overline{s}_{\lambda} =2\overline{s}_{\lambda}e^{2U(x)}+\frac{1}{2\pi}\displaystyle\int^{2\pi}_0q^*_\lambda(r,\theta) \,d\theta, ~\,~\mbox{for}~|x|\leq \frac{d_0}{\theta_{\lambda}}.
\end{equation}
Next we define $s^*_{\lambda}(x)=\frac{\overline{s}_{\lambda}(|x|)}{\overline{s}_{\lambda}(t_\lambda)}$ and   pass to the limit in the equation \eqref{equazionePsid} divided by $\overline{s}_{\lambda}(t_\lambda)$. One has
\begin{equation*}
|s^*_{\lambda}(x)|\leq \frac{C\big|\overline{s}_{\lambda}(|x|)\big|}{L_\lambda}\leq C\big(1+|x|\big)^{\tau_2}.
\end{equation*}
Also we know
\begin{equation*}
\frac{1}{\overline{s}_{\lambda}(t_\lambda)} \displaystyle\int^{2\pi}_0q^*_\lambda(r,\theta) \,d\theta \leq  \frac{C}{L_\lambda}\to 0.
\end{equation*}
Hence from the above computations and dominated convergence theorem, we can deduce that $$s^*_{\lambda}\to s(x)~~\mbox{in}~~ C^2_{loc}(\R^2),$$ with $s(x)=s(|x|)$ satisfying
$-s''-\frac{1}{r}s'=2e^{2U}s$.
Therefore, it follows
\begin{equation}\label{s-res}
s(x)=c_0\frac{4-|x|^2}{4+|x|^2},~\mbox{with some constant}~c_0.
\end{equation}
Since $s^*_{\lambda}{ (t_{\lambda})}=1$ and $t_\lambda\leq C$, we find $s(|x|)\not\equiv 0$.

\vskip 0.1cm

On the other hand, we know that
\[
\overline{s}_{\lambda}(0) =s_{\lambda}(0) =\gamma^2_\lambda \big(k_{\lambda}(0)-k_0(0)\big) \overset{\eqref{phi0}}= -\gamma^2_\lambda k_0(0) = -\gamma^2_\lambda \lim_{\lambda\to 0} k_{\lambda}(0) =0.
\]
Hence $s^*_{\lambda}(0)=0$ and then we have $s(0)=0$, which together with \eqref{s-res}, implies $s(x)\equiv 0$. This is a contradiction. So we complete the proof of \eqref{s-inequa}.
\end{proof}

\noindent\textbf{Acknowledgments}
Peng Luo  and  Shuangjie Peng were supported by National Natural Science Foundation of China (No.11831009).
Peng Luo  was also   supported  by National Natural Science Foundation of China (No.12171183).

\renewcommand\refname{References}
\renewenvironment{thebibliography}[1]{%
\section*{\refname}
\list{{\arabic{enumi}}}{\def\makelabel##1{\hss{##1}}\topsep=0mm
\parsep=0mm
\partopsep=0mm\itemsep=0mm
\labelsep=1ex\itemindent=0mm
\settowidth\labelwidth{\small[#1]}%
\leftmargin\labelwidth \advance\leftmargin\labelsep
\advance\leftmargin -\itemindent
\usecounter{enumi}}\small
\def\newblock{\ }
\sloppy\clubpenalty4000\widowpenalty4000
\sfcode`\.=1000\relax}{\endlist}
\bibliographystyle{model1b-num-names}

\begin{thebibliography}{99}
{\footnotesize

\bibitem{AD2004}
Adimurthi, O. Druet,  \emph{Blow up analysis in dimension 2 and a sharp form of Trudinger-Moser inequality.} Comm. Partial Differential Equations 29 (2004), 295--322.


\bibitem{BLR1995}
A. Bahri, Y.Y. Li, O. Rey, \emph{On a variational problem with lack of compactness: the topological effect of
the critical points at infinity.} Calc. Var. Partial Differential Equations 3 (1995), 67--93.

\bibitem{BL1992} A. Bahri, P.L. Lions, \emph{Solutions of superlinear elliptic equations and their Morse indices.} Comm. Pure Appl. Math. 45 (1992), 1205--1215.

\bibitem{BJLY2019}
D. Bartolucci, A. Jevnikar, Y. Lee, W. Yang, \emph{Local uniqueness of m-bubbling sequences for the Gelfand equation.} Comm. Partial Differential Equations 44 (2019),  447--466.

 \bibitem{BP2015} T. Bartsch, A. Pistoia, \emph{Critical points of the N-vortex Hamiltonian in bounded planar domains and steady state solutions of the incompressible Euler equations.} SIAM J. Appl. Math.  75 (2015), 726--744.

\bibitem{BPW2010} T. Bartsch, A. Pistoia, T. Weth, \emph{N-vortex equilibria for ideal fluids in bounded planar domains and new nodal solutions of the sinh-Poisson and the Lane-Emden-Fowler equations}. Comm. Math. Phys. 297 (2010), 653--686.

\bibitem{BN1983} H. Brezis, L. Nirenberg, \emph{Positive solutions of nonlinear elliptic equations involving critical Sobolev exponents.} Comm. Pure Appl. Math. 36 (1983), 437--477.


\bibitem{CF1980}
 L. A. Caffarelli, A. Friedman, \emph{Asymptotic estimates for the plasma problem.} Duke
Math. J.  47 (1980), 705--742.

\bibitem{CF1985}
L. A. Caffarelli, A. Friedman, \emph{Convexity of solutions of semilinear elliptic equations.} Duke Math. J. 52 (1985), 431--456.

\bibitem{CH2003}
D. Cao,  H. P. Heinz, \emph{Uniqueness of positive multi-lump bound states of nonlinear Schr\"{o}dinger equations.} Math. Z. 243 (2003), 599--642.


\bibitem{CLL2015}
D. Cao, S. Li, P. Luo, \emph{Uniqueness of positive bound states with multi-bump for nonlinear Schr\"{o}dinger
equations.} Calc. Var. Partial Differential Equations 54 (2015), 4037--4063.

\bibitem{CLP2021}
D. Cao, P. Luo, S. Peng, \emph{The number of positive solutions to the Brezis-Nirenberg problem.} Trans. Amer. Math. Soc. 374 (2021), 1947--1985.

\bibitem{CNY1998}
D. Cao, E. Noussair, S. Yan, \emph{Existence and uniqueness results on single-peaked solutions of a semilinear problem.} Ann. Inst. H. Poincar\'{e} Anal. Non Lin\'{e}aire 15 (1998), 73--111.

\bibitem{CPY2010}
D. Cao, S. Peng, S. Yan, \emph{Multiplicity of solutions for the plasma problem in two dimensions.} Adv. Math. 225 (2010), 2741--2785.

\bibitem{CPY2021}
D. Cao, S. Peng, S. Yan, \emph{Singularly perturbed methods for nonlinear elliptic problems.} Cambridge University Press, 2021.

\bibitem{CC1986} L. Carleson, S. Chang, \emph{On the existence of an extremal function for an inequality of
J. Moser.} Bull. Sci. Math. 110 (1986), 113--127.

\bibitem{CL2002}
C. Chen, C. Lin, \emph{Sharp estimates for solutions of multi-bubbles in compact Riemann surfaces.} Comm. Pure Appl. Math. 55 (2002), 728--771.


\bibitem{CKL2016}
W. Choi, S. Kim, K. Lee, \emph{Qualitative properties of multi-bubble solutions for nonlinear elliptic equations involving critical exponents.} Adv. Math. 298 (2016), 484--533.

\bibitem{DGIP2019}
F. De Marchis, M. Grossi, I. Ianni, F. Pacella, \emph{Morse index and uniqueness of positive solutions of the Lane-Emden problem in planar domains.} J. Math. Pures Appl. 128 (2019), 339--378.

 \bibitem{DIP2015} F. De Marchis, I. Ianni, F. Pacella, \emph{Asymptotic analysis and sign-changing bubble towers for Lane-Emden problems.} J. Eur. Math. Soc. 17 (2015), 2037--2068.

\bibitem{DIP2017}
F. De Marchis, I. Ianni, F. Pacella, \emph{Asymptotic profile of positive solutions of Lane-Emden problems in dimension two.} J. Fixed Point Theory Appl. 19 (2017), 889--916.




\bibitem{DMR2010}
M. del Pino, M. Musso, B. Ruf, \emph{New solutions for Trudinger-Moser critical equations in $\R^2$.}
J. Funct. Anal. 258 (2010), 421--457.

\bibitem{DMR2012}
M. del Pino, M. Musso, B. Ruf, \emph{Beyond the Trudinger-Moser supremum.} Calc. Var. Partial Differential Equations 44 (2012), 543--576.
\bibitem{DLY2015}
Y. Deng, C. Lin, S. Yan, \emph{On the prescribed scalar curvature problem in $\R^N$, local uniqueness and periodicity.} J. Math. Pures Appl. 104 (2015), 1013--1044.


\bibitem{Dru2006}
O. Druet, \emph{Multibumps analysis in dimension 2: quantification of blow-up levels.} Duke Math. J. 132 (2006), 217--269.

\bibitem{DT2020}
O. Druet, P. D. Thizy, \emph{Multi-bump analysis for Trudinger-Moser nonlinearities. I. Quantification and location of concentration points.} J. Eur. Math. Soc. 22 (2020), 4025--4096.

\bibitem{EG2004}
K. El Mehdi, M. Grossi, \emph{Asymptotic estimates and qualitative properties of an elliptic problem in dimension two.} Adv. Nonlinear Stud. 4 (2004), 15--36.

\bibitem{EMP2006}
P. Esposito, M. Musso, A. Pistoia, \emph{Concentrating solutions for a planar elliptic problem involving nonlinearities with large exponent.} J. Differential Equations 227 (2006), 29--68.

\bibitem{F2007} A. Farina, \emph{On the classification of solutions of the Lane-Emden equation on unbounded domains of $\R^N$.} J. Math. Pures Appl. 87 (2007), 537--561.

\bibitem{F1992}
M. Flucher, \emph{Extremal functions for the Trudinger-Moser inequality in 2 dimensions.} Comment. Math. Helv. 67 (1992), 471--497.

\bibitem{GT1983}
D. Gilbarg, N. Trudinger, \emph{Elliptic Partial Differential Equations of Second Order.} Third edition. Springer-Verlag, Berlin, 1998.

\bibitem{GG2004}
F. Gladiali, M. Grossi, \emph{Some results for the Gelfand's problem.} Comm. Partial Differential Equations 29 (2004), 1335--1364.

\bibitem{GGLY2022}
F. Gladiali, M. Grossi, P. Luo, S. Yan,
\emph{Qualitative analysis on the critical points of the Robin function.} arXiv:2202.10895.

\bibitem{GGLY2023}
F. Gladiali, M. Grossi, P. Luo, S. Yan,
\emph{Qualitative analysis on the critical points of Kirchhoff-Routh function.} In preparation.

\bibitem{GGOS2014}
F. Gladiali, M. Grossi, H. Ohtsuka, T. Suzuki, \emph{Morse indices of multiple blow-up solutions to the two-dimensional Gel'fand problem.} Comm. Partial Differential Equations 39 (2014),  2028--2063.

\bibitem{G1993}
L. Glangetas, \emph{Uniqueness of positive solutions of a nonlinear elliptic equation involving the critical exponent.} Nonlinear Anal. 20 (1993), 571--603.

\bibitem{G2005}
M. Grossi, \emph{A nondegeneracy result for a nonlinear elliptic equation.} NoDEA Nonlinear Differential Equations Appl. 12 (2005), 227--241.

\bibitem{GILY2022}
M. Grossi, I. Ianni, P. Luo, S. Yan, \emph{Non-degeneracy and local uniqueness of positive solutions to the Lane-Emden problem in dimension two.} J. Math. Pures Appl. 157 (2022), 145--210.


\bibitem{GMNP2021}
M. Grossi, G. Mancini, D. Naimen, A. Pistoia, \emph{Bubbling nodal solutions for a large perturbation of the
Moser-Trudinger equation on planar domains.} Math. Ann. 380 (2021), 643--686.

\bibitem{GN2021}
M. Grossi, D. Naimen, \emph{Blow-up analysis for nodal radial solutions in Moser-Trudinger critical equations in $\mathbb{R}^2$.} Ann. Sc. Norm. Super. Pisa Cl. Sci. 20 (2020), 797--825.

\bibitem{GP2005}
M. Grossi, F. Pacella, \emph{On an eigenvalue problem related to the critical exponent.} Math. Z. 250 (2005), 225--256.

\bibitem{GOS2011}
M. Grossi, H. Ohtsuka, T. Suzuki, \emph{Asymptotic non-degeneracy of the multiple blow-up solutions to the Gel'fand problem in two space dimensions.} Adv. Differential Equations 16 (2011), 145--164.


\bibitem{GT2010}
M. Grossi, F. Takahashi, \emph{Nonexistence of multi-bubble solutions to some elliptic equations on convex domains.} J. Funct. Anal.  259 (2010), 904--917.

\bibitem{GLPY2021}
Y. Guo, B. Li, A. Pistoia, S. Yan, \emph{The fractional Brezis-Nirenberg problems on lower dimensions.} J. Differential Equations 286 (2021), 284--331.

\bibitem{GMPY2020}
Y. Guo, M. Musso, S. Peng, S. Yan, \emph{Non-degeneracy of multi-bubbling solutions for the prescribed scalar curvature equations and applications.} J. Funct. Anal. 279 (2020), 108553.

\bibitem{GMPY2022}
Y. Guo, M. Musso, S. Peng, S. Yan, \emph{Non-degeneracy and existence of new solutions for the Schr\"{o}dinger equations.} J. Differential Equations 326 (2022), 254--279.


\bibitem{GPY2017}
Y. Guo, S. Peng, S. Yan, \emph{Local uniqueness and periodicity induced by concentration.} Proc. Lond. Math. Soc. 114 (2017), 1005--1043.

\bibitem{ILY}
I. Ianni, P. Luo, S. Yan, \emph{Morse index of multi-spikes solutions to the Lane-Emden problem in dimension two.} In preparation.


\bibitem{KS2018} N. Kamburov, B. Sirakov, \emph{Uniform a priori estimates for positive solutions of the Lane-Emden equation in the plane.} Calc. Var. Partial Differential Equations 57 (2018), no. 6, Paper No. 164, 8 pp.

\bibitem{LRS2009}
T. Lamm,  F. Robert, M. Struwe,  \emph{The heat flow with a critical exponential nonlinearity.}
J. Funct. Anal. 257 (2009), 2951--2998.


\bibitem{LY2018}
C. Lin, S. Yan, \emph{On the mean field type bubbling solutions for Chern-Simons-Higgs equation.} Adv. Math. 338 (2018), 1141--1188.


\bibitem{MM2014}
A. Malchiodi, L. Martinazzi,  \emph{Critical points of the Moser-Trudinger functional on a disk.} J. Eur. Math. Soc. 16 (2014), 893--908.

\bibitem{MM2017}
G. Mancini, L. Martinazzi, \emph{The Moser-Trudinger inequality and its extremals on a disk via energy estimates.} Calc. Var. Partial Differential Equations 56 (2017), no. 4, art. 94, 26 pp.


\bibitem{MT2022}
L. Martinazzi, P. D. Thizy, J. V\'etois, \emph{Sign-changing blow-up for the Moser-Trudinger equation.} J. Funct. Anal. 282 (2022), no. 2, Paper No. 109288, 85 pp.

\bibitem{Moser1971}
J. Moser, \emph{A sharp form of an inequality by N. Trudinger.} Indiana Univ. Math. J. 20 (1971), 1077--1092.


\bibitem{MP2002}
M. Musso, A. Pistoia, \emph{Multispike solutions for a nonlinear elliptic problem involving the critical Sobolev exponent.} Indiana Univ. Math. J.  51   (2002), 541--579.

\bibitem{N2021}
D. Naimen, \emph{Concentration profile, energy, and weak limits of radial solutions to semilinear elliptic equations with Trudinger-Moser critical nonlinearities.} Calc. Var. Partial Differential Equations 60 (2021), no. 2, Paper No. 66, 54 pp.


\bibitem{P2002}
F. Pacella, \emph{Symmetry results for solutions of semilinear elliptic equations with convex nonlinearities.} J. Funct. Anal. 192 (2002),  271--282.

\bibitem{RW1994}
X. Ren, J. Wei, \emph{On a two-dimensional elliptic problem with large exponent in nonlinearity.} Trans. Amer. Math. Soc. 343 (1994), 749--763.

\bibitem{Struwe2000}
M. Struwe, \emph{Positive solutions of critical semilinear elliptic equations on non-contractible
planar domains.} J. Eur. Math. Soc. 2 (2000), 329--388.

\bibitem{Tru1967}
N. S. Trudinger, \emph{On imbeddings into Orlicz spaces and some applications.} J. Math. Mech. 17(1967), 473--483.

\bibitem{Y1998} X. Yang, \emph{Nodal sets and Morse indices of solutions of super-linear elliptic PDEs.} J. Funct. Anal. 160 (1998), 223--253.

}
\end{thebibliography}

\end{document}